\documentclass[reqno]{amsart}
\usepackage{graphicx}
\usepackage{amscd}
\usepackage{amsmath}
\usepackage{amsfonts}
\usepackage{amssymb}
\usepackage{cite}
\usepackage{xcolor}
\usepackage{longtable}
\usepackage{caption}
\usepackage{float}
\usepackage{multicol}        % used for the two-column index
\restylefloat{table}
\oddsidemargin=-.0cm \evensidemargin=-.0cm \textwidth=16cm
\textheight=22cm \topmargin=0cm
\newtheorem{theorem}{Theorem}[section]
\theoremstyle{plain}

\newtheorem{corollary}{Corollary}[section]

\newtheorem{example}{Example}[section]

\newtheorem{lemma}{Lemma}[section]

\newtheorem{proposition}{Proposition}[section]
\newtheorem{remark}{Remark}[section]

\numberwithin{equation}{section}
\def\R {\mathbb{R}}
\def\T {\mathbb{T}}
\def\F {\mathbb{F}}
\def\N {\mathbb{N}}
\def\s {\mathcal{S}}
\def\C{{\mathbb C}}
\def\P{{\mathbb{P}}}

\def\K{{\boldsymbol{K}}}
\def\d{\mathfrak{d}}
\def\b{\mathfrak{b}}
\def\h{\mathfrak{h}}
\def\z{\mathfrak{z}}
\def\y{\mathfrak{y}}
\def\ss {\mathfrak{S}}

\begin{document}

\title[Functional Difference Equations]
{On a class of  functional difference equations: explicit\\
solutions, asymptotic behavior and  applications}

\author[ N. Vasylyeva]
{ Nataliya Vasylyeva}

\address{Institute of Applied Mathematics and Mechanics of NAS of Ukraine
\newline\indent
G.Batyuka str. 19, 84100 Sloviansk, Ukraine; and
\newline\indent
Institute of Hydromechanics of NAS of Ukraine
\newline\indent
Zhelyabova str. 8/4, 03057 Kyiv, Ukraine}
\email[N.Vasylyeva]{nataliy\underline{\ }v@yahoo.com}

\subjclass[2000]{Primary 39A06, 39B32; Secondary
35C15,35R11,35R35} \keywords{functional difference equations,
explicit solution, asymptotic, subdiffusion equation}

%%%%%%%%%%%%%%%%%%%%%%%%%%%%%%%%%%%%%%%%%%%%%%%%%%%%%%%%%%%%%%%%%%%%%%
\begin{abstract}
For $\nu\in[0,1]$ and a complex parameter $\sigma,$  $Re\, \sigma>0,$ we discuss a linear inhomogeneous functional difference equation with variable coefficients on a complex plane $z\in\C$:
\[
(a_{1}\sigma+a_{2}\sigma^{\nu})\mathcal{Y}(z+\beta,\sigma)-\Omega(z)\mathcal{Y}(z,\sigma)=\F(z,\sigma), \quad\beta\in\R,\, \beta\neq 0,
\]
where $\Omega(z)$ and $\F(z)$ are given complex functions, while $a_{1}$ and $a_{2}$ are given real non-negative numbers. Under suitable conditions on the given functions and parameters, we construct explicit solutions of the equation and describe their asymptotic behavior as $|z|\to +\infty$. Some applications to the theory of   functional difference equations and to the theory of boundary value problems governed by subdiffusion in nonsmooth domains are then discussed.
\end{abstract}
%%%%%%%%%%%%%%%%%%%%%%%%%%%%%%%%%%%%%%%%%%%%%%%%%%%%%%%%%%%%%%%%%%%%%%

\maketitle

%%%%%%%%%%%%%%%%%%%%%%%%%%%%%%%%%%%%%%%%%%%%%%%%%%%%%%%%%%%%%%%%%%%%%%
\section{Introduction}
\label{s1}

\noindent Functional difference equations (\textbf{FDEs}) have
shown an incredible power to solve many problems arising in
mathematics, physics, biology and economics. In particular, they
give rise to many examples of applications including  population
models, molecular evolution models, price and quantity levels and
indices, utility theory, production functions, Fisher equation of
exchange, theory of multi-sectoral growth,  the famous problem of
the 'gambler's ruin', combinatorial problems encompassing the
recurrences generating many famous combinatorial numbers (e.g.
Fine, central Delannoy, Schr\"{o}der, counting directed animals)
(see \cite{Ag,BKS,CC,EGLLR,KP,TI,TF,Sc} and also references
therein). Still another way in which difference equations appear
is as recurrence relations in connection  with  special functions
or as numerical approximations to differential equations and to
initial-boundary value problems (especially if these problems are
stated in the domains with singular boundaries)
\cite{Ba,BV1,BF,DV,La,LK,Ka,HFI,SF,Va1}. Finding explicit
solutions and closure forms of  solutions for linear \textbf{FDEs}
using various approaches, including Mallin, Laplace
transformations, series and infinite products, special functions
(Barnes double Gamma-functions, Maliuzhnets' function,
Alexeiewsky's $G$ function),  is  one of the main research topics
related with asymptotic theory of analytic FDEs and symbolic
computations. Indeed, construction of solutions to  difference
equations with different  types of  coefficients (e.g. constant,
not transcendental, trigonometric, rational) has been extensively
studied  by many authors (see
\cite{Ba,BF,BV4,C,CH,FGH,HE,Ka,LK,L,La,SF,Va1,Va2} and the
references therein).

In this paper, for  fixed real  numbers  $\nu\in(0,1)$,  $a_{1}$, $a_{2}$, $\beta$,
we focus on a linear inhomogeneous functional difference equation with a variable coefficient in the unknown complex-valued function $\mathcal{Y}=\mathcal{Y}(z,\sigma),$
\begin{equation}\label{i.1}
(a_{1}\sigma+a_{2}\sigma^{\nu})\mathcal{Y}(z+\beta,\sigma)-\Omega(z)\mathcal{Y}(z,\sigma)=\F(z,\sigma),\, z\in\C,
\end{equation}
where $\sigma$ is a complex parameter, $Re\, \sigma>0$, and $\F(z,\sigma)$ is some given function whose properties will be specified later.
Coming to the coefficient involved, the function $\Omega(z)$ reads as a finite or  an infinite product
\begin{equation}\label{i.2}
\Omega(z)=\d_{0}\exp\{Az^{2}+Bz\}\frac{\prod_{i=1}^{M_{1}}(\d_{i}^{1}-z)\prod_{i=1}^{M_{2}}(\d_{i}^{2}+z)}
{\prod_{i=1}^{M_{3}}(\d_{i}^{3}-z)\prod_{i=1}^{M_{4}}(\d_{i}^{4}+z)}\Omega_{1}(z),
\end{equation}
where
\[
\Omega_{1}(z)=1\quad\text{if} \quad \Omega(z)\,\,\text{ is a finite product,}
\]
otherwise this function reads as
\[
\Omega_{1}(z)=\prod_{n=1}^{+\infty}\frac{\prod_{i=1}^{M_{5}}(\mathfrak{h}_{i,n}-z)\prod_{i=1}^{M_{6}}(\gamma_{i,n}+z)}
{\prod_{i=1}^{M_{7}}(\zeta_{i,n}-z)\prod_{i=1}^{M_{8}}(\eta_{i,n}+z)}
\frac{\prod_{i=1}^{M_{7}}\zeta_{i,n}\prod_{i=1}^{M_{8}}\eta_{i,n}}{\prod_{i=1}^{M_{5}}\mathfrak{h}_{i,n}\prod_{i=1}^{M_{6}}\gamma_{i,n}}
.
\]
Here  $M_{j},$ $j\in\{1,2,...,8\},$ are non-negative integer numbers, and $\mathfrak{h}_{i,n}$, $\gamma_{i,n},$ $\eta_{i,n},$ $\zeta_{i,n},$  $\d_{i}^{k},$ $k\in\{1,2,3,$ $4\},$  are real sequences, $\d_{0}\neq 0,$ $A$ and $B$ are complex numbers.

Actually, as we will see later, equation \eqref{i.1} turns out to be more power tools in solving linear \textbf{FDEs} with variable  coefficients containing meromorphic functions  and in finding solutions of linear initial-boundary value problems governed by  subdiffusion in domains with singular boundaries.

The first order homogenous equation \eqref{i.1} (i.e. $\beta=1$) with coefficients  being independent of the parameter $\sigma$ was discussed in \cite{Ba} in the case of  $\Omega(z)=\prod_{n=1}^{+\infty}(\h_{1,n}-z)(\eta_{1,n}+z)^{-1}$. Assuming convergence of the series
$ \sum_{n=1}^{+\infty}[|\h_{1,n}|^{-m_1}+|\eta_{1,n}|^{-m_2}]$ with some real positive $m_1$and $m_2$, the author constructed the explicit solution  in the terms of the Barns double Gamma-functions. We also refer to work \cite{Va1}, in which exploiting the technique from \cite{Ba}, the author obtained an explicit solution to the similar homogenous equation with $\Omega(z)=\prod_{n=1}^{+\infty}\prod_{i=1}^{2}\frac{(\h_{i,n}-z)}{(\zeta_{i,n}-z)}$, where $\h_{2,n}-\h_{1,n}=1$ and $\zeta_{2,n}-\zeta_{1,n}=1$.

Concerning the first order inhomogeneous equation like \eqref{i.1}, we point out the papers
 \cite{BF,BV4,SF}, where several particular  solutions were built in the case of  a bounded analytic function $\F(z,\sigma)$ in $\C$ for each $\sigma,$ $Re\sigma>0$,  $a_1=1,a_2=0$ and $$\Omega(z)=\d_0\prod_{n=1}^{+\infty}\frac{(\h_{1,n}-z)(\gamma_{1,n}+z)\zeta_{1,n}\eta_{1,n}}{(\zeta_{1,n}-z)(\eta_{1,n}+z)\h_{1,n}\gamma_{1,n}}$$  with real sequences satisfying  relations:
\begin{equation}\label{i.3}
\h_{1,n},\gamma_{1,n},\eta_{1,n},\zeta_{1,n}=O(n)\quad\text{as}\quad n\to+\infty,\quad\text{and}\quad
\h_{1,n}-\gamma_{1,n}\equiv C,\, \eta_{1,n}-\zeta_{1,n}\equiv C_{0},
\end{equation}
with some non-negative constants $C$ and $C_{0}$. We also recall that the first order equation \eqref{i.1} with $a_2=0$ was analyzed in \cite{BV1,BV5}, where the explicit general  solutions  were constructed in the case of the coefficient $\Omega(z)=\d_0\prod_{n=1}^{+\infty}\frac{(\h_{1,n}-z)(\gamma_{1,n}+z)\prod_{i=1}^{M_7}(\zeta_{i,n}\eta_{i,n})}{\prod_{i=1}^{M_7}(\zeta_{i,n}-z)(\eta_{1,n}+z)\h_{i,n}\gamma_{1,n}}$, where $M_7>1$ and the sequences $\h_{1,n},\gamma_{1,n},\eta_{i,n},\zeta_{i,n}$ satisfied \eqref{i.3} with positive constants $C, C_0$.
It is worth noting that all studies presented in \cite{BF,BV1,BV4,BV5,SF,Va1} concern  equation \eqref{i.1} with the coefficient $\Omega(z)$  containing only real sequences.

In this paper, under weaker assumptions (compared to previous works) on the right-hand side of \eqref{i.1} and on the coefficient $\Omega_1(z)$ (see \eqref{i.2}), we construct explicit solutions of equation \eqref{i.1} with $\beta\neq 0$ as linear combination
\[
\mathcal{Y}(z,\sigma)=\mathcal{Y}_{h}(z,\sigma;\P)+\mathcal{Y}_{ih}(z,\sigma),
\]
 where $\mathcal{Y}_{h}(z,\sigma,\P)$ is an  general solution of homogenous equation \eqref{i.1} depending on an arbitrary periodic function $\P(z/\beta)$ while  $\mathcal{Y}_{ih}(z,\sigma)$ being a particular solution of inhomogeneous \eqref{i.1} is selected
in the integral form
\begin{equation}\label{i.5}
\mathcal{Y}_{ih}(z,\sigma)=\frac{\mathcal{Y}_{h}(z,\sigma)}{2i}\int_{\ell_{d_0}}\frac{\F(z+\beta\xi,\sigma)\mathcal{K}(\xi)}{(a_1\sigma+a_2\sigma^{\nu})\mathcal{Y}_{h}(z+\beta\xi+\beta,\sigma)}d\xi
\end{equation}
 with the appropriate chosen kernel $\mathcal{K}(\xi)$ and the contour $\ell_{d_0}$ in the complex plane.

 \noindent Concerning the function $\Omega_1(z)$, we just require that the sequences included in $\Omega_1(z)$ are strictly monotonic increasing and satisfy the estimate:
\[
    \sum_{n=1}^{+\infty}\Big[\sum_{i=1}^{M_{5}}\mathfrak{h}^{-2}_{i,n}+\sum_{i=1}^{M_{6}}\gamma^{-2}_{i,n}
    +\sum_{i=1}^{M_{7}}\zeta^{-2}_{i,n}+\sum_{i=1}^{M_{8}}\eta^{-2}_{i,n}
    \Big]+\Big|\sum_{i=1}^{M_{5}}\mathfrak{h}^{-1}_{i,n}-\sum_{i=1}^{M_{6}}\gamma^{-1}_{i,n}
    -\sum_{i=1}^{M_{7}}\zeta^{-1}_{i,n}+\sum_{i=1}^{M_{8}}\eta^{-1}_{i,n}
    \Big|<+\infty.
\]
Besides, in our work we analyze equation \eqref{i.1} in the case of complex sequences included in the coefficient $\Omega(z)$ and accordingly in  $\Omega_1(z)$.

As for  the properties of the function $\F(z,\sigma)$, we relax requirements on the growth of this function and on the region of analyticity of $\F$. Namely,
we consider both bounded and unbounded functions $\F$ whose growth for large $|z|$ is controlled by a specially selected analytic periodic rapidly decreasing function $\mathcal{K}$. As for the domain of  analyticity for  $\F$, we narrow this region  from  $\C$ to a specified strip
 $z_0-1<Re(z/\beta)<z_0+d_0$ with some fixed $d_0\in[0,1]$ and $z_0\in\R$.

In order to find $\mathcal{Y}_{h}(z,\sigma;\P)$ and $\mathcal{Y}_{ih}(z,\sigma)$ we employ the following strategy.
First, performing the special change of variables and introducing new unknown function, we convert equation \eqref{i.1} with $\beta\neq 1$ into the first order difference equation. Then exploiting the well-known properties of the Gamma-function, e.g. $\Gamma(z+1)=z\Gamma(z)$, and solutions of the  simplest  first order difference equations, we build general solutions $\mathcal{Y}_{h}(z,\sigma)$ of homogenous equation \eqref{i.1} in the form of the finite product (if $\Omega_1=1$) or the infinite  product  (otherwise). After that, using the well-known asymptotic  of Gamma- and Digamma-functions and technique of summation of the series from \cite[Section 2]{Ba}, we prove convergence of the corresponding products and obtain asymptotic behavior of $\mathcal{Y}_{h}(z,\sigma;\P)$ for $|Im z|\to+\infty$. Besides, we describe the region of analyticity of $\mathcal{Y}_{h}(z,\sigma;\P)$. The properties of $\mathcal{Y}_{h}(z,\sigma;\P)$ and $\F(z,\sigma)$ allow us to construct a particular solution of inhomogeneous equation \eqref{i.1} in the form \eqref{i.5} with a suitable function $\mathcal{K}$.

Moreover, we discuss application of  this technique to find explicit solutions of equation \eqref{i.1} in the multidimensional case. Namely, we analyze the linear \textbf{FDE} in the unknown function $\boldsymbol{Y}:=\boldsymbol{Y}(\mathbf{z},\sigma)$:
$$
(a_{1}\sigma+a_{2}\sigma^{\nu})\boldsymbol{Y}(\mathbf{z}+\mathbf{b},\sigma)-\mathbf{S}(\mathbf{z})\boldsymbol{Y}(\mathbf{z},\sigma)=\F(\mathbf{z},\sigma),\quad \mathbf{z}=\{z_{1},z_{2},...,z_{k}\}\in \C^{k}, \, k>1,
$$
with
$$\mathbf{b}=\{\beta_{1},\beta_{2},...,\beta_{n}\},\quad \mathbf{S}(\mathbf{z})=\d_{0}\prod_{j=1}^{k}
\exp\{A_{j}z_{j}^{2}+B_{j}z_{j}\}\frac{\prod_{i=1}^{M_{1,j}}(\d_{i,j}^{1}-z_j)\prod_{i=1}^{M_{2,j}}
(\d_{i,j}^{2}+z_j)}
{\prod_{i=1}^{M_{3,j}}(\d_{i,j}^{3}-z_j)\prod_{i=1}^{M_{4,j}}(\d_{i,j}^{4}+z_j)}\Omega_{1}^{j}(z_{j}).
$$
It is worth noting that, the one of the particular case of this equation  in two-dimensional case arises in studying of a boundary value problem for the superdiffusion equation in the right angle \cite{DV}.

In connection with application of \eqref{i.1}, taking into account the proposed approach to search explicit solutions of \eqref{i.1}, we are able to obtain solutions of \textbf{FDEs} like \eqref{i.1} with the coefficient $\Omega(z):=\mathcal{S}(z)$ being either a finite product of entire functions or their quotient. Appealing to properties of entire functions, we describe sufficient conditions on the function $\mathcal{S}(z)$, which allow us to factorize $\mathcal{S}(z)$ as an infinite product similar to \eqref{i.2}, and then apply aforementioned technique. Besides, we demonstrate how to use our technique in the case of $\mathcal{S}(z)$ being trigonometrical functions and either their linear combinations or their quotients.

Another important applications of \eqref{i.1}, as anticipated above, concerns with construction of solutions to transmission problem for Laplace operators with fractional dynamic boundary conditions governed by subdiffusion. It is worth noting that fractional differential equations play a key role in the description of the so-called anomalous phenomena in nature and in the theory of complex systems. In particular, these equations ensure a more faithful  representation of the long-memory and nonlocal dependence of many anomalous processes. The feature of these anomalies in diffusion/transport processes is that the mean square displacement of the diffusing species $\langle(\Delta\mathbf{x})^{2}\rangle $ scales as a nonlinear power law in time, i.e.  $\langle(\Delta\mathbf{x})^{2}\rangle\sim t^{\nu},$ $\nu>0$ \cite{MSSPCM}. For a subdiffusive process, the value of $\nu$ is such that $\nu\in(0,1)$, while for normal diffusion $\nu=1$, and for superdiffusive process, we have $\nu>1$.

 Let $(r,\varphi)$ be a polar coordinate system in $\R^{2}$, and $G_1$,  $G_2\subset \R^{2}$ be plane corners. For a given $\omega_0\in[0,\frac{\pi}{2})$ , we set
\[
G_1=\{(r,\varphi):\, r>0,\, \varphi\in(-\frac{\pi}{2},\omega_0)\},\,
G_2=\{(r,\varphi):\, r>0,\, \varphi\in(\omega_0,\frac{\pi}{2})\},\,
g=\{(r,\varphi):\, r\geq0,\, \varphi=\omega_0\}.
\]
For an arbitrarily given time $T>0$, we denote
\[
G_{1,T}=G_1\times(0,T),\quad G_{2,T}=G_2\times(0,T),\quad g_T=g\times[0,T].
\]
For fixed $\nu\in(0,1)$, we analyze the Laplace equations in the unknown functions $u_1$ and $u_2$, $u_1:=u_1(r,\varphi,t):G_{1,T}\to\R,$
$u_2:=u_2(r,\varphi,t):G_{2,T}\to\R,$
\begin{equation}\label{6.1}
\Delta u_1=f_{0,1}\quad\text{in}\quad G_{1,T},\quad \Delta u_2=f_{0,2}\quad\text{in}\quad G_{2,T},
\end{equation}
supplemented with the initial conditions
\begin{equation}\label{6.2}
u_1(r,\varphi,0)=0\quad\text{in}\quad \bar{G}_{1},\quad u_2(r,\varphi,0)=0\quad\text{in}\quad \bar{G}_{2},
\end{equation}
and subject to the transmission conditions on $g_T$:
\begin{equation}\label{6.3}
r^{\mathfrak{s}_0}\Big[a_1\frac{\partial}{\partial t}(u_1-u_2)+a_2\mathbb{D}_t^{\nu}(u_1-u_2)\Big]+\Big[\frac{\partial u_1}{\partial \mathbf{n}}-\frac{\partial u_2}{\partial \mathbf{n}}\Big]+a_3\frac{\partial}{\partial r}(u_1-u_2)=f(r,t)\quad\text{on}\quad g_T,
\end{equation}
\begin{equation}\label{6.4}
\frac{\partial u_1}{\partial \mathbf{n}}-\mathfrak{K}\frac{\partial u_2}{\partial \mathbf{n}}-\mathfrak{K}a_4\frac{\partial}{\partial r}[u_1-u_2]=f_1\quad \text{on}\quad g_T,
\end{equation}
and  to the Dirichlet boundary condition (\textbf{DBC}) on  $\partial G_1\backslash g$ and $\partial G_2\backslash g$:
\begin{equation}\label{6.5}
u_1(r,-\pi/2,t)=f_2,\quad u_2(r,\pi/2,t)=f_3,\quad r\geq 0,\quad t\in[0,T],
\end{equation}
%or to the Neumann boundary condition (\textbf{NBC}):
%\begin{equation}\label{6.6}
%\frac{\partial u_1}{\partial n}(r,-\pi/2,t)=0, \quad \frac{\partial u_2}{\partial n}(r,\pi/2,t)=0,\quad r\geq 0,\quad t\in[0,T],
%\end{equation}
where the functions $f_{0,1}$, $f_{0,2},$ $f$, $f_{k},$ $k=1,2,3,$ are prescribed, $\mathbf{n}$ is the unit normal to $g$ directed to the domain $G_1$.

\noindent
Here the symbol $\mathbb{D}_{t}^{\nu}$ stands for the Caputo fractional derivative of the order $\nu\in(0,1)$  with respect to time $t$, defined as
\[
\mathbb{D}_{t}^{\nu}u(\cdot,t)=\frac{1}{\Gamma(1-\nu)}\frac{\partial}{\partial t}\int\limits_{0}^{t}\frac{u(\cdot,\tau)-u(\cdot,0)}{(t-\tau)^{\nu}}d\tau.
\]
It is worth noting that in the limit cases $\nu=0$ and $\nu=1$, the Caputo fractional derivatives of $u(\cdot,t)$ boil down to $[u(\cdot,t)-u(\cdot,0)],$ and $\frac{\partial u}{\partial t}(\cdot,t)$, respectively.

In virtue of presence of time derivatives in transmission boundary condition \eqref{6.3}, this condition is called either dynamic boundary condition if $a_2=0$ or fractional dynamic boundary condition if $a_2\neq 0$.    If $\mathfrak{s}_{0}>0$ the dynamic/ fractional dynamic boundary condition is called degenerate, while the case of negative $\mathfrak{s}_{0}$ corresponds to the singular  condition. It is worth noting that, transmission problem  \eqref{6.1}-\eqref{6.5} is a key point in the study of the contact Muskat problem in the case of either normal diffusion ($a_{2}=0$) or subdiffusion ($a_{1}=0$). We recall that the Muskat problem (or two-phase Hele-Shaw problem) describes the evolution of the interface between the two immiscible incompressible fluids    subjected to either the classical Darcy law or its fractional version (see for  details \cite{BV1,VV}).

In this work, performing special change of variables and then Laplace and Fourier transforms, we reduce problem \eqref{6.1}-\eqref{6.5} to the functional difference equation like \eqref{i.1} with $\beta=\mathfrak{s}_0+1$. Then, incorporated the proposed technique leading to finding solution of \eqref{i.1}, we construct the integral representation of the solution to \eqref{6.1}-\eqref{6.5} which will be play a key role in the further study  of solvability and  waiting time phenomena of the corresponding contact Muskat problem.

The paper is organized as follows. In the next section, we introduce notation, main assumptions on the data in \eqref{i.1} and state our main results related with explicit solutions of \eqref{i.1} in one-dimensional and multidimensional cases  (Theorems \ref{t3.1}-\ref{t3.2}, Remark \ref{r3.5}). In Section \ref{s3}, we prove Theorem \ref{t3.1} concerning to explicit solutions of homogenous equation \eqref{i.1} ($\F\equiv 0$) and their asymptotic behaviors. The proof of Theorem \ref{t3.2} is given in Section \ref{s4}. Besides, in this section, we discuss finding solutions of \eqref{i.1} in the case of complex sequences included in  $\Omega_1(z)$ (Theorem \ref{t4.1}). Section \ref{s5} is related with searching solution of \eqref{i.1} with the coefficients being entire functions. The main results of this section is stated in Theorems \ref{t5.1} and \ref{t5.2}. Moreover, we demonstrate our technique with some explicit examples. In Section \ref{s6}, we discuss application \eqref{i.1} to solve transmission boundary problem \eqref{6.1}-\eqref{6.5}. Finally, in Appendix, we prove auxilliary results (Proposition \ref{p3.1})  playing a  key role in the course of the investigation of
 of asymptotic behavior for  homogenous solutions in Subsection \ref{s3.2}.

%%%%%%%%%%%%%%%%%%%%%%%%%%%%%%%%%%%%%%%%%%%%%%%%%%%%%%%%%%%%%%%%%%%%%%

%%%%%%%%%%%%%%%%%%%%%%%%%%%%%%%%%%%%%%%%%%%%%%%%%%%%%%%%%%%%%%%%%%%%%%
\section{Main Results}
\label{s2}
\noindent The first achievement of the paper is related to a solvability of homogenous equation \eqref{i.1} (i.e. $\F\equiv 0$). Throughout the work, the symbol $C$ will denote a generic positive constant, depending only on the structural quantities of the model.
Besides, our  important convention is if $n_{1}>n_{2}$, then by the definition
\[
\sum_{i=n_{1}}^{n_{2}}\mathfrak{a}(i)=0\quad\text{and}\quad\prod_{i=n_1}^{n_2}\mathfrak{a}(i)=1.
\]
Accordingly, we have that
$
\sum_{i=1}^{0}\mathfrak{a}(i)=0$ and $\prod_{i=1}^{0}\mathfrak{a}(i)=1.
$

First, we state our general hypothesis on the structural terms of the equation.

\noindent \textbf{H1 (Condition on the parameters):} Let $a_1$ and $a_2$, $\beta$, $\nu$  be real numbers, and $A$ and $B$ be complex, such that
$$a_{1}\geq 0,\quad a_{2}\geq 0,\quad a_{1}+a_{2}>0,\quad \nu\in(0,1),\quad \beta\neq 0.
$$
Besides, we assume that $\sigma$ and $\d_{0}$ are complex numbers satisfying inequalities: $Re\, \sigma\geq 0$ and $ \d_{0}\neq 0$.

\noindent \textbf{H2 (Condition on the infinite product $\Omega_{1}(z)$):} For each fixed $i$, infinite sequences $\mathfrak{h}_{i,n},$ $\gamma_{i,n},$ $\zeta_{i,n}$ and $\eta_{i,n}$ satisfy inequalities:
$$
    0<\mathfrak{h}_{i,n}<\mathfrak{h}_{i,n+1},\quad 0<\gamma_{i,n}<\gamma_{i,n+1},
    \quad 0<\zeta_{i,n}<\zeta_{i,n+1},\quad 0<\eta_{i,n}<\eta_{i,n+1};
    $$
        \begin{equation}\label{2.2}
    \sum_{n=1}^{+\infty}\Big[\sum_{i=1}^{M_{5}}\mathfrak{h}^{-2}_{i,n}+\sum_{i=1}^{M_{6}}\gamma^{-2}_{i,n}
    +\sum_{i=1}^{M_{7}}\zeta^{-2}_{i,n}+\sum_{i=1}^{M_{8}}\eta^{-2}_{i,n}
    \Big]<C,
    \,
    \sum_{n=1}^{+\infty}\Big|\sum_{i=1}^{M_{5}}\mathfrak{h}^{-1}_{i,n}-\sum_{i=1}^{M_{6}}\gamma^{-1}_{i,n}
    -\sum_{i=1}^{M_{7}}\zeta^{-1}_{i,n}+\sum_{i=1}^{M_{8}}\eta^{-1}_{i,n}
    \Big|<C.
    \end{equation}

\noindent \textbf{H3 (Condition on the coefficient $\Omega(z)$):} For non-negative integer numbers $M_1$, $M_{2},$ $M_3$ and $M_4$, we assume that sequences $\{\d_{i}^{1}\}_{i=1}^{M_1}$, $\{\d_{i}^{2}\}_{i=1}^{M_2}$, $\{\d_{i}^{3}\}_{i=1}^{M_3}$, $\{\d_{i}^{4}\}_{i=1}^{M_4}$ are real and besides, $\d_{i}^{1}\neq 0$ for each $i\in\{1,2,...,M_1\},$ and $\d_{i}^{3}\neq 0$ for each $i\in\{1,2,...,M_{3}\}$ .
\begin{example}\label{e3.1}
The following are examples of sequences satisfying condition \textbf{H2}: for $M_{8},M_{6}\geq 1,$
$
M_{5}=M_{7}=M\geq 1,$  and for $0<A_{1}<A_{2}<...<A_{M}<A_{0}$ there holds
\[
\gamma_{i,n}=n^{2}-\frac{i}{M_{6}},\, i\in\{1,2,...,M_{6}\};\quad
 \eta_{i,n}=n^{3/2}+i,\, i\in\{1,2,...,M_8\};\]
\[
\mathfrak{h}_{i,n}=2A_{0}n-A_{i},\quad \zeta_{i,n}=2A_{0} n+A_{i},\, i\in\{1,2,...,M\}.
\]
\end{example}
\begin{example}\label{e3.2}
It is apparent that the function $\tan z$ can be presented as an infinite product $\Omega(z)$ with $\d_{0}=1,$ $M_{1}=M_3=M_4=0,$ $M_2=1,$ $M_5=M_6=M_7=M_8=1,$ $A=B=0$, $\d_{1}^{2}=0$, $\h_{1,n}=\gamma_{1,n}=\pi n$, $\zeta_{1,n}=\eta_{1,n}=\frac{(2n-1)\pi}{2},$ i.e.
\[
\tan z=z\prod_{n=1}^{+\infty}\frac{(n\pi-z)(n\pi+z)}{(\frac{(2n-1)\pi}{2}-z)(\frac{(2n-1)\pi}{2}+z)}
\frac{\Big(\frac{(2n-1)\pi}{2}\Big)^{2}}{(n\pi)^{2}}
\]
\end{example}
More examples of functions which can be presented in form \eqref{i.2} are discussed in Section \ref{s5.1}.

Denoting
\begin{align*}
\mathcal{R}(n)&=\text{sgn}\beta\Big\{-\frac{\ln 2\pi}{2}[M_{6}-M_{5}+M_{7}-M_{8}]-\sum_{i=1}^{M_{6}}\frac{\gamma_{i,n}}{|\beta|}\big[\ln \frac{\gamma_{i,n}}{|\beta|}-1\big]
-\sum_{i=1}^{M_{7}}\frac{\zeta_{i,n}}{|\beta|}\big[\ln \frac{\zeta_{i,n}}{|\beta|}-1\big]
\\&+
\sum_{i=1}^{M_{5}}\frac{\h_{i,n}}{|\beta|}\big[\ln \frac{\h_{i,n}}{|\beta|}-1\big]
+\sum_{i=1}^{M_{8}}\frac{\eta_{i,n}}{|\beta|}\big[\ln \frac{\eta_{i,n}}{|\beta|}-1\big]\Big\};\end{align*}
\[
\mathbb{L}_{1}(z)
=
\begin{cases}
\prod\limits_{n=1}^{+\infty}
\frac{\prod\limits_{i=1}^{M_6}\Gamma\Big(\frac{\gamma_{i,n}+z}{\beta}\Big) \prod\limits_{i=1}^{M_7}\Gamma\Big(\frac{\zeta_{i,n}-z}{\beta}+1\Big)}
{\prod\limits_{i=1}^{M_8}\Gamma\Big(\frac{\eta_{i,n}+z}{\beta}\Big) \prod\limits_{i=1}^{M_5}\Gamma\Big(\frac{\h_{i,n}-z}{\beta}+1\Big)}
\Bigg(\frac{\prod\limits_{i=1}^{M_7}\frac{\zeta_{i,n}}{\beta}
\prod\limits_{i=1}^{M_8}\frac{\eta_{i,n}}{\beta}
}
{\prod\limits_{i=1}^{M_5}\frac{\h_{i,n}}{\beta}
\prod\limits_{i=1}^{M_6}\frac{\gamma_{i,n}}{\beta}}
\Bigg)^{\frac{z}{\beta}-\frac{1}{2}} e^{\mathcal{R}(n)},
\quad \text{if}\quad \beta>0,
\\
\prod\limits_{n=1}^{+\infty}
\frac{\prod\limits_{i=1}^{M_5}\Gamma\Big(\frac{\h_{i,n}-z}{|\beta|}\Big) \prod\limits_{i=1}^{M_8}\Gamma\Big(\frac{\eta_{i,n}+z}{|\beta|}+1\Big)}
{\prod\limits_{i=1}^{M_7}\Gamma\Big(\frac{\zeta_{i,n}-z}{|\beta|}\Big) \prod\limits_{i=1}^{M_6}\Gamma\Big(\frac{\gamma_{i,n}+z}{|\beta|}+1\Big)}
\Bigg(\frac{\prod\limits_{i=1}^{M_7}\frac{\zeta_{i,n}}{|\beta|}
\prod\limits_{i=1}^{M_8}\frac{\eta_{i,n}}{|\beta|}
}
{\prod\limits_{i=1}^{M_5}\frac{\h_{i,n}}{|\beta|}
\prod\limits_{i=1}^{M_6}\frac{\gamma_{i,n}}{\beta}}
\Bigg)^{\frac{z}{\beta}-\frac{1}{2}} e^{\mathcal{R}(n)},
\quad \text{if}\quad \beta<0,
\\
\end{cases}
\]
\[
\mathbb{L}(z)=\frac{\prod\limits_{i=1}^{M_{2}}\Gamma\Big(\frac{\d_{i}^{2}+z}{\beta}\Big)
\prod\limits_{i=1}^{M_{3}}\Gamma\Big(\frac{\d_{i}^{3}-z}{\beta}+1
\Big)}
{\prod\limits_{i=1}^{M_{4}}\Gamma\Big(\frac{\d_{i}^{4}+z}{\beta}\Big)
\prod\limits_{i=1}^{M_{1}}\Gamma\Big(\frac{\d_{i}^{1}-z}{\beta}+1
\Big)}
\begin{cases}
1,\quad\quad\quad\text{if}\quad\Omega_{1}=1,\\
\mathbb{L}_{1}(z),\quad\text{if}\quad \Omega_{1}\, \text{is an infinite product},
\end{cases}
\]
we state our first result.
\begin{theorem}\label{t3.1}
Let $\F\equiv 0$ and let for $m\in\mathbb{N}\cup\{0\}$ and $n\in\mathbb{N},$ the inequalities hold
\begin{equation}\label{2.3}
Re\, \Big(\frac{z+\d_{i}^{2}}{\beta}\Big)\neq-m,\, i\in\{1,2,...,M_{2}\},\,\text{and}\,
Re\, \Big(\frac{z-\d_{j}^{3}}{\beta}\Big)\neq 1+m,\, j\in\{1,2,...,M_{3}\}.
\end{equation}
In addition, in the case of $\Omega_{1}(z)$ being the infinite product, we also require
\begin{equation}\label{2.4}
\begin{cases}
Re\, \Big(\frac{z+\gamma_{i,n}}{\beta}\Big)\neq-m,\, i\in\{1,2,...,M_{6}\},\quad
Re\, \Big(\frac{z-\zeta_{i,n}}{\beta}\Big)\neq 1+m,\, i\in\{1,2,...,M_{7}\},\,\text{ if } \beta>0,
\\
Re\, \Big(\frac{z-\h_{i,n}}{|\beta|}\Big)\neq m,\, i\in\{1,2,...,M_{5}\},\quad
Re\, \Big(\frac{z+\eta_{i,n}}{|\beta|}\Big)\neq -1-m,\, i\in\{1,2,...,M_{8}\},\,\text{ if } \beta<0.
\end{cases}
\end{equation}
Then, under assumptions \textbf{H1-H3}, a general solution of homogenous equation \eqref{i.1} is given by
\begin{equation}\label{2.4*}
\mathcal{Y}_{h}(z,\sigma;\P)=\exp\Big\{\frac{Az^{3}}{3\beta}+\frac{B-A\beta}{2\beta}z^{2}+\frac{A\beta-3B}{6}z\Big\}
\Big(\frac{\d_{0} \beta^{M_1+M_2-M_3-M_4}}{(a_{1}\sigma+a_{2}\sigma^{\nu})}\Big)^{\frac{z}{\beta}-\frac{1}{2}}\P\Big(\frac{z}{\beta}\Big)
\mathbb{L}(z),
\end{equation}
where $\P\Big(\frac{z}{\beta}\Big)$ is an arbitrary analytic periodic function,
$
\P\Big(\frac{z+\beta}{\beta}\Big)=\P\Big(\frac{z}{\beta}\Big).
$

Moreover, the following statements hold:

\noindent \textbf{(s.1)} for each fixed $z$ satisfying \eqref{2.4}, the infinite product $\mathbb{L}_{1}(z)$ converges;

\noindent\textbf{(s.2)} for each fixed $Re \, z$ satisfying \eqref{2.3} and \eqref{2.4}, and for $|Im\, z|\to +\infty$, there is the relation
\begin{equation}\label{2.5}
\mathbb{L}(z)=[\Omega(z)\d_{0}^{-1}\exp\{-Az^{2}-Bz\}]^{\frac{z}{\beta}-\frac{1}{2}}\exp\{\Phi(z)\}
\quad
\text{where}\quad
\frac{|\Phi(z)|}{|z^{2}|\ln |z|}\leq C.
\end{equation}
Besides, $\Phi(z)=\exp\{C_{1}\ln z+C_{2} z+O(1)\}$  if one of  the following conditions hold for each $n\in\mathbb{N}$:

\noindent (i)
$ M_{5}=M_{7},\,  M_{6}=M_{8},\,\text{and }\,
\h_{i,n}-\zeta_{i,n}=C_{3},\, i\in\{1,2,...,M_{5}\},\quad
\eta_{i,n}-\gamma_{i,n}=C_{4},\, i\in\{1,2,...,M_{6}\}.
$

\noindent (ii) $M_{7}=M_{8},\, M_{5}=M_{6},\,\text{and }\,
\h_{i,n}-\gamma_{i,n}=C_{5},\, i\in\{1,2,...,M_{5}\},\quad
\eta_{i,n}-\zeta_{i,n}=C_{6},\, i\in\{1,2,...,M_{7}\},
$
where $C_{1}$ and $C_{2}$ are some constants, and the real quantities $C_3$, $C_4$, $C_5$ and $C_6$  depend only on $i$ and are independent of $n$.
\end{theorem}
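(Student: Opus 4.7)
The plan is to reduce the homogeneous equation to a first-order difference equation with unit shift, build a particular solution block-by-block using the relation $\Gamma(\zeta+1)=\zeta\Gamma(\zeta)$, and then perform a Stirling-type asymptotic analysis on the resulting product to obtain both convergence and the large-$|\mathrm{Im}\,z|$ behaviour.

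First I would substitute $w=z/\beta$ (the case $\beta<0$ is handled symmetrically after passing to $|\beta|$), which turns \eqref{i.1} into a shift-by-one equation. I would then match the multiplicative blocks of \eqref{2.4*} one by one to the corresponding factors of $\Omega(z)/(a_{1}\sigma+a_{2}\sigma^{\nu})$: the constant $\d_{0}/(a_{1}\sigma+a_{2}\sigma^{\nu})$ is absorbed into the power factor $(\ldots)^{z/\beta-1/2}$; the quadratic exponential $\exp(Az^{2}+Bz)$ is eliminated by solving the telescoping identity $g(z+\beta)-g(z)=Az^{2}+Bz$ for a cubic polynomial $g$, which recovers exactly the cubic exponent in \eqref{2.4*}; each linear factor $(\d_{i}^{2}+z)$ (resp.\ $(\d_{i}^{3}-z)$) produces a numerator Gamma $\Gamma((\d_{i}^{2}+z)/\beta)$ (resp.\ $\Gamma((\d_{i}^{3}-z)/\beta+1)$) in $\mathbb{L}$, and analogously for the denominator and for every sequence entering $\Omega_{1}$. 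Multiplying by an arbitrary analytic $\beta$-periodic factor $\P(z/\beta)$ exhausts the kernel of the shift operator and yields the general solution; hypotheses \eqref{2.3}--\eqref{2.4} are precisely what is needed to keep every Gamma away from its poles.

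For statement \textbf{(s.1)} I would apply Stirling, $\ln\Gamma(\zeta)=(\zeta-\tfrac{1}{2})\ln\zeta-\zeta+\tfrac{1}{2}\ln 2\pi+O(|\zeta|^{-1})$, to every Gamma in $\mathbb{L}_{1}(z)$. The regulariser $e^{\mathcal{R}(n)}$ is designed exactly to cancel the $\zeta\ln\zeta-\zeta$ contributions, while the power $(\ldots)^{z/\beta-1/2}$ absorbs the residual $\ln\zeta$ and $\tfrac{1}{2}\ln 2\pi$ terms. After these cancellations, the $n$-th logarithmic term of $\mathbb{L}_{1}(z)$ reduces to a linear combination of $z\bigl[\sum\h_{i,n}^{-1}-\sum\gamma_{i,n}^{-1}-\sum\zeta_{i,n}^{-1}+\sum\eta_{i,n}^{-1}\bigr]$, of $z^{2}$ times the inverse-square sums, and of a tail $O(n^{-2})$, so hypothesis \eqref{2.2} delivers absolute convergence, locally uniformly on the admissible region.

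For the asymptotics in \textbf{(s.2)} I would invoke Stirling once more, this time letting $|\mathrm{Im}\,z|\to+\infty$. Collecting the dominant contributions, $\ln\mathbb{L}(z)$ reorganises into $(\tfrac{z}{\beta}-\tfrac{1}{2})\ln[\Omega(z)\d_{0}^{-1}\exp(-Az^{2}-Bz)]$ plus a remainder $\Phi(z)$ whose worst non-cancelling terms grow at most as $|z|^{2}\ln|z|$, yielding the first claim in \eqref{2.5}. Under assumption (i) or (ii), paired ratios such as $\Gamma((\h_{i,n}-z)/\beta)/\Gamma((\zeta_{i,n}-z)/\beta)$ collapse because $\h_{i,n}-\zeta_{i,n}=C_{3}$ is independent of $n$, and Stirling reduces them to $((\zeta_{i,n}-z)/\beta)^{C_{3}/\beta}(1+O(1/n))$; summing these contributions over $n$ and $i$ and tallying the $n$-independent constants $C_{3},\ldots,C_{6}$ produces the finer asymptotic $\Phi(z)=\exp(C_{1}\ln z+C_{2}z+O(1))$ with explicit $C_{1}$ and $C_{2}$. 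The main obstacle is uniform control of the Stirling remainders as both $n$ and $|\mathrm{Im}\,z|$ grow simultaneously, together with the justification of term-by-term asymptotics inside the infinite product; this is precisely where the auxiliary Proposition \ref{p3.1} (based on the summation technique borrowed from \cite{Ba}) will be indispensable.
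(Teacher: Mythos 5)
Your proposal is correct and follows essentially the same route as the paper: reduction to the unit-shift case via $y=z/\beta$, block-by-block construction of $\mathcal{Y}_h$ from the elementary first-order equations solved by $\Gamma$, the cubic telescoping exponent, and then Stirling/Digamma asymptotics with the regulariser $e^{\mathcal{R}(n)}$ and hypothesis \eqref{2.2} for convergence and for the asymptotic \eqref{2.5}, with Proposition \ref{p3.1} supplying the uniform control of the remainders exactly as you anticipate. The only cosmetic difference is that the paper performs the rescaling to $\beta=1$ at the end rather than at the outset, and organises the convergence proof via a Taylor expansion of $\ln\Gamma(z+\mathfrak{c})$ about $\mathfrak{c}$ (the terms $Q_1,\dots,Q_4$) rather than applying Stirling directly to each factor, but these are the same computation.
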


The following assessment is a simple consequence of assumptions \eqref{2.3}, \eqref{2.4} and the explicit form of solution \eqref{2.4*}.
\begin{corollary}\label{c3.1}
Let for $m\in\mathbb{N}\cup\{0\}$ and $n\in\mathbb{N}$ the inequalities be fulfilled
\[
Re\, \Big(\frac{z+\d_{i}^{4}}{\beta}\Big)\neq -m,\quad i\in\{1,2,...,M_{4}\},\quad
Re\, \Big(\frac{z-\d_{j}^{1}}{\beta}\Big)\neq 1+m,\quad j\in\{1,2,...,M_{1}\}.
\]
If  $\Omega_1$ is an infinite product,  we additionally assume  that
\[
\begin{cases}
Re\, \Big(\frac{z+\eta_{i,n}}{\beta}\Big)\neq -m,\quad i\in\{1,2,...,M_{8}\},\quad
Re\, \Big(\frac {z-\h_{i,n}}{\beta}\Big)\neq 1+m,\quad i\in\{1,2,...,M_{5}\},\quad\text{if } \beta>0,
\\
Re\, \Big(\frac{z+\gamma_{i,n}}{|\beta|}\Big)\neq -m-1,\quad i\in\{1,2,...,M_{6}\},\quad
Re\, \Big(\frac {z-\zeta_{i,n}}{|\beta|}\Big)\neq m,\quad i\in\{1,2,...,M_{7}\},\quad\text{if } \beta<0,
\end{cases}
\]
Then, under assumptions \eqref{2.3} and \eqref{2.4}, the solution $\mathcal{Y}_{h}(z,\sigma;\P)$ given by \eqref{2.4*} has no poles, and the function $\frac{\mathcal{Y}_{h}(z,\sigma;\P)}{\P(z/\beta)}$ does not vanish.

If, in addition, for any fixed $d_0\in[0,1]$ the relations hold:
\begin{equation}\label{2.6}
Re\, \Big(\frac{z+\d_{i}^{4}}{\beta}\Big)\neq -m-1+d_0,\quad i\in\{1,2,...,M_{4}\},\quad
Re\, \Big(\frac{z-\d_{j}^{1}}{\beta}\Big)\neq d_0+m,\quad j\in\{1,2,...,M_{1}\},
\end{equation}
and, in the case of $\Omega_1$ being an infinite product,
\begin{equation}\label{2.7}
\begin{cases}
Re\, \Big(\frac{z+\eta_{i,n}}{\beta}\Big)\neq -m-1+d_0,\, i\in\{1,2,...,M_{8}\},\,
Re\, \Big(\frac{z-\h_{i,n}}{\beta}\Big)\neq d_0+m,\, i\in\{1,2,...,M_{5}\},\, \text{if }\beta>0,
\\
Re\, \Big(\frac{z+\gamma_{i,n}}{|\beta|}\Big)\neq -m-d_0,\, i\in\{1,2,...,M_{6}\},\,
Re\, \Big(\frac{z-\zeta_{i,n}}{|\beta|}\Big)\neq 1- d_0+m,\, i\in\{1,2,...,M_{7}\},\, \text{if }\beta<0,
\end{cases}
\end{equation}
then the function $\frac{\P(1-d_0+z/\beta)}{\mathcal{Y}_{h}(z+\beta-\beta d_0,\sigma;\P)}$
does not have any poles.
\end{corollary}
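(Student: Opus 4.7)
The plan is to reduce the corollary to a straightforward bookkeeping of zeros and poles of the Gamma-function factors appearing in the explicit formula \eqref{2.4*}. All the other ingredients of that formula are entire and nowhere vanishing on the relevant region: the exponential prefactor $\exp\{Az^{3}/(3\beta)+\dots\}$ is an entire nonzero function of $z$; the power $\bigl(\d_{0}\beta^{M_{1}+M_{2}-M_{3}-M_{4}}(a_{1}\sigma+a_{2}\sigma^{\nu})^{-1}\bigr)^{z/\beta-1/2}$ is entire and nonvanishing, because its base is a nonzero complex constant by \textbf{H1} and the standing assumption $\d_{0}\neq 0$; and the periodic factor $\P(z/\beta)$ is explicitly removed by each of the two quotients under consideration. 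Hence the analysis rests solely on the classical description of $\Gamma$: simple poles at $\{0,-1,-2,\dots\}$ and no zeros.

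For the first statement, I will enumerate the Gamma factors in the numerator of $\mathbb{L}(z)$. In the finite part these are $\Gamma\bigl(\tfrac{\d_{i}^{2}+z}{\beta}\bigr)$ and $\Gamma\bigl(\tfrac{\d_{i}^{3}-z}{\beta}+1\bigr)$; in $\mathbb{L}_{1}(z)$, for $\beta>0$, one encounters $\Gamma\bigl(\tfrac{\gamma_{i,n}+z}{\beta}\bigr)$ and $\Gamma\bigl(\tfrac{\zeta_{i,n}-z}{\beta}+1\bigr)$, with the symmetric list for $\beta<0$. The arithmetic conditions for these factors to be singular are precisely those ruled out by the standing hypotheses \eqref{2.3} and \eqref{2.4}; hence $\mathcal{Y}_{h}(z,\sigma;\P)$ has no poles. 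Similarly, $\mathcal{Y}_{h}/\P$ vanishes exactly when a Gamma factor in the denominator of $\mathbb{L}(z)$ has a pole, i.e.\ when one of $\Gamma\bigl(\tfrac{\d_{i}^{4}+z}{\beta}\bigr)$, $\Gamma\bigl(\tfrac{\d_{i}^{1}-z}{\beta}+1\bigr)$, or their analogues from $\mathbb{L}_{1}$, is singular. These locations coincide exactly with what is excluded by the two groups of inequalities displayed at the beginning of the corollary, so $\mathcal{Y}_{h}/\P$ does not vanish.

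For the second statement, set $w:=z+\beta(1-d_{0})$. The substitution $z\mapsto w$ in the periodic factor of \eqref{2.4*} yields $\P(w/\beta)=\P(z/\beta+1-d_{0})=\P(1-d_{0}+z/\beta)$, which cancels identically against the numerator of the quotient under consideration. What remains is the reciprocal of the non-$\P$ part of $\mathcal{Y}_{h}(w,\sigma;\P)$, whose poles lie exactly at the zeros of that part. Translating the zero conditions identified in the previous paragraph by $z\mapsto z+\beta(1-d_{0})$ converts them term by term into precisely the inequalities \eqref{2.6} and \eqref{2.7}; since those are excluded by hypothesis, no pole survives.

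There is no substantive obstacle in this argument; the only delicate point is that the zero/pole analysis of the infinite product $\mathbb{L}_{1}(z)$ is legitimate only once one knows it converges to a nonzero meromorphic function. Convergence is already granted by statement \textbf{(s.1)} of Theorem \ref{t3.1}, and strict monotonicity of $\mathfrak{h}_{i,n},\gamma_{i,n},\zeta_{i,n},\eta_{i,n}\to+\infty$ ensures that for each fixed $z$ only finitely many Gamma arguments in $\mathbb{L}_{1}$ can lie close to a non-positive integer. The meromorphic structure of $\mathbb{L}_{1}$ is therefore governed by the same arithmetic inequalities as in the finite-product case, and the corollary follows by direct inspection of formula \eqref{2.4*}.
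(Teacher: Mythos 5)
Your proof is correct and follows exactly the route the paper intends: the paper offers no separate argument for Corollary \ref{c3.1}, declaring it "a simple consequence of assumptions \eqref{2.3}, \eqref{2.4} and the explicit form of solution \eqref{2.4*}," and your bookkeeping of the poles of the numerator and denominator Gamma factors (together with the shift $z\mapsto z+\beta(1-d_0)$ and the cancellation of the periodic factor) is precisely that consequence, carried out correctly in both the $\beta>0$ and $\beta<0$ cases.
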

\begin{remark}\label{r3.1}
Coming a behavior of the function $\P(z/\beta)$ as $|Im\, \frac{z}{\beta}|\to+\infty$, the results in \cite[p.331-334]{Ma} tell us that $\P(z/\beta)$ (if $\P\neq const.$) grows no slower than $\exp\{2\pi|Im\frac{z}{\beta}|\}$ if either $Im \frac{z}{\beta}\to+\infty$ or $Im \frac{z}{\beta}\to-\infty$. This fact will be a key point under construction of a general and particular solutions to inhomogeneous equation \eqref{i.1}.
\end{remark}
\begin{remark}\label{r3.2}
It is apparent that inequalities \eqref{2.3} and \eqref{2.4} describe the region of analyticity  of the general solution $\mathcal{Y}_{h}(z,\sigma;\P)$. Nevertheless, this domain can be extended (i.e. conditions \eqref{2.3} and \eqref{2.4} can be relaxed) if the function $\P(z/\beta)$ fulfills a stronger requirements.
Indeed, assumptions \eqref{2.3} can be removed in Theorem \ref{t3.1} if we require that the function $\P$ has zeros in the points
\begin{equation}\label{2.3*}
Re\, \Big(\frac{z+\d^{2}_{i}}{\beta}\Big)=-m,\quad i\in\{1,...,M_2\},\quad
Re\, \Big(\frac{z-\d^{3}_{j}}{\beta}\Big)=1+m,\quad j\in\{1,...,M_3\}.
\end{equation}
Clearly, an example of the function $\P(z/\beta)$ satisfying assumptions of Theorem \ref{t3.1} and \eqref{2.3*} is
\[
\P\Big(\frac{z}{\beta}\Big)=\prod_{i=1}^{M_2}\exp\Big\{\frac{i\pi(z+\d_{i}^{2})}{\beta}\Big\}\sin\, \pi \Big(\frac{z+\d^{2}_{i}}{\beta}\Big)
\prod_{i=1}^{M_3}\exp\Big\{i\pi\Big(\frac{z-\d_{i}^{3}}{\beta}+1\Big)\Big\}\sin\, \pi \Big(\frac{z+\d^{3}_{i}}{\beta}+1\Big).
\]
\end{remark}

We are now in the position to construct a solution of inhomogeneous equation \eqref{i.1} (i.e. $\F\neq 0$). As a consequence of Theorem \ref{t3.1}, we are left with the problem of finding a particular solution to the given inhomogeneous  equation.

Till the end of the paper, we assume that $d_0\in[0,1]$ is an arbitrary fixed quantity, and we define the contour $\ell_{d_{0}}$ in the complex plane $\xi\in\C$ (see Figure \ref{fig:1} for geometric setting) having the following properties:

\noindent$\bullet$ $\ell_{d_{0}}=\{Re\, \xi=-d_{0},\, Im\, \xi\in\R\}$ if $d_0\in(0,1)$;

\noindent$\bullet$ if $d_0=1$, then $\ell_{d_{0}}=\ell_{1}$ consists of three parts: the half-circle $\{|\xi+1|=d_1,\, Re\, \xi>-1\}$ with a small positive number $d_1$, $0<d_1<d_0/8,$ and  the intervals: $\{Re\, \xi=-1,\, Im\, \xi\in(-\infty,-d_1)\}$ and $\{Re\, \xi=-1,\, Im\, \xi\in(d_1,+\infty)\}$;

\noindent$\bullet$ the contour $\ell_{0}$ (i.e. $d_0=0$) is obtained from $\ell_{1}$ after its shifting to the right-hand side on $Re\, \xi=1$.
\begin{figure}[t]
\includegraphics[scale=0.37]{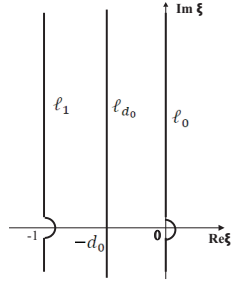}

 \caption{Typical configurations of the contour $\ell_{d_{0}}$, $d_{0}\in[0,1]$.}
 \label{fig:1}
\end{figure}
Next we introduce additional assumptions.

\noindent\textbf{H4 (Assumption on the kernel):} Let
$
\mathcal{K}(\xi)=\frac{1}{2i}[\cot\pi\xi+i]\mathcal{K}_{1}(\xi),\, \xi\in\C,
$
where $\mathcal{K}_{1}(\xi)$ obeys  conditions:

\noindent$\bullet$ $\mathcal{K}_{1}(\xi)$ is periodic with the period of $1$, i.e. $\mathcal{K}_1(\xi+1)=\mathcal{K}_1(\xi)$;

\noindent$\bullet$ $\mathcal{K}_{1}(\xi)$ does not have any poles in the strip $\{Re\, \xi\in[-1,0],\, Im\, \xi\in\R\}$;

\noindent$\bullet$ $\mathcal{K}_{1}(0)\neq 0$, and
 for each fixed $Re\, \xi\in[-1,0]$, $\underset{|Im\xi|\to+\infty}{\lim}\mathcal{K}_{1}(\xi)=0$ .

\noindent\textbf{H5 (Assumption on the right-hand side in \eqref{i.1}):} We assume that  $\F(z,\sigma)$ is an analytic function for $z\in\C$ and each $\sigma$ satisfying \textbf{H1},
such that for all $z$ satisfying \eqref{2.3}, \eqref{2.4}, \eqref{2.6} and \eqref{2.7}, and each $Re\, \xi\in[-1,0],$
the relation holds
\[
\underset{|Im\, \xi|\to+\infty}{\lim}\Big|\frac{\mathcal{K}(\xi)\F(z+\beta\xi,\sigma)}{\mathcal{Y}_{h}(z+\beta\xi+\beta,\sigma;\P_{1})}\Big|=0
\]
with appropriately selected
 functions $\mathcal{K}_{1}(\xi)$ and $\P_1:=\P(z/\beta)$ satisfying \textbf{H4} and  requirements of Theorem \ref{t3.1}, respectively.
\begin{example}\label{e3.3}
The following are examples of the kernel $\mathcal{K}_{1}$ and the function $\F$ satisfying assumptions \textbf{H4-H5} in the case of \eqref{i.1} with $\beta=1$ and $\Omega(z)$ given in Example \ref{e3.1}:
\[
\F(z,\sigma)=\sigma\sin\pi z,\quad\P_{1}(z)=e^{i\pi z}\sin\pi z,\quad \mathcal{K}_{1}(\xi)=\frac{\sin^{6}\pi d}{\sin^{6}\pi(\xi+d)}\quad\text{with} \quad 0<d-d_{0}<1.
\]
In Section \ref{s5.2} we will discuss another examples of the  function $\mathcal{K}_{1}(\xi)$ which correspond to various $\P_1(z/\beta)$ and $\Omega(z)$.
\end{example}
Now, we are ready to state the main results related with solutions of inhomogeneous equation  \eqref{i.1}.
\begin{theorem}\label{t3.2}
Let assumptions of Theorem \ref{t3.1} and hypotheses \eqref{2.6}, \eqref{2.7} and \textbf{H4-H5} hold. Then a general solution of \eqref{i.1} is a function
\begin{equation}\label{2.11*}
\mathcal{Y}(z,\sigma)=\mathcal{Y}_{h}(z,\sigma;\P)+\mathcal{Y}_{ih}(z,\sigma),
\end{equation}
where $\mathcal{Y}_{h}(z,\sigma;\P)$ is given by \eqref{2.4*} and $\mathcal{Y}_{ih}$ is a particular solution defined as
\begin{equation}\label{2.11}
\mathcal{Y}_{ih}(z,\sigma)=\frac{\mathcal{Y}_{h}(z,\sigma;\P_{1})}{\mathcal{K}_{1}(0)}\int_{\ell_{d_{0}}}\frac{\F(z+\beta\xi,\sigma)\mathcal{K}(\xi)}{(a_{1}\sigma+a_{2}\sigma^{\nu})\mathcal{Y}_{h}(z+\beta\xi+\beta,\sigma;\P_{1})}d\xi.
\end{equation}
\end{theorem}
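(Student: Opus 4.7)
The plan is to verify by direct substitution that $\mathcal{Y}_{ih}$ defined in \eqref{2.11} is a particular solution of the inhomogeneous equation \eqref{i.1}. Once this is established, the representation \eqref{2.11*} of the general solution follows immediately from linearity together with Theorem \ref{t3.1}: any other solution of \eqref{i.1} differs from $\mathcal{Y}_{ih}$ by a solution of the homogeneous equation, which is of the form $\mathcal{Y}_{h}(z,\sigma;\P)$ for some admissible analytic periodic function $\P$.

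For the verification I would compute $(a_{1}\sigma+a_{2}\sigma^{\nu})\mathcal{Y}_{ih}(z+\beta,\sigma)$ and $\Omega(z)\mathcal{Y}_{ih}(z,\sigma)$ separately, then subtract. In the first expression, the change of variable $\xi\mapsto\xi-1$ inside the integral translates the contour from $\ell_{d_{0}}$ to $\ell_{d_{0}}+1$, while the integrand retains the same form because $\mathcal{K}(\xi-1)=\mathcal{K}(\xi)$: indeed, both $\cot\pi\xi$ and $\mathcal{K}_{1}(\xi)$ are $1$-periodic by construction and by \textbf{H4}. In the second expression, the homogeneous identity $\Omega(z)\mathcal{Y}_{h}(z,\sigma;\P_{1})=(a_{1}\sigma+a_{2}\sigma^{\nu})\mathcal{Y}_{h}(z+\beta,\sigma;\P_{1})$ coming from Theorem \ref{t3.1} converts $\Omega(z)\mathcal{Y}_{ih}(z,\sigma)$ into the same integrand taken over the original $\ell_{d_{0}}$, with an identical prefactor $\mathcal{Y}_{h}(z+\beta,\sigma;\P_{1})/\mathcal{K}_{1}(0)$. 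The difference is therefore this prefactor multiplied by the integral of one and the same function along the closed contour $(\ell_{d_{0}}+1)-\ell_{d_{0}}$.

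The third step is to evaluate this closed-contour integral via the residue theorem. Assumptions \eqref{2.6}-\eqref{2.7} together with Corollary \ref{c3.1} assure that $1/\mathcal{Y}_{h}(z+\beta\xi+\beta,\sigma;\P_{1})$ is analytic in the strip enclosed between the two contours; $\F(z+\beta\xi,\sigma)$ is analytic in $\xi$ by \textbf{H5}; and $\mathcal{K}_{1}$ has no poles there by \textbf{H4}. Hence the sole singularity inside the strip is the simple pole of $\cot\pi\xi$ at $\xi=0$, whose residue is $1/\pi$, so that $\mathrm{Res}_{\xi=0}\mathcal{K}(\xi)=\mathcal{K}_{1}(0)/(2\pi i)$. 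The residue of the full integrand at $\xi=0$ is consequently $\mathcal{K}_{1}(0)\F(z,\sigma)/[2\pi i\,\mathcal{Y}_{h}(z+\beta,\sigma;\P_{1})]$, and after multiplication by $2\pi i$ and by the common prefactor, one recovers exactly $\F(z,\sigma)$, as required.

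The main technical obstacle will be justifying the closure of the contour at $\pm i\infty$ and carefully handling the limit cases $d_{0}=0$ and $d_{0}=1$, in which $\ell_{d_{0}}$ carries a small semicircular indentation around an integer point. For generic $d_{0}\in(0,1)$ both contours are straight vertical lines, and the horizontal connecting segments at $Im\,\xi=\pm R$ contribute vanishingly as $R\to+\infty$ by \textbf{H5}. In the endpoint cases the same residue computation goes through provided one checks, by matching the semicircular indentations and vertical rays of $\ell_{d_{0}}$ and $\ell_{d_{0}}+1$ in the correct order, that the resulting closed contour still encloses $\xi=0$ exactly once and with positive orientation; this is a matter of geometric bookkeeping rather than of mathematical substance.
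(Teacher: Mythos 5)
Your proposal is correct and follows essentially the same route as the paper: substitute \eqref{2.11} into \eqref{i.1}, use the homogeneous identity for $\mathcal{Y}_{h}(\cdot,\sigma;\P_{1})$ and the $1$-periodicity of $\mathcal{K}$ to reduce the left-hand side to an integral of one fixed function over $(\ell_{d_{0}}+1)-\ell_{d_{0}}$, and then pick up the residue of $\cot\pi\xi$ at $\xi=0$ to recover $\F(z,\sigma)$. The only cosmetic difference is order of treatment of the contours: the paper first proves the cases $d_{0}=1$ and $d_{0}=0$ (where $\ell_{1}+1=\ell_{0}$ by construction) and then deforms to $d_{0}\in(0,1)$ via Cauchy's theorem using \eqref{2.6}--\eqref{2.7}, whereas you start from the generic straight-line case; both are fine.
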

It is worth noting that the assumption on analyticity of $\F(z,\sigma)$ in $\C$ can be relaxed provided a stronger requirements on $Re\, z$.
\begin{corollary}\label{c3.2}
Let assumptions \textbf{H1-H5}, \eqref{2.3}, \eqref{2.4}, \eqref{2.6} and \eqref{2.7} hold. We assume that for each fixed $d_{0}\in[0,1]$, the function $\frac{\F(z,\sigma)}{\mathcal{Y}_{h}(z+\beta,\sigma;\P_{1})}$ is analytic in the strip
$
z_{0}-1<Re\, (z/\beta)<z_{0}+d_{0}
$
with some $z_{0}\in\R$ satisfying \eqref{2.6}, \eqref{2.3} and, additionally, \eqref{2.4} and  \eqref{2.7}, if $\Omega_{1}$ is an infinite product. Then for each $z$ from this strip, general and particular solutions of \eqref{i.1} are defined with \eqref{2.11*} and \eqref{2.11}, correspondingly. Besides, $\mathcal{Y}_{ih}(z,\sigma)$ is analytic in the strip
\[
z_{0}<Re\, (z/\beta)<z_{0}+1+d_0.
\]
\end{corollary}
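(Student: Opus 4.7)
The plan is to adapt the argument of Theorem \ref{t3.2} to the weakened analyticity assumption on $\F$. The essential observation is that even when $\F(z+\beta\xi,\sigma)/\mathcal{Y}_{h}(z+\beta\xi+\beta,\sigma;\P_{1})$ is no longer analytic on the original contour $\ell_{d_{0}}$, it can be rendered analytic on a deformed contour $\ell_{d'}$ of the same topological type. Within the vertical strip $\mathrm{Re}\,\xi\in(-1,0)$ the factor $\cot\pi\xi+i$ has no poles, and hypotheses \eqref{2.4}, \eqref{2.7} combined with Corollary \ref{c3.1} imply that $1/\mathcal{Y}_{h}(z+\beta\xi+\beta,\sigma;\P_{1})$ is likewise pole-free there, so all such horizontal translations are legitimate by Cauchy's theorem.

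First I would fix $z$ with $x:=\mathrm{Re}(z/\beta)\in(z_{0},z_{0}+1+d_{0})$ and seek $d'\in(0,1)$ such that $x-d'\in(z_{0}-1,z_{0}+d_{0})$; this requires $d'\in(x-z_{0}-d_{0},\,x-z_{0}+1)\cap(0,1)$, an interval that is non-empty precisely when $x\in(z_{0}-1,z_{0}+1+d_{0})$. Along the replaced contour $\ell_{d'}$ the integrand of \eqref{2.11} is analytic by hypothesis, and the decay condition of \textbf{H5} yields absolute convergence. Two admissible choices of $d'$ give the same value of the integral by Cauchy's theorem, so \eqref{2.11} defines $\mathcal{Y}_{ih}(z,\sigma)$ unambiguously. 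Differentiation under the integral sign, again justified by the uniform decay in \textbf{H5}, then establishes analyticity of $\mathcal{Y}_{ih}$ throughout the target strip $(z_{0},z_{0}+1+d_{0})$.

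Next I would verify that $\mathcal{Y}_{ih}$ satisfies \eqref{i.1} for $z$ with $x\in(z_{0}-1,z_{0}+d_{0})$, the region where $\F(z,\sigma)$ is itself analytic. Exactly as in the proof of Theorem \ref{t3.2}, the substitution $\xi\mapsto\xi-1$ in the integral representation of $\mathcal{Y}_{ih}(z+\beta,\sigma)$ (with its own admissible contour $\ell_{d''}+1$), combined with $\mathcal{K}(\xi-1)=\mathcal{K}(\xi)$ and the homogeneous identity $(a_{1}\sigma+a_{2}\sigma^{\nu})\mathcal{Y}_{h}(z+\beta,\sigma;\P_{1})=\Omega(z)\mathcal{Y}_{h}(z,\sigma;\P_{1})$, rewrites $(a_{1}\sigma+a_{2}\sigma^{\nu})\mathcal{Y}_{ih}(z+\beta,\sigma)-\Omega(z)\mathcal{Y}_{ih}(z,\sigma)$ as the difference of the same integrand along $\ell_{d''}+1$ and $\ell_{d'}$. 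The only enclosed singularity is $\xi=0$, where $\mathrm{Res}_{\xi=0}\mathcal{K}(\xi)=\mathcal{K}_{1}(0)/(2\pi i)$; multiplying by the prefactor $\mathcal{Y}_{h}(z,\sigma;\P_{1})/\mathcal{K}_{1}(0)$ and by $2\pi i$ and simplifying via the homogeneous identity reproduces exactly $\F(z,\sigma)$. Combining with Theorem \ref{t3.1} yields the general solution \eqref{2.11*}.

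The main technical point is coordinating the pair of contours $(\ell_{d'},\ell_{d''}+1)$ used in the residue step, since both, and the intervening vertical strip, must remain in the analyticity domain of the translated integrand. A direct calculation on the bounds shows that admissible $(d',d'')\in(0,1)\times(0,1)$ exist for every $x\in(z_{0}-1,z_{0}+d_{0})$ with the sole exception of the right endpoint $x=z_{0}+d_{0}$, which is recovered by continuity. The endpoint cases $d_{0}\in\{0,1\}$ of the ambient contour family, where $\ell_{d_{0}}$ is bent around $\xi=0$ or $\xi=-1$, require no additional care: the deformation to an interior vertical $\ell_{d'}$ with $d'\in(0,1)$ does not cross the corresponding pole, and Cauchy's theorem applies with no spurious residue contribution.
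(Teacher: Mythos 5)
Your argument is the natural extension of the paper's proof of Theorem \ref{t3.2} (the paper gives no separate proof of this corollary), and in substance it is correct: deform $\ell_{d_{0}}$ to an admissible vertical line $\ell_{d'}$ lying in the analyticity strip of $\xi\mapsto\F(z+\beta\xi,\sigma)/\mathcal{Y}_{h}(z+\beta\xi+\beta,\sigma;\P_{1})$, use \textbf{H5} for convergence and Morera/differentiation under the integral for analyticity of $\mathcal{Y}_{ih}$, and then re-run the shift-by-one residue computation with a second admissible contour $\ell_{d''}+1$; your bookkeeping of the admissible intervals for $d'$ and $d''$ and the residue $\mathrm{res}_{\xi=0}\mathcal{K}(\xi)=\mathcal{K}_{1}(0)/(2\pi i)$ is right. (Two cosmetic slips: the pole-freeness of $1/\mathcal{Y}_{h}(z+\beta\xi+\beta,\sigma;\P_{1})$ comes from \eqref{2.6}--\eqref{2.7} via Corollary \ref{c3.1}, not from \eqref{2.4}; and the ``right endpoint $x=z_{0}+d_{0}$'' you except is not in the open strip, so no continuity argument is needed.)

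The one step that fails as written is your final claim about the endpoint contour $d_{0}=0$. By the paper's construction $\ell_{0}$ is $\ell_{1}$ shifted right by one, so it runs along $\mathrm{Re}\,\xi=0$ but is indented into $\mathrm{Re}\,\xi>0$ around $\xi=0$; the pole of $\cot\pi\xi$ at $\xi=0$ therefore lies to the \emph{left} of $\ell_{0}$, and deforming $\ell_{0}$ to an interior vertical line $\ell_{d'}$ with $d'\in(0,1)$ does sweep across $\xi=0$ and picks up the residue $\mathcal{K}_{1}(0)\,\F(z,\sigma)/[(a_{1}\sigma+a_{2}\sigma^{\nu})\mathcal{Y}_{h}(z+\beta,\sigma;\P_{1})]$. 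So for $d_{0}=0$ the integral in \eqref{2.11} over $\ell_{0}$ and your deformed integral over $\ell_{d'}$ define functions differing by a nonzero multiple of $\F(z,\sigma)\mathcal{Y}_{h}(z,\sigma;\P_{1})/\mathcal{Y}_{h}(z+\beta,\sigma;\P_{1})$, and the verification of \eqref{i.1} must be redone for that case (the only pole enclosed between $\ell_{0}$ and $\ell_{0}+1$ is $\xi=1$, not $\xi=0$). Your assertion that ``Cauchy's theorem applies with no spurious residue contribution'' is correct for $d_{0}=1$ but not for $d_{0}=0$; either treat $d_{0}=0$ separately or restrict the deformation argument to $d_{0}\in(0,1]$.
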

\begin{remark}\label{r3.3}
A choice of the function $\P$ in $\mathcal{Y}_{h}(z,\sigma;\P)$ (see \eqref{2.11*}) depends on the property of the desired solution and corresponding boundary problems. Indeed, if we look for an analytic solution of \eqref{i.1} which is bounded for $Im\, z\to\pm\infty$, then (see Theorem \ref{t3.1} and Remark \ref{r3.1}) $\mathcal{Y}_{h}(z,\sigma;\P)$ will grow as  $Im\, z\to\pm\infty$ unless $\P\equiv 0$. This means the desired solution of \eqref{i.1} is given by
\[
\mathcal{Y}(z,\sigma)=\frac{\mathcal{Y}_{h}(z,\sigma;\P_{1})}{\mathcal{K}_{1}(0)}\int_{\ell_{d_{0}}}\frac{\F(z+\beta\xi,\sigma)\mathcal{K}(\xi)}{(a_{1}\sigma+a_{2}\sigma^{\nu})\mathcal{Y}_{h}(z+\beta\xi+\beta,\sigma;\P_{1})}d\xi.
\]
\end{remark}
\begin{remark}\label{r3.4}
It is apparent that assumptions on the function $\mathcal{K}_{1}(\xi)$ can be relaxed if the function $\frac{\F(z+\beta\xi,\sigma)}{\mathcal{Y}_{h}(z+\beta\xi+\beta,\sigma;\P_{1})}$ fulfills  stronger requirements.
Indeed, if  $\F(z,\sigma)$ is a rapidly decreasing function for $|Im\, z|\to+\infty$ and satisfies the equality
\begin{equation*}
\underset{|Im\,\xi|\to+\infty}{\lim}\Big|\frac{\F(z+\beta\xi,\sigma)}{\mathcal{Y}_{h}(z+\beta\xi+\beta,\sigma;\P_{1})}\Big|=0,
\end{equation*}
then assumptions \textbf{H4-H5} fulfill automatically with $\mathcal{K}_{1}(\xi)\equiv 1$.
\end{remark}
\begin{remark}\label{r3.5}
Actually, with nonessential modifications in the proofs, the very same results hold for the equation like \eqref{i.1} in the multidimensional case:
\begin{equation}\label{2.14}
(a_{1}\sigma+a_{2}\sigma^{\nu})\boldsymbol{Y}(\mathbf{z}+\mathbf{b},\sigma)-\mathbf{S}(\mathbf{z})\boldsymbol{Y}(\mathbf{z},\sigma)=\F(\mathbf{z},\sigma),\quad \mathbf{z}=\{z_{1},z_{2},...,z_{k}\}\in \C^{k}, \, k>1.
\end{equation}
Here we put
$$\mathbf{b}=\{\beta_{1},\beta_{2},...,\beta_{n}\},\quad \mathbf{S}(\mathbf{z})=\d_{0}\prod_{j=1}^{k}
\exp\{A_{j}z_{j}^{2}+B_{j}z_{j}\}\frac{\prod_{i=1}^{M_{1,j}}(\d_{i,j}^{1}-z_j)\prod_{i=1}^{M_{2,j}}
(\d_{i,j}^{2}+z_j)}
{\prod_{i=1}^{M_{3,j}}(\d_{i,j}^{3}-z_j)\prod_{i=1}^{M_{4,j}}(\d_{i,j}^{4}+z_j)}\Omega_{1}^{j}(z_{j}),
$$
with
$
\Omega_{1}^{j}(z_{j})=1$ in the case of $\Omega^{j}(z_{j})$ being a finite product,
and
\[
\Omega_{1}^{j}(z_{j})=\prod_{n=1}^{+\infty}\frac{\prod_{i=1}^{M_{5,j}}(\mathfrak{h}^{j}_{i,n}-z_{j})\prod_{i=1}^{M_{6,j}}(\gamma^{j}_{i,n}+z_{j})}
{\prod_{i=1}^{M_{7,j}}(\zeta^{j}_{i,n}-z_{j})\prod_{i=1}^{M_{8,j}}(\eta^{j}_{i,n}+z_{j})}
\frac{\prod_{i=1}^{M_{7,j}}\zeta^{j}_{i,n}\prod_{i=1}^{M_{8,j}}\eta^{j}_{i,n}}{\prod_{i=1}^{M_{5,j}}\mathfrak{h}^{j}_{i,n}\prod_{i=1}^{M_{6,j}}\gamma^{j}_{i,n}}
\quad
\text{otherwise.
}
\]
Here  $M_{i,j},$ $i\in\{1,2,...,8\},$ $j\in\{1,2,...,k\}$ are non-negative integer numbers, and sequences  $\mathfrak{h}^{j}_{i,n}$, $\gamma^{j}_{i,n},$ $\eta^{j}_{i,n},$ $\zeta^{j}_{i,n},$  $\d_{i,j}^{l},$ $l\in\{1,2,3,4\},$  satisfy assumptions \textbf{H1-H3};  $A_{j}$ and $B_{j}$ are complex numbers. Under assumptions of Theorems \ref{t3.1} and \ref{t3.2}, equation \eqref{2.14} in the case of $\beta_{j}>0$, $j\in\{1,2,...,k\}$, has a solution defined by
\[
\boldsymbol{Y}(\mathbf{z},\sigma)=\boldsymbol{Y}_{h}(\mathbf{z},\sigma;\P)+
\frac{\boldsymbol{Y}_{h}(\mathbf{z},\sigma;\P_{1})}{\mathcal{K}_{1}(0)}\int_{\ell_{d_{0}}}\frac{\F(\mathbf{z}+\mathbf{b}\xi,\sigma)\mathcal{K}(\xi)}{(a_{1}\sigma+a_{2}\sigma^{\nu})\boldsymbol{Y}_{h}(\mathbf{z}+\mathbf{b}\xi+\mathbf{b},\sigma;\P_{1})}d\xi,
\]
with
\begin{align*}
\boldsymbol{Y}_{h}(\mathbf{z},\sigma;\P)&=\Big(\frac{\d_{0}\beta_{1}^{M_{1,1}+M_{2,1}-M_{3,1}-M_{4,1}}}{a_{1}\sigma+a_{2}\sigma^{\nu}}\Big)^{\frac{z_{1}}{\beta_{1}}-\frac{1}{2}}\prod_{j=1}^{k}\mathcal{Y}_{h}^{j}(z_{j},\sigma;\P)
\\
\mathcal{Y}_{h}^{j}(z_{j},\sigma;\P)&=
\exp\Big\{\frac{A_{j}z_{j}^{3}}{3\beta_{j}}+\frac{B_{j}-A_{j}\beta_{j}}{2\beta_{j}}z_{j}^{2}+\frac{A_{j}\beta_{j}-3B_{j}}{6}z_{j}\Big\}
\P\Big(\frac{z_{j}}{\beta_{j}}\Big)\\
&
\times
\frac{\prod\limits_{i=1}^{M_{2,j}}\Gamma\Big(\frac{\d_{i,j}^{2}+z_{j}}{\beta_j}\Big)
\prod\limits_{i=1}^{M_{3,j}}\Gamma\Big(\frac{\d_{i,j}^{3}-z_j}{\beta_j}+1
\Big)}
{\prod\limits_{i=1}^{M_{4,j}}\Gamma\Big(\frac{\d_{i,j}^{4}+z_j}{\beta_j}\Big)
\prod\limits_{i=1}^{M_{1,j}}\Gamma\Big(\frac{\d_{i,j}^{1}-z_j}{\beta_j}+1
\Big)}
\begin{cases}
1,\quad\quad\quad\text{if}\quad\Omega_{1}^{j}(z_j)=1,\\
\mathbb{L}_{1}^{j}(z_j),\quad\text{otherwise},
\end{cases}
\end{align*}
\begin{align*}
\mathbb{L}_{1}^{j}(z_j)&=\prod_{n=1}^{+\infty}
\frac{\prod\limits_{i=1}^{M_{6,j}}\Gamma\Big(\frac{\gamma^{j}_{i,n}+z_j}{\beta_j}\Big) \prod\limits_{i=1}^{M_{7,j}}
\Gamma\Big(\frac{\zeta^{j}_{i,n}-z_j}{\beta_j}+1\Big)}
{\prod\limits_{i=1}^{M_{8,j}}\Gamma\Big(\frac{\eta^{j}_{i,n}+z_j}{\beta_j}\Big) \prod\limits_{i=1}^{M_{5,j}}\Gamma\Big(\frac{\h^{j}_{i,n}-z_j}{\beta_j}+1\Big)}
\Bigg(\frac{\prod\limits_{i=1}^{M_{7,j}}\frac{\zeta^{j}_{i,n}}{\beta_j}
\prod\limits_{i=1}^{M_{8,j}}\frac{\eta^{j}_{i,n}}{\beta_j}
}
{\prod\limits_{i=1}^{M_{5,j}}\frac{\h^{j}_{i,n}}{\beta_j}
\prod\limits_{i=1}^{M_{6,j}}\frac{\gamma^{j}_{i,n}}{\beta_j}}
\Bigg)^{\frac{z_j}{\beta_j}-\frac{1}{2}} e^{\mathcal{R}_{j}(n)},\\
\mathcal{R}_{j}(n)&=-\frac{\ln 2\pi}{2}[M_{6,j}-M_{5,j}+M_{7,j}-M_{8,j}]-\sum_{i=1}^{M_{6,j}}\frac{\gamma^{j}_{i,n}}{\beta_j}\big[\ln \frac{\gamma^{j}_{i,n}}{\beta_j}-1\big]
-\sum_{i=1}^{M_{7,j}}\frac{\zeta^{j}_{i,n}}{\beta_j}\big[\ln \frac{\zeta^{j}_{i,n}}{\beta_j}-1\big]
\\&+
\sum_{i=1}^{M_{5,j}}\frac{\h^{j}_{i,n}}{\beta_j}\big[\ln \frac{\h^{j}_{i,n}}{\beta_{j}}-1\big]
+\sum_{i=1}^{M_{8,j}}\frac{\eta^{j}_{i,n}}{\beta_j}\big[\ln \frac{\eta^{j}_{i,n}}{\beta_j}-1\big].
\end{align*}
The similar representation to the solution of \eqref{2.14} holds in the case of either all or some $\beta_{j}<0,$ $j\in\{1,2,..., k\}$.
\end{remark}

%%%%%%%%%%%%%%%%%%%%%%%%%%%%%%%%%%%%%%%%%%%%%%%%%%%%%%%%%%%%%%%%%%%%%%

%%%%%%%%%%%%%%%%%%%%%%%%%%%%%%%%%%%%%%%%%%%%%%%%%%%%%%%%%%%%%%%%%%%%%%
\section{Proof of Theorem \ref{t3.1}}
\label{s3}
\noindent We first dwell on the special case where $\beta=1$, namely homogenous equation \eqref{i.1} is replaced by the simpler equation
\begin{equation}\label{3.1}
(a_{1}\sigma+a_2\sigma^{\nu})\mathcal{Y}(z+1,\sigma)-\Omega(z)\mathcal{Y}(z,\sigma)=0.
\end{equation}
In a further step, we will show how to reduce the general cas to this special one.

Here we will follow the strategy consisting in two main steps. In this first one, we construct the general solution $\mathcal{Y}_{h}(z,\sigma;\P)$ of homogenous equation \eqref{3.1} and prove the convergence of the infinite product $\mathbb{L}_{1}(z)$. Then, based on the asymptotic representation of the Euler-Gamma function, we  obtain asymptotic \eqref{2.5} including the special cases (i) and (ii) in Theorem \ref{t3.1}.
%%%%%%%%%%%%%%%%%%%%%%%%%%%%%%%%%%%%%%%%%%%%%%%%%%%%%%%%%%%%%%%%%%%%%%

%%%%%%%%%%%%%%%%%%%%%%%%%%%%%%%%%%%%%%%%%%%%%%%%%%%%%%%%%%%%%%%%%%%%%%
\subsection{The General Solution of \eqref{3.1}}
\label{s3.1}
\noindent In order to verify that $\mathcal{Y}_{h}(z,\sigma;\P)$ given by \eqref{2.4*} solves equation \eqref{3.1}, we appeal to well-known properties of the Euler-Gamma function $\Gamma(z)$:
$
    (z+\mathfrak{c})\Gamma(z+\mathfrak{c})=\Gamma(z+\mathfrak{c}+1)$ \textit{for any fixed} $ \mathfrak{c}\in\C;$ and
     $\Gamma(z)$ \textit{has a simple poles in the points} $Re\, z=-m,$ $m\in\mathbb{N}\cup\{0\}.$

In particular, these relations with straightforward calculations enable to conclude that  the functions: $Y_{0}(z)=\Gamma(z+\mathfrak{c}),$ $Y_{1}(z)=\frac{1}{\Gamma(1+\mathfrak{c}-z)}$, $Y_{2}(z)=\frac{1}{\Gamma(\mathfrak{c}+z)}$, $Y_{3}(z)=\Gamma(1+\mathfrak{c}-z)$,
$Y_{4}(z)=\mathfrak{c}^z$, $Y_{5}(z)=\exp\{\mathfrak{c} z^{3}+\frac{\mathfrak{c}_{1}-\mathfrak{c}}{2}z^{2}+
\frac{\mathfrak{c}-3\mathfrak{c}_{1}}{6}z\}$,
solve the simplest homogenous difference equations:
\begin{align*}
Y_0(z+1)&-(z+\mathfrak{c})Y_0(z)=0,\quad Y_1(z+1)-(\mathfrak{c}-z)Y_1(z)=0,\quad Y_2(z+1)-(z+\mathfrak{c})^{-1}Y_2(z)=0,
\\
Y_3(z+1)&-(\mathfrak{c}-z)^{-1}Y_3(z)=0,\quad
Y_4(z+1)-\mathfrak{c}Y_4(z)=0,\quad Y_5(z+1)-\exp\{\mathfrak{c}z^2+\mathfrak{c}_1z\}Y_5(z)=0.
\end{align*}
Thus, the straightforward calculations tell us that \eqref{2.4*} is a general solution of \eqref{3.1}. Besides, under  assumptions \eqref{2.3} and \eqref{2.4}, this solution does not have any poles.

At this point, we verify the convergence of the infinite  product $\mathbb{L}_{1}(z)$. It is apparent that its convergence is equivalent to convergence of the series
\begin{align}\label{3.3}\notag
\ln\mathbb{L}_{1}(z)&=\sum_{n=1}^{+\infty}\Big[(z-1/2)\ln\frac{\prod\limits_{i=1}^{M_7}\zeta_{i,n}\prod\limits_{i=1}^{M_8}\eta_{i,n}}
{\prod\limits_{i=1}^{M_5}\h_{i,n}\prod\limits_{i=1}^{M_6}\gamma_{i,n}}+\sum_{i=1}^{M_{6}}\ln\Gamma(\gamma_{i,n}+z)
+\sum_{i=1}^{M_{7}}\ln\Gamma(1+\zeta_{i,n}-z)\\
&
-
\sum_{i=1}^{M_{5}}\ln\Gamma(1+\h_{i,n}-z)-\sum_{i=1}^{M_{8}}\ln\Gamma(\eta_{i,n}+z)+\mathcal{R}(n)
\Big]
\equiv
\sum_{n=1}^{+\infty}R_{n}(z).
\end{align}
Here, for any fixed $z$ satisfying \eqref{2.4}, we show that the reminder of \eqref{3.3} is bounded. To this end, we chose $n=\mathfrak{M}$ such that the inequality holds
\[
|z|\leq\underset{n\geq\mathfrak{M}}{\min}\{\gamma_{1,n},...,\gamma_{M_6,n},\zeta_{1,n},...,\zeta_{M_{7},n},\eta_{1,n},...,\eta_{M_{8},n},\h_{1,n},...,\h_{M_{5},n}\}.
\]
After that, denoting
\[
\psi^{(0)}(z):=\psi(z)=\frac{d}{dz}\ln \Gamma(z),\quad \psi^{(j-1)}=\frac{d^{j}}{dz^{j}}\psi(z), j\in\mathbb{N},
\]
and using Taylor expansion for the function $\ln\Gamma(z+\mathfrak{c})$ near the point $\mathfrak{c},$ ($\mathfrak{c}>0$):
\[
\ln\Gamma(z+\mathfrak{c})=\ln\Gamma(\mathfrak{c})+\sum_{j=1}^{+\infty}\psi^{(j-1)}(\mathfrak{c})\frac{z^{j}}{j!},
\]
we rewrite the reminder of series \eqref{3.3} in more convenient form
\begin{equation}\label{3.4}
\sum_{n=\mathfrak{M}}^{+\infty}R_{n}(z)=\sum_{n=\mathfrak{M}}^{+\infty}Q_{1}(n)+z\sum_{n=\mathfrak{M}}^{+\infty}Q_{2}(n)
+\frac{z^{2}}{2}\sum_{n=\mathfrak{M}}^{+\infty}Q_{3}(n)+\sum_{n=\mathfrak{M}}^{+\infty}Q_{4}(z,n).
\end{equation}
Here we set
\begin{align*}
Q_{1}(n)&=-\frac{1}{2}\ln\frac{\prod\limits_{i=1}^{M_7}\zeta_{i,n}\prod\limits_{i=1}^{M_8}\eta_{i,n}}
{\prod\limits_{i=1}^{M_5}\h_{i,n}\prod\limits_{i=1}^{M_6}\gamma_{i,n}}+\sum_{i=1}^{M_{6}}\ln\Gamma(\gamma_{i,n})
+\sum_{i=1}^{M_{7}}\ln\Gamma(1+\zeta_{i,n})-\sum_{i=1}^{M_{5}}\ln\Gamma(1+\h_{i,n})\\
&-\sum_{i=1}^{M_{8}}\ln\Gamma(\eta_{i,n})+\mathcal{R}(n),
\end{align*}
\[
Q_{2}(n)=\ln\frac{\prod\limits_{i=1}^{M_7}\zeta_{i,n}\prod\limits_{i=1}^{M_8}\eta_{i,n}}
{\prod\limits_{i=1}^{M_5}\h_{i,n}\prod\limits_{i=1}^{M_6}\gamma_{i,n}}+\sum_{i=1}^{M_{6}}\psi(\gamma_{i,n})-
\sum_{i=1}^{M_{7}}\psi(1+\zeta_{i,n})+\sum_{i=1}^{M_{5}}\psi(1+\h_{i,n})-\sum_{i=1}^{M_{8}}\psi(\eta_{i,n}),\]
\[
Q_3(n)=\sum_{i=1}^{M_{6}}\psi^{(1)}(\gamma_{i,n})+
\sum_{i=1}^{M_{7}}\psi^{(1)}(1+\zeta_{i,n})-\sum_{i=1}^{M_{5}}\psi^{(1)}(1+\h_{i,n})-\sum_{i=1}^{M_{8}}\psi^{(1)}(\eta_{i,n}),\]
\begin{align*}
Q_4(z,n)&=\sum_{j=3}^{+\infty}\Big[
\sum_{i=1}^{M_{6}}\psi^{(j-1)}(\gamma_{i,n})\frac{z^{j}}{j!}-
\sum_{i=1}^{M_{7}}\psi^{(j-1)}(1+\zeta_{i,n})\frac{(-z)^{j}}{j!}
+\sum_{i=1}^{M_{5}}\psi^{(j-1)}(1+\h_{i,n})\frac{(-z)^{j}}{j!}\\
&
-\sum_{i=1}^{M_{8}}\psi^{(j-1)}(\eta_{i,n})\frac{z^{j}}{j!}
\Big].
\end{align*}
At this point, being within assumptions \textbf{H2} and \eqref{2.4},  we check (separately) that each series in the right-hand side of \eqref{3.4} is absolutely convergent. To this end, we essentially use asymptotic representations of $\psi(z)$ and $\ln\Gamma(z)$ as $|z|\to+\infty$ and $|\arg z|<\pi$ and the recurrence relations to $\psi(z)$ (see e.g. \cite[Section 6]{AS}):
\begin{align}\label{3.5}\notag
\psi(z+1)&=\psi(z)+z^{-1},\, \psi^{(1)}(z+1)=\psi^{(1)}(z)-z^{-2};\,
\psi(z)=\ln z-\frac{1}{2z}+O(z^{-2}),\, \psi^{(1)}(z)=\frac{1}{z}+O(z^{-2}),\\
\ln\Gamma(z)&=(z-1/2)\ln z-z+\frac{\ln(2\pi)}{2}+\frac{1}{12 z}+O(z^{-3}).
\end{align}

\noindent$\bullet$ Concerning the term $Q_1(n)$, the last asymptotic in \eqref{3.5}
arrives at the relation
\begin{align*}
Q_1(n)&=\mathcal{R}(n)+\frac{\ln(2\pi)}{2}[M_{6}+M_{7}-M_{5}-M_{8}]+\frac{1}{12}\Big[\sum_{i=1}^{M_6}\frac{1}{\gamma_{i,n}}+\sum_{i=1}^{M_7}\frac{1}{\zeta_{i,n}}-\sum_{i=1}^{M_5}\frac{1}{\h_{i,n}}-\sum_{i=1}^{M_8}\frac{1}{\eta_{i,n}}\Big]\\
&
+\sum_{i=1}^{M_6}\gamma_{i,n}[\ln\gamma_{i,n}-1]+\sum_{i=1}^{M_7}\zeta_{i,n}[\ln\zeta_{i,n}-1]-\sum_{i=1}^{M_5}\h_{i,n}[\ln\h_{i,n}-1]-
\sum_{i=1}^{M_8}\eta_{i,n}[\ln\eta_{i,n}-1]\\
&
+\bigg[\sum_{i=1}^{M_6}O(\gamma_{i,n}^{-3})+\sum_{i=1}^{M_7}O(\zeta_{i,n}^{-3})
+\sum_{i=1}^{M_5}O(\h_{i,n}^{-3})+\sum_{i=1}^{M_8}O(\eta_{i,n}^{-3})
\bigg].
\end{align*}
Then, collecting this equality with definition of $\mathcal{R}(n)$ and taking into account assumption \textbf{H2}, we end up with the desired convergence of the series
$
\sum_{n=\mathfrak{M}}^{+\infty}\,|Q_{1}(n)|$.

\noindent$\bullet$ Asymptotic of $\psi(z)$ in \eqref{3.5} allows us to conclude
\begin{align*}
Q_2(n)&=\frac{1}{2}\bigg[\sum_{i=1}^{M_5}\frac{1}{\h_{i,n}}-\sum_{i=1}^{M_7}\frac{1}{\zeta_{i,n}}+
\sum_{i=1}^{M_8}\frac{1}{\eta_{i,n}}-\sum_{i=1}^{M_6}\frac{1}{\gamma_{i,n}}\bigg]\\
&
+\bigg[
\sum_{i=1}^{M_5}O(\h_{i,n}^{-2})+\sum_{i=1}^{M_6}O(\gamma_{i,n}^{-2})+\sum_{i=1}^{M_7}O(\zeta_{i,n}^{-2})+
\sum_{i=1}^{M_8}O(\eta_{i,n}^{-2})
\bigg].
\end{align*}
After that, we appeal to assumptions  \eqref{2.2} and deduce that the series $\sum\limits_{n=\mathfrak{M}}^{+\infty}|Q_{2}(n)|$ is convergent.

\noindent$\bullet$ Coming to the third term in \eqref{3.4}, the asymptotic representation to the derivative $\psi^{(1)}(z)$ in \eqref{3.5} entails
\begin{align*}
Q_{3}(n)&=\sum_{i=1}^{M_{6}}\frac{1}{\gamma_{i,n}}+\sum_{i=1}^{M_{7}}\frac{1}{\zeta_{i,n}}-\sum_{i=1}^{M_{5}}\frac{1}{\h_{i,n}}
-\sum_{i=1}^{M_{8}}\frac{1}{\eta_{i,n}}\\
&
+
\sum_{i=1}^{M_5}O(\h^{-2}_{i,n})+\sum_{i=1}^{M_6}O(\gamma^{-2}_{i,n})+\sum_{i=1}^{M_7}O(\zeta^{-2}_{i,n})+
\sum_{i=1}^{M_8}O(\eta^{-2}_{i,n}).
\end{align*}
Then, assumption \textbf{H2} provides the convergence of $\sum\limits_{n=\mathfrak{M}}^{+\infty}|Q_{3}(n)|$.

\noindent$\bullet$ Finally, operating arguments from \cite[p.445]{Ba} and appealing to condition \textbf{H2}, we end up with the bound
\[
\sum_{n=\mathfrak{M}}^{+\infty}|Q_{4}(n,z)|<C.
\]
In summary, we conclude the absolutely convergence of series in the right-hand side of  \eqref{3.4}.
Thus, we arrive at the convergence of $\mathbb{L}_{1}(z)$ for each fixed $z$ which in turn finishes the proof of the first part of Theorem \ref{t3.1} in the case of $\beta=1$. \qed

%%%%%%%%%%%%%%%%%%%%%%%%%%%%%%%%%%%%%%%%%%%%%%%%%%%%%%%%%%%%%%%%%%%%%%

%%%%%%%%%%%%%%%%%%%%%%%%%%%%%%%%%%%%%%%%%%%%%%%%%%%%%%%%%%%%%%%%%%%%%%
\subsection{ Asymptotic \eqref{2.5}}
\label{s3.2}
\noindent The key point in our arguments here is the following technical assertion which will be proved in Appendix.
\begin{proposition}\label{p3.1}
Let $z\in\C,$ $z=z_{1}+iz_{2}$, and let $\{\b(n)\}_{n=1}^{+\infty}$ be a  real strictly monotonic increasing sequence obeying the properties:

\noindent $\bullet$ $\b(n)>0$ for all
$n\in\N,$ and
 $\sum_{n=1}^{+\infty}\frac{1}{\b^{\,2}(n)}<+\infty,$

\noindent $\bullet$ for each fixed $z_{1}$ and each real number $C^{\star}$, there are  integer  numbers   $\mathfrak{N}_{0}, \mathfrak{N}_{1}\geq 1$, such that inequalities hold
$$
\b(n)\pm z_{1}>0\, \text{ for } n\geq \mathfrak{N}_{0}+1,\quad \text{and}\quad
\b(n)+C^{\star}\pm z_{1}>0\, \text{ for }  n\geq\mathfrak{N}_{1}+1.
$$
Besides, there exists  $\mathfrak{N}\geq \max\{\mathfrak{N}_0,\mathfrak{N}_1\}$ such that bounds are fulfilled:
\[
\b(n)>4|z_{1}|,\quad \b(n)+2C^{\star}>4|z_{1}|\quad\text{for each } n\geq\mathfrak{N}.
\]
Then, for $|z_2|\to+\infty$ the relations hold:
\begin{description}
    \item[i]\[
    \sum_{n=\mathfrak{N}}^{+\infty}\Big|\frac{z}{\b(n)(\b(n)+z)}\Big|+
        \sum_{n=\mathfrak{N}}^{+\infty}\Big|\frac{z}{\b(n)(\b(n)-z)}\Big|\leq C[1+\ln|z|]+O(|z|^{-1}),
    \]
    \item[ii]\begin{align*}
    &\sum_{n=\mathfrak{N}}^{+\infty}\Big|\frac{z}{(\b(n)+z+C^{\star})(\b(n)+z)}\Big|+
        \sum_{n=\mathfrak{N}}^{+\infty}\Big|\frac{z}{(\b(n)-z+C^{\star})(\b(n)-z)}\Big|+
        \sum_{n=\mathfrak{N}}^{+\infty}\Big|\frac{z}{(\b(n)-z+C^{\star})(\b(n)+z)}\Big|
        \\
        &\leq C+O(|z|^{-1}),
    \end{align*}
        \item[iii]\begin{align*}
    \sum_{n=\mathfrak{N}}^{+\infty}\Big|\b(n)\bigg(\frac{z}{\b(n)+z}\bigg)^{3}\sum_{j=1}^{+\infty}\frac{1}{j+3}\Big(\frac{z}{\b(n)+z}\Big)^{j}\Big|&+
        \sum_{n=\mathfrak{N}}^{+\infty}\Big|\b(n)\bigg(\frac{z}{\b(n)-z}\bigg)^{3}\sum_{j=1}^{+\infty}\frac{(-1)^{j-2}}{j+3}\Big(\frac{z}{\b(n)-z}\Big)^{j}\Big|
                \\
                &\leq C[1+|z^2|+|z^2|\ln|z|].
    \end{align*}
    \item[iv]
    \begin{align*}
    &\Big|\sum_{n=\mathfrak{N}}^{+\infty}\b(n)\bigg(\frac{z}{\b(n)+z}\bigg)^{3}\sum_{j=1}^{+\infty}\frac{1}{j+3}\Big(\frac{z}{\b(n)+z}\Big)^{j}
    \\&-
        \sum_{n=\mathfrak{N}}^{+\infty}[\b(n)+C^{\star}]\bigg(\frac{z}{\b(n)+C^{\star}+z}\bigg)^{3}\sum_{j=1}^{+\infty}\frac{1}{j+3}\Big(\frac{z}{\b(n)+z+C^{\star}}\Big)^{j}\Big|
                \\
            %   &\leq C|z|+O(1)\quad\text{as } |z_{2}|\to+\infty,
    %\end{align*}
    %   \begin{align*}
    &+\Big|\sum_{n=\mathfrak{N}}^{+\infty}\b(n)\bigg(\frac{z}{\b(n)-z}\bigg)^{3}\sum_{j=1}^{+\infty}\frac{(-1)^{j-2}}{j+3}\Big(\frac{z}{\b(n)-z}\Big)^{j}\\&+
        \sum_{n=\mathfrak{N}}^{+\infty}[\b(n)+C^{\star}]\bigg(\frac{z}{\b(n)+C^{\star}-z}\bigg)^{3}\sum_{j=1}^{+\infty}\frac{(-1)^{j-2}}{j+3}\Big(\frac{z}{\b(n)-z+C^{\star}}\Big)^{j}\Big|
                \\
                &\leq C|z|+O(1).
    \end{align*}
\end{description}
\end{proposition}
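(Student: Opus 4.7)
The plan is to prove all four bounds by a common method: I split each series at the index $\mathfrak{N}_{\star}$ where $\mathfrak{b}(\mathfrak{N}_{\star})\approx |z_{2}|$, and estimate the resulting ``head'' and ``tail'' pieces using the elementary geometric inequality
\[
|\mathfrak{b}(n)\pm z|^{2}=(\mathfrak{b}(n)\pm z_{1})^{2}+z_{2}^{2}\geq\max\bigl\{(3\mathfrak{b}(n)/4)^{2},\,z_{2}^{2}\bigr\},
\]
valid once $\mathfrak{b}(n)>4|z_{1}|$. This provides two complementary lower bounds on $|\mathfrak{b}(n)\pm z|$: the $3\mathfrak{b}(n)/4$ bound is sharp in the tail $\mathfrak{b}(n)>|z_{2}|$, while the $|z_{2}|$ bound is sharp in the head $\mathfrak{b}(n)\leq|z_{2}|$. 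Throughout the argument I will also use that $|z|/|z_{2}|\to 1$ as $|z_{2}|\to+\infty$ with $z_{1}$ fixed.

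For (i), the tail contribution reduces to $O(|z|/\mathfrak{b}(n)^{2})$ per summand, whose series is controlled via the hypothesis $\sum 1/\mathfrak{b}(n)^{2}<+\infty$ and gives an $O(1)$ overall contribution. The head contribution is $O(|z|/(\mathfrak{b}(n)|z_{2}|))$ per summand, and I would generate the $\ln|z|$ factor by comparing $\sum_{\mathfrak{b}(n)\leq|z_{2}|}1/\mathfrak{b}(n)$ to an integral of the form $\int db/(b\sqrt{b^{2}+z_{2}^{2}})$ whose $\operatorname{arcsinh}$-primitive produces exactly a $\ln|z_{2}|/|z_{2}|$ behaviour. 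Part (ii) is treated by the same head/tail split but is simpler: the additional shift $C^{\star}$ makes the denominator effectively quadratic in $\mathfrak{b}(n)$ in both regimes, so both pieces contribute $O(1)$ uniformly, with the $O(|z|^{-1})$ remainder coming from the sharp-tail estimate.

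Parts (iii) and (iv) begin by summing the inner series in $j$ in closed form. Writing $w_{\pm}=z/(\mathfrak{b}(n)\pm z)$, the identity
\[
\sum_{j=1}^{+\infty}\frac{w^{j}}{j+3}=\frac{-\ln(1-w)-w-w^{2}/2-w^{3}/3}{w^{3}}
\]
turns each summand of (iii) into a bounded multiple of the third Taylor remainder of $\mathfrak{b}(n)\ln(1\pm z/\mathfrak{b}(n))$. Re-applying the same head/tail split then bounds the tail by $\sum|z|^{3}/\mathfrak{b}(n)^{2}$ and the head by a logarithmic integral multiplied by $|z|^{2}$, delivering the advertised $C[1+|z|^{2}+|z|^{2}\ln|z|]$.

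The main obstacle is (iv), where a triangle inequality applied directly to the two sums appearing in (iii) would give only $O(|z|^{2}\ln|z|)$ rather than $O(|z|)$, so I must exploit the cancellation between corresponding terms with shift $C^{\star}$. Using the closed-form expression above, the difference becomes a discrete increment of the map $\mathfrak{b}\mapsto\mathfrak{b}\ln(1\pm z/\mathfrak{b})$ between the arguments $\mathfrak{b}(n)$ and $\mathfrak{b}(n)+C^{\star}$. A Taylor expansion in $C^{\star}/\mathfrak{b}(n)$ shows that the leading $z^{2}$ contributions annihilate, leaving a remainder of order $|z|\cdot\sum 1/\mathfrak{b}^{2}(n)$ in the tail together with a bounded logarithmic piece in the head, which combine into the claimed $C|z|+O(1)$. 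Executing this cancellation cleanly inside the absolute-value bars, and controlling the Taylor remainders uniformly as $|z_{2}|\to+\infty$, is the delicate step that will drive most of the case analysis in the appendix.
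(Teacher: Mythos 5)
Your plan is sound and, in substance, parallels the paper's own argument: the decisive ingredients are identical, namely the closed-form summation $\sum_{j\ge1}\frac{w^{j}}{j+3}=w^{-3}\big[-\ln(1-w)-w-\tfrac{w^{2}}{2}-\tfrac{w^{3}}{3}\big]$, which turns the summands of (iii)--(iv) into log-remainders of $\mathfrak{b}\mapsto\mathfrak{b}\ln(1\pm z/\mathfrak{b})$, and the expansion of the resulting increments in powers of $C^{\star}/\mathfrak{b}(n)$ and $C^{\star}/(\mathfrak{b}(n)\pm z)$ to realize the cancellation in (iv). The paper implements the elementary estimates (i)--(ii) by splitting each summand into explicit real and imaginary parts and comparing the surviving series $\sum z_{2}^{2}/\big(\mathfrak{b}(n)[(\mathfrak{b}(n)+z_{1})^{2}+z_{2}^{2}]\big)$ and $\sum\big[(\mathfrak{b}(n)+z_{1})^{2}+z_{2}^{2}\big]^{-1}$ directly with integrals, whereas you split the index set at $\mathfrak{b}(n)\approx|z_{2}|$; these are interchangeable bookkeeping devices and both produce the logarithm from the same $\int db/(b(b^{2}+z_{2}^{2}))$-type integral. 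Likewise, the paper's decomposition of the difference in (iv) into the four terms $\mathfrak{D}_{1},\dots,\mathfrak{D}_{4}$ is exactly the term-by-term Taylor differencing you describe.

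One step is not justified as written. In (i) you claim the tail contribution $|z|\sum_{\mathfrak{b}(n)>|z_{2}|}\mathfrak{b}(n)^{-2}$ is $O(1)$ ``via the hypothesis $\sum\mathfrak{b}(n)^{-2}<+\infty$''; mere convergence gives only $o(|z|)$, since the tail of a convergent series can decay arbitrarily slowly relative to its cutoff (take $\mathfrak{b}(n)$ essentially constant equal to $2^{k}$ on blocks of length $2^{2k}/k^{2}$). What is needed is the quantitative estimate $\sum_{\mathfrak{b}(n)>T}\mathfrak{b}(n)^{-2}=O(1/T)$, which requires a lower bound on the gaps $\mathfrak{b}(n+1)-\mathfrak{b}(n)$ (equivalently, the density bound $\#\{n:\mathfrak{b}(n)\le T\}=O(T)$ that you already use implicitly when counting head terms and comparing $\sum_{\mathfrak{b}(n)\le|z_{2}|}\mathfrak{b}(n)^{-1}$ to a logarithmic integral). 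The paper leans on the same strengthening, extending the sequence to a function with $\mathfrak{b}'(x)\ge\delta_{0}>0$ before comparing to integrals, so this is a shared and fixable hypothesis issue rather than a defect of your route; but you should state the separation assumption explicitly and use it for the tail of (i) as well as for the head counts, since every one of your four bounds silently depends on it. With that repaired, your head/tail scheme delivers all four estimates.
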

Coming to asymptotic \eqref{2.5} and being within assumptions \eqref{2.3} and \eqref{2.4} to $Re\, z$, we first remark that the desired asymptotic
is a simple consequence of the following relations
\begin{align}\label{3.6}\notag
\ln\frac{\mathbb{L}(z)}{\mathbb{L}_{1}(z)}-(z-1/2)\ln\Omega_{0}(z)&=C_{1}^{0}\ln z+C_{2}^{0}z+O(1),\\
\ln\mathbb{L}_{1}(z)-(z-1/2)\ln\Omega_{1}(z)&=C_{1}^{1}\ln z+C_{2}^{1}z+\Phi^{\star}(z),\,\text{ as } |Im\, z|\to+\infty,
\end{align}
where $\Phi^{\star}(z)$ obeys the properties of $\Phi(z)$, $C_{1}^{0},$ $C_{2}^{0}$, $C_{1}^{1}$ and $C_{2}^{1}$ are some constants, and
$
\Omega_{0}(z)=\frac{\Omega(z)}{\d_0\Omega_{1}(z)}\exp\{-Az^{2}-Bz\}.
$
Thus, we are left to prove \eqref{3.6}.
To this end, we first fixate $Re\, z$ and being within assumptions \textbf{H2} and \eqref{2.4}, we choose a positive integer number $N$ (which may depend on $Re\,z $) such that the estimate holds
\begin{equation}\label{3.6*}
4|Re\, z|<\max\{\gamma_{1,N},...,\gamma_{M_6,N},\eta_{1,N},...,\eta_{M_8,N},\zeta_{1,N},...,\zeta_{M_7,N},\h_{1,N},...,\h_{M_5,N}\}.
\end{equation}
Then putting
\begin{align*}
S_{1}^{0}(z)&=\sum_{i=1}^{M_2}\d_i^{2}\ln(\d_i^{2}+z)+\sum_{i=1}^{M_3}\d_i^{3}\ln(\d_{i}^{3}-z)-\sum_{i=1}^{M_4}\d_i^{4}\ln(\d_{i}^{4}+z)
-\sum_{i=1}^{M_1}\d_{i}^{1}\ln(d_{i}^{1}-z),
\\
S_2^{0}(z)&=z[M_3+M_4-M_2-M_1],\\
%\end{align*}
%\begin{align*}
S_{1,N}(z)&=\sum_{n=1}^{N}\Big[
\sum_{i=1}^{M_{5}}O((\h_{i,n}+z)^{-3})+\sum_{i=1}^{M_{6}}O((z+\gamma_{i,n})^{-3})+\sum_{i=1}^{M_{7}}O((\zeta_{i,n}-z)^{-3})\\
&+
\sum_{i=1}^{M_{8}}O((z+\eta_{i,n})^{-3})
\Big],\\
S_{1,\infty}(z)&=\sum_{n=N+1}^{+\infty}\Big[
\sum_{i=1}^{M_{5}}O((\h_{i,n}-z)^{-3})+\sum_{i=1}^{M_{6}}O((z+\gamma_{i,n})^{-3})+\sum_{i=1}^{M_{7}}O((\zeta_{i,n}-z)^{-3})\\
&+
\sum_{i=1}^{M_{8}}O((z+\eta_{i,n})^{-3})
\Big],\\
%\end{align*}
%\begin{align*}
S_{2,N}(z)&=\frac{1}{12}\sum_{n=1}^{N}\Big[\sum_{i=1}^{M_6}(\gamma_{i,n}+z)^{-1}-
\sum_{i=1}^{M_5}(\h_{i,n}-z)^{-1}+\sum_{i=1}^{M_7}(\zeta_{i,n}-z)^{-1}
-\sum_{i=1}^{M_8}(\eta_{i,n}+z)^{-1}
\Big],
\\
S_{2,\infty}(z)&=\frac{1}{12}\sum_{n=N+1}^{+\infty}\Big[\sum_{i=1}^{M_6}(\gamma_{i,n}+z)^{-1}-
\sum_{i=1}^{M_5}(\h_{i,n}-z)^{-1}+\sum_{i=1}^{M_7}(\zeta_{i,n}-z)^{-1}
-\sum_{i=1}^{M_8}(\eta_{i,n}+z)^{-1}
\Big],\\
%\end{align*}
%\begin{align*}
S_{3,N}(z)&=\sum_{n=1}^{N}\Big[\sum_{i=1}^{ M_6}(\gamma_{i,n}\ln\frac{\gamma_{i,n}+z}{\gamma_{i,n}}-z)
+
\sum_{i=1}^{    M_7}(\zeta_{i,n}\ln\frac{\zeta_{i,n}-z}{\zeta_{i,n}}+z)+
\sum_{i=1}^{    M_8}(z- \eta_{i,n}\ln\frac{\eta_{i,n}+z}{\eta_{i,n}})\\
&
-\sum_{i=1}^{   M_5}(\h_{i,n}\ln\frac{\h_{i,n}-z}{\h_{i,n}}+z)
\Big]\\
S_{3,\infty}(z)&=\sum_{n=N+1}^{+\infty}\Big[\sum_{i=1}^{    M_6}(\gamma_{i,n}\ln\frac{\gamma_{i,n}+z}{\gamma_{i,n}}-z)
+
\sum_{i=1}^{    M_7}(\zeta_{i,n}\ln\frac{\zeta_{i,n}-z}{\zeta_{i,n}}+z)+
\sum_{i=1}^{    M_8}(z- \eta_{i,n}\ln\frac{\eta_{i,n}+z}{\eta_{i,n}})\\
&
-\sum_{i=1}^{   M_5}(\h_{i,n}\ln\frac{\h_{i,n}-z}{\h_{i,n}}+z)
\Big],
\end{align*}
and
 appealing to asymptotic  \eqref{3.5} and assumptions \eqref{2.3} and \eqref{2.4}, we rewrite the left-hand sides in \eqref{3.6} in the form
\begin{align}\label{3.7}\notag
\ln\frac{\mathbb{L}(z)}{\mathbb{L}_{1}(z)}-(z-1/2)\ln\Omega_{0}(z)&=S_{1}^{0}(z)+S_{2}^{0}(z)+O(1),\\
\ln\mathbb{L}_{1}(z)-(z-1/2)\ln\Omega_{1}(z)&=\sum_{j=1}^{3}[S_{j,N}(z)+S_{j,\infty}(z)],\quad |Im z|\to+\infty.
\end{align}
Performing technical  calculations, we deduce the equality
\begin{align*}\label{3.8}\notag
S_{1}^{0}(z)&=\Big[\sum_{i=1}^{M_2}\d_{i}^{2}+\sum_{i=1}^{M_3}\d_{i}^{3}-\sum_{i=1}^{M_4}\d_{i}^{4}-\sum_{i=1}^{M_1}\d_{i}^{1}\Big]\ln z
\\
&+
\frac{1}{z}\Big[\sum_{i=1}^{M_2}(\d_{i}^{2})^{2}-
\sum_{i=1}^{M_3}(\d_{i}^{3})^{2}-\sum_{i=1}^{M_4}(\d_{i}^{4})^{2}
+\sum_{i=1}^{M_1}(\d_{i}^{1})^{2}
\Big]+O(1),
\end{align*}
which in turn together with the representation of  $S_{2}^{0}(z)$ arrive at the first relation in \eqref{3.6} with
\[
C_{1}^{0}=M_3+M_4-M_2-M_1,\quad C_{2}^{0}=\sum_{i=1}^{M_2}\d_{i}^{2}+\sum_{i=1}^{M_3}\d_{i}^{3}-\sum_{i=1}^{M_4}\d_{i}^{4}-\sum_{i=1}^{M_1}\d_{i}^{1}.
\]
In order to obtain the second relation in \eqref{3.6}, we are left to evaluate, separately, each term $S_{j,\infty}(z)$ and $S_{j,N}(z)$ at the right-hand side of the second relation in \eqref{3.7}.

\noindent $\bullet$ Recasting the arguments leading to representation of $S_{1}^{0}(z)$ and taking into account that $|Im\, z|\to+\infty$, we immediately conclude
\begin{align}\label{3.9}\notag
S_{1,N}(z)+S_{2,N}(z)+S_{3,N}(z)&=
\sum_{n=1}^{N}\Big[\sum_{i=1}^{M_6}\gamma_{i,n}-\sum_{i=1}^{M_5}\h_{i,n}+\sum_{i=1}^{M_7}\zeta_{i,n}-\sum_{i=1}^{M_8}\eta_{i,n}\Big]\ln z
\\&+z N[M_7+M_8-M_6-M_5]+O(1).
\end{align}

\noindent $\bullet$ The first relation in assumption \eqref{2.2} and condition \eqref{3.6*} lead to the bound
\[
|S_{1,\infty}(z)|\leq\frac{C}{|Im\, z|}\sum_{n=N+1}^{+\infty}\Big[
\sum_{i=1}^{M_6}\gamma_{i,n}^{-2}+\sum_{i=1}^{M_8}\eta_{i,n}^{-2}+
\sum_{i=1}^{M_7}\zeta_{i,n}^{-2}+\sum_{i=1}^{M_5}\h_{i,n}^{-2}
\Big],
\]
which provides the desired equality
\begin{equation*}\label{3.10}
S_{1,\infty}(z)=O(1) \quad\text{as}\quad |Im\, z|\to +\infty.
\end{equation*}

\noindent $\bullet$ Introducing the functions:
\begin{align*}
S_{2,\infty}^{1}(z)&=\frac{1}{12}\sum_{n=N+1}^{+\infty}\Big[
\sum_{i=1}^{M_6}\gamma_{i,n}^{-1}-\sum_{i=1}^{M_5}\h_{i,n}^{-1}
+\sum_{i=1}^{M_7}\zeta_{i,n}^{-1}-\sum_{i=1}^{M_8}\eta_{i,n}^{-1}
\Big],
\\
S_{2,\infty}^{2}(z)&=\frac{z}{12}\sum_{n=N+1}^{+\infty}\Bigg[
\sum_{i=1}^{M_7}\frac{1}{\zeta_{i,n}(\zeta_{i,n}-z)}
+
\sum_{i=1}^{M_8}\frac{1}{\eta_{i,n}(\eta_{i,n}+z)}
-
\sum_{i=1}^{M_5}\frac{1}{\h_{i,n}(\h_{i,n}-z)}
-
\sum_{i=1}^{M_6}\frac{1}{\gamma_{i,n}(\gamma_{i,n}+z)}
\Bigg],
\end{align*}
we rewrite the term
 $S_{2,\infty}(z)$ in more suitable form
\[
S_{2,\infty}(z)=S_{2,\infty}^{1}(z)+S_{2,\infty}^{2}(z).
\]
It is apparent that the second inequality in  \eqref{2.2} ensures the absolutely convergence of the series $S_{2,\infty}^{1}(z)$, which in turn means
\[
S_{2,\infty}^{1}(z)=O(1)\quad\text{as}\quad |Im\,z|\to+\infty.
\]
Concerning the term $S_{2,\infty}^{2}(z),$  we can take advantage of the straightforward relations as $|Im\, z|\to+\infty$ (which are simple consequences of  \textbf{H2} and \eqref{3.6*}):
\begin{align*}
\sum_{n=N+1}^{+\infty}\sum_{i=1}^{M_7}\frac{1}{\zeta_{i,n}|\zeta_{i,n}-z|}&\leq\sum_{n=1}^{+\infty}\sum_{i=1}^{M_7}\frac{C}{\zeta_{i,n}^{2}}=O(1),\quad
\sum_{n=N+1}^{+\infty}\sum_{i=1}^{M_6}\frac{1}{\gamma_{i,n}|\gamma_{i,n}+z|}\leq\sum_{n=1}^{+\infty}\sum_{i=1}^{M_6}\frac{C}{\gamma_{i,n}^{2}}=O(1),\\
\sum_{n=N+1}^{+\infty}\sum_{i=1}^{M_5}\frac{1}{\h_{i,n}|\h_{i,n}-z|}&\leq\sum_{n=1}^{+\infty}\sum_{i=1}^{M_5}\frac{C}{\h_{i,n}^{2}}=O(1),\quad
\sum_{n=N+1}^{+\infty}\sum_{i=1}^{M_8}\frac{1}{\eta_{i,n}|\eta_{i,n}+z|}\leq\sum_{n=1}^{+\infty}\sum_{i=1}^{M_8}\frac{C}{\eta_{i,n}^{2}}=O(1).
\end{align*}
Accordingly, we end up with the equality
\[
S_{2,\infty}^{2}(z)=C z+O(1)\quad \text{as}\quad|Im\, z|\to+\infty.
\]
Collecting this equality  with representation of ${S_{2,\infty}^{1}(z)}$, we deduce
\begin{equation}\label{3.11}
S_{2,\infty}(z)=C_{7}z+O(1)\quad\text{as } |Im\, z|\to+\infty.
\end{equation}
At this point, assuming additionally either (i) or (ii) conditions in Theorem \ref{t3.1}, we show that
 the constant $C_7$ in \eqref{3.11} equals to zero.
In light of arguments leading to \eqref{3.11}, we are left to verify that $S_{2,\infty}^{2}(z)=O(1)$ as $|Im\, z|\to+\infty$ and either (i) or (ii) holds. Indeed, the technical calculations arrive at equalities:
\begin{align*}
S_{2,\infty}^{2}(z)&=\frac{z}{12}\sum_{n=N+1}^{+\infty}\Bigg[C_{3}\sum_{i=1}^{M_5}\Big\{
\frac{1}{(\zeta_{i,n}-z)(\zeta_{i,n}+C_{3}-z)(C_3+\zeta_{i,n})}
+\frac{1}{\zeta_{i,n}(C_3+\zeta_{i,n})(\zeta_{i,n}-z)}
\Big\}
\\
&
-
C_{4}
\sum_{i=1}^{M_6}\Big\{
\frac{1}{(\gamma_{i,n}+z)(\gamma_{i,n}+C_{4}+z)(C_4+\gamma_{i,n})}
+\frac{1}{\gamma_{i,n}(C_4+\gamma_{i,n})(\gamma_{i,n}+z)}
\Big\}
\Bigg]
\end{align*}
if (i) holds while in the case of (ii)
\begin{align*}
S_{2,\infty}^{2}(z)&=\frac{z}{12}\sum_{n=N+1}^{+\infty}\Bigg[
-\sum_{i=1}^{M_6}\frac{\gamma_{i,n}^{2}+(C_5+\gamma_{i,n})^{2}-C_5z}{\gamma_{i,n}(C_5+\gamma_{i,n})(C_5+\gamma_{i,n}-z)(\gamma_{i,n}+z)}\\
&
+
\sum_{i=1}^{M_7}\frac{(C_6+\zeta_{i,n})^{2}+\zeta_{i,n}^{2}+C_6z}{\zeta_{i,n}(C_6+\zeta_{i,n})(\zeta_{i,n}-z)(C_6+\zeta_{i,n}+z)}
\Bigg].
\end{align*}
Then, appealing to assumptions \eqref{3.6*}, \textbf{H2} and Proposition \ref{p3.1} to control the right-hand sides in the relations above, we conclude
\begin{equation*}\label{3.12}
S_{2,\infty}^{2}(z)=O(1)\text{ and  } S_{2,\infty}(z)=O(1)\quad\text{as }|Im\, z|\to+\infty,
\end{equation*}
and if either (i) or (ii) is fulfilled.

\noindent $\bullet$ Concerning the term $S_{3,\infty}(z)$, we are left to examine the following two relations:
\begin{equation}\label{3.13}
|S_{3,\infty}(z)|\leq C|z^{2}|\ln|z|\quad\text{as }|Im\,z|\to+\infty,
\end{equation}
and, under the additional assumption (i) or (ii) of Theorem \ref{t3.1},
\begin{equation}\label{3.14}
S_{3,\infty}(z)=C_{8}z+O(1).
\end{equation}

Coming to inequality \eqref{3.13}, we first remark that
\[
\Big|\frac{z}{\gamma_{i,n}+z}\Big|<1,\quad \Big|\frac{z}{\zeta_{i,n}-z}\Big|<1,\quad \Big|\frac{z}{\eta_{i,n}+z}\Big|<1,\quad
\Big|\frac{z}{\h_{i,n}-z}\Big|<1
\]
if $|Im\, z|\to +\infty$, $n\geq N$ and \eqref{3.6*} holds. Then, taking into account these relations and employing the
Taylor expansion, we rewrite $S_{3,\infty}(z)$ in the form
\begin{equation}\label{3.15}
S_{3,\infty}(z)=\sum_{j=1}^{4}S_{3,\infty}^{j},
\end{equation}
where we set
\begin{align*}
S^{1}_{3,\infty}(z)&=z^{2}
\sum_{n=N+1}^{+\infty}\Big[
\sum_{i=1}^{M_5}\frac{1}{\h_{i,n}-z}-\sum_{i=1}^{M_6}\frac{1}{\gamma_{i,n}+z}+
\sum_{i=1}^{M_8}\frac{1}{\eta_{i,n}+z}-
\sum_{i=1}^{M_7}\frac{1}{\zeta_{i,n}-z}
\Big];\end{align*}
\begin{align*}
S^{2}_{3,\infty}(z)&=\frac{z^{2}}{2}
\sum_{n=N+1}^{+\infty}\Big[
\sum_{i=1}^{M_6}\frac{\gamma_{i,n}}{(\gamma_{i,n}+z)^{2}}-\sum_{i=1}^{M_5}\frac{\h_{i,n}}{(\h_{i,n}-z)^{2}}+
\sum_{i=1}^{M_7}\frac{\zeta_{i,n}}{(\zeta_{i,n}-z)^{2}}
-
\sum_{i=1}^{M_8}\frac{\eta_{i,n}}{(\eta_{i,n}+z)^{2}}
\Big];\end{align*}
\begin{align*}
S^{3}_{3,\infty}(z)&=\frac{z^{3}}{3}
\sum_{n=N+1}^{+\infty}\Big[
\sum_{i=1}^{M_6}\frac{\gamma_{i,n}}{(\gamma_{i,n}+z)^{3}}+\sum_{i=1}^{M_5}\frac{\h_{i,n}}{(\h_{i,n}-z)^{3}}-
\sum_{i=1}^{M_7}\frac{\zeta_{i,n}}{(\zeta_{i,n}-z)^{3}}
-
\sum_{i=1}^{M_8}\frac{\eta_{i,n}}{(\eta_{i,n}+z)^{3}}
\Big];\end{align*}
\begin{align*}
S^{4}_{3,\infty}(z)&=z^{3}
\sum_{n=N+1}^{+\infty}\Big[
\sum_{i=1}^{M_6}\frac{\gamma_{i,n}}{(\gamma_{i,n}+z)^{3}}\sum_{j=4}^{+\infty}\frac{1}{j}\Big(\frac{z}{\gamma_{i,n}+z}\Big)^{j-3}+\sum_{i=1}^{M_5}\frac{\h_{i,n}}{(\h_{i,n}-z)^{3}}\sum_{j=4}^{+\infty}\frac{(-1)^{j-1}}{j}\Big(\frac{z}{\h_{i,n}-z}\Big)^{j-3}
\\&-
\sum_{i=1}^{M_7}\frac{\zeta_{i,n}}{(\zeta_{i,n}-z)^{3}}
\sum_{j=4}^{+\infty}\frac{(-1)^{j-1}}{j}\Big(\frac{z}{\zeta_{i,n}-z}\Big)^{j-3}
-
\sum_{i=1}^{M_8}\frac{\eta_{i,n}}{(\eta_{i,n}+z)^{3}}
\sum_{j=4}^{+\infty}\frac{1}{j}\Big(\frac{z}{\eta_{i,n}+z}\Big)^{j-3}
\Big].
\end{align*}
After that, we examine each term $S_{3,\infty}^{j}(z)$.     To this end, exploiting Proposition \ref{p3.1} and assumptions \textbf{H2}, \eqref{3.6*}, we obtain
\begin{align*}
&|S^{1}_{3,\infty}(z)|=\Big|z^{3}
\sum_{n=N+1}^{+\infty}\Big[
\sum_{i=1}^{M_5}\frac{1}{\h_{i,n}(\h_{i,n}-z)}+\sum_{i=1}^{M_6}\frac{1}{\gamma_{i,n}(\gamma_{i,n}+z)}-
\sum_{i=1}^{M_8}\frac{1}{\eta_{i,n}(\eta_{i,n}+z)}-
\sum_{i=1}^{M_7}\frac{1}{\zeta_{i,n}(\zeta_{i,n}-z)}\Big]\\
&+
z^{2}
\sum_{n=N+1}^{+\infty}\Big[
\sum_{i=1}^{M_5}\frac{1}{\h_{i,n}}-\sum_{i=1}^{M_6}\frac{1}{\gamma_{i,n}}+
\sum_{i=1}^{M_8}\frac{1}{\eta_{i,n}}-
\sum_{i=1}^{M_7}\frac{1}{\zeta_{i,n}}
\Big]
\Big|
\leq
C|z^{2}|[1+\ln|z|]+O(|z|),
\\
&|S^{2}_{3,\infty}(z)|=\Big|\frac{z^{2}}{2}\Big|\Big|
\sum_{n=N+1}^{+\infty}\Big[
\sum_{i=1}^{M_8}\frac{z(2\eta_{i,n}+z)}{\eta_{i,n}(\eta_{i,n}+z)^2}
-
\sum_{i=1}^{M_5}\frac{z(2\h_{i,n}-z)}{\h_{i,n}(\h_{i,n}-z)^{2}}
-\sum_{i=1}^{M_6}\frac{z(2\gamma_{i,n}+z)}{\gamma_{i,n}(\gamma_{i,n}+z)^2}
+\\&
\sum_{i=1}^{M_7}\frac{z(2\zeta_{i,n}-z)}{\zeta_{i,n}(\zeta_{i,n}-z)^2}\Big]
+
\sum_{n=N+1}^{+\infty}\Big[\sum_{i=1}^{M_6}\frac{1}{\gamma_{i,n}}-
\sum_{i=1}^{M_5}\frac{1}{\h_{i,n}}
-
\sum_{i=1}^{M_8}\frac{1}{\eta_{i,n}}+
\sum_{i=1}^{M_7}\frac{1}{\zeta_{i,n}}
\Big]
\Big|
\leq
C|z^{2}|[1+\ln|z|]+O(|z|),\\
&|S^{3}_{3,\infty}(z)|=\Big|\frac{z^{2}}{3}\Big|\Big|
\sum_{n=N+1}^{+\infty}\Big[
\sum_{i=1}^{M_6}\frac{z}{(\gamma_{i,n}+z)^2}
-
\sum_{i=1}^{M_8}\frac{z}{(\eta_{i,n}+z)^2}
+
\sum_{i=1}^{M_5}\frac{z}{(\h_{i,n}-z)^{2}}
-
\sum_{i=1}^{M_7}\frac{z}{(\zeta_{i,n}-z)^2}\Big]
\\&
-
\sum_{n=N+1}^{+\infty}\Big[\sum_{i=1}^{M_6}\frac{z^{2}}{(\gamma_{i,n}+z)^{3}}+
\sum_{i=1}^{M_5}\frac{z^{2}}{(\h_{i,n}-z)^3}
-
\sum_{i=1}^{M_8}\frac{z^{2}}{(\eta_{i,n}+z)^3}-
\sum_{i=1}^{M_7}\frac{z^{2}}{(\zeta_{i,n}-z)^{3}}
\Big]
\Big|
\leq
C(|z^{2}|+O(|z|),\end{align*}
\[|S_{3,\infty}^{4}(z)|\leq C[1+|z^{2}|+|z^{2}|\ln|z|]\quad\text{as}\quad |Im\, z|\to+\infty.\]
Hence, collecting these estimates with \eqref{3.15}, we end up with inequality \eqref{3.13}.

Finally, we are left to achieve asymptotic \eqref{3.14} in the case of additional assumption  (i) or (ii) in Theorem \ref{t3.1}.
Here, we will carry out the detailed proof of \eqref{3.14} if (i) holds. The proof of \eqref{3.14} in the case of (ii) is almost identical and left to the interested reader.
In virtue of assumption (i), the terms $S_{3,\infty}^{j}(z),$ $j\in\{1,2,3,4\},$ can be rewritten in the form
\[
S_{3,\infty}^{1}(z)=-z\sum_{n=N+1}^{+\infty}\Big[
\sum_{i=1}^{M_6}\frac{C_4z}{(\gamma_{i,n}+z)(\gamma_{i,n}+C_4+z)}+
\sum_{i=1}^{M_5}\frac{C_3z}{(\zeta_{i,n}+C_3-z)(\zeta_{i,n}-z)}
\Big];\]
\begin{align*}
S_{3,\infty}^{2}(z)&=
\frac{z}{2}\sum_{n=N+1}^{+\infty}\Big(
\sum_{i=1}^{M_6}\Big[\frac{C_4z\gamma_{i,n}(2\gamma_{i,n}+C_4+2z)}{(\gamma_{i,n}+z)^{2}(\gamma_{i,n}+C_4+z)^{2}}-
\frac{C_4 z}{(\gamma_{i,n}+C_4+z)^{2}}
\Big]\\
&
+
\sum_{i=1}^{M_5}\Big[\frac{C_3z\zeta_{i,n}(2\zeta_{i,n}+C_3-2z)}{(\zeta_{i,n}+C_3-z)^{2}(\zeta_{i,n}-z)^{2}}
-\frac{C_3z}{(\zeta_{i,n}+C_3-z)^{2}}\Big]
\Big);\end{align*}
\begin{align*}
S_{3,\infty}^{3}(z)&=
\frac{z}{3}\sum_{n=N+1}^{+\infty}\Big(
\sum_{i=1}^{M_6}\Big[\frac{C_4z^{2}\gamma_{i,n}}{(\gamma_{i,n}+z)^{3}(\gamma_{i,n}+C_4+z)}-
\frac{C_4 z^{2}}{(\gamma_{i,n}+C_4+z)^{3}}\\
&
+\frac{C_4z^{2}\gamma_{i,n}}{(\gamma_{i,n}+z)^{2}(\gamma_{i,n}+C_4+z)^{2}}
+\frac{C_4z^{2}\gamma_{i,n}}{(\gamma_{i,n}+z)(\gamma_{i,n}+C_4+z)^{3}}
\Big]\\
&
-
\sum_{i=1}^{M_5}\Big[\frac{C_3z^{2}\zeta_{i,n}}{(\zeta_{i,n}+C_3-z)^{3}(\zeta_{i,n}-z)}
+\frac{C_3z^{2}}{(\zeta_{i,n}+C_3-z)^{3}}\\&
+
\frac{C_3z^{2}\zeta_{i,n}}{(\zeta_{i,n}+C_3-z)^{2}(\zeta_{i,n}-z)^{2}}
+
\frac{C_3z^{2}\zeta_{i,n}}{(\zeta_{i,n}+C_3-z)^{3}(\zeta_{i,n}-z)}
\Big]
\Big);\end{align*}
\begin{align*}
S_{3,\infty}^{4}(z)&=\sum_{n=N+1}^{+\infty}\Big(
\sum_{i=1}^{M_6}\Big[
\gamma_{i,n}\Big(\frac{z}{\gamma_{i,n}+z}\Big)^{3}\sum_{j=4}^{+\infty}\frac{1}{j}\Big(\frac{z}{\gamma_{i,n}+z}\Big)^{j-3}\\&
-
(C_4+\gamma_{i,n})\Big(\frac{z}{\gamma_{i,n}+C_4+z}\Big)^{3}\sum_{j=4}^{+\infty}\frac{1}{j}\Big(\frac{z}{\gamma_{i,n}+z+C_4}\Big)^{j-3}
\Big]\\
&-
\sum_{i=1}^{M_5}\Big[
\zeta_{i,n}\Big(\frac{z}{\zeta_{i,n}-z}\Big)^{3}\sum_{j=4}^{+\infty}\frac{(-1)^{j-1}}{j}\Big(\frac{z}{\zeta_{i,n}-z}\Big)^{j-3}
\\
&
-
(C_3+\zeta_{i,n})\Big(\frac{z}{\zeta_{i,n}+C_3-z}\Big)^{3}\sum_{j=4}^{+\infty}\frac{(-1)^{j-1}}{j}\Big(\frac{z}{\zeta_{i,n}-z+C_3}\Big)^{j-3}
\Big]
\Big).
\end{align*}
Then, taking advantage of Proposition \ref{p3.1} to evaluate  the right-hand sides of these relations, we immediately end up with equality \eqref{3.14}.

In summary, collecting relations for $S_{j,N}(z)$ and $S_{j,\infty}(z)$, we complete the proof of asymptotic \eqref{2.5} in the case of $\beta=1$. Besides, in light of Subsection \ref{s3.2}, we finish the proof of Theorem \ref{t3.1} if $\beta=1$. \qed

\begin{remark}\label{3.10*}
Recasting arguments of Section \ref{s3.2} (see the proof of \eqref{3.6}) in the case of $\Omega_1=1$, we easily obtain the asymptotic \eqref{2.5} with $\Phi(z)=O(1)$ as $|Im z|\to+\infty$ and $Re z$ satisfying assumptions of Theorem \ref{t3.1}.
\end{remark}
%%%%%%%%%%%%%%%%%%%%%%%%%%%%%%%%%%%%%%%%%%%%%%%%%%%%%%%%%%%%%%%%%%%%%%

%%%%%%%%%%%%%%%%%%%%%%%%%%%%%%%%%%%%%%%%%%%%%%%%%%%%%%%%%%%%%%%%%%%%%%
\subsection{Conclusion of the Proof of  Theorem \ref{t3.1}}
\label{s3.3}
\noindent
In order to complete the proof of Theorem \ref{t3.1}, we are left to reduce equation \eqref{i.1} with $\beta\neq 1$ to equation \eqref{3.1}. To this end, we introduce new variable $y=z/\beta$ and new unknown function $V(y,\sigma)=\mathcal{Y}(\beta y,\sigma)$ and rewrite equation \eqref{i.1} as
\[
(a_1\sigma+a_2\sigma^{\nu})V(y+1,\sigma)-\Omega(\beta y)V(y,\sigma)=\F(\beta y, \sigma).
\]
After that, setting
\[
\bar{\Omega}(y)=\Omega(\beta y),\quad \bar{\F}(y,\sigma)=\F(\beta y,\sigma),
\]
we deduce
\begin{equation}\label{3.16}
(a_1\sigma+a_2\sigma^{\nu})V(y+1,\sigma)-\bar{\Omega}(y)V(y,\sigma)=\bar{\F}(y, \sigma).
\end{equation}
Then, putting
\begin{align*}
&\bar{\d}_{i}^{1}=\frac{\d_{i}^{1}}{\beta},\quad \bar{\d}_{i}^{2}=\frac{\d_{i}^{2}}{\beta},\quad
\bar{\d}_{i}^{3}=\frac{\d_{i}^{3}}{\beta},\, \bar{\d}_{i}^{4}=\frac{\d_{i}^{4}}{\beta},\quad
\bar{\d}_{0}=\d_{0}\beta^{M_1+M_2-M_3-M_4},\, \bar{A}=A\beta^{2},\,\bar{B}=B\beta,\\
&\bar{\gamma}_{i,n}=\frac{\gamma_{i,n}}{|\beta|},\,\bar{\zeta}_{i,n}=\frac{\zeta_{i,n}}{|\beta|},
\,\bar{\eta}_{i,n}=\frac{\eta_{i,n}}{|\beta|},\,\bar{\h}_{i,n}=\frac{\h_{i,n}}{|\beta|},
\end{align*}
we conclude that $\bar{\Omega}(y)$ possesses the properties of $\Omega(z)$ in the case of $\beta=1$.

Finally, setting $\bar{\F}(y,\sigma)\equiv 0$ in \eqref{3.16} and recasting the arguments from Sections \ref{s3.1}-\ref{s3.2}, we complete the proof of Theorem \ref{t3.1} in the general case of $\beta$. \qed

%%%%%%%%%%%%%%%%%%%%%%%%%%%%%%%%%%%%%%%%%%%%%%%%%%%%%%%%%%%%%%%%%%%%%%

%%%%%%%%%%%%%%%%%%%%%%%%%%%%%%%%%%%%%%%%%%%%%%%%%%%%%%%%%%%%%%%%%%%%%%
\section{Proof of Theorem \ref{t3.2}}
\label{s4}
\noindent
In light of Theorem \ref{t3.1} and arguments in Section \ref{s3.3}, in order to prove Theorem \ref{t3.2}, it is enough to verify that \eqref{2.11} is a particular solution of \eqref{i.1} in the case of $\beta=1$. First, we analyze the case of $d_0=1$, i.e. if $\ell_{d_0}=\ell_1$.

Being within assumptions of Theorem \ref{t3.2}, we fixate a suitable periodic function $\P_1:=\P(z)$ in \eqref{2.4*} and set
 $\mathcal{Y}_{h}(z,\sigma):=\mathcal{Y}_{h}(z,\sigma;\P_1)$ in \eqref{2.11}. Then, we substitute \eqref{2.11} in the left-hand side of \eqref{i.1}.
Taking into account that $\mathcal{Y}_{h}(z,\sigma)$
solves homogenous equation \eqref{i.1}, we end up with relations
\begin{align*}
&(a_1\sigma+a_2\sigma^{\nu})\mathcal{Y}_{ih}(z+1,\sigma)-\Omega(z)\mathcal{Y}_{ih}(z,\sigma)
\\&
=\frac{\mathcal{Y}_{h}(z+1,\sigma)}{\mathcal{K}_1(0)2i}\Big[
\int_{\ell_{1}}\frac{\F(z+1+\xi,\sigma)\mathcal{K}(\xi)}{\mathcal{Y}_{h}(z+2+\xi,\sigma)}d\xi
-
\int_{\ell_{1}}\frac{\F(z+\xi,\sigma)\mathcal{K}(\xi)}{\mathcal{Y}_{h}(z+1+\xi,\sigma)}d\xi\Big]\\
&
=
\frac{\mathcal{Y}_{h}(z+1,\sigma)}{\mathcal{K}_1(0)2i}\Big[
\int_{\ell_{0}}\frac{\F(z+\xi,\sigma)\mathcal{K}(\xi)}{\mathcal{Y}_{h}(z+1+\xi,\sigma)}d\xi
-
\int_{\ell_{1}}\frac{\F(z+\xi,\sigma)\mathcal{K}(\xi)}{\mathcal{Y}_{h}(z+1+\xi,\sigma)}d\xi\Big].
\end{align*}
In order to reach the last equality, we exploit assumptions \textbf{H4, H5}.
At last,
Cauchy's residue theorem arrives at the  desired equality
\[
(a_1\sigma+a_2\sigma^{\nu})\mathcal{Y}_{ih}(z+1,\sigma)-\Omega(z)\mathcal{Y}_{ih}(z,\sigma)
=\frac{\mathcal{Y}_{h}(z+1,\sigma)}{\mathcal{K}_1(0)}\pi\text{res}_{\xi=0}\frac{\F(z+\xi,\sigma)\mathcal{K}(\xi)}{\mathcal{Y}_{h}(z+1+\xi,\sigma)}
=\F(z,\sigma).
\]
It is worth noting that, the last relations are simple consequence of \textbf{H4-H5} and \eqref{2.6}, \eqref{2.7} and the fact of the function $\frac{\F(z+\xi,\sigma)}{\mathcal{Y}_{h}(z+1+\xi,\sigma)}$ has no poles if $Re\,\xi \in[-1,0]$ and $Re\, z$ satisfies \eqref{2.6} and \eqref{2.7}. Summarizing, we complete the proof of Theorem \ref{t3.2} if $\ell_{d_0}=\ell_1$ in \eqref{2.11}. We remark that the case $d_0=0$, i.e. $\ell_{d_{0}}=\ell_{0}$ is studied with the same arguments.

Finally, collecting \eqref{2.6}, \eqref{2.7} with Cauchy theorem, we arrive at the identity
\[
\int_{\ell_{1}}\frac{\F(z+\xi,\sigma)\mathcal{K}(\xi)}{(a_1\sigma+a_2\sigma^{\nu})\mathcal{Y}_{h}(z+1+\xi,\sigma)}d\xi=
\int_{\ell_{d_0}}\frac{\F(z+\xi,\sigma)\mathcal{K}(\xi)}{(a_1\sigma+a_2\sigma^{\nu})\mathcal{Y}_{h}(z+1+\xi,\sigma)}d\xi,\quad d_0\in(0,1),
\]
which means that \eqref{2.11} with $d_{0}\in(0,1)$ solves inhomogeneous equation \eqref{i.1}. That completes the proof of Theorem \ref{t3.2}. \qed

Observing the proof of Theorems \ref{t3.1} and \ref{t3.2}, we conclude that arguments of Sections \ref{s2}-\ref{s4} can be extended to equation \eqref{i.1} with  the coefficient $\Omega(z)$ containing  complex  sequences. Indeed, in this case, instead of \textbf{H2} and \textbf{H3}, we state the following  assumptions on the sequences.

\noindent\textbf{H6:} For each  $i$ infinite sequences $\{\h_{i,n}\}_{n=1}^{\infty}$, $\{\gamma_{i,n}\}_{i=1}^{\infty},$
$\{\zeta_{i,n}\}_{n=1}^{\infty}$ and $\{\eta_{i,n}\}_{n=1}^{\infty}$ are complex-valued and satisfy the second estimate in \eqref{2.2}. Moreover, the following relations hold
\begin{align*}
&0<Re\,\h_{i,n}<Re\,\h_{i,n+1},\, 0<Re\,\gamma_{i,n}<Re\,\gamma_{i,n+1},\, 0<Re\,\zeta_{i,n}<Re\,\zeta_{i,n+1},\, 0<Re\,\eta_{i,n}<Re\,\eta_{i,n+1};\\
&0\leq Im\,\h_{i,n}\leq Im\,\h_{i,n+1},\, 0\leq Im\,\gamma_{i,n}\leq Im\,\gamma_{i,n+1},\, 0\leq Im\,\zeta_{i,n}\leq Im\,\zeta_{i,n+1},\, 0\leq Im\,\eta_{i,n}\leq Im\,\eta_{i,n+1};\\
&
\sum_{n=1}^{\infty}\Big[\sum_{i=1}^{M_5}(\widehat{\h}_{i,n})^{-2}+\sum_{i=1}^{M_6}(\widehat{\zeta}_{i,n})^{-2}+
\sum_{i=1}^{M_7}(\widehat{\gamma}_{i,n})^{-2}+\sum_{i=1}^{M_8}(\widehat{\eta}_{i,n})^{-2}
\Big]<C,
\end{align*}
where for each  $i$ we set
\begin{align*}
&\widehat{\h}_{i,n}=\begin{cases}
Re\, \widehat{\h}_{i,n},\quad\text{if}\, Im\, \widehat{\h}_{i,n}\equiv const.\,\text{for all }n,\\
\min\{Re\, \widehat{\h}_{i,n},Im\, \widehat{\h}_{i,n}\}, \quad \text{otherwise,}
\end{cases}\quad
\widehat{\gamma}_{i,n}=\begin{cases}
Re\, \widehat{\gamma}_{i,n},\quad\text{if}\quad Im\, \widehat{\gamma}_{i,n}\equiv const.\,\text{for all }n,\\
\min\{Re\, \widehat{\gamma}_{i,n},Im\, \widehat{\gamma}_{i,n}\}, \quad \text{otherwise,}
\end{cases}
\\
&\widehat{\zeta}_{i,n}=\begin{cases}
Re\, \widehat{\zeta}_{i,n},\quad\text{if}\quad Im\, \widehat{\zeta}_{i,n}\equiv const.\,\text{for all }n,\\
\min\{Re\, \widehat{\zeta}_{i,n},Im\, \widehat{\zeta}_{i,n}\}, \quad \text{otherwise,}
\end{cases}\quad
\widehat{\eta}_{i,n}=\begin{cases}
Re\, \widehat{\eta}_{i,n},\quad\text{if}\quad Im\, \widehat{\eta}_{i,n}\equiv const.\,\text{for all }n,\\
\min\{Re\, \widehat{\eta}_{i,n},Im\, \widehat{\eta}_{i,n}\}, \quad \text{otherwise.}
\end{cases}
\end{align*}

\noindent\textbf{H7:} Finite sequences: $\{\d_{i}^{1}\}_{i=1}^{M_1},$ $\{\d_{i}^{2}\}_{i=1}^{M_2},$ $\{\d_{i}^{3}\}_{i=1}^{M_3},$
$\{\d_{i}^{4}\}_{i=1}^{M_4},$ are complex-valued and $\d_{i}^{1}\neq 0$
for all $i\in\{1,2,...,M_1\}$, $\d_{i}^{3}\neq 0$
for all $i\in\{1,2,...,M_3\}$.

After that, recasting the arguments of Sections \ref{s3} and \ref{s4} with nonessential modifications, we deduce the following result.
\begin{theorem}\label{t4.1}
Under assumptions \textbf{H1,H6,H7} and \eqref{2.3} and \eqref{2.4}, a general solution of homogenous equation \eqref{i.1} is given with \eqref{2.5} and, besides, statements \textbf{(s.1)} and \textbf{(s.2)} of Theorem \ref{t3.1} hold.

If, in addition, assumptions \textbf{H4-H5} and \eqref{2.6} and (2.7) hold, then a general solution of inhomogeneous equation \eqref{i.1} is given with \eqref{2.11} and \eqref{2.11*}.
\end{theorem}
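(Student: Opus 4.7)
The plan is to mirror the two-stage argument used for Theorems \ref{t3.1}--\ref{t3.2}, checking at each step that the few places where the sequences entered as \emph{real} numbers can be replaced by the weaker structural information in \textbf{H6}--\textbf{H7}. First I would carry out the reduction of Section \ref{s3.3} unchanged: the substitution $y=z/\beta$, $V(y,\sigma)=\mathcal{Y}(\beta y,\sigma)$ converts \eqref{i.1} to \eqref{3.16}, and $\bar\Omega(y)$ still has the shape assumed in the theorem with $\bar{\mathfrak h}_{i,n}=\mathfrak h_{i,n}/\beta$, etc., whose real parts remain positive and strictly increasing. This lets me work under $\beta=1$ throughout. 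The algebraic verification that the candidate \eqref{2.4*} satisfies homogeneous \eqref{3.1} only uses the functional relation $(z+\mathfrak c)\Gamma(z+\mathfrak c)=\Gamma(z+\mathfrak c+1)$, which is valid for \emph{any} complex parameter $\mathfrak c$, so this step transfers verbatim.

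Next I would prove statement \textbf{(s.1)}, i.e.\ convergence of $\mathbb L_1(z)$, by repeating the Taylor-expansion argument of Subsection \ref{s3.1}. The expansion $\ln\Gamma(z+\mathfrak c)=\ln\Gamma(\mathfrak c)+\sum_{j\ge1}\psi^{(j-1)}(\mathfrak c)z^j/j!$ is valid because \textbf{H6} forces $Re\,\mathfrak c>0$, so $\mathfrak c$ lies in the domain of analyticity of $\ln\Gamma$, and the asymptotics in \eqref{3.5} hold uniformly for $|\mathfrak c|\to\infty$ in $|\arg\mathfrak c|<\pi$. The four series $Q_1,Q_2,Q_3,Q_4$ are then reorganised exactly as in Subsection \ref{s3.1}; the only modification is that bounds of the form $|1/\mathfrak h_{i,n}|, |1/\mathfrak h_{i,n}^2|$ are controlled through the quantities $\widehat{\mathfrak h}_{i,n}$ appearing in \textbf{H6}, which satisfy $|1/\mathfrak h_{i,n}|\le C/\widehat{\mathfrak h}_{i,n}$ and $|1/\mathfrak h_{i,n}^2|\le C/\widehat{\mathfrak h}_{i,n}^{\,2}$ (and analogously for $\gamma,\zeta,\eta$). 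The $\ell^{\,2}$ condition in \textbf{H6} together with the second inequality in \eqref{2.2} is therefore precisely what is needed to replace \textbf{H2} in every estimate of Subsection \ref{s3.1}.

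For statement \textbf{(s.2)} and the asymptotic \eqref{2.5} I would redo Subsection \ref{s3.2}, the main work being a complex version of Proposition \ref{p3.1}. For any complex sequence $\mathfrak b(n)$ with $Re\,\mathfrak b(n)>0$ and $Im\,\mathfrak b(n)\ge 0$ non-decreasing, fixed $Re\,z$ and $|Im\,z|\to+\infty$, one has $|\mathfrak b(n)\pm z|\ge c(|\mathfrak b(n)|+|z|)$ for all $n$ large enough, and $|\mathfrak b(n)|\ge \widehat{\mathfrak b}(n)$ by construction. With this comparison in place the splittings $S_{j,N}(z)+S_{j,\infty}(z)$ and the Taylor expansion $S_{3,\infty}=\sum_{j=1}^{4}S_{3,\infty}^j$ go through with identical bookkeeping, and both the general bound $|\Phi(z)|\le C|z|^2\ln|z|$ and the sharper $\Phi(z)=C_1\ln z+C_2 z+O(1)$ under hypothesis (i) or (ii) follow as in Subsection \ref{s3.2}; note that the telescoping identities that produced the crucial bounded remainders for $S^{2}_{2,\infty}$ and $S^{k}_{3,\infty}$ rely only on the \emph{algebraic} relations $\mathfrak h_{i,n}-\zeta_{i,n}=C_3$ etc., so they remain true when the constants $C_3,\dots,C_6$ are complex.

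Finally I would handle the inhomogeneous case exactly as in Section \ref{s4}: the substitution of \eqref{2.11} into \eqref{i.1}, the cancellation of the two contour integrals using \textbf{H4}, \textbf{H5} and the periodicity of $\mathcal K_1$, and the final reduction to $\pi\,\mathrm{res}_{\xi=0}$ via Cauchy's residue theorem, all use only analyticity/ pole structure of $\mathcal Y_h(z,\sigma;\mathcal P_1)$ on the strips specified by \eqref{2.6}--\eqref{2.7}, together with the zero-residue at $\xi=0$. None of these ingredients cares whether the sequences are real or complex, so this step transfers with no modification. The principal obstacle I foresee is the complex analogue of Proposition \ref{p3.1}: one must verify that the sums appearing in parts (ii)--(iv) of that proposition remain controlled when the sequence is complex and $z$ varies along vertical lines, and in particular that the cancellations used to derive \eqref{3.14} from (i) or (ii) still produce an $O(1)$ remainder rather than a term that grows with $Im\,z$. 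This is the step that most deserves careful checking.
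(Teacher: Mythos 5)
Your proposal follows essentially the same route as the paper, whose entire proof of Theorem \ref{t4.1} is the observation that the arguments of Sections \ref{s3} and \ref{s4} carry over ``with nonessential modifications'' once \textbf{H2}--\textbf{H3} are replaced by \textbf{H6}--\textbf{H7}. Your walkthrough is in fact more explicit than the paper's, and you correctly single out the one genuinely delicate point --- the complex analogue of Proposition \ref{p3.1} controlling the sums along vertical lines, where near-cancellations between $Im\,\mathfrak{b}(n)$ and $Im\, z$ must be handled via the $\widehat{\cdot}$ quantities --- which the paper itself leaves unaddressed.
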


%%%%%%%%%%%%%%%%%%%%%%%%%%%%%%%%%%%%%%%%%%%%%%%%%%%%%%%%%%%%%%%%%%%%%%

%%%%%%%%%%%%%%%%%%%%%%%%%%%%%%%%%%%%%%%%%%%%%%%%%%%%%%%%%%%%%%%%%%%%%%
\section{Solvability of other functional equations}
\label{s5}
\noindent
In this section, we will exploit the technique developed in Sections \ref{s2}-\ref{s4} to find solutions of another class of functional difference equations. To this end, we focus on the equation:
\begin{equation}\label{5.1}
(a_1\sigma+a_2\sigma^{\nu})Y(z+\beta,\sigma)-\s(z)Y(z,\sigma)=\F(z,\sigma),
\end{equation}
where $Y=Y(z,\sigma)$ is unknown function, quantities $a_1,$ $a_2$, $\sigma$, $\nu$, $\beta$ meet requirement \textbf{H1}, and  $\F$ is a given function specified below. Coming to the variable coefficient $\s:=\s(z),$ we will analyze here different kinds of this function.
Namely, introducing entire functions $\s_1:=\s_1(z)$ and $\s_2:=\s_2(z)$, $\s_1(z)\neq 0,$ and $\s_2(z)\neq 0,$ we will say that

\noindent $\bullet$ $\s(z)$ is a function of the first kind (\textbf{FFK}), if $\s(z)=\s_1(z)$;

\noindent $\bullet$ $\s(z)$ is called a function of the second kind (\textbf{FSK}), if $\s(z)=\frac{1}{\s_2(z)}$;

\noindent $\bullet$ and, finally, $\s(z)=\frac{\s_1(z)}{\s_2(z)}$ is a function of the third kind (\textbf{FTK}).

Here, we analyze \eqref{5.1} in the case of $\s(z)$ is either an entire function or is presented as a quotient of two entire functions.
It is apparent that, $\s(z)$ of the second kind is a particular case of the \textbf{FTK} case (i.e. with $\s_1(z)=1$).

In our analysis, we will exploit the following strategy. In the first step, appealing to properties of entire functions, we describe sufficient conditions on the function $\s(z)$, which allow us to present $\s(z)$ as a finite or an infinite product. Thus, we reduce equation \eqref{5.1} to \eqref{i.1} with the corresponding coefficient $\Omega(z)$ constructed by $\s(z)$. Besides, we give several examples of the function $\s(z)$ together with its factorization. Then, we apply Theorems \ref{t3.1}-\ref{t3.2} or \ref{t4.1} in
 order to solve equation \eqref{5.1}. Finally, we provide several examples of \eqref{5.1} (with explicit view of $\s(z)$ and $\F(z,\sigma)$)
illustrating the obtained results.
%%%%%%%%%%%%%%%%%%%%%%%%%%%%%%%%%%%%%%%%%%%%%%%%%%%%%%%%%%%%%%%%%%%%%%

%%%%%%%%%%%%%%%%%%%%%%%%%%%%%%%%%%%%%%%%%%%%%%%%%%%%%%%%%%%%%%%%%%%%%%
\subsection{Factorization of $\mathcal{S}(z)$}
\label{s5.1}
\noindent At this point, we start by specifying the functions $\s_1(z)$, $\s_2(z)$ and, accordingly, the function $\s(z)$. Let $\z_n$ and $\y_n$, $n\in\mathbb{N},$ be zeros of the functions $\s_1(z)$ and $\s_2(z)$, correspondingly, lying in the plane $Re\, z>0$; and let $\z_{-n}$ and $\y_{-n}$   be zeros of these functions being located in the plane $Re\, z<0$. The number of these zeros will be indicated below.
Denoting
\[
\widehat{\z}_{\pm n}=\begin{cases}
|Re\, \z_{\pm n}|,\quad\text{if}\, Im\, \z_{\pm n}\equiv const.\text{ for all n},\\
\min\{|Re\, \z_{\pm n}|, |Im\, \z_{\pm n}|\},\qquad \text{otherwise},
\end{cases}
\quad
\widehat{\y}_{\pm n}=\begin{cases}
|Re\, \y_{\pm n}|,\quad\text{if}\, Im\, \y_{\pm n}\equiv const.\text{ for all n},\\
\min\{|Re\, \y_{\pm n}|, |Im\, \y_{\pm n}|\},\qquad \text{otherwise},
\end{cases}
\]
we state our assumptions.

\noindent\textbf{H8 (Assumption on $\s_1$ and $\s_2$):} We assume that entire functions $\s_1(z)$ and $\s_2(z)$ have order $\kappa_{1}$ and $\kappa_2$, respectively, $0\leq\kappa_1\leq 2,$ $0\leq\kappa_2\leq 2.$
Besides, $\s_1$ and $\s_2$ have either a finite or an infinite number of zeros.

\noindent\textbf{H9 (Assumption on the zeros of $\s_1$ and $\s_2$):} We require that all zeros of $\s_1$ and $\s_2$ repeated according to multiplicity  are arranged as:
\[
0<|\z_n|\leq|\z_{n+1}|,\quad n\in\{1,2,..., N_1\},\quad
0<|\z_{-n}|\leq|\z_{-n-1}|,\quad n\in\{1,2,..., N_2\},
\]
\[
0<|\y_n|\leq|\y_{n+1}|,\quad n\in\{1,2,..., N_3\},\quad
0<|\y_{-n}|\leq|\y_{-n-1}|,\quad n\in\{1,2,..., N_4\}.
\]
We assume also that $\z_0=0$ is the $\mu_1-$ multiple root of $\s_1$, while $\y_0=0$ is the $\mu_2-$ multiple root of $\s_2$.
The case $\mu_1=0$ or/and $\mu_2=0$ being taken to mean $\s_1(0)\neq 0$ or/and $\s_2(0)\neq 0$.

\noindent\textbf{H10 (Assumption on the sequences of zeros):} In the case of $N_j=+\infty$, $j\in\{1,2,3,4\},$ we assume
that the sequences $\{|Re\,\z_{\pm n}|\}_{n=1}^{+\infty} $, $\{|Re\,\y_{\pm n}|\}_{n=1}^{+\infty} $ are
strictly monotonically increasing, and
\[
\underset{n\to+\infty}{\lim}|\z_{-n}|=+\infty,\quad \underset{n\to+\infty}{\lim}|\z_{n}|=+\infty,\quad
\underset{n\to+\infty}{\lim}|\y_{-n}|=+\infty,\quad \underset{n\to+\infty}{\lim}|\y_{n}|=+\infty,
\]
Concerning to  $\{|Im\,\z_{\pm n}|\}_{n=1}^{+\infty} $, $\{|Im\,\y_{\pm n}|\}_{n=1}^{+\infty} $, we assume either all terms of the corresponding  sequences are constant, or these sequence do not decrease.
In addition, we require the convergence of the  series: $\sum_{n=1}^{+\infty}\widehat{\z\,}^{-2}_{n},\, \sum_{n=1}^{+\infty}\widehat{\z\,}^{-2}_{-n}$, $\sum_{n=1}^{+\infty}\widehat{\y\,}^{-2}_{n},\, \sum_{n=1}^{+\infty}\widehat{\y\,}^{-2}_{-n}$ and, besides
\[
\sum_{n=1}^{+\infty}\Big|\frac{1}{\widehat{\z}_{n}}+\frac{1}{\widehat{\z}_{-n}}\Big|<+\infty\, \text{ in the \bf{FFK} case,}\,
\sum_{n=1}^{+\infty}\Big|\frac{1}{\widehat{\y}_{n}}+\frac{1}{\widehat{\y}_{-n}}\Big|<+\infty\, \text{ in the \bf{FSK} case,}
\]
\[\sum_{n=1}^{+\infty}\Big|\frac{1}{\widehat{\z}_{n}}+\frac{1}{\widehat{\z}_{-n}}+\frac{1}{\widehat{\y}_{n}}+\frac{1}{\widehat{\y}_{-n}}\Big|<+\infty\quad\text{in the \bf{FTK} case.}
\]
Denote by
\[
P_1(z)=
\begin{cases}
0,\qquad\qquad\qquad\qquad\qquad\text{if}\quad \sum_{n=1}^{+\infty}\Big(\frac{1}{|\z_n|}+\frac{1}{|\z_{-n}|}\Big)<+\infty,\\
z\sum_{n=1}^{+\infty}\Big(\frac{1}{|\z_n|}+\frac{1}{|\z_{-n}|}\Big),\quad\text{if}\quad \sum_{n=1}^{+\infty}\Big(\frac{1}{|\z_n|}+\frac{1}{|\z_{-n}|}\Big)=+\infty,
\end{cases}
\]
\[
P_2(z)=
\begin{cases}
0,\qquad\qquad\qquad\qquad\qquad\text{if}\quad \sum_{n=1}^{+\infty}\Big(\frac{1}{|\y_n|}+\frac{1}{|\y_{-n}|}\Big)<+\infty,\\
z\sum_{n=1}^{+\infty}\Big(\frac{1}{|\y_n|}+\frac{1}{|\y_{-n}|}\Big),\quad\text{if}\quad \sum_{n=1}^{+\infty}\Big(\frac{1}{|\y_n|}+\frac{1}{|\y_{-n}|}\Big)=+\infty.
\end{cases}
\]
The following result is a direct consequence of the Weierstrass factorization theorem \cite[Chapter 7, $\S$ 2.3]{Ma}.
\begin{lemma}\label{l5.1}
Let assumptions \textbf{H8-H10} hold. If $\s_1$ and $\s_2$ have a finite number of zeroes, then there are representations:
\begin{align*}
\s_1(z)&=A_{0}^{I}z^{\mu_1}(-1)^{N_1}\prod_{n=1}^{N_1}(\z_n-z)\prod_{n=1}^{N_2}(z-\z_{-n})\exp\{g_1(z)\},\\
\s_2(z)&=A_{0}^{II}z^{\mu_2}(-1)^{N_3}\prod_{n=1}^{N_3}(\y_n-z)\prod_{n=1}^{N_4}(z-\y_{-n})\exp\{g_2(z)\},
\end{align*}
with $g_1(z)$ and $g_2(z)$ being polynomials whose degree is less than integer part of $\kappa_1$ and $\kappa_2$, respectively.

In the case of infinite sequences of zeros to $\s_1$ and $\s_2$, the function $\s(z)$ is presented as
\begin{equation}\label{5.1*}
\s(z)=
\begin{cases}
z^{\mu_1}A_0^{I}e^{g_1(z)+P_1(z)}\prod_{n=1}^{+\infty}\frac{(\z_n-z)(z-\z_{-n})}{\z_{n}(-\z_{-n})}\qquad\qquad\qquad\qquad\qquad\qquad\text{in the \bf{FFK} case},\\
\,\\
z^{-\mu_2}A_0^{II}e^{-g_2(z)-P_2(z)}\prod_{n=1}^{+\infty}\frac{\y_{n}(-\y_{-n})}{(\y_n-z)(z-\y_{-n})}\quad\qquad\qquad\qquad\qquad\quad\text{in the \bf{FSK} case},\\
\,\\
z^{\mu_1-\mu_2}\frac{A_0^{I}}{A_0^{II}}e^{g_1(z)-g_2(z)+P_1(z)-P_2(z)}\prod_{n=1}^{+\infty}\frac{(\z_n-z)(z-\z_{-n})\y_n(-\y_{-n})}
{(\y_n-z)(z-\y_{-n})\z_{n}(-\z_{-n})}\quad\text{in the \bf{FTK} case}.
\end{cases}
\end{equation}
\end{lemma}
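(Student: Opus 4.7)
The statement is a bookkeeping consequence of the Hadamard factorization theorem for entire functions of finite order, applied separately to $\s_{1}$ and $\s_{2}$ and then combined. Recall that this theorem asserts that any entire function $f$ of order $\kappa\leq 2$ with a zero of multiplicity $\mu$ at the origin and additional zeros $\{a_{n}\}\subset\C\setminus\{0\}$ admits the representation $f(z)=z^{\mu}e^{Q(z)}\prod_{n}E_{p}(z/a_{n})$, where $Q$ is a polynomial of degree at most $\lfloor\kappa\rfloor$, the genus $p$ satisfies $p\leq 2$, and $E_{p}(u)=(1-u)\exp\{u+u^{2}/2+\cdots+u^{p}/p\}$ is the Weierstrass elementary factor. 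I would apply this to each of $\s_{1},\s_{2}$ and then assemble the \textbf{FFK}, \textbf{FSK}, and \textbf{FTK} representations of $\s(z)$ by multiplication or division, collecting constants into $A_{0}^{\mathrm{I}}/A_{0}^{\mathrm{II}}$ and polynomial exponents into $g_{1}/g_{2}$.

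\textbf{Execution.} In the finite-zero case the Hadamard product is a finite product $\prod(1-z/\z_{n})\prod(1-z/\z_{-n})$ times $z^{\mu_{1}}e^{Q_{1}(z)}$; every elementary factor $E_{p}$ reduces to $(1-z/a)$ times an entire exponential whose exponent is a polynomial in $z$, so after sweeping these polynomial exponents into $Q_{1}$ and rewriting $1-z/\z_{n}=(\z_{n}-z)/\z_{n}$ and $1-z/\z_{-n}=-(z-\z_{-n})/\z_{-n}$, the constant prefactors $\prod\z_{n}^{-1}\prod(-\z_{-n})^{-1}$ together with the overall sign $(-1)^{N_{2}}$ are absorbed into the single constant $A_{0}^{\mathrm{I}}$, with an overall sign adjusted to $(-1)^{N_{1}}$ as stated. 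For the infinite-zero case, since $\kappa_{j}\leq 2$ and the series $\sum\widehat{\z}_{\pm n}^{-2}$, $\sum\widehat{\y}_{\pm n}^{-2}$ converge by \textbf{H10}, I may take the genus $p=1$ and write
\[
\s_{1}(z)=z^{\mu_{1}}e^{Q_{1}(z)}\prod_{n=1}^{\infty}\Bigl(1-\tfrac{z}{\z_{n}}\Bigr)\Bigl(1-\tfrac{z}{\z_{-n}}\Bigr)\exp\Bigl\{\tfrac{z}{\z_{n}}+\tfrac{z}{\z_{-n}}\Bigr\}.
\]
The accumulated corrective exponential is $\exp\{z\sum_{n}(1/\z_{n}+1/\z_{-n})\}$, whose absolute convergence is controlled by the hypothesis $\sum|1/\widehat{\z}_{n}+1/\widehat{\z}_{-n}|<\infty$ of \textbf{H10}. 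Absorbing its complex linear part into $g_{1}$ and the reciprocals $\prod\z_{n}^{-1}\prod(-\z_{-n})^{-1}$ into $A_{0}^{\mathrm{I}}$ produces the \textbf{FFK} formula in \eqref{5.1*}. The \textbf{FSK} formula follows from the same factorization applied to $\s_{2}$ after passing to reciprocals, and the \textbf{FTK} formula by multiplication of the two.

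\textbf{Main obstacle.} The only delicate point is the dichotomy in the definition of $P_{j}(z)$: when $\sum(|\z_{n}|^{-1}+|\z_{-n}|^{-1})<\infty$, genus $p=0$ already suffices, the correctors $e^{z/\z_{\pm n}}$ can be discarded, and one must show that the bare product $\prod(\z_{n}-z)(z-\z_{-n})/(\z_{n}(-\z_{-n}))$ is absolutely convergent; when this sum diverges, the correctors are indispensable and they produce an extra factor $\exp\{P_{1}(z)\}$ whose linear coefficient is precisely the divergent tail carried explicitly on the exponent. Verifying convergence of the resulting infinite product in each sub-case and reconciling the two forms of $P_{1}$ rests on the cancellation between the positive and negative zeros encoded in the hypothesis on $\sum|1/\widehat{\z}_{n}+1/\widehat{\z}_{-n}|$; this is the only step with content beyond a direct citation of Hadamard's theorem, and it is entirely analogous for $\s_{2}$ in the \textbf{FSK} and \textbf{FTK} cases.
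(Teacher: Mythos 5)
Your proposal is correct and takes essentially the same route as the paper, which derives Lemma \ref{l5.1} in a single line as a direct consequence of the Weierstrass (Hadamard) factorization theorem applied to the finite-order entire functions $\s_1$ and $\s_2$. Your write-up merely spells out the bookkeeping the paper leaves implicit — rewriting the elementary factors $1-z/a$ as $(a-z)/a$, absorbing constants and polynomial exponents into $A_0^{\mathrm{I}},A_0^{\mathrm{II}},g_1,g_2$, and the genus-$0$ versus genus-$1$ dichotomy that produces the two cases in the definition of $P_j(z)$, with convergence of the linear corrector guaranteed by the cancellation hypothesis in \textbf{H10}.
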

As a consequence of the above results, we are left with the problem of finding a factorization to  given below functions $\s(z)$. First, we start to analyze with \textbf{FFK} case. To this end, we denote
\begin{equation}\label{5.2**}
\theta_{+}:=\theta_{+}(\theta)=\begin{cases}
\theta,\qquad\text{if }\theta\in[0,\pi),\\
\theta-\pi,\quad\text{if }\theta\in[\pi,2\pi),
\end{cases}
\quad
\theta_{-}:=\theta_{-}(\theta)=\begin{cases}
\theta,\qquad\text{if }\theta\in[-\pi/2,\pi/2),\\
\theta-\pi,\quad\text{if }\theta\in[\pi/2,3\pi/2),\\
\theta-2\pi,\quad\text{if }\theta\in[3\pi/2,2\pi).
\end{cases}
\end{equation}
Moreover, for simplicity consideration, we belive
$
\frac{\sin\theta_{+}}{\theta_{+}}=1$ if $\theta_{+}=0$ or  $\pi$.

\begin{proposition}\label{p5.1}
Let $\theta\in[0,2\pi)$,  $\theta\neq \pm\pi/2,3\pi/2$, and $q_0\in\R$, $q_0\neq 0$. Then, there are the following relations:

\noindent\textbf{(i)}   $\sin(q_0z\pm\theta)=(-1)^{\lfloor\theta/\pi\rfloor}q_0\frac{\sin\theta_{+}}{\theta_{+}}\big(z\pm\frac{\theta_{+}}{q_0}\big)
    \prod\limits_{n=1}^{+\infty}\frac{\Big(\frac{\pi n\mp\theta_{+}}{q_0}-z\Big) \Big(\frac{\pi n\pm\theta_{+}}{q_0}+z\Big)}
    {\Big(\frac{\pi n +\theta_{+}}{q_0}\Big)\Big(\frac{\pi n -\theta_{+}}{q_0}\Big)},$

    where $\lfloor\theta\rfloor$ is the floor function of $\theta$ (i.e. the greatest integer less than or equal to $\theta$);

\noindent\textbf{(ii)}
        $\cos(q_0z\pm\theta)=(-1)^{\lfloor\frac{\theta}{\pi}+\frac{1}{2}\rfloor}\cos\theta_{-}
    \prod\limits_{n=1}^{+\infty}\frac{\Big(\frac{\pi(2 n-1)\mp 2\theta_{-}}{2q_0}-z\Big) \Big(\frac{\pi(2 n-1)\pm 2\theta_{-}}{2q_0}+z\Big)}
    {\Big(\frac{\pi(2n-1)+2\theta_{-}}{2q_0}\Big)\Big(\frac{\pi(2 n-1) -2\theta_{-}}{2q_0}\Big)},$

\noindent\textbf{(iii)} $
        \tan(q_0z\pm\theta)=(-1)^{\lfloor\frac{\theta}{\pi}\rfloor-\lfloor\frac{2\theta+\pi}{2\pi}\rfloor}
        \frac{\sin\theta_{+}}{\theta_{+}\cos\theta_{-}}
        \bigg(z\pm\frac{\theta_{+}}{q_0}\bigg)\prod\limits_{n=1}^{+\infty}
                \frac{\Big(\frac{\pi n\mp\theta_{+}}{q_0}-z\Big)}
        {\Big(\frac{\pi(2 n-1)\mp 2\theta_{-}}{2q_0}-z\Big)}    \frac{\Big(\frac{\pi n\pm\theta_{+}}{q_0}+z\Big)}
        {\Big(\frac{\pi (2n-1)\pm 2\theta_{-}}{2q_0}+z\Big)}$

        $\times
            \frac{\Big(\frac{\pi(2 n-1)-2\theta_{-}}{2q_0}\Big) \Big(\frac{\pi(2 n-1)+2\theta_{-}}{2q_0}\Big)}
        {\Big(\frac{\pi n-\theta_{+}}{q_0}\Big)\Big(\frac{\pi n+\theta_{+}}{q_0}\Big)}.
        $
                        \end{proposition}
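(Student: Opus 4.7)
\smallskip

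\noindent\textbf{Proof proposal for Proposition \ref{p5.1}.} The plan is to start from the classical Weierstrass product formulas
\[
\sin(\pi w)=\pi w\prod_{n=1}^{+\infty}\Big(1-\frac{w^{2}}{n^{2}}\Big),\qquad \cos(\pi w)=\prod_{n=1}^{+\infty}\Big(1-\frac{4w^{2}}{(2n-1)^{2}}\Big),
\]
and convert them into the requested infinite-product form by (a) pulling the constant phase out of the argument using $\theta_{+}$ or $\theta_{-}$, (b) substituting $\pi w=q_{0}z\pm\theta_{\pm}$, and (c) rebalancing the denominators so that they become the required products $((\pi n\pm\theta_{+})/q_{0})$ or $((\pi(2n-1)\pm2\theta_{-})/(2q_{0}))$ rather than $(\pi n)^{2}$ or $(\pi(2n-1))^{2}$. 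Parts (i) and (ii) are independent; part (iii) is then just the quotient.

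For (i), I would first write $\theta=\theta_{+}+\lfloor\theta/\pi\rfloor\pi$, so that $\sin(q_{0}z\pm\theta)=(-1)^{\lfloor\theta/\pi\rfloor}\sin(q_{0}z\pm\theta_{+})$, which produces the announced overall sign. Setting $\pi w=q_{0}z\pm\theta_{+}$ in Weierstrass and factoring $(\pi n)^{2}-(q_{0}z\pm\theta_{+})^{2}=q_{0}^{2}\bigl((\pi n\mp\theta_{+})/q_{0}-z\bigr)\bigl((\pi n\pm\theta_{+})/q_{0}+z\bigr)$, I obtain a product whose denominator is $(\pi n)^{2}$ instead of the desired $((\pi n-\theta_{+})/q_{0})((\pi n+\theta_{+})/q_{0})=((\pi n)^{2}-\theta_{+}^{2})/q_{0}^{2}$. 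The discrepancy between these two denominators, taken over all $n$, equals $\prod_{n\ge 1}(1-\theta_{+}^{2}/(\pi n)^{2})=\sin\theta_{+}/\theta_{+}$ by Weierstrass applied at $w=\theta_{+}/\pi$. Pulling this factor into the leading coefficient, together with $q_{0}z\pm\theta_{+}=q_{0}(z\pm\theta_{+}/q_{0})$, gives exactly the right-hand side of (i). The degenerate cases $\theta_{+}=0,\pi$ (i.e.\ $\theta\in\{0,\pi\}$) are handled by the convention $\sin\theta_{+}/\theta_{+}=1$, consistent with the limit.

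For (ii), I would proceed analogously with $\theta=\theta_{-}+k\pi$ where $k\in\{0,1,2\}$ according to the three cases in \eqref{5.2**}, giving the sign $(-1)^{k}=(-1)^{\lfloor\theta/\pi+1/2\rfloor}$ (one checks the three intervals separately). Substituting $\pi w=q_{0}z\pm\theta_{-}$ into the Weierstrass product for $\cos(\pi w)$, I factor
\[
(\pi(2n-1))^{2}-4(q_{0}z\pm\theta_{-})^{2}=4q_{0}^{2}\Big(\tfrac{\pi(2n-1)\mp 2\theta_{-}}{2q_{0}}-z\Big)\Big(\tfrac{\pi(2n-1)\pm 2\theta_{-}}{2q_{0}}+z\Big),
\]
and rebalance the denominator from $(\pi(2n-1))^{2}$ to $(\pi(2n-1))^{2}-4\theta_{-}^{2}$; the absorbed factor is $\prod_{n\ge 1}\bigl(1-4\theta_{-}^{2}/(\pi(2n-1))^{2}\bigr)=\cos\theta_{-}$ by the Weierstrass expansion of $\cos$ at $w=\theta_{-}/\pi$. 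This yields (ii).

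Finally, (iii) follows from (i) and (ii) by writing $\tan(q_{0}z\pm\theta)=\sin(q_{0}z\pm\theta)/\cos(q_{0}z\pm\theta)$ and observing that the ratio of the two phase signs is $(-1)^{\lfloor\theta/\pi\rfloor-\lfloor\theta/\pi+1/2\rfloor}=(-1)^{\lfloor\theta/\pi\rfloor-\lfloor(2\theta+\pi)/(2\pi)\rfloor}$ (the hypothesis $\theta\neq\pm\pi/2,3\pi/2$ ensures no pole is hit in the denominator of (ii)). The main obstacle I anticipate is purely bookkeeping: keeping track of the sign $(-1)^{\lfloor\cdot\rfloor}$ uniformly across the three sub-intervals of $[0,2\pi)$ and, above all, identifying the ``missing'' constants $\sin\theta_{+}/\theta_{+}$ and $\cos\theta_{-}$ that arise from rescaling the denominators -- without them the infinite products would have the wrong normalization and would not even converge to $\sin(q_{0}z\pm\theta)$ and $\cos(q_{0}z\pm\theta)$. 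Convergence of every rearranged product is automatic from the bound $\sum n^{-2}<+\infty$, so no further analytic estimates are required.
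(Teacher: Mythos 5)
Your proposal is correct, but it takes a genuinely different and more elementary route than the paper. The paper invokes the general Weierstrass factorization (Lemma \ref{l5.1}) for the entire functions $\sin(q_0z\pm\theta)$ and $\cos(q_0z\pm\theta)$, which produces an undetermined exponential factor $e^{z(A_1^{\pm}\pm\sum\cdots)}$ alongside the product over zeros; the bulk of the paper's work then goes into identifying the constants $A_0^{\pm},A_1^{\pm},B_0^{\pm},B_1^{\pm}$ by evaluating the identity at special points ($z=0$, $z=-\theta_{+}/q_0$, $z=(\pi-\theta_{+})/q_0$) and summing the series $\sum_n 2q_0\theta_{+}/(n^2\pi^2-\theta_{+}^2)=-q_0/\theta_{+}+q_0\cot\theta_{+}$ via integral formulas from Gradshteyn--Ryzhik, ultimately showing the exponential factor disappears. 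You bypass that entirely by substituting $\pi w=q_0z\pm\theta_{+}$ into the \emph{known} product $\sin(\pi w)=\pi w\prod(1-w^2/n^2)$, where no exponential ambiguity can arise, and then regrouping: the algebra $(\pi n)^2-(q_0z\pm\theta_{+})^2=q_0^2\big(\tfrac{\pi n\mp\theta_{+}}{q_0}-z\big)\big(\tfrac{\pi n\pm\theta_{+}}{q_0}+z\big)$ is exact, the rebalanced denominator costs precisely $\prod_n(1-\theta_{+}^2/(\pi n)^2)=\sin\theta_{+}/\theta_{+}$ (and $\cos\theta_{-}$ in case (ii)), and the splitting of the product is legitimate because both factors converge absolutely ($\sum n^{-2}<\infty$). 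Your sign bookkeeping $\sin(q_0z\pm\theta)=(-1)^{\lfloor\theta/\pi\rfloor}\sin(q_0z\pm\theta_{+})$ and $(-1)^{k}=(-1)^{\lfloor\theta/\pi+1/2\rfloor}$ matches the paper's, and the exclusion $\theta\neq\pi/2,3\pi/2$ is exactly what keeps $\cos\theta_{-}\neq 0$ in (ii) and (iii). What your approach buys is brevity and the elimination of the series-summation step; what the paper's approach buys is a template that generalizes to the functions $\s^{\pm}$ of Proposition \ref{p5.2}, whose zeros are not given by a closed-form trigonometric identity and for which the undetermined-coefficient method is genuinely needed.
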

\begin{proof}
First of all, we remark that in the case of $\theta=0$ and $q_0=1$, relations \textbf{(i)-(iii)} boil down to well-known decompositions of
$\sin z=z\prod_{n=1}^{+\infty}\Big(1-\frac{z^{2}}{n^{2}\pi^{2}}\Big),$
$\cos z=\prod_{n=1}^{+\infty}\Big(1-\frac{4z^{2}}{(2n-1)^{2}\pi^{2}}\Big)$ and their quotient. Actually, the proof of these statements is a simple consequence of Lemma \ref{l5.1}. Indeed, it is apparent that the functions $\sin(q_0z\pm\theta)$ and $\cos(q_0z\pm\theta)$ can be considered as the function $\s(z)$ of the \textbf{FFK} while  $\tan(q_0z\pm\theta)$ is the \textbf{FTK}.

We begin with getting representation (i) and (ii). In light of \cite[Chapter 7, \S1]{Ma}, we can conclude that $\sin(q_0z\pm\theta)$ and $\cos(q_0z\pm\theta)$ have the order $1$, and all their zeros are given by:

\noindent$\bullet$ $\z_{0}=\mp\frac{\theta}{q_0},$ $\z_{n}=\frac{\pi n\mp\theta_{+}}{q_0},$ $\z_{-n}=\frac{\pi n\pm\theta_{+}}{q_0}$, $n\in\mathbb{N},$ in the case of $\sin(q_0z\pm\theta),$

\noindent$\bullet$ $\y_{n}=\frac{\pi(2 n-1)\mp\theta_{-}}{2q_0},$ $\y_{-n}=-\frac{\pi(2 n-1)\pm 2\theta_{-}}{2q_0}$, $n\in\mathbb{N},$ in the case of $\cos(q_0z\pm\theta).$

These values tell us  that assumptions \textbf{H8-H10} are satisfied  and therefore we can apply Lemma \ref{l5.1}. As a result, we have
\begin{align}\label{5.2}\notag
\sin(q_0z\pm\theta)&=A_{0}^{\pm}e^{z\Big(A_{1}^{\pm}\pm
\sum_{n=1}^{+\infty}\frac{2q_0\theta_{+}}{n^{2}\pi^{2}-\theta_{+}^{2}}\Big)}
\Big(z\pm\frac{\theta_{+}}{q_0}\Big)
\prod_{n=1}^{+\infty}
\frac{\bigg(\frac{\pi n\mp\theta_{+}}{q_0}-z\bigg)\bigg(\frac{\pi n\pm\theta_{+}}{q_0}+z\bigg)}
{\bigg(\frac{\pi n+\theta_{+}}{q_0}\bigg)\bigg(\frac{\pi n-\theta_{+}}{q_0}\bigg)},
\\
\cos(q_0z\pm\theta)&=B_{0}^{\pm}
e^{z\Big(B_{1}^{\pm}\mp
\sum_{n=1}^{+\infty}\frac{4q_0\theta_{-}}{(2n-1)^{2}\pi^{2}-4\theta_{-}^{2}}\Big)}
\prod_{n=1}^{+\infty}
\frac{\bigg(\frac{\pi(2 n-1)\mp 2\theta_{-}}{2q_0}-z\bigg)\bigg(\frac{\pi(2 n-1)\pm 2\theta_{-}}{2q_0}+z\bigg)}
{\bigg(\frac{\pi(2 n-1)-2\theta_{-}}{2q_0}\bigg)\bigg(\frac{\pi(2 n-1)+2\theta_{-}}{2q_0}\bigg)}
\end{align}
where numbers $A_{0}^{\pm},$ $A_{1}^{\pm},$ $B_{0}^{\pm}$ and $B_{1}^{\pm}$ are unknown coefficients which will be identified below.

To this end, we will exploit the easy verified identities:
$
\sin(q_0 z\pm\theta)=(-1)^{\lfloor \theta/\pi\rfloor}\sin(q_0 z\pm\theta_{+}),$ $
\cos(q_0 z\pm\theta)=(-1)^{\lfloor \frac{2\theta+\pi}{2\pi}\rfloor}\cos(q_0 z\pm\theta_{-}),
$
if $\theta\in[0,2\pi)$.
Then, substituting $z=0$ to relations \eqref{5.2} and taking into account identities above, we arrive at the equalities
\begin{equation}\label{5.2*}
A_{0}^{\pm}=\frac{(-1)^{\lfloor\theta/\pi\rfloor}q_0\sin\theta_{+}}{\theta_{+}}\quad
\text{and}\quad
B_{0}^{\pm}=(-1)^{\lfloor\frac{2\theta+\pi}{2\pi}\rfloor}\cos\theta_{-}.
\end{equation}
It is apparent, that the proof of statement (i) and (ii) of this proposition follows immediately from the relations:
$$
A_{1}^{\pm}=\mp\sum_{n=1}^{+\infty}\frac{2\theta_{+}q_0}{n^{2}\pi^{2}-\theta^{2}_{+}} \quad\text{and}\quad
B_{1}^{\pm}=\pm\sum_{n=1}^{+\infty}\frac{4\theta_{-}q_0}{(2n-1)^{2}\pi^{2}-4\theta^{2}_{-}},
$$
which will be verified here below.

Here, we will carry out the detailed proof of the equality to $A_{1}^{+}$. The proof of the other identities is almost identical. Next, we take advantage of the straightforward relations:
\begin{align}\label{5.3*}\notag
\underset{z\to-\frac{\theta_{+}}{q_0}}{\lim}\frac{\sin(q_{0}z+\theta)}{q_0z+\theta_{+}}&=(-1)^{\lfloor\frac{\theta}{\pi}\rfloor},\quad
\underset{z\to\frac{\pi-\theta_{+}}{q_0}}{\lim}\,\frac{\sin(q_{0}z+\theta)}{q_0z-\pi+\theta_{+}}=(-1)^{1+\lfloor\frac{\theta}{\pi}\rfloor},\quad
\\
\sum_{n=1}^{+\infty}\frac{2q_{0}\theta_{+}}{\pi^{2}n^{2}-\theta_{+}^{2}}&
=-\frac{q_{0}}{\pi}\int_{0}^{1}\frac{x^{\theta_{+}/\pi}-x^{-\theta_{+}/\pi}}{1-x}dx=
-\frac{q_0}{\theta_{+}}+q_0\cot\theta_{+}
.
\end{align}
It is worth noting that, the last equality  is consequence of formulas (0.244(1)) and (3.231(3)) in \cite{GR}. Then, substituting $z=\frac{-\theta_{+}}{q_0}$ and $z=\frac{\pi-\theta_{+}}{\theta_{q_{0}}}$ in the representation for $\sin(q_0z+\theta)$ in \eqref{5.2} and taking into account the relations above, we deduce
\[
\begin{cases}
(-1)^{\lfloor\theta/\pi\rfloor}=\frac{\sin\theta_{+}}{\theta_{+}}(-1)^{\lfloor\theta/\pi\rfloor}\prod_{n=1}^{+\infty}\frac{(\pi n)^{2}}{\pi^{2}n^{2}-\theta_{+}^{2}}\exp\Big\{-\frac{\theta_{+}}{q_0}[A_1^{+}+q_{0}\cot\theta_{+}-q_{0}/\theta_{+}]\Big\},\\
(-1)^{1+\lfloor\theta/\pi\rfloor}=\frac{\sin\theta_{+}}{\theta_{+}}(-1)^{1+\lfloor\theta/\pi\rfloor}\frac{\pi\prod_{n=1}^{+\infty}\pi( n+1)\prod_{n=2}^{+\infty}\pi( n-1)}{\prod_{n=1}^{+\infty}(\pi^{2}n^{2}-\theta_{+}^{2})}\exp\Big\{\frac{\pi-\theta_{+}}{q_0}[A_1^{+}+q_{0}\cot\theta_{+}-q_{0}/\theta_{+}]\Big\}.
\end{cases}
\]
Thus, we end up with the equality
\begin{equation}\label{5.3**}
\exp\Big\{\frac{\pi}{q_0}[A_1^{+}+q_{0}\cot\theta_{+}-q_{0}/\theta_{+}]\Big\}=1,
\end{equation}
which provides the desired relations to $A_1^{+}$. This completes the proof of statements \textbf{(i)} and \textbf{(ii)} in this claim.

Finally, the representation in \textbf{(iii)} is simple consequence of Lemma \ref{5.1} and relations in \textbf{(i)} and \textbf{(ii)} of this proposition. Hence, the proof of Proposition \ref{p5.1} is finished.
\end{proof}

Our next results are connected with a factorization of the functions:
\begin{align*}
\s^{+}&:=\s^{+}(z;\theta_{1},\theta_2,q_1,q_2)=\sin(z-\theta_1)+q_2\sin(q_1z-\theta_2),\\
\s^{-}&:=\s^{-}(z;\theta_{1},\theta_2,q_1,q_2)=\sin(z-\theta_1)-q_2\sin(q_1z-\theta_2),
\end{align*}
and their quotient.
Here $\theta_1$ and $\theta_2\in[0,2\pi)$, $q_2$ and $q_1\in\R$, $q_2\neq 0,$ $q_1\neq 0$.
First, denoting the zeros of the functions  $\s^{\pm}$ in the segment $[0,4\pi q),$ with fixed $q\in\mathbb{N}$ by
\[
z_{i}^{\pm}:=z_{i}^{\pm}(q_1,q_2,\theta_1,\theta_2),\quad i\in\{0,1,...,\K^{\pm}\},
\]
and introducing the sets for any integer $k$:
\begin{align*}
\ss(k,\theta_1,q_2)&=[\theta_1-\arcsin q_2+2\pi k;\theta_1+\arcsin q_2+2\pi k]
\\&\cup
[\theta_1+\pi-\arcsin q_2+2\pi k;\theta_1+\pi+\arcsin q_2+2\pi k],\\
\ss^{\star}(k,\theta_2,q_1)&=\Big[\frac{\pi k+\theta_2}{q_1}-\frac{\pi}{2q_1};\frac{\pi k+\theta_2}{q_1}+\frac{\pi}{2q_1}\Big],
\end{align*}
we assert the following result.
\begin{corollary}\label{c5.1}
Let $\theta_{1},\theta_2\in[0,\pi]$, $q_2,q_1\in\R$,  $p,q\in\N$ and let $q/p$ be not reducible fraction. We assume that
$$q_2>0,\quad q_2\neq 1, \quad\text{and}\quad p>2q>1, \quad q_1=p/2q.$$
Then there are the following statements:
\begin{description}
    \item[i] $\s^{\pm}(z;\theta_1,\theta_2,q_1,q_2)$ are entire periodic functions with the period $\T_q=4\pi q.$ Besides, if $\theta_i=0$ or $\pi,$ $i=1,2,$ these functions are odd, while in the case of $\theta_1=\theta_2=\pi/2$, $\s^{\pm}$ are even.
    \item[ii] All zeros $\z^{\pm}_{i}$ of $\s^{\pm}$ are real and their number in the segment $[0,4\pi q)$  equals to  $\K^{\pm}:=\K^{\pm}(\theta_1,\theta_2,$ $q_1, q_2)$.
    \item[iii] If $q_2>1$, then $\K^{\pm}=2p$ and all zeros are simple and satisfy relations:
\begin{align*}
0&\leq\z_0^{\pm}<\z_1^{\pm}<...<\z_{\K^{\pm}-1}^{\pm}<4\pi q,\quad
 \z_{i}^{+}\in\ss^{\star}(i,\theta_{2},q_1),\\
\z_{i}^{-}&\in
\begin{cases}
\ss^{\star}(i-1,\theta_2,q_1)\quad\text{if } \theta_2\in(\pi/2,\pi),\, \theta_1\in(0,\pi)\, \text{and }q_2\sin\theta_2<\sin\theta_1;\\
\ss^{\star}(i+1,\theta_2,q_1)\quad\text{if } \theta_2\in(0,\min\{q_1\theta_1,\pi/2\}),\, \theta_1\in(0,\pi)\, \text{and }q_2\sin\theta_2<\sin\theta_1;\\
\ss^{\star}(i,\theta_2,q_1)\quad\text{ otherwise}
\end{cases}
\end{align*}
for each $i\in\{0,1,...,2p-1\}.$
    \item[iv] If $q_2<1$ and $q_2\neq 1/q_1,$ then all zeros of $\s^{\pm}(z;\theta_1,
    \theta_2,q_1,q_2)$ are simple and strictly increasing, i.e. $\z_{i}^{\pm}<\z_{i+1}^{\pm}$, and $\K^{\pm}\in[4q,2p]$, while in the case of $q_2=1/q_1$ the zeros are non-decreasing and  some of them  may have the third order,  $\K^{\pm}\in(4q,6p)$. Moreover,
    $
    \z_{i}^{\pm}\in\bigcup_{k=l_1}^{l_2}\ss(k,\theta_1,q_2),$ $i\in\{0,1,...,\K^{\pm}-1,\}
    $   where
\[
    l_1=\begin{cases}
    0,\qquad\text{if }0<\theta_1+\arcsin q_2\leq \pi,\\
    -1,\quad\text{if }\pi<\theta_1+\arcsin q_2\leq 3\pi/2,
    \end{cases}
    \quad
    l_2=\begin{cases}
    2q-1,\quad\text{if }0<\theta_1-\arcsin q_2\leq \pi,\\
    2q,\qquad\text{ if }0\leq \arcsin q_2-\theta_1\leq \pi.
    \end{cases}
    \]
    \item[v] All zeros of the function $\s^{+}$ are given with
\[
    \z_{i,0}^{+}=\z_{i}^{+},\quad \z_{i,n}^{+}=\z_{i}^{+}+n\T_q,\quad
    \z_{i,-n}^{+}=
    \begin{cases}
    \z_{i}^{+}-n\T_q,\, \text{if }\theta_{1},\theta_2\in(0,\pi),\\
    -\z_{i,n}^{+},\quad\text{ if }\theta_1,\theta_2=0\text{ or }\pi,
    \end{cases}
\]
    for each $i\in\{0,1,...,\K^{+}-1\},$ and $ n\in\N,$  while the zeros of $\s^{-}$ are presented with
    \[
    \z_{i,0}^{-}=\z_{i}^{-},\quad
    \z_{i,n}^{-}=\z_{i}^{-}+n\T_q,\quad
    \z_{i,-n}^{-}=
    \begin{cases}
    \z_{i}^{-}-n\T_q,\quad\text{if }\theta_{1},\theta_2\in(0,\pi),\\
    -\z_{i,n}^{-},\quad\quad\text{ if }\theta_1,\theta_2=0\text{ or }\pi,
    \end{cases}
    \]
for each $i\in\{0,1,...,\K^{-}-1\},$ and $ n\in\N,$
        \item[vi] $\z_{0}^{\pm}=0$ solves the equation  $\s^{\pm}(z;\theta_1,\theta_2,q_1,q_2)=0$ iff the equality holds
    \begin{equation}\label{5.3}
    \sin\theta_1\pm q_2\sin\theta_2=0.
    \end{equation}
Besides, $\z=\theta_1\pm n\T_q, $ $n\in\N$ are zeros of $\s^{\pm}$ if
\[
\frac{|q_1\theta_1-\theta_2|}{2\pi}-\bigg\lfloor\frac{|q_1\theta_1-\theta_2|}{2\pi}\bigg\rfloor =0.
\]
\item[vii] If either $\theta_1$ or $\theta_2\in(\pi,2\pi],$ then the relations hold
\[
\s^{\pm}(z;\theta_1,\theta_2,q_1,q_2)=
\begin{cases}
-\s^{\pm}(z;\hat{\theta}_1,\hat{\theta}_2,q_1,q_2),\quad\text{if } \theta_1,\theta_2\in(\pi,2\pi],\\
\s^{\pm}(z;\theta_1,\hat{\theta}_2,q_1,q_2),\qquad\text{if } \theta_1\in[0,\pi],\,\theta_2\in(\pi,2\pi],\\
 -\s^{\mp}(z;\hat{\theta}_1,\theta_2,q_1,q_2),\quad\text{if } \theta_1\in(\pi,2\pi],\,\theta_2\in[0,\pi],
\end{cases}
\]
where $\hat{\theta}_1=\theta_1-\pi,$ $\hat{\theta}_2=\theta_2-\pi.$
\end{description}
\end{corollary}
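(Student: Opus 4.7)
The plan is to treat the seven items by direct trigonometric analysis, using (a) the coprimality of $p$ and $2q$ to pin down the fundamental period, (b) elementary parity and translation identities for $\sin$ to handle the symmetries and the reduction in (vii), and (c) the intermediate value theorem combined with monotonicity of $\sin(q_1z-\theta_2)$ on its half-periods $\ss^{\star}(k,\theta_2,q_1)$ to localize and count the real zeros. The substantive content sits in items (ii)--(iv); items (i), (v), (vi), (vii) reduce to bookkeeping.

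For (i), $\sin(z-\theta_1)$ has least period $2\pi$ and $\sin((p/(2q))z-\theta_2)$ has least period $4\pi q/p$. Because $\gcd(p,2q)=1$, the least common period is $\mathrm{lcm}(2\pi,4\pi q/p)=4\pi q=\T_q$. The oddness when $\theta_1,\theta_2\in\{0,\pi\}$ and the evenness when $\theta_1=\theta_2=\pi/2$ come from applying $\sin(-z)=-\sin z$, $\sin(z-\pi)=-\sin z$ and $\sin(z-\pi/2)=-\cos z$ to $\s^{\pm}(-z)$.

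For items (ii)--(iv), the key observation is that $\s^{\pm}$ is entire of exponential type $q_1$ (since $q_1>1$); computing $|\s^{\pm}(iy)|\sim (q_2/2)e^{q_1|y|}$ as $|y|\to\infty$ and invoking a standard indicator-function argument shows that the total number of zeros of $\s^{\pm}$ in one period strip, counted with multiplicity, equals $\T_q\,q_1/\pi=2p$. It will then suffice to produce enough real zeros to saturate this total; reality in (ii) follows automatically. In case (iii) with $q_2>1$, the interval $[0,\T_q)$ partitions into exactly $2p$ half-periods $\ss^{\star}(k,\theta_2,q_1)$ of $\sin(q_1z-\theta_2)$, on each of which $q_2\sin(q_1z-\theta_2)$ is strictly monotone from $-q_2$ to $+q_2$. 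Since $q_2>1\ge|\sin(z-\theta_1)|$, the function $\s^{\pm}$ takes opposite signs at the two endpoints of each $\ss^{\star}(k)$, and the intermediate value theorem yields at least one zero per interval, hence $2p$ real zeros, matching the Hadamard total; all zeros are therefore real and simple, and the index shift in $\s^{-}$ is pinned down by tracking sign changes near $z=0$ via the sign of $\sin\theta_1-q_2\sin\theta_2$. In case (iv) with $q_2<1$, the roles reverse: outside $\bigcup_k\ss(k,\theta_1,q_2)$ one has $|\sin(z-\theta_1)|>q_2\ge|q_2\sin(q_1z-\theta_2)|$, so $\s^{\pm}$ is sign-definite there and every zero lies in some $\ss(k,\theta_1,q_2)$. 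Since $\sin(z-\theta_1)$ has $4q$ sign changes in $[0,\T_q)$, a boundary-sign comparison yields $\K^{\pm}\ge 4q$, and the upper bound $\K^{\pm}\le 2p$ comes from the Hadamard count. Simplicity in the generic sub-case follows from observing that $\s^{\pm}=(\s^{\pm})'=0$ forces, after squaring and adding the sine and cosine identities, the equation $\cos^2(q_1z-\theta_2)=(1-q_2^2)/(q_2^2(q_1^2-1))$, compatible with $|\cos|\le 1$ only when $q_2\ge 1/q_1$; the threshold $q_2=1/q_1$ is precisely where further coincidence with zeros of both $\sin(q_1z-\theta_2)$ and $\sin(z-\theta_1)$ becomes possible, producing the higher-order zeros indicated in the statement.

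Items (v)--(vii) are then immediate: (v) combines $\T_q$-periodicity with the odd/even symmetry from (i) to enumerate all zeros; (vi) follows by evaluating $\s^{\pm}$ at $z=0$ and $z=\theta_1$, giving $\sin\theta_1\pm q_2\sin\theta_2=0$ and $q_1\theta_1-\theta_2\in 2\pi\mathbb{Z}$ respectively; (vii) is a direct substitution exploiting $\sin(\alpha-\pi)=-\sin\alpha$. I expect the principal technical obstacle to be the sharp zero-count in (iii)/(iv), and in particular the correct determination of the half-period index in (iii) across the three sign sub-cases involving $\sin\theta_1$, $q_2\sin\theta_2$ and the position of $\theta_2$ relative to $\pi/2$, as well as the delicate treatment of the resonant case $q_2=1/q_1$ in (iv), where the interplay between higher-order zeros and the factorization framework of Lemma \ref{l5.1} must be carefully unwound.
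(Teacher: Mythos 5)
Your proposal cannot really be compared step-by-step with ``the paper's own proof,'' because the paper does not prove this corollary: it cites \cite[Proposition 3.1]{BV5} for the case $\theta_1,\theta_2\in\{0,\pi\}$ and \cite[Section 3.1]{BV1} for $\s^{-}$, and asserts that ``recasting the same arguments'' covers $\s^{+}$ and general $\theta_i$. Your self-contained route --- the exponential-sum zero count of exactly $2p$ zeros (with multiplicity) per period strip, combined with the intermediate value theorem on the $2p$ half-periods $\ss^{\star}(k,\theta_2,q_1)$ --- is therefore a genuinely different and more explicit argument, and in the case $q_2>1$ it is clean and correct: the $2p$ real zeros you produce saturate the strip count, which yields reality, simplicity and the localization of (iii) all at once.

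The gap is in case (iv), and there your own counting device works against you. The strip still contains exactly $2p$ zeros with multiplicity, but the sign-change argument on $\bigcup_k\ss(k,\theta_1,q_2)$ only produces $4q$ real zeros, and since $2p>4q$ your claim that ``reality in (ii) follows automatically'' fails: the count does not saturate, so nothing prevents the remaining $2p-4q$ zeros from being non-real. In fact they generally are. For $\s^{-}(z;0,0,\tfrac32,\tfrac12)=\sin z-\tfrac12\sin(3z/2)$ (i.e.\ $p=3$, $q=1$, $q_2=\tfrac12<1$, $q_2\neq 1/q_1$), the restriction to the imaginary axis is $i\big[\sinh y-\tfrac12\sinh(3y/2)\big]$, which is positive at $y=1$ and negative at $y=1.5$, so there is a purely imaginary zero $z=iy_0$ with $y_0\in(1,1.5)$. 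Thus either item (ii) must be read as a statement about the real zeros only (with item (v) and the ensuing factorizations of Proposition \ref{p5.2} qualified accordingly), or your proof of it in case (iv) cannot be completed; as written, the proposal glosses over exactly the point where the statement is most delicate. A secondary, smaller gap: your simplicity argument in (iv), which squares and adds to get $\cos^2(q_1z-\theta_2)=(1-q_2^{2})/(q_2^{2}(q_1^{2}-1))$, only excludes real double zeros when $q_2<1/q_1$; for $q_2\in(1/q_1,1)$ the necessary condition is satisfiable and an additional argument is needed to rule out a simultaneous solution of $\s^{\pm}=(\s^{\pm})'=0$.
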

\begin{proof}
This claim in the case of $\theta_{1},\theta_2=0$ or $\pi$ is proved in \cite[Proposition 3.1]{BV5}. The case of $\s^{-}(z;\theta_1,\theta_2,$ $q_1,q_2)$ is analyzed in \cite[Section 3.1]{BV1}. Thus, recasting the same arguments in the case of $\s^{+}(z;\theta_1,\theta_2,q_1,q_2)$ and performing standard technical calculations, we complete the proof of Corollary \ref{c5.1}.
\end{proof}

 Setting
\[
\mu_1^{+}=\begin{cases}
1,\quad\text{if }z=0\text{ is a simple root of }\s^{+},\\
3,\quad\text{if }z=0\text{ is a 3-multiple root of }\s^{+},
\end{cases}
\quad
\mu_1^{-}=\begin{cases}
1,\quad\text{if }z=0\text{ is a simple root of }\s^{-},\\
3,\quad\text{if }z=0\text{ is a 3-multiple root of }\s^{-},
\end{cases}
\]
we assert the following result which is simple consequence of Lemma \ref{l5.1} and Corollary \ref{c5.1}.
\begin{proposition}\label{p5.2}
Let assumptions of Corollary \ref{c5.1} hold. Then, there are the following decompositions.
\begin{description}
    \item[i] If $\sin\theta_1-q_2\sin\theta_2\neq 0$ and $\theta_1,\theta_2\in(0,\pi)$, then
    \[
    \s^{-}(z;\theta_1,\theta_2,q_1,q_2)=-[\sin\theta_1-q_2\sin\theta_2]\prod\limits_{i=0}^{\K^{-}-1}\frac{[\z_{i}^{-}-z]}{\z_{i}^{-}}
    \prod\limits_{n=1}^{\infty}\prod\limits_{i=0}^{\K^{-}-1}\frac{[\z_{i,n}^{-}-z][-\z_{i,-n}^{-}+z]}{\z_{i,n}^{-}(-\z_{i,-n}^{-})}.
    \]
    In the case of  $\sin\theta_1-q_2\sin\theta_2=0$ and $\theta_1,\theta_2\in(0,\pi)$, there holds
    \begin{align*}
    \s^{-}(z;\theta_1,\theta_2,q_1,q_2)&=(-1)^{\lfloor\mu_{1}^{-}/2\rfloor} z^{\mu_1^{-}}[\cos\theta_1-q_1^{\mu_1^{-}}q_2\cos\theta_2]\prod\limits_{i=\mu_1^{-}+1}^{\K^{-}-1}\frac{[\z_{i}^{-}-z]}{\z_{i}^{-}}\\
    &
    \times
    \prod\limits_{n=1}^{\infty}\prod\limits_{i=0}^{\K^{-}-1}\frac{[\z_{i,n}^{-}-z][-\z_{i,-n}^{-}+z]}{\z_{i,n}^{-}(-\z_{i,-n}^{-})}.
    \end{align*}
    \item[ii]
        If $\sin\theta_1+q_2\sin\theta_2\neq 0$ and $\theta_1,\theta_2\in(0,\pi)$, then
    \[
    \s^{+}(z;\theta_1,\theta_2,q_1,q_2)=-[\sin\theta_1+q_2\sin\theta_2]\prod\limits_{i=0}^{\K^{+}-1}\frac{[\z_{i}^{+}-z]}{\z_{i}^{+}}
    \prod\limits_{n=1}^{\infty}\prod\limits_{i=0}^{\K^{+}-1}\frac{[\z_{i,n}^{+}-z][-\z_{i,-n}^{+}+z]}{\z_{i,n}^{+}(-\z_{i,-n}^{+})}.
    \]
    In the case of  $\sin\theta_1+q_2\sin\theta_2=0$ and $\theta_1,\theta_2\in(0,\pi)$, there holds
    \begin{align*}
    \s^{+}(z;\theta_1,\theta_2,q_1,q_2)&=(-1)^{\lfloor\mu_{1}^{+}/2\rfloor} z^{\mu_1^{+}}[\cos\theta_1+q_1^{\mu_1^{+}}q_2\cos\theta_2]\prod\limits_{i=\mu_1^{+}+1}^{\K^{+}-1}\frac{[\z_{i}^{+}-z]}{\z_{i}^{+}}\\
    &
    \times
    \prod\limits_{n=1}^{\infty}\prod\limits_{i=0}^{\K^{+}-1}\frac{[\z_{i,n}^{+}-z][-\z_{i,-n}^{+}+z]}{\z_{i,n}^{+}(-\z_{i,-n}^{+})}.
    \end{align*}
    \item[iii] If $\theta_1=\theta_2=0$, then
    \[
    \s^{+}(z;0,0,q_1,q_2)=(1+q_1q_2)z
\prod\limits_{i=1}^{\K^{+}-1}\frac{(\z_{i}^{+}-z)(\z_{i}^{+}+z)}{(\z_{i}^{+})^{2}}
\prod\limits_{n=1}^{\infty}\prod\limits_{i=0}^{\K^{+}-1}\frac{[\z_{i,n}^{+}-z][\z_{i,n}^{+}+z]}{(\z_{i,n}^{+})^{2}},\]
\[
\s^{-}(z;0,0,q_1,1/q_1)=-(1-q_1^{3}q_2)z^{3}
\prod\limits_{i=4}^{\K^{-}-1}\frac{(\z_{i}^{-}-z)(\z_{i}^{-}+z)}{(\z_{i}^{-})^{2}}
\prod\limits_{n=1}^{\infty}\prod\limits_{i=0}^{\K^{-}-1}\frac{[\z_{i,n}^{-}-z][\z_{i,n}^{-}+z]}{(\z_{i,n}^{-})^{2}},\]
\[
\s^{-}(z;0,0,q_1,q_2)=(1-q_1q_2)z
\prod\limits_{i=1}^{\K^{-}-1}\frac{(\z_{i}^{-}-z)(\z_{i}^{-}+z)}{(\z_{i}^{-})^{2}}
\prod\limits_{n=1}^{\infty}\prod\limits_{i=0}^{\K^{-}-1}\frac{[\z_{i,n}^{-}-z][\z_{i,n}^{-}+z]}{(\z_{i,n}^{-})^{2}},\quad \text{if }\, q_2q_1\neq 1.
    \]
        Besides, the equalities are fulfilled
    \begin{align*}
    \s^{\pm}(z;\pi,\pi,q_1,q_2)&=-\s^{\pm}(z;0,0,q_1,q_2),\quad \s^{\pm}(z;0,\pi,q_1,q_2)=\s^{\mp}(z;0,0,q_1,q_2),\\
    \s^{\pm}(z;\pi,0,q_1,q_2)&=-\s^{\mp}(z;0,0,q_1,q_2).
    \end{align*}

    \item[iv] If either $\theta_1\in(\pi,2\pi)$ or $\theta_2\in(\pi,2\pi)$ then
    \[
    \s^{-}(z;\theta_1,\theta_2,q_1,q_2)=
    \begin{cases}
    -\s^{-}(z;\theta_1-\pi,\theta_2-\pi,q_1,q_2),\quad\text{if }\theta_1,\theta_2\in(\pi,2\pi),\\
    \s^{+}(z;\theta_1,\theta_2-\pi,q_1,q_2),\quad\text{if }\theta_1\in[0,\pi],\, \theta_2\in(\pi,2\pi),\\
    -\s^{+}(z;\theta_1-\pi,\theta_2,q_1,q_2),\quad\text{if }\theta_1\in(\pi,2\pi),\, \theta_2\in[0,\pi],
    \end{cases}
    \]
        \[
    \s^{+}(z;\theta_1,\theta_2,q_1,q_2)=
    \begin{cases}
    -\s^{+}(z;\theta_1-\pi,\theta_2-\pi,q_1,q_2),\quad\text{if }\theta_1,\theta_2\in(\pi,2\pi),\\
    \s^{-}(z;\theta_1,\theta_2-\pi,q_1,q_2),\quad\text{if }\theta_1\in[0,\pi],\, \theta_2\in(\pi,2\pi),\\
    -\s^{-}(z;\theta_1-\pi,\theta_2,q_1,q_2),\quad\text{if }\theta_1\in(\pi,2\pi),\, \theta_2\in[0,\pi].
    \end{cases}
    \]
    \item[v] For $\theta_1,\theta_2\in[0,2\pi)$ there are relations:
    \begin{align*}
    \sin(z+\theta_1)-q_2\sin(q_1z+\theta_2)&=\s^{-}(z;2\pi-\theta_1,2\pi-\theta_2,q_1,q_2);\\
    \sin(z+\theta_1)+q_2\sin(q_1z+\theta_2)&=\s^{+}(z;2\pi-\theta_1,2\pi-\theta_2,q_1,q_2);\\
    \cos(z-\theta_1)\pm q_2\cos(q_1z-\theta_2)&=
    \begin{cases}
    -\s^{\pm}(z;\frac{\pi}{2}+\theta_1,\frac{\pi}{2}+\theta_2,q_1,q_2),\text{ if }\theta_1,\theta_2\in[0,3\pi/2),\\
    -\s^{\pm}(z;\theta_1-\frac{3\pi}{2},\theta_2-\frac{3\pi}{2},q_1,q_2),\text{ if }\theta_1,\theta_2\in[3\pi/2,2\pi),\\
    -\s^{\pm}(z;\frac{\pi}{2}+\theta_1,\theta_2-\frac{3\pi}{2},q_1,q_2),\text{ if }\theta_1\in[0,3\pi/2),\,\theta_2\in[3\pi/2,2\pi),\\
    -\s^{\pm}(z;\theta_1-\frac{3\pi}{2},\theta_2+\frac{\pi}{2},q_1,q_2),\text{ if }\theta_2\in[0,3\pi/2),\,\theta_1\in[3\pi/2,2\pi).
    \end{cases}
    \end{align*}
\end{description}
\end{proposition}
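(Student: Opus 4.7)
The plan is to read Proposition \ref{p5.2} as a direct application of Lemma \ref{l5.1} (Weierstrass factorization) using the complete inventory of zeros supplied by Corollary \ref{c5.1}, followed by a normalization step to pin down the leading constant. First I would check that $\mathcal{S}^{\pm}(z;\theta_1,\theta_2,q_1,q_2)$ are entire functions of order $\kappa_1=1$ (the bound $|\mathcal{S}^{\pm}(z)|\le C(1+q_2)e^{q_1|z|}$ is immediate from $|\sin w|\le e^{|w|}$), and that assumptions \textbf{H8}--\textbf{H10} are met: Corollary \ref{c5.1}(ii)--(v) gives the full list of zeros $\{\mathfrak{z}_{i,n}^{\pm}\}$, their real-valuedness, and the strict monotonicity in $n$; since $|\mathfrak{z}_{i,n}^{\pm}|\sim n\mathfrak{T}_q$ for large $|n|$, the convergence $\sum_n \widehat{\mathfrak{z}}_{i,n}^{\;-2}<+\infty$ is automatic, and in the symmetric pairing $\mathfrak{z}_{i,n}^{\pm}=\mathfrak{z}_i^{\pm}+n\mathfrak{T}_q$, $\mathfrak{z}_{i,-n}^{\pm}=\mathfrak{z}_i^{\pm}-n\mathfrak{T}_q$ we obtain $\frac{1}{\mathfrak{z}_{i,n}^{\pm}}+\frac{1}{\mathfrak{z}_{i,-n}^{\pm}}=O(n^{-2})$, which keeps the combined sum absolutely convergent.

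Next I would feed these data into Lemma \ref{l5.1}, obtaining a representation of the form $\mathcal{S}^{\pm}(z)=A_0\,z^{\mu_1^{\pm}}\,e^{g_1(z)+P_1(z)}\prod_{i,n}(\text{paired factors})$, with $g_1$ a polynomial of degree $<\kappa_1=1$, hence a constant absorbable into $A_0$, and $P_1$ the Weierstrass linear correction. The absolute convergence of the symmetric pairing shows the paired product $\prod_{n\ge 1}\prod_i\frac{(\mathfrak{z}_{i,n}^{\pm}-z)(-\mathfrak{z}_{i,-n}^{\pm}+z)}{\mathfrak{z}_{i,n}^{\pm}(-\mathfrak{z}_{i,-n}^{\pm})}$ converges as written, with no genus-$1$ exponential correction required; any residual exponential $e^{\lambda z}$ must then be trivial by periodicity: $\mathcal{S}^{\pm}(z+\mathfrak{T}_q)=\mathcal{S}^{\pm}(z)$ (Corollary \ref{c5.1}(i)) forces $e^{\lambda\mathfrak{T}_q}=1$, and the real-valuedness of $\mathcal{S}^{\pm}$ on $\mathbb{R}$ together with its boundedness excludes any nonzero purely imaginary $\lambda$. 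Hence the exponential factor collapses to a constant and only $A_0$ remains to be identified.

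The constant $A_0$ is then pinned down by evaluation at $z=0$. In subcases of \textbf{(i)} and \textbf{(ii)} where $\sin\theta_1\mp q_2\sin\theta_2\neq 0$, setting $z=0$ in the product (each ratio equals $1$) gives $\mathcal{S}^{\mp}(0)=-\sin\theta_1\pm q_2\sin\theta_2=A_0$, matching the stated leading constant. In the singular subcases where $z=0$ is a zero of multiplicity $\mu_1^{\pm}$, I would match the $\mu_1^{\pm}$-th Taylor coefficient: direct differentiation of $\mathcal{S}^{\pm}$ yields $(\mathcal{S}^{\pm})^{(\mu_1^{\pm})}(0)$ proportional to $\cos\theta_1\mp q_1^{\mu_1^{\pm}}q_2\cos\theta_2$, and the sign $(-1)^{\lfloor\mu_1^{\pm}/2\rfloor}$ reproduces the alternation between the simple-root ($\mu_1^{\pm}=1$) and triple-root ($\mu_1^{\pm}=3$) cases as predicted by the structure of the trigonometric Taylor series. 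Finally, for the odd/even cases in \textbf{(iii)} one observes $\mathfrak{z}_{i,-n}^{\pm}=-\mathfrak{z}_{i,n}^{\pm}$ from Corollary \ref{c5.1}(v), which collapses the paired factors to $(1-z^2/(\mathfrak{z}_{i,n}^{\pm})^2)$; cases \textbf{(iv)} and \textbf{(v)} then follow as corollaries of the shift identities Corollary \ref{c5.1}(vii) together with the trivial conversions $\sin(z+\theta)=\sin(z-(2\pi-\theta))$ and $\cos(z-\theta)=\sin(z-\theta+\pi/2)$.

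The genuinely delicate step is the vanishing of the exponential prefactor. The convergence estimate $\sum_n(\mathfrak{z}_{i,n}^{\pm})^{-1}+(\mathfrak{z}_{i,-n}^{\pm})^{-1}=O(n^{-2})$ handles the Weierstrass part cleanly, but absorbing the Hadamard linear term $e^{bz}$ requires using periodicity crucially; I expect the cleanest route is to argue via the quotient $\mathcal{S}^{\pm}(z)/\prod(\text{paired factors})$, which is a priori entire of order $\le 1$ and zero-free, hence of the form $Ce^{bz}$, and then to invoke $\mathfrak{T}_q$-periodicity and real-valuedness on $\mathbb{R}$ to conclude $b=0$. All other steps are either bookkeeping with the sign conventions $\theta_{\pm}$ in \eqref{5.2**} or standard corollaries of the normalization.
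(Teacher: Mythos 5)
Your proposal is correct and follows the same skeleton as the paper's proof: invoke Lemma \ref{l5.1} with the zero inventory of Corollary \ref{c5.1}, observe that the paired product converges without genus--one corrections, kill the residual exponential factor, and normalize at $z=0$ (or by matching the leading Taylor coefficient at a multiple root); items \textbf{(iii)}--\textbf{(v)} are, in both treatments, bookkeeping with the symmetries $\z_{i,-n}^{\pm}=-\z_{i,n}^{\pm}$ and elementary shift identities. The one genuinely different step is how the linear exponential $e^{A_1z+P_1(z)}$ is shown to be trivial. The paper first evaluates $P_1(z)=z\sum_{n,i}\bigl(\tfrac{1}{\z_{i,n}^{-}}+\tfrac{1}{\z_{i,-n}^{-}}\bigr)$ in closed form via the summation formula used already in Proposition \ref{p5.1} (reducing to a cotangent), rewrites each paired subproduct as $\sin\tfrac{z-\z_i^-}{4q}/\sin\tfrac{\z_i^-}{4q}$, and then determines $A_1$ by substituting $z=2\T_q$ and $z=4\T_q$ and solving the resulting $2\times2$ system; you instead argue abstractly that the zero-free quotient is $Ce^{bz}$ by Hadamard, and that $\T_q$-periodicity together with real-valuedness on $\R$ forces $b=0$. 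Your route is shorter and avoids the explicit series evaluation; the paper's computation has the side benefit of producing the explicit value of the Weierstrass correction $P_1$. Two small points to tighten in your version: (a) each paired subproduct equals $\sin\tfrac{z-\z_i^{\pm}}{4q}/\sin\tfrac{\z_i^{\pm}}{4q}$ and is therefore only \emph{anti}-periodic under $z\mapsto z+\T_q$, so the full product picks up $(-1)^{\K^{\pm}}$; your periodicity step yields $e^{b\T_q}=(-1)^{\K^{\pm}}$, which combined with the reality of $b$ still gives $b=0$ because $\K^{\pm}$ is even here, but you should say so explicitly (the paper sidesteps this by evaluating at $2\T_q$ and $4\T_q$, where the sign is $(-1)^{2\K^{\pm}}=1$); (b) ``boundedness'' of $\s^{\pm}$ is neither true off the real axis nor needed --- real-valuedness of the quotient on $\R$ alone already excludes a nonzero imaginary exponent.
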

\begin{proof}
It is worth noting that statement (i) in the case of $\sin\theta_1-q_2\sin\theta_2\neq 0$ is proved in \cite[Section 3]{BV5}, while statement (iii) concerning the function $\s^{-}(z;0,0,q_1,q_2)$ is verified in \cite[Proposition 3.1]{BV1}. It is apparent that the proof of statement (i) is a simple consequence of Lemma \ref{l5.1} and Corollary \ref{c5.1}. As for statements (ii) and (iii), they are examined with the arguments exploiting in the proof of claim (i). Coming statements (iv) and (v), they are follows from properties of the functions $\sin z,$ $\cos z$ and standard calculations.

Thus, in order to prove Proposition \ref{p5.2}, it is enough to verify statements in the point (i). First, we analyze the case of $\sin\theta_1-q_2\sin\theta_2\neq 0$ and
$\theta_1,\theta_2\in(0,\pi)$. By virtue of (i) in Corollary \ref{c5.1}, we easily  conclude that $\s^{-}(z;\theta_1,\theta_2,q_1,q_2)$ is the function $\s(z)$ of \textbf{FFK} (see Lemma \ref{l5.1}). Hence, taking into account Corollary \ref{c5.1} and the decomposition of $\s(z)$ in Lemma \ref{l5.1}, we obtain the equality
\begin{equation}\label{5.4}
\s^{-}(z;\theta_1,\theta_2,q_1,q_2)=A_0\exp\{A_1 z+P_1(z)\}\prod\limits_{i=0}^{\K^{-}-1}\frac{[\z_{i}^{-}-z]}{\z_{i}^{-}}
\prod\limits_{n=1}^{+\infty}\prod\limits_{i=0}^{\K^{-}-1}\frac{[\z_{i,n}^{-}-z][z-\z_{i,-n}^{-}]}{\z_{i,n}^{-}(\z_{i,-n}^{-})}
\end{equation}
with
$
P_1(z)=z\sum_{n=1}^{+\infty}\sum_{i=0}^{\K^{-}-1}\Big(\frac{1}{\z_{i,n}^{-}}+\frac{1}{\z_{i,-n}^{-}}\Big),
$
and unknown coefficients $A_0$ and $A_1$ which will be identified below. In order to compute the series in $P_1(z),$ we employ the arguments applying in calculation of $A_{1}^{\pm}$ (see \eqref{5.3*}) in the proof of Proposition \ref{p5.1}. Thus, we have
\[
P_1(z)=z\sum_{i=0}^{\K^{-}-1}\frac{2\z_{i}^{-}}{\T_q^{2}}\sum_{n=1}^{+\infty}\frac{\T_q^{2}}{n^{2}\T_q^{2}-(\z^{-}_{i})^{2}}
=-z\sum_{i=0}^{\K^{-}-1}\Big[\frac{1}{\z_i^{-}}-\frac{\pi\cot(\pi\z_i^{-}/\T_q)}{\T_q}\Big].
\]
As a result, we are left to check that the relations hold
\begin{equation}\label{5.5}
A_{0}=-[\sin\theta_1-q_2\sin\theta_2],\quad A_1=\sum_{i=0}^{\K^{-}-1}\Big[\frac{1}{\z_i^{-}}-\frac{\pi\cot(\pi\z_i^{-}/\T_q)}{\T_q}\Big].
\end{equation}
To this end, we recast the arguments from the proof of Proposition \ref{p5.1} which provides the explicit values of $A_0^{\pm}$ and $A_1^{\pm}$
(see \eqref{5.2*} and \eqref{5.3**}).

\noindent$\bullet$ Indeed, substituting $z=0$ to \eqref{5.4} and taking into account the inequality $\sin\theta_1-q_2\sin\theta_2\neq 0$, we end up with the first equality in \eqref{5.5}.

\noindent$\bullet$ Coming to the equality for finding $A_1$, let us first write (by means of (i) in Proposition \ref{p5.1})
\[
\frac{[z-\z_i^{-}]}{\z_i^{-}}\prod\limits_{n=1}^{+\infty}\frac{[\z_{i,n}^{-}-z][z-\z_{i,-n}^{-}]}{\z_{i,n}^{-}(-\z_{i,-n}^{-})}
=\frac{\sin\frac{z-\z_{i}^{-}}{4q}}{\sin\frac{\z_{i}^{-}}{4q}},\quad i\in\{0,1,2,...,\K^{-}-1\}.
\]
 Then, simple tedious calculations entail
\begin{align*}
&\s^{-}(2\T_q;\theta_1,\theta_2,q_1,q_2)=-[\sin\theta_1-q_2\sin\theta_2]=\s^{-}(4\T_q;\theta_1,\theta_2,q_1,q_2),\\
&\frac{[z-\z_i^{-}]}{\z_i^{-}}\prod\limits_{n=1}^{+\infty}\frac{[\z_{i,n}^{-}-z][z-\z_{i,-n}^{-}]}{\z_{i,n}^{-}(-\z_{i,-n}^{-})}\Big|_{z=2\T_q}=-1=
\frac{[z-\z_i^{-}]}{\z_i^{-}}\prod\limits_{n=1}^{+\infty}\frac{[\z_{i,n}^{-}-z][z-\z_{i,-n}^{-}]}{\z_{i,n}^{-}(-\z_{i,-n}^{-})}\Big|_{z=4\T_q}.
\end{align*}
Finally, substituting $z=2\T_q$ and $z=4\T_q$ to \eqref{5.4} and using relations above, we arrive at the system
\[
\begin{cases}
(-1)^{2\K^{-}}\exp\Big\{2\T_q\Big(A_1-\sum_{i=0}^{\K^{-}-1}\Big[\frac{1}{\z_{i}^{-}}-\frac{\pi\cot\frac{\pi\z_{i}^{-}}{\T_q}}{\T_q}\Big]\Big)\Big\}=1,\\
(-1)^{2\K^{-}}\exp\Big\{4\T_q\Big(A_1-\sum_{i=0}^{\K^{-}-1}\Big[\frac{1}{\z_{i}^{-}}-\frac{\pi\cot\frac{\pi\z_{i}^{-}}{\T_q}}{\T_q}\Big]\Big)\Big\}=1.
\end{cases}
\]
In conclusion, solving this system, we reach the second equality in \eqref{5.5}.

Now, we treat the function $\s^{-}$ if $\sin\theta_1-q_2\sin\theta_2=0$. In this case, Corollary \ref{c5.1} tells us that
$
\s^{-}(0;\theta_1,\theta_2,q_1,q_2)=0.
$
Then, Lemma \ref{l5.1} provides the relation
\[
\s^{-}(z;\theta_1,\theta_2,q_1,q_2)=B_0\exp\{B_1 z+P_1(z)\}z^{\mu_1^{-}}\prod\limits_{i=\mu_{1}^{-}+1}^{\K^{-}-1}\frac{(\z_i^{-}-z)}{\z_{i}^{-}}\prod\limits_{n=1}^{+\infty}\prod\limits_{j=0}^{\K^{-}-1}\frac{[\z_{j,n}^{-}-z][z-\z_{j,-n}^{-}]}{\z_{j,n}^{-}(-\z_{j,-n}^{-})},
\]
where $B_0$ and $B_1$ are unknown coefficients which will be specified below.
To this  end, we recast the procedure leading to identification of $A_0$ and $A_1$ and obtain the desired equalities: $B_1=A_1,
$ and $
B_0=(-1)^{\lfloor\mu_1^{-}/2\rfloor}[\cos\theta_1-q_2q_1^{\mu_1^{-}}\cos\theta_2].$
This completes the proof of Proposition \ref{p5.2}.
\end{proof}

Our next results are related to factorization of the function $\s(z)$ in the \textbf{FTK} case. First, we need to introduce the following  quantities depending on $\theta_1,\theta_2\in[0,2\pi),$   and real positive number $q_3$:
\begin{align*}
q_2^{\pm}&=\begin{cases}
\frac{1\mp q_3}{1\pm q_3},\qquad\text{if } 0<q_3<1,\\
-\frac{1\mp q_3}{1\pm q_3},\quad\text{ if } 1<q_3,
\end{cases}\quad
\theta_2^{\star}=
\begin{cases}
\theta_2+\theta_1,\qquad\qquad\qquad\text{ if }0\leq\theta_2+\theta_1<2\pi,\\
\theta_2-\theta_1-2\pi\lfloor\frac{\theta_2+\theta_1}{2\pi}\rfloor,\quad\text{if }\theta_2+\theta_1\geq 2\pi,
\end{cases}
\\
\theta_{+}^{\star}(\theta_1+\theta_2)&=
\begin{cases}
\theta_{+}(\theta_1+\theta_2),\qquad\quad\text{if }\theta_1+\theta_2\in[0,2\pi),\\
\theta_{+}(\theta_1+\theta_2-2\pi),\quad\text{if }\theta_1+\theta_2\in[2\pi,4\pi)
\end{cases}
\end{align*}
(as for the definition of $\theta_{+}$ see \eqref{5.2*}).
\begin{proposition}\label{p5.3}
Let $0<\omega_1\leq \omega_2$, and $q_3>0$, and let
 $0\leq\theta_1\leq\theta_2<2\pi$.
Then  the following factorizations hold for  functions $\tan(\omega_1z-\theta_1)\pm q_3\tan(\omega_2 z-\theta_2)$.
\begin{description}
    \item[i] If $q_3= 1$ then
    \begin{align*}
    &\tan(\omega_1z-\theta_1)- \tan(\omega_2 z-\theta_2)=-\frac{(-1)^{\lfloor\frac{\theta_2-\theta_1}{\pi}\rfloor-\lfloor\frac{2\theta_2+\pi}{2\pi}\rfloor-\lfloor\frac{2\theta_1+\pi}{2\pi}\rfloor}(\omega_2-\omega_1)\sin\theta_{+}^{\star}(\theta_2-\theta_1)}
    {\theta_{+}(\theta_2-\theta_1)\cos\theta_{-}(\theta_1)\cos\theta_{-}(\theta_2)}\\
    &
    \times
    \Big(z-\frac{\theta_{+}(\theta_2-\theta_1)}{\omega_2-\omega_1}\Big)\prod\limits_{n=1}^{+\infty}
    \frac{\bigg(\frac{\pi n+\theta_{+}(\theta_2-\theta_1)}{\omega_2-\omega_1}-z\bigg)
    \bigg(\frac{\pi n-\theta_{+}(\theta_2-\theta_1)}{\omega_2-\omega_1}+z\bigg)}
    {\bigg(\frac{\pi n+\theta_{+}(\theta_2-\theta_1)}{\omega_2-\omega_1}\bigg)
    \bigg(\frac{\pi n-\theta_{+}(\theta_2-\theta_1)}{\omega_2-\omega_1}\bigg)}\\
    &
    \times
    \frac{\bigg(\frac{\pi (2n-1)+2\theta_{-}(\theta_1)}{2\omega_1}\bigg)\bigg(\frac{\pi (2n-1)-2\theta_{-}(\theta_1)}{2\omega_1}\bigg)\bigg(\frac{\pi (2n-1)+2\theta_{-}(\theta_2)}{2\omega_2}\bigg)\bigg(\frac{\pi (2n-1)-2\theta_{-}(\theta_2)}{2\omega_2}\bigg)}
    {\bigg(\frac{\pi (2n-1)+2\theta_{-}(\theta_1)}{2\omega_1}-z\bigg)\bigg(\frac{\pi (2n-1)-2\theta_{-}(\theta_1)}{2\omega_1}+z\bigg)\bigg(\frac{\pi (2n-1)+2\theta_{-}(\theta_2)}{2\omega_2}-z\bigg)\bigg(\frac{\pi (2n-1)-2\theta_{-}(\theta_2)}{2\omega_2}+z\bigg)},
        \end{align*}
            \begin{align*}
    &\tan(\omega_1z-\theta_1)+\tan(\omega_2 z-\theta_2)=\frac{(-1)^{\lfloor\frac{\theta_2-\theta_1}{\pi}\rfloor-\lfloor\frac{2\theta_2+\pi}{2\pi}\rfloor-\lfloor\frac{2\theta_1+\pi}{2\pi}\rfloor}(\omega_2+\omega_1)\sin\theta^{\star}_{+}(\theta_2+\theta_1)}
    {\theta_{+}^{\star}(\theta_2+\theta_1)\cos\theta_{-}(\theta_1)\cos\theta_{-}(\theta_2)}\\
    &
    \times
    \Big(z+\frac{\theta^{\star}_{+}(\theta_2+\theta_1)}{\omega_2+\omega_1}\Big)\prod\limits_{n=1}^{+\infty}
    \frac{\bigg(\frac{\pi n+\theta^{\star}_{+}(\theta_2+\theta_1)}{\omega_2+\omega_1}-z\bigg)
    \bigg(\frac{\pi n-\theta^{\star}_{+}(\theta_2+\theta_1)}{\omega_2+\omega_1}+z\bigg)}
    {\bigg(\frac{\pi n+\theta^{\star}_{+}(\theta_2+\theta_1)}{\omega_2+\omega_1}\bigg)
    \bigg(\frac{\pi n-\theta^{\star}_{+}(\theta_2+\theta_1)}{\omega_2+\omega_1}\bigg)}\\
    &
    \times
    \frac{\bigg(\frac{\pi (2n-1)+2\theta_{-}(\theta_1)}{2\omega_1}\bigg)\bigg(\frac{\pi (2n-1)-2\theta_{-}(\theta_1)}{2\omega_1}\bigg)
    \bigg(\frac{\pi (2n-1)+2\theta_{-}(\theta_2)}{2\omega_2}\bigg)\bigg(\frac{\pi (2n-1)-2\theta_{-}(\theta_2)}{2\omega_2}\bigg)}
    {\bigg(\frac{\pi (2n-1)+2\theta_{-}(\theta_1)}{2\omega_1}-z\bigg)\bigg(\frac{\pi (2n-1)-2\theta_{-}(\theta_1)}{2\omega_1}+z\bigg)\bigg(\frac{\pi (2n-1)+2\theta_{-}(\theta_2)}{2\omega_2}-z\bigg)\bigg(\frac{\pi (2n-1)-2\theta_{-}(\theta_2)}{2\omega_2}+z\bigg)}.
        \end{align*}
        \item[ii] If $q_3\neq 1$ and, in addition, $\omega_2=\frac{1+q_1}{q_1-1}\omega_1$, where $q_1$ meets the requirements of Corollary \ref{c5.1}, then in the case of $q_3\in(0,1)$ the factorization holds
        \begin{align*}
        &\tan(\omega_1z-\theta_1)\pm q_3\tan(\omega_2 z-\theta_2)=-\frac{(1\pm q_3)\s^{-}((\omega_2-\omega_1)z;\theta_2-\theta_1,\theta_{2}^{\star},\frac{\omega_1+\omega_2}{\omega_2-\omega_1},q_{2}^{\pm})}
        {2(-1)^{\lfloor\frac{2\theta_2+\pi}{2\pi}\rfloor+\lfloor\frac{2\theta_1+\pi}{2\pi}\rfloor}\cos\theta_{-}(\theta_1)\cos\theta_{-}(\theta_2)}\\
        &
        \times
        \prod\limits_{n=1}^{+\infty}
        \frac{\bigg(\frac{\pi (2n-1)+2\theta_{-}(\theta_1)}{2\omega_1}\bigg)\bigg(\frac{\pi (2n-1)-2\theta_{-}(\theta_1)}{2\omega_1}\bigg)
    \bigg(\frac{\pi (2n-1)+2\theta_{-}(\theta_2)}{2\omega_2}\bigg)\bigg(\frac{\pi (2n-1)-2\theta_{-}(\theta_2)}{2\omega_2}\bigg)}
    {\bigg(\frac{\pi (2n-1)+2\theta_{-}(\theta_1)}{2\omega_1}-z\bigg)\bigg(\frac{\pi (2n-1)-2\theta_{-}(\theta_1)}{2\omega_1}+z\bigg)\bigg(\frac{\pi (2n-1)+2\theta_{-}(\theta_2)}{2\omega_2}-z\bigg)\bigg(\frac{\pi (2n-1)-2\theta_{-}(\theta_2)}{2\omega_2}+z\bigg)},
        \end{align*}
        while in the case of $q_3>1$, there is the decomposition
        \begin{align*}
        &\tan(\omega_1z-\theta_1)\pm q_3\tan(\omega_2 z-\theta_2)=-\frac{(1\pm q_3)\s^{+}((\omega_2-\omega_1)z;\theta_2-\theta_1,\theta_{2}^{\star},\frac{\omega_1+\omega_2}{\omega_2-\omega_1},q_{2}^{\pm})}
        {2(-1)^{\lfloor\frac{2\theta_2+\pi}{2\pi}\rfloor+\lfloor\frac{2\theta_1+\pi}{2\pi}\rfloor}\cos\theta_{-}(\theta_1)\cos\theta_{-}(\theta_2)}\\
        &
        \times
        \prod\limits_{n=1}^{+\infty}
        \frac{\bigg(\frac{\pi (2n-1)+2\theta_{-}(\theta_1)}{2\omega_1}\bigg)\bigg(\frac{\pi (2n-1)-2\theta_{-}(\theta_1)}{2\omega_1}\bigg)
    \bigg(\frac{\pi (2n-1)+2\theta_{-}(\theta_2)}{2\omega_2}\bigg)\bigg(\frac{\pi (2n-1)-2\theta_{-}(\theta_2)}{2\omega_2}\bigg)}
    {\bigg(\frac{\pi (2n-1)+2\theta_{-}(\theta_1)}{2\omega_1}-z\bigg)\bigg(\frac{\pi (2n-1)-2\theta_{-}(\theta_1)}{2\omega_1}+z\bigg)\bigg(\frac{\pi (2n-1)+2\theta_{-}(\theta_2)}{2\omega_2}-z\bigg)\bigg(\frac{\pi (2n-1)-2\theta_{-}(\theta_2)}{2\omega_2}+z\bigg)}.
        \end{align*}
            \item[iii] If, in addition, assumptions of Corollary \ref{c5.1} hold, and $\bar{\theta}_1$, $\bar{\theta}_2\in[0,2\pi)$, $q_{1}^{\star}=\frac{p^{\star}}{q^{\star}}$, where $p^{\star},$ $q^{\star}\in\N$, $p^{\star}>2q^{\star}>1,$ and $q^{\star}/p^{\star}$ being irreducible fraction and $q_{2}^{\star}>0,$ $q_2^{\star}\neq 1.$ Then
            \begin{align*}
            \frac{\sin(z-\theta_1)-q_2\sin(q_1 z-\theta_2)}{\sin(z-\theta^{\star}_1)\mp q^{\star}_2\sin(q^{\star}_1 z-\theta^{\star}_2)}
            &=\frac{\s^{-}(z;\theta_1,\theta_2,q_1,q_2)}{\s^{\mp}(z;\theta^{\star}_1,\theta^{\star}_2,q^{\star}_1,q^{\star}_2)},\\
            \frac{\sin(z-\theta_1)+q_2\sin(q_1 z-\theta_2)}{\sin(z-\theta^{\star}_1)\mp q^{\star}_2\sin(q^{\star}_1 z-\theta^{\star}_2)}
            &=\frac{\s^{+}(z;\theta_1,\theta_2,q_1,q_2)}{\s^{\mp}(z;\theta^{\star}_1,\theta^{\star}_2,q^{\star}_1,q^{\star}_2)},
            \end{align*}
            where factorizations of $\s^{\mp}(z;\theta^{\star}_1,\theta^{\star}_2,q^{\star}_1,q^{\star}_2)$ and $\s^{+}(z;\theta_1,\theta_2,q_1,q_2)$ are given in Proposition \ref{p5.2}.
\end{description}
\end{proposition}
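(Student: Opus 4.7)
My plan is to dispose of each of the three parts by reducing to results already established earlier in the paper. Part (i) follows from the classical identity
\[
\tan A \pm \tan B = \frac{\sin(A\pm B)}{\cos A\cos B}
\]
together with the factorizations of $\sin$ and $\cos$ from Proposition \ref{p5.1}. Part (ii) is the technical core: I will combine the two tangents over a common denominator, apply the product-to-sum formulas, recognize the numerator as $\mathcal{S}^{\pm}$ after rescaling, and invoke Proposition \ref{p5.2}. Part (iii) is immediate from Lemma \ref{l5.1} (in its quotient form, the \textbf{FTK} case) and Proposition \ref{p5.2}.

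\textbf{For part (i)}, setting $A=\omega_1z-\theta_1$ and $B=\omega_2 z-\theta_2$, I rewrite
\[
\tan(\omega_1z-\theta_1)-\tan(\omega_2z-\theta_2)=\frac{-\sin((\omega_2-\omega_1)z-(\theta_2-\theta_1))}{\cos(\omega_1z-\theta_1)\cos(\omega_2z-\theta_2)},
\]
and analogously for the $+$ case with $\sin((\omega_1+\omega_2)z-(\theta_1+\theta_2))$ in the numerator. Applying Proposition \ref{p5.1}(i) to the single sine in the numerator (with $q_0=\omega_2\mp\omega_1$ and shift $\mp(\theta_2\mp\theta_1)$, reduced to $\theta_+^{\star}$ when the shift exceeds $\pi$) and Proposition \ref{p5.1}(ii) to each cosine factor in the denominator (with $q_0=\omega_i$ and shift $\theta_-(\theta_i)$) yields the claimed infinite products. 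All sign factors $(-1)^{\lfloor\cdot\rfloor}$ come from reducing $\theta\mapsto\theta_{\pm}$ as recorded in Proposition \ref{p5.1}.

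\textbf{For part (ii)}, the denominator $\cos(\omega_1z-\theta_1)\cos(\omega_2z-\theta_2)$ is handled exactly as in part (i). For the numerator I combine
\[
\sin(\omega_1z-\theta_1)\cos(\omega_2z-\theta_2)\pm q_3\cos(\omega_1z-\theta_1)\sin(\omega_2z-\theta_2)
\]
by product-to-sum and regroup by frequency, obtaining
\[
\frac{1\pm q_3}{2}\sin((\omega_1+\omega_2)z-(\theta_1+\theta_2))-\frac{1\mp q_3}{2}\sin((\omega_2-\omega_1)z-(\theta_2-\theta_1)).
\]
Introducing $u=(\omega_2-\omega_1)z$ and using the hypothesis $\omega_2=\frac{1+q_1}{q_1-1}\omega_1$, so that $q_1=\frac{\omega_1+\omega_2}{\omega_2-\omega_1}$, the numerator becomes a constant multiple of $\sin(u-(\theta_2-\theta_1))\mp q_2^{\pm}\sin(q_1u-\theta_2^{\star})$ (after reducing $\theta_1+\theta_2$ modulo $2\pi$, which is exactly the role of $\theta_2^{\star}$). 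Here the sign before $q_2^{\pm}$ depends on whether $q_3<1$ or $q_3>1$, because $q_2^{\pm}=\pm(1\mp q_3)/(1\pm q_3)$ is chosen so as to remain positive; this is what toggles the expression between $\mathcal{S}^{-}$ and $\mathcal{S}^{+}$ in the final answer. Substituting the resulting expression into the factorization of $\mathcal{S}^{\mp}(u;\theta_2-\theta_1,\theta_2^{\star},q_1,q_2^{\pm})$ furnished by Proposition \ref{p5.2} and dividing by the cosine products from Proposition \ref{p5.1}(ii) produces the stated identity.

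\textbf{Part (iii)} is simply the quotient case of Lemma \ref{l5.1}: the numerator and denominator are entire, of the \textbf{FFK} type analyzed in Proposition \ref{p5.2}, so their quotient admits the displayed representation with all analytic prefactors cancelling to $1$ because the underlying trigonometric functions have equal growth orders. The hard part of the whole proposition is bookkeeping: matching the $\theta_{+}$, $\theta_{-}$, $\theta_{+}^{\star}$ and $\theta_2^{\star}$ shifts together with the floor-function parity factors $(-1)^{\lfloor\cdot/\pi\rfloor}$ arising each time one reduces an angle into its canonical interval, and verifying that the case $q_3>1$ indeed produces the $\mathcal{S}^{+}$ variant (and $q_3<1$ the $\mathcal{S}^{-}$ variant) because the auxiliary coefficient $q_2^{\pm}$ switches sign in its definition at $q_3=1$.
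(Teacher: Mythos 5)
Your proposal follows essentially the same route as the paper: part (i) via the identity $\tan A\pm\tan B=\sin(A\pm B)/(\cos A\cos B)$ combined with the factorizations of Proposition \ref{p5.1}, part (ii) via the product-to-sum regrouping of the numerator into $\frac{1\pm q_3}{2}\sin((\omega_1+\omega_2)z-(\theta_1+\theta_2))-\frac{1\mp q_3}{2}\sin((\omega_2-\omega_1)z-(\theta_2-\theta_1))$ followed by the rescaling $y=(\omega_2-\omega_1)z$ and an appeal to Proposition \ref{p5.2}, and part (iii) as an immediate consequence of Lemma \ref{l5.1} and Proposition \ref{p5.2}. These are exactly the identities and reductions the paper uses, so the argument is correct and matches the paper's proof.
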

\begin{proof}
First of all, we remark that statement (i) in the case of $\omega_1=0$ and $\theta_1\in[0,\pi/2]$ is proved in \cite[\S5]{SF} and \cite[\S3]{BV4}. It is apparent that the statement (iii) is a simple consequence of Proposition \ref{p5.2} and the definition of functions $\s^{\pm}$. Concerning the proof of statements in (i) and (ii), it follows immediately from Propositions \ref{p5.1} and \ref{p5.2} and the easily verified identities below
\begin{align*}
&\tan(\omega_1z-\theta_1)\mp\tan(\omega_2z-\theta_2)=\mp\frac{\sin[(\omega_2\mp\omega_1)z\mp(\theta_2-\theta_1)]}
{\cos(\omega_1z-\theta_1)\cos(\omega_2z-\theta_2)},\\
&\tan(\omega_1z-\theta_1)\pm q_3\tan(\omega_2z-\theta_2)=-\frac{(1\pm q_3)}{2\cos(\omega_1z-\theta_1)\cos(\omega_2z-\theta_2)}\\
&
\times
\begin{cases}
\Big[\sin(y-\theta_2+\theta_1)-q_{2}^{+}\sin\Big(\frac{\omega_1+\omega_2}{\omega_2-\omega_1}y-\theta_1-\theta_2\Big)\Big]\Big|_{y=(\omega_2-\omega_1)z},\quad\text{if }q_3\in(0,1),\\
\Big[\sin(y-\theta_2+\theta_1)+q_{2}^{-}\sin\Big(\frac{\omega_1+\omega_2}{\omega_2-\omega_1}y-\theta_1-\theta_2\Big)\Big]\Big|_{y=(\omega_2-\omega_1)z},\quad\text{if }q_3>1.
\end{cases}
\end{align*}
That completes the proof of this claim.
\end{proof}
To test the results proposed in Corollary \ref{5.1} and Proposition \ref{p5.2}, the example of $\s^{+}(z;\theta_1,\theta_2,q_1,q_2)$ is examined here below, where, in particular, the zeros of $\s^{+}$ are found in the analytical form.
\begin{example}\label{e5.1}
Consider the function $\s^{+}(z;\theta_1,2\theta_1,2,q_2)=\sin(z-\theta_1)+q_2\sin 2(z-\theta_1)$ with  $\theta_2=2\theta_1$ and $\theta_1\in[\pi/4,\pi/3]$, $q=1$, $p=4$ (i.e. $q_1=2$).
\end{example}
In this example, it is easy to verify that
$\T_q=4\pi$ and $
\sin\theta_1+q_2\sin \theta_2\neq 0,
 $
for  $\theta_1\in[\pi/4,\pi/3]$ and $\theta_2\in[\pi/2,2\pi/3]$.
Performing the simple calculations, we  arrive at the explicit form of the zeros in the segment $[0,4\pi)$ which are listed in
Table \ref{tab:table1}. Besides, all zeros of $\s^{+}(z;\theta_1,2\theta_1,2,q_2)$ are given by
\[
\z_{i,n}^{+}=\z_{i}^{+}+4\pi n,\quad \z_{i,-n}^{+}=\z_{i}^{+}-4\pi n, \quad n\in\N, \, i=0,1,...,\K^{+}-1.
\]
\begin{table}[htbp]
  \begin{center}
    \caption{Quantities of $\K^{+}$ and $\z_i^{+}$  in Example \ref{e5.1}}
    \label{tab:table1}
    \begin{tabular}{c|c|c|c}
        %\hline
     % \multicolumn{4}{c|}{$|\nu-\nu_{\delta}(\lambda_{i_{j_0}},\tilde{t}_{j_{0}})|$} & $\nu$\\ % <-- Combining two cells with alignment c| and content 12.
      \hline
     $q_2$ &$q_2>1/2$& $q_2=1/2$&  $0<q_2<1/2$   \\
             \hline
    $\K^{+}$ & 8 & 8  & 4 \\
        \hline
      $\z_0^{+}$ & $\theta_1$ &$\theta_1$  & $\theta_1$ \\
            \hline
$\z_1^{+}$ & $\pi+\theta_1-\arccos\frac{1}{2q_2}$ & $\pi+\theta_1$ & $\pi+\theta_1$  \\
\hline
     $\z_2^{+}$  & $\pi+\theta_1$ & $\pi+\theta_1$ & $2\pi+\theta_1$\\
    \hline
        $\z_3^{+}$ & $\pi+\theta_1+\arccos\frac{1}{2q_2}$ & $\pi+\theta_1$ & $3\pi+\theta_1$\\
        \hline
        $\z_4^{+}$ & $2\pi+\theta_1$ & $2\pi+\theta_1$ & --\\
        \hline
         $\z_5^{+}$ & $3\pi+\theta_1-\arccos\frac{1}{2q_2}$ & $3\pi+\theta_1$ & -- \\
        \hline
         $\z_6^{+}$ & $3\pi+\theta_1$ & $3\pi+\theta_1$ & -- \\
        \hline
         $\z_7^{+}$ & $3\pi+\theta_1+\arccos\frac{1}{2q_2}$ & $3\pi+\theta_1$ & -- \\
        \hline
            \end{tabular}
  \end{center}
\end{table}
Examination of  these zeros arrives at the conclusions:

\noindent$\bullet$ if $q_2>1$, then $\z_{i}^{+}\in\ss^{\star}(i,2\theta_1,2)=\Big[\frac{i\pi}{2}+\theta_1-\frac{\pi}{4},\frac{i\pi}{2}+\theta_1+\frac{\pi}{4}\Big]$, $i\in\{0,1,...,7\},$ and $0<\z_{i}^{+}<\z_{i+1}^{+}$;

\noindent$\bullet$ if $1/2<q_2<1$, then $0<\z_{i}^{+}<\z_{i+1}^{+}$, $i\in\{0,1,...,7\},$  and
\begin{align*}
\z_{i}^{+}\in\ss(0,\theta_1,q_2)&=[\theta_1-\arcsin q_2,\theta_1+\arcsin q_2]\\
&\cup[\pi+\theta_1-\arcsin q_2,\pi+\theta_1+\arcsin q_2],\quad i\in\{0,1,2,3\},\\
\z_{i}^{+}\in\ss(1,\theta_1,q_2)&=[2\pi+\theta_1-\arcsin q_2,2\pi+\theta_1+\arcsin q_2]\\
&\cup[3\pi+\theta_1-\arcsin q_2,3\pi+\theta_1+\arcsin q_2],\quad i\in\{4,5,6,7\};
\end{align*}

\noindent$\bullet$ if $q_2=1/2$, then there are 3-multiple roots of $\s^{+}$ and
$
\z_{0}^{+},\z_{1}^{+}=\z_{2}^{+}=\z_{3}^{+} \in\ss(0,\theta_1,1/2)$ and $
\z_{4}^{+},\z_{5}^{+}=\z_{6}^{+}=\z_{7}^{+} \in\ss(1,\theta_1,1/2);
$

\noindent$\bullet$ if $0<q_2<1/2$, then all zeroes are simple and
$
\z_{0}^{+},\z_{1}^{+} \in\ss(0,\theta_1,q_2)$ and $
\z_{2}^{+},\z_{3}^{+} \in\ss(1,\theta_1,q_2).
$

Summarizing, we conclude that the outcomes in  Example \ref{e5.1} are fitting to Corollary \ref{c5.1}. Finally, statement (ii) in Proposition \ref{p5.2} provides the following factorization of the function $\s^{+}(z;\theta_1,2\theta_1,2,q_2)$:
\begin{align*}
\sin(z-\theta_1)+q_2\sin 2(z-\theta_1)&=-[\sin\theta_1+q_2\sin2\theta_1]\\
&
\times
\begin{cases}
\prod\limits_{i=0}^{7}\frac{\z_{i}^{+}-z}{\z_i^{+}}
                \prod\limits_{n=1}^{+\infty}\prod\limits_{i=0}^{7}\frac{[\z_{i}^{+}+4\pi n-z][4\pi n-\z_{i}^{+}+z]}
                {[\z_{i}^{+}+4\pi n][4\pi n-\z_{i}^{+}]},\, \text{ if }q_2> 1/2,\\
                                \prod\limits_{i=0}^{3}\frac{\z_{i}^{+}-z}{\z_i^{+}}
                \prod\limits_{n=1}^{+\infty}\prod\limits_{i=0}^{3}\frac{[\z_{i}^{+}+4\pi n-z][4\pi n-\z_{i}^{+}+z]}
                {[\z_{i}^{+}+4\pi n][4\pi n-\z_{i}^{+}]},\, \text{ if }0<q_2< 1/2,
\end{cases}
\end{align*}
\begin{align*}
&\sin(z-\theta_1)+\frac{1}{2}\sin 2(z-\theta_1)=-\Big[\sin\theta_1+\frac{1}{2}\sin2\theta_1\Big]
\frac{(\z_{0}^{+}-z)}{\z_0^{+}}
        \Big(\frac{\z_{1}^{+}-z}{\z_1^{+}}\Big)^{3}
        \frac{(\z_{4}^{+}-z)}{\z_4^{+}}\Big(\frac{\z_{5}^{+}-z}{\z_5^{+}}\Big)^{3}\\
        &
        \times
            \prod\limits_{n=1}^{+\infty}\frac{[\z_{0}^{+}+4\pi n-z][4\pi n-\z_{0}^{+}+z]}
                {[\z_{0}^{+}+4\pi n][4\pi n-\z_{0}^{+}]}
                \Big(\frac{[\z_{1}^{+}+4\pi n-z][4\pi n-\z_{1}^{+}+z]}
                {[\z_{1}^{+}+4\pi n][4\pi n-\z_{1}^{+}]}\Big)^{3} \\
                &\times
                                \frac{[\z_{4}^{+}+4\pi n-z][4\pi n-\z_{4}^{+}+z]}
                {[\z_{4}^{+}+4\pi n][4\pi n-\z_{4}^{+}]}\Big(\frac{[\z_{5}^{+}+4\pi n-z][4\pi n-\z_{5}^{+}+z]}
                {[\z_{5}^{+}+4\pi n][4\pi n-\z_{5}^{+}]}\Big)^{3}.
\end{align*}

%%%%%%%%%%%%%%%%%%%%%%%%%%%%%%%%%%%%%%%%%%%%%%%%%%%%%%%%%%%%%%%%%%%%%%

%%%%%%%%%%%%%%%%%%%%%%%%%%%%%%%%%%%%%%%%%%%%%%%%%%%%%%%%%%%%%%%%%%%%%%
\subsection{Solvability of \eqref{5.1}}
\label{s5.2}
\noindent
In this section, assuming \textbf{H8-H10} and appealing to Theorems \ref{t3.1}-\ref{t3.2}, \ref{t4.1}, we will construct a solution of equation \eqref{5.1}. Indeed, assumptions \textbf{H8-H10} provide the validity of Lemma \ref{l5.1}, which in turn means that the function $\s(z)$ has the form similar to $\Omega(z)$. To conclude this fact, it is enough to compare representation \eqref{5.1*} with \eqref{i.2}. Thus, we can apply Theorem \ref{t3.1} or \ref{t4.1} in the case of $\F\equiv 0$ and Theorem \ref{t3.2} or \ref{t4.1} for $\F\neq 0$ to equation \eqref{5.1}.

First, we make additional assumptions.

  \noindent\textbf{H11:} If $\s(z)=\s_1(z),$ then  we require
        \begin{equation}\label{5.6}
            Re\, \Big(\frac{z-\z_{-n}}{\beta}\Big)\neq -m,\quad\text{if}  \quad\beta>0,\quad \text{and}\quad
Re\, \Big(\frac{z-\z_{n}}{|\beta|}\Big)\neq -m,\quad\text{if}  \quad\beta<0,
        \end{equation}
for $m\in\N\cup\{0\},$ and $n\in\{1,2,..., N_1\}$ if $\s_1$ has the finite number of zeros, and $n\in\N$ otherwise.
In addition, if $\mu_1\neq 0$, we assume that the inequality holds:
$
Re(z/\beta)\neq-m.
$

In the case of $\s(z)=1/\s_2(z)$, we require
\begin{equation}\label{5.7}
            Re\, \Big(\frac{z-\y_{n}}{\beta}\Big)\neq 1+m,\quad\text{if}  \quad\beta>0,\quad\text{and}\quad
Re\, \Big(\frac{z-\y_{-n}}{|\beta|}\Big)\neq-1 -m,\quad\text{if}  \quad\beta<0,
        \end{equation}
for  $m\in\N\cup\{0\},$ and $n\in\{1,2,..., N_3\}$  if $\s_2$ has the finite number of zeros, and $n\in\N$  otherwise.

If $\s(z)=\s_1(z)/\s_2(z)$, then inequalities \eqref{5.6} and \eqref{5.7} hold simultaneously.

At this point, for simplicity consideration, we put
\[
A_0^{I}\exp\{g_1(z)+P_1(z)\}:=A_0^{I}\exp\{B_1^{I}(z)\},\quad A_0^{II}\exp\{-g_2(z)-P_2(z)\}:=A_0^{II}\exp\{B_1^{II}(z)\},
\]
and then introduce the functions:
\begin{align*}
\mathcal{R}^{I}(n)&=sgn\, (\beta)\Big(\frac{\z_{-n}}{|\beta|}\Big[\ln\Big(-\frac{\z_{-n}}{|\beta|}-1\Big)\Big]
+\frac{\z_{n}}{|\beta|}\Big[\ln\Big(\frac{\z_{n}}{|\beta|}-1\Big)\Big]
\Big),\\
\mathcal{R}^{II}(n)&=-sgn\, (\beta)\Big(\frac{\y_{-n}}{|\beta|}\Big[\ln\Big(-\frac{\y_{-n}}{|\beta|}-1\Big)\Big]
+\frac{\y_{n}}{|\beta|}\Big[\ln\Big(\frac{\y_{n}}{|\beta|}-1\Big)\Big]
\Big),
\end{align*}
 and
\begin{align*}
\mathbb{L}^{I}(z)&=
\begin{cases}
\frac{\prod\limits_{n=1}^{N_2}\Gamma\Big(\frac{z-\z_{-n}}{\beta}\Big)\Big(\Gamma(z/\beta)\Big)^{\mu_1}}
{\prod\limits_{n=1}^{N_1}\Gamma\Big(\frac{\z_{n}-z}{\beta}+1\Big)}\exp\{i\pi z(N_1+1)\},\quad\text{if }\s_1 \text{ has a finite number of zeros},\\
\Big(\Gamma(z/\beta)\Big)^{\mu_1}\prod\limits_{n=1}^{+\infty}\frac{\Gamma\Big(\frac{z-\z_{-n}}{\beta}\Big)}
{\Gamma\Big(\frac{\z_{n}-z}{\beta}+1\Big)}\Big[\frac{\z_n(-\z_{-n})}{\beta^{2}}\Big]^{\frac{\beta-2z}{2\beta}}\exp\{\mathcal{R}^{I}\},\quad\text{if }\s_1 \text{ has an infinite number }\\
\qquad\qquad\qquad\qquad\qquad\qquad\qquad\qquad\qquad\qquad\qquad\qquad\text{of zeros and }\beta>0,\\
\Big(\Gamma(z/\beta)\Big)^{\mu_1}\prod\limits_{n=1}^{+\infty}\frac{\Gamma\Big(\frac{\z_{n}-z}{|\beta|}\Big)}
{\Gamma\Big(\frac{z-\z_{-n}}{|\beta|}+1\Big)}\Big[\frac{\z_n(-\z_{-n})}{\beta^{2}}\Big]^{\frac{\beta-2z}{2\beta}}\exp\{\mathcal{R}^{I}\},\quad\text{if }\s_1 \text{ has an infinite number}\\
\qquad\qquad\qquad\qquad\qquad\qquad\qquad\qquad\qquad\qquad\qquad\quad\quad\text{ of zeros and }\beta<0,
\end{cases}
\end{align*}
\begin{align*}\
\mathbb{L}^{II}(z)&=
\begin{cases}
\Big(\Gamma\big(\frac{z}{\beta}\big)\Big)^{-\mu_2}\frac{\prod\limits_{n=1}^{N_3}\Gamma\Big(\frac{\y_{n}-z}{\beta}+1\Big)}
{\prod\limits_{n=1}^{N_4}\Gamma\Big(\frac{z-\y_{-n}}{\beta}\Big)}\exp\{i\pi z(N_3+1)\},\quad\text{if }\s_2 \text{ has a finite number of zeros},\\
\Big(\Gamma(z/\beta)\Big)^{-\mu_2}\prod\limits_{n=1}^{+\infty}\frac{\Gamma\Big(\frac{\y_{n}-z}{\beta}+1\Big)}
{\Gamma\Big(\frac{z-\y_{-n}}{\beta}\Big)}\Big[\frac{\y_n(-\y_{-n})}{\beta^{2}}\Big]^{\frac{2z-\beta}{2\beta}}\exp\{\mathcal{R}^{II}\},\quad\text{if }\s_2 \text{ has an infinite number }\\
\qquad\qquad\qquad\qquad\qquad\qquad\qquad\qquad\qquad\qquad\qquad\qquad\quad\text{ of zeros and }\beta>0,\\
\Big(\Gamma(z/\beta)\Big)^{-\mu_2}\prod\limits_{n=1}^{+\infty}\frac{\Gamma\Big(1+\frac{z-\y_{-n}}{|\beta|}\Big)}
{\Gamma\Big(\frac{\y_{n}-z}{|\beta|}\Big)}\Big[\frac{\y_n(-\y_{-n})}{\beta^{2}}\Big]^{\frac{2z-\beta}{2\beta}}\exp\{\mathcal{R}^{II}\},\quad\text{if }\s_1 \text{ has an infinite number }\\
\qquad\qquad\qquad\qquad\qquad\qquad\qquad\qquad\qquad\qquad\qquad\qquad\qquad\text{ of zeros and }\beta<0.
\end{cases}\end{align*}
Finally, denoting
\[
\d_0^{\star}=
\begin{cases}
A_{0}^{I}\beta^{-\mu_1}\quad\text{in the \bf{FFK} case},\\
\, \\
\frac{\beta^{\mu_2}}{A_0^{II}}\quad\text{in the \bf{FSK} case},\\
\,\\
\frac{\beta^{\mu_2-\mu_1}A_{0}^{I}}{A_0^{II}}\quad\text{in the \bf{FTK} case},
\end{cases}\quad
B^{\star}=
\begin{cases}
B_1^{I}\quad\quad\text{in the \bf{FFK} case},\\
\, \\
B_1^{II}\quad\quad\text{in the \bf{FSK} case},\\
\, \\
B_1^{I}-B_2^{II}\quad\text{in the \bf{FTK} case},
\end{cases}\]
\[
\mathbb{L}^{\star}(z)=
\begin{cases}
\mathbb{L}^{I}\quad\quad\text{in the \bf{FFK} case},\\
\, \\
\mathbb{L}^{II}\quad\quad\text{in the \bf{FSK} case},\\
\, \\
\frac{\mathbb{L}^{I}}{\mathbb{L}^{II}}\quad\text{in the \bf{FTK} case},
\end{cases}
\]
and appealing to Theorem \ref{t3.1}, we end up with the assertion.
\begin{theorem}\label{t5.1}
Let assumptions \textbf{H1} and \textbf{H8-H10} hold, $\F\equiv 0$. Then a general solution $Y_h(z,\sigma;\P)$ of homogenous equation \eqref{5.1} is given by
\begin{equation}\label{5.8*}
Y_{h}(z,\sigma;\P)=\exp\Big\{\frac{B^{\star}}{2\beta}z^{2}-\frac{B^{\star}}{2}z\Big\}(\d_0^{\star})^{\frac{2z-\beta}{2\beta}}
\P(z/\beta)(a_1\sigma+a_2\sigma)^{\frac{\beta-2z}{2\beta}}\mathbb{L}^{\star}(z),
\end{equation}
where $\P(z)$ is a function describing in Theorem \ref{t3.1}.

Moreover, statements \textbf{(s.1), (s.2)} of  Theorem \ref{t3.1} with $\mathbb{L}(z):=\mathbb{L}^{\star}(z)$ and
$\mathbb{L}_1(z):=\mathbb{L}^{\star}(z)$ hold.
\end{theorem}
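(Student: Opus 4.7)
The plan is to reduce equation \eqref{5.1} with $\F\equiv 0$ to the homogeneous form of equation \eqref{i.1} by using Lemma \ref{l5.1} to factorize $\s(z)$ into the product shape \eqref{i.2}, and then to invoke Theorem \ref{t3.1} (or Theorem \ref{t4.1} if the zeros of $\s_1,\s_2$ are complex) to read off the explicit formula for $Y_h$ together with the convergence and asymptotic statements \textbf{(s.1)}, \textbf{(s.2)}.

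First I would identify the parameters. In the \textbf{FFK} case, Lemma \ref{l5.1} gives $\s(z)=z^{\mu_1}A_0^I\exp\{g_1(z)+P_1(z)\}\prod_{n=1}^{+\infty}(\z_n-z)(z-\z_{-n})/(\z_n(-\z_{-n}))$, which matches $\Omega(z)$ in \eqref{i.2} under the assignment $\d_0=A_0^I$, $M_2=\mu_1$ with $\d_i^2=0$, $M_5=1$ with $\h_{1,n}=\z_n$, $M_6=1$ with $\gamma_{1,n}=-\z_{-n}$, and every remaining $M_j=0$. Because \textbf{H8} forces $\kappa_1\le 2$, the polynomial $g_1$ has degree at most $1$, and $P_1$ is linear, so $g_1(z)+P_1(z)=B^{\star}z+\text{const}$ with the constant absorbed into $A_0^I$; this realizes \eqref{i.2} with $A=0$ and $B=B^{\star}$. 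The \textbf{FSK} case is analogous with $M_7=1$, $M_8=1$, $M_4=\mu_2$, $\d_i^4=0$, and the \textbf{FTK} case is obtained by superposition. Assumption \textbf{H9} delivers the monotonicity and non-vanishing required by \textbf{H2}/\textbf{H6} and \textbf{H3}/\textbf{H7}, while the two sums in \textbf{H10} correspond precisely to the second-moment and first-moment conditions of \eqref{2.2}; the first-moment bound is exactly what permits $P_1$ to be absorbed into $B^{\star}z$ without creating a genuine quadratic piece.

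With these identifications in hand, formula \eqref{2.4*} of Theorem \ref{t3.1} (or its analogue in Theorem \ref{t4.1}) can be written out directly. With $A=0$ and $B=B^{\star}$, the cubic exponential in \eqref{2.4*} collapses to $\exp\{B^{\star}z^2/(2\beta)-B^{\star}z/2\}$; the prefactor $(\d_0/(a_1\sigma+a_2\sigma^{\nu}))^{z/\beta-1/2}$ becomes $(\d_0^{\star})^{(2z-\beta)/(2\beta)}(a_1\sigma+a_2\sigma^{\nu})^{(\beta-2z)/(2\beta)}$ after the $\beta^{M_1+M_2-M_3-M_4}$ correction (which yields $\beta^{\mu_1-\mu_2}$ in the \textbf{FTK} case and the analogous single powers in the other two); and the product $\mathbb{L}(z)$ of Theorem \ref{t3.1} specializes to $\mathbb{L}^{\star}(z)$ in each of the three cases, with the $(\Gamma(z/\beta))^{\pm\mu}$ contribution from the origin-zero and the $\beta>0/\beta<0$ split already built in as in the definitions of $\mathbb{L}^I,\mathbb{L}^{II}$. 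Statements \textbf{(s.1)} and \textbf{(s.2)} with $\mathbb{L}=\mathbb{L}_1=\mathbb{L}^{\star}$ then follow verbatim from the corresponding parts of Theorem \ref{t3.1} or \ref{t4.1}.

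The main obstacle is not conceptual but lies in careful bookkeeping: checking that the combined polynomial $g_1(z)+P_1(z)$ (and $g_2(z)+P_2(z)$ in the \textbf{FSK}/\textbf{FTK} cases) truly reduces to a linear term under \textbf{H8}--\textbf{H10}, matching the $\beta<0$ branch of the definitions of $\mathbb{L}^I,\mathbb{L}^{II}$ to the $\beta<0$ branch of $\mathbb{L}_1$ in Theorem \ref{t3.1}, and separately handling the finite-product subcases where the infinite-product compensator $e^{\mathcal{R}(n)}$ of Theorem \ref{t3.1} is replaced by the finite phase $\exp\{i\pi z(N_1+1)\}$ (which can be absorbed into $\d_0^{\star}$ after re-identification). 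No analytic machinery beyond Sections \ref{s3}--\ref{s4} is required.
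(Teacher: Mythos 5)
Your proposal is correct and follows exactly the route the paper takes: Section \ref{s5.2} obtains Theorem \ref{t5.1} by invoking Lemma \ref{l5.1} to cast $\s(z)$ in the product form \eqref{i.2} and then applying Theorem \ref{t3.1} (or Theorem \ref{t4.1} when the zeros are complex), which is precisely your reduction, and your explicit parameter identification ($M_2=\mu_1$ with $\d_i^2=0$, $M_5=M_6=1$ with $\h_{1,n}=\z_n$, $\gamma_{1,n}=-\z_{-n}$, etc.) simply fills in the bookkeeping the paper leaves implicit. The only point worth flagging is that your computed factor $\beta^{\mu_1-\mu_2}$, coming from $\beta^{M_1+M_2-M_3-M_4}$ in \eqref{2.4*}, has the opposite sign in the exponent from the paper's stated $\d_0^{\star}=\beta^{\mu_2-\mu_1}A_0^{I}/A_0^{II}$; this appears to be a slip in the paper's definition rather than an error in your derivation.
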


Coming to inhomogeneous equation \eqref{5.1}, we state additional hypotheses.

\noindent \textbf{H12:} For $d_0\in[0,1]$ and $m\in\N\cup\{0\},$ we assume
\begin{equation*}\label{5.8}
Re\Big(\frac{z-\z_n}{\beta}\Big)\neq d_0+m,\quad n\in\{1,2,...,N_1\},
\end{equation*}
in the case of a finite number of zeros at  $\s_1$. Otherwise, we require that  these inequalities  hold with $n\in\N$ if $\beta>0$, while in the case of $\beta<0$ the inequalities are fulfilled
\begin{equation*}\label{5.8**}
Re\Big(\frac{z-\z_{-n}}{|\beta|}\Big)\neq -d_0-m,\quad n\in\N.
\end{equation*}
In the case of a finite number of zeros at $\s_2$, we require that
\begin{equation*}\label{5.9}
Re(z/\beta)\neq -1-m+d_0\quad \text{if}\quad \mu_2\neq 0;\quad\text{and }
Re\Big(\frac{z-\y_{-n}}{\beta}\Big)\neq -m-1+d_0,\quad n\in\{1,2,...,N_4\}.
\end{equation*}
Otherwise, we require that  these relations hold with $n\in\N$ if $\beta>0$, while in the case of $\beta<0$ the inequalities are fulfilled
\begin{equation*}\label{5.9***}
Re(z/\beta)\neq -1-m+d_0\quad\text{if}\quad\mu_2\neq 0;\quad\text{and }
Re\Big(\frac{z-\y_{n}}{|\beta|}\Big)\neq m+1-d_0,\quad n\in\N.
\end{equation*}
In the case of the function $\s_1/\s_2$, we assume that all inequalities  hold simultaneously.

To make the following assumption, we set $\P_1:=\P(z/\beta)$ for each chosen periodic function and
$
Y_h(z,\sigma)=Y_{h}(z,\sigma;\P_1).
$

\noindent \textbf{H13} We assume that $\F(z,\sigma)$ is an analytic function for $z\in\C$ and each $\sigma$ satisfying \textbf{H1}. Moreover, for some functions $\P_1(z/\beta)$ and $\mathcal{K}_{1}(\xi)$, the equality holds
\begin{equation}\label{5.9**}
\underset{|Im\xi|\to+\infty}{\lim}\Big|\frac{\mathcal{K}(\xi)\F(z+\beta\xi,\sigma)}{Y_h(z+\beta\xi+\beta,\sigma)}\Big|=0.
\end{equation}
for all $z$ satisfying \textbf{H.11} and \textbf{H.12} and each $Re\xi\in[-1,0]$.

Now, taking into account Theorems \ref{t3.2}, \ref{t4.1} and \ref{t5.1} we conclude.
\begin{theorem}\label{t5.2}
Let assumptions of Theorem \ref{t5.1} and hypotheses \textbf{H5, H12, H13} hold. Then a general solution of \eqref{5.1} has a form
$
Y_g(z,\sigma)=Y_h(z,\sigma;\P)+Y_{ih}(z,\sigma),
$
where $Y_{h}(z,\sigma;\P)$ is defined in Theorem \ref{t5.1} and $Y_{ih}(z,\sigma)$ being a particular solution is given by
\begin{equation}\label{5.9*}
Y_{ih}(z,\sigma)=\frac{Y_h(z,\sigma)}{\mathcal{K}_1(0)}\int\limits_{\ell_{d_0}}\frac{\F(z+\beta\xi,\sigma)\mathcal{K}(\xi)}{(a_1\sigma+a_2\sigma^{\nu})Y_{h}(z+\beta\xi+\beta,\sigma)}d\xi.
\end{equation}
\end{theorem}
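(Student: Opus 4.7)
The proof proposal is to reduce Theorem \ref{t5.2} directly to Theorem \ref{t3.2} (or Theorem \ref{t4.1} in the complex-sequences setting), using the factorization of $\mathcal{S}(z)$ supplied by Lemma \ref{l5.1}. The plan has essentially three steps.

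First, I would invoke Lemma \ref{l5.1} under the standing hypothesis \textbf{H8--H10} to rewrite $\mathcal{S}(z)$ in the form \eqref{5.1*}. Comparing this expression term by term with the template \eqref{i.2}, I would read off the corresponding parameters for $\Omega(z)$: take $\mathfrak{d}_0=\mathfrak{d}_0^{\star}$, the polynomial/exponential prefactor $Az^2+Bz$ absorbs $B^{\star}(z)$ (so the quadratic $A$ may vanish while $B$ carries the affine part, and the cubic term of \eqref{2.4*} disappears), the finite factors coming from $z^{\mu_1},z^{-\mu_2}$ and $g_1,g_2$ supply the $\mathfrak{d}_i^k$-sequences, and the infinite product on the right of \eqref{5.1*} is exactly of the form $\Omega_1(z)$ with the $\mathfrak{h}_{i,n},\gamma_{i,n},\zeta_{i,n},\eta_{i,n}$-sequences determined by $\{\mathfrak{z}_{\pm n}\}$ and $\{\mathfrak{y}_{\pm n}\}$. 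Assumption \textbf{H10} is precisely what is needed to guarantee that these extracted sequences satisfy \textbf{H2} (real case) or \textbf{H6} (complex case), and \textbf{H11} together with \textbf{H12} translates without alteration into the non-resonance conditions \eqref{2.3}--\eqref{2.4} and \eqref{2.6}--\eqref{2.7}.

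Second, once the identification $\Omega(z)=\mathcal{S}(z)$ has been made, Theorem \ref{t5.1} already delivers the homogeneous solution: the expression \eqref{5.8*} for $Y_h(z,\sigma;\mathcal{P})$ is obtained from \eqref{2.4*} by substituting these parameters, and the convergence/asymptotic statements transfer by \textbf{(s.1)}, \textbf{(s.2)} of Theorem \ref{t3.1}. For the inhomogeneous part, I would then apply Theorem \ref{t3.2} directly: assumption \textbf{H13} is exactly the hypothesis \textbf{H5} written with $\mathcal{Y}_h(z,\sigma;\mathcal{P}_1)$ replaced by $Y_h(z,\sigma;\mathcal{P}_1)$, so the decay condition on the integrand along the contour $\ell_{d_0}$ needed to define \eqref{5.9*} and to justify shifting between $\ell_1$ and $\ell_0$ via Cauchy's theorem is guaranteed. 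Hence the integral $Y_{ih}$ in \eqref{5.9*} is precisely $\mathcal{Y}_{ih}$ from \eqref{2.11} for the corresponding $\Omega(z)=\mathcal{S}(z)$, and it solves \eqref{5.1} by Theorem \ref{t3.2} (or Theorem \ref{t4.1} when the zeros $\mathfrak{z}_{\pm n}$, $\mathfrak{y}_{\pm n}$ are complex, where \textbf{H10} was framed so as to imply \textbf{H6}).

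Finally, writing $Y_g = Y_h + Y_{ih}$ and appealing to linearity of \eqref{5.1} closes the argument: the full solution space of the homogeneous equation is already described by \eqref{5.8*} through the arbitrary periodic factor $\mathcal{P}(z/\beta)$, and $Y_{ih}$ supplies one particular solution of the inhomogeneous one. The only real subtlety I anticipate is the dictionary step in the \textbf{FTK} case: one must verify that combining the $\Gamma$-products for $\mathcal{S}_1$ in the numerator and $\mathcal{S}_2$ in the denominator produces a single $\mathbb{L}^{\star}(z)$ satisfying the hypotheses of Theorem \ref{t3.1}, and in particular that the remainder sums $\mathcal{R}^I(n)-\mathcal{R}^{II}(n)$ assemble into the $\mathcal{R}(n)$ appearing in $\mathbb{L}_1$. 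This is essentially bookkeeping, but it is where the two sign conventions ($\beta>0$ versus $\beta<0$) and the exponents of the $\Gamma$-functions must be matched carefully; everything else is a direct citation of Theorems \ref{t3.1}--\ref{t3.2} and \ref{t4.1}.
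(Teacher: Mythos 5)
Your proposal is correct and follows essentially the same route as the paper: the paper's own justification of Theorem \ref{t5.2} is the single remark ``taking into account Theorems \ref{t3.2}, \ref{t4.1} and \ref{t5.1} we conclude,'' resting on the observation made at the start of Subsection \ref{s5.2} that \textbf{H8--H10} and Lemma \ref{l5.1} put $\s(z)$ in the form \eqref{i.2}, after which \textbf{H11--H13} are the translations of \eqref{2.3}--\eqref{2.4}, \eqref{2.6}--\eqref{2.7} and \textbf{H4--H5}. Your write-up simply makes explicit the dictionary (including the vanishing of the cubic term when $A=0$ and the assembling of $\mathcal{R}^{I}-\mathcal{R}^{II}$ into $\mathcal{R}(n)$ in the \textbf{FTK} case) that the paper leaves implicit.
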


Finally, to demonstrate results of Theorems \ref{t5.1}-\ref{t5.2}, we conclude this section with examples of explicit solutions to \eqref{5.1} with the coefficient $\s$ presented in Propositions \ref{p5.1}-\ref{p5.3}. In light of formulas \eqref{5.8*} and \eqref{5.9*}, we give below the explicit forms to $\mathbb{L}^{\star}(z),$ $B^{\star}$, $\d_0^{\star}$ and the kernel $\mathcal{K}(\xi)$. As for $\mathcal{K}(\xi)$, we describe several kinds of this function which correspond to the various types of the functions $\P(z/\beta)$ and $\F$. For simplicity consideration, we analyze \eqref{5.1} in the case of $\beta=1$.

We begin with consideration of the homogenous case of \eqref{5.1}. By virtue of Propositions \ref{p5.1}-\ref{p5.3}, we deduce that $B^{\star}\equiv 0$, which means the term $\exp\{\frac{B^{\star}z^{2}}{2\beta}-\frac{B^{\star}z}{2}\}$ equals to $1$ in \eqref{5.8*}.  To this end, for
$q_1^{\star}=\frac{p^{\star}}{2q^{\star}}>1,$ $q_2^{\star}>0$, with $p^{\star}$ and $q^{\star}$ having the properties of the numbers $p,q$,
 we first denote the zeros of the function $\s^{\pm}(z;\theta_1,\theta_2,q_1^{\star},q_2^{\star})$ in the segment $[0,4\pi q^{\star})$ by
\[
\z_{i}^{\star,+}:=\z_{i}^{\star,+}(\theta_1,\theta_2,q_1^{\star},q_2^{\star}),\quad i\in\{1,2,...,\K^{\star,+}\},\quad\text{and}\quad
\z_{i}^{\star,-}:=\z_{i}^{\star,-}(\theta_1,\theta_2,q_1^{\star},q_2^{\star}),\quad i\in\{1,2,...,\K^{\star,-}\},
\]
where $\K^{\star,+}$ and $\K^{\star,-}$ are the corresponding number of the  zeros in $[0,4\pi q^{\star})$.
 Then, collecting Theorem \ref{t5.1} with Propositions \ref{p5.1}-\ref{p5.3}, we end up with the outputs listed in Tables \ref{tab:table3}-\ref{tab:table2}.

\begin{longtable}{l|l}
 % \begin{center}
  \caption{The quantity of $\d_{0}^{\star}$ for $\s(z)$ listed in Table \ref{tab:table2}}
    \label{tab:table3}\\
    \hline
    $\s(z)$ &$\d_{0}^{\star}$  \\
                \endfirsthead
                \hline
                            $\s(z)$ &$\d_{0}^{\star}$   \\
                \endhead
             \hline
                                                                        $\sin(q_0z+\theta),\quad
        \theta\in[0,2\pi)$ & $(-1)^{\lfloor\theta/\pi\rfloor}q_0\frac{\sin\theta_{+}}{\theta_{+}}$\\
        \hline
      $\cos(q_0z-\theta),\, \theta\in[0,2\pi),\, \theta\neq \frac{\pi}{2}, \frac{3\pi}{2}$ &
                                $(-1)^{\lfloor\frac{2\theta+\pi}{2\pi}\rfloor}\cos\theta_{-}(\theta)$
                \\
            \hline
  $\s^{+}(z;0,0,q_1,q_2)$ & $(1+q_2q_1)\prod\limits_{i=1}^{\K^{+}-1}(\z_{i}^{+})^{-2}$ \\
    \hline
       $\s^{-}(z;0,0,q_1,q_2),\, q_2\neq\frac{1}{q_1}$ &
            $(1-q_2q_1)\prod\limits_{i=1}^{\K^{-}-1}(\z_{i}^{-})^{-2}$
            \\
    \hline
       $\s^{-}(z;0,0,q_1,q_2),\, q_2=\frac{1}{q_1}$ &
            $-(1-q_2q_1^{3})\prod\limits_{i=4}^{\K^{-}-1}(\z_{i}^{-})^{-2}$  \\
        \hline
                  $\begin{array}{l}\s^{-}(z;\theta_1,\theta_2,q_1,q_2),\, \text{if } \theta_1,\,\theta_2\in(0,\pi),\\
            \sin\theta_1-q_2\sin\theta_2\neq 0\end{array}$ &
            $-(\sin\theta_1-q_2\sin\theta_2)\prod\limits_{i=0}^{\K^{-}-1}(\z_{i}^{-})^{-1}$  \\
        \hline
                  $\begin{array}{l}\s^{-}(z;\theta_1,\theta_2,q_1,q_2),\, \text{if } \theta_1,\,\theta_2\in(0,\pi),\\
            \sin\theta_1-q_2\sin\theta_2= 0\end{array}$ &
            $(-1)^{\lfloor\mu_1^{-}/2\rfloor}[\cos\theta_1-q_2 q_1^{\mu_1^{-}}\cos\theta_2]\prod\limits_{i=\mu_1^{-1}+1}^{\K^{-}-1}(\z_{i}^{-})^{-1}$
                \\
        \hline
                  $\begin{array}{l}\s^{+}(z;\theta_1,\theta_2,q_1,q_2),\ \text{ if } \theta_1,\,\theta_2\in(0,\pi)\\
            \sin\theta_1+q_2\sin\theta_2\neq 0 \end{array}$ &
            $-[\sin\theta_1+q_2\sin\theta_2]\prod\limits_{i=0}^{\K^{+}-1}(\z_{i}^{+})^{-1}$  \\
        \hline
                  $\begin{array}{l}\s^{+}(z;\theta_1,\theta_2,q_1,q_2),\, \text{if } \theta_1,\,\theta_2\in(0,\pi),\\
            \sin\theta_1+q_2\sin\theta_2= 0\end{array}$ &
            $(-1)^{\lfloor\mu_1^{+}/2\rfloor}[\cos\theta_1+q_2 q_{1}^{\mu_1^{+}}\cos\theta_2]
            \prod\limits_{i=\mu_1^{+}+1}^{\K^{+}-1}(\z_{i}^{+})^{-1}$ \\
        \hline
                                      $\begin{array}{l}\frac{\s^{-}(z;\theta_1,\theta_2,q_1,q_2)}{\s^{+}(z;0,0,q_1^{\star},q_2^{\star})},
                            \text{ if } \theta_1,\,\theta_2\in(0,\pi),\\
            \sin\theta_1-q_2\sin\theta_2= 0 \end{array}$  &
                                $\frac{(-1)^{\lfloor\mu_1^{-}/2\rfloor}(\cos\theta_1-q_2 q_{1}^{\mu_1^{-}}\cos\theta_2)}{(1+q_2^{\star}q_1^{\star})}
                \prod\limits_{i=1}^{\K^{\star,+}-1}(\z_{i}^{\star,+})^{2}
                \prod\limits_{i=\mu_1^{-}+1}^{\K^{-}-1}(\z_{i}^{-})^{-1}$
                                    \\
            \hline
                         $\begin{array}{l}\frac{\s^{-}(z;\theta_1,\theta_2,q_1,q_2)}{\s^{-}(z;0,0,q_1^{\star},q_2^{\star})},\,
                            \text{if } \theta_1,\,\theta_2\in(0,\pi),\\
            \sin\theta_1-q_2\sin\theta_2= 0, \,
            q^{\star}_2\neq1/q_1^{\star}\end{array}$  &
                $\frac{(-1)^{\lfloor\mu_1^{-}/2\rfloor}(\cos\theta_1-q_2 q_{1}^{\mu_1^{-}}\cos\theta_2)}{(1-q_2^{\star}q_1^{\star})}
                \prod\limits_{i=1}^{\K^{\star,-}-1}(\z_{i}^{\star,-})^{2}
                \prod\limits_{i=\mu_1^{-}+1}^{\K^{-}-1}(\z_{i}^{-})^{-1}$
                        \\
                    \hline
                                    $\begin{array}{l}\frac{\s^{-}(z;\theta_1,\theta_2,q_1,q_2)}{\s^{-}(z;0,0,q_1^{\star},q_2^{\star})},\,
                            \text{if } \theta_1,\,\theta_2\in(0,\pi),\\
            \sin\theta_1-q_2\sin\theta_2= 0,\,
            q^{\star}_2=1/q_1^{\star}\end{array}$  &
                $\frac{(-1)^{\lfloor\mu_{1}^{-}/2\rfloor-1}(\cos\theta_1-q_2 q_{1}^{\mu_1^{-}}\cos\theta_2)}{(1-q_2^{\star}(q_1^{\star})^{3})}
                \prod\limits_{i=4}^{\K^{\star,-}-1}(\z_{i}^{\star,-})^{2}
                \prod\limits_{i=\mu_1^{-}+1}^{\K^{-}-1}(\z_{i}^{-})^{-1}$
                        \\
                    \hline
                                                            $\begin{array}{l}\frac{\s^{+}(z;\theta_1,\theta_2,q_1,q_2)}{\s^{+}(z;0,0,q_1^{\star},q_2^{\star})},
                            \text{ if } \theta_1,\,\theta_2\in(0,\pi),\\
            \sin\theta_1+q_2\sin\theta_2\neq 0 \end{array}$  &
            $-\frac{(\sin\theta_1+q_2 \sin\theta_2)}{(1+q_2^{\star}q_1^{\star})}
                \prod\limits_{i=1}^{\K^{\star,+}-1}(\z_{i}^{\star,+})^{2}
                \prod\limits_{i=0}^{\K^{+}-1}(\z_{i}^{+})^{-1}$
                        \\
                    \hline
                                        $\begin{array}{l}\frac{\s^{+}(z;\theta_1,\theta_2,q_1,q_2)}{\s^{+}(z;0,0,q_1^{\star},q_2^{\star})},
                            \text{ if } \theta_1,\,\theta_2\in(0,\pi),\\
            \sin\theta_1+q_2\sin\theta_2= 0 \end{array}$  &
                $\frac{(-1)^{\lfloor\mu_{1}^{+}/2\rfloor}(\cos\theta_1+q_2 q_{1}^{\mu_1^{+}}\cos\theta_2)}{(1+q_2^{\star}q_1^{\star})}
                \prod\limits_{i=1}^{\K^{\star,+}-1}(\z_{i}^{\star,+})^{2}
                \prod\limits_{i=\mu_1^{+}+1}^{\K^{+}-1}(\z_{i}^{+})^{-1}$
                        \\
                    \hline
            %\end{tabular}
  %\end{center}
\end{longtable}\begin{longtable}{l|l}
 % \begin{center}
  \caption[ ]{The function $\mathbb{L}^{\star}(z)$ for different kinds of $\s(z)$}
    \label{tab:table2}\\
    \hline
    $\s(z)$ &$\mathbb{L}^{\star}(z)$   \\
                \endfirsthead
                \hline
                    $\s(z)$ &$\mathbb{L}^{\star}(z)$   \\
                \endhead
        \hline
$\s^{+}(z;0,0,q_1,q_2)$ & $\begin{array}{l}
    \Gamma(z)\prod\limits_{i=1}^{\K^{+}-1}\frac{\Gamma(\z_{i}^{+}+z)}{\Gamma(\z_{i}^{+}-z+1)}
    \prod\limits_{n=1}^{+\infty}
    \prod\limits_{i=0}^{\K^{+}-1} \frac{\Gamma(\z_{i,n}^{+}+z)}{\Gamma(\z_{i,n}^{+}-z+1)}[\z_{i,n}^{+}]^{1-2z}
    \end{array}$ \\
    \hline
       $\s^{-}(z;0,0,q_1,q_2),\, q_2\neq\frac{1}{q_1}$ &
            $\begin{array}{l}
    \Gamma(z)\prod\limits_{i=1}^{\K^{-}-1}\frac{\Gamma(\z_{i}^{-}+z)}{\Gamma(\z_{i}^{-}-z+1)}
    \prod\limits_{n=1}^{+\infty}
    \prod\limits_{i=0}^{\K^{-}-1} \frac{\Gamma(\z_{i,n}^{-}+z)}{\Gamma(\z_{i,n}^{-}-z+1)}[\z_{i,n}^{-}]^{1-2z}
    \end{array}$
            \\
    \hline
       $\s^{-}(z;0,0,q_1,q_2),\, q_2=\frac{1}{q_1}$ &
            $\begin{array}{l}
    [\Gamma(z)]^{3}\prod\limits_{i=4}^{\K^{-}-1}\frac{\Gamma(\z_{i}^{-}+z)}{\Gamma(\z_{i}^{-}-z+1)}
    \prod\limits_{n=1}^{+\infty}
    \prod\limits_{i=0}^{\K^{-}-1} \frac{\Gamma(\z_{i,n}^{-}+z)}{\Gamma(\z_{i,n}^{-}-z+1)}[\z_{i,n}^{-}]^{1-2z}
    \end{array}$   \\

                \hline
                  $\begin{array}{l}\s^{-}(z;\theta_1,\theta_2,q_1,q_2), \text{if }\theta_1,\,\theta_2\in(0,\pi)\\
            \text{and }\sin\theta_1-q_2\sin\theta_2\neq 0 \end{array}$ &
            $\begin{array}{l}
    \prod\limits_{i=0}^{\K^{-}-1}\frac{1}{\Gamma(\z_{i}^{-}-z+1)}
    \prod\limits_{n=1}^{+\infty}\prod\limits_{i=0}^{\K^{-}-1} \frac{\Gamma(z-\z_{i,-n}^{-})[(-\z_{i,-n}^{-})\z_{i,n}^{-}]^{\frac{1-2z}{2}}}{\Gamma(\z_{i,n}^{-}-z+1)}\\
    \times\exp\{\z_{i,-n}^{-}[\ln(-\z_{i,-n}^{-})-1]+\z_{i,n}^{-}[\ln(\z_{i,n}^{-})-1]\}
    \end{array}$  \\
        \hline
                  $\begin{array}{l}\s^{-}(z;\theta_1,\theta_2,q_1,q_2), \text{if }\theta_1,\,\theta_2\in(0,\pi)\\
            \text{and }\sin\theta_1-q_2\sin\theta_2= 0\end{array}$ &
            $\begin{array}{l}
\prod\limits_{i=\mu_{1}^{-}+1}^{\K^{-}-1}\frac{
[\Gamma(z)]^{\mu_1^{-}}}{\Gamma(\z_{i}^{-}-z+1)}\prod\limits_{n=1}^{+\infty}
    \prod\limits_{i=0}^{\K^{-}-1} \frac{\Gamma(z-\z_{i,-n}^{-})[(-\z_{i,-n}^{-})\z_{i,n}^{-}]^{\frac{1-2z}{2}}}{\Gamma(\z_{i,n}^{-}-z+1)}\\
    \times\exp\{\z_{i,-n}^{-}[\ln(-\z_{i,-n}^{-})-1]+\z_{i,n}^{-}[\ln(\z_{i,n}^{-})-1]\}
    \end{array}$  \\
        \hline
                  $\begin{array}{l}\s^{+}(z;\theta_1,\theta_2,q_1,q_2), \text{ if } \theta_1,\,\theta_2\in(0,\pi)\\
            \sin\theta_1+q_2\sin\theta_2\neq 0\end{array}$ &
            $\begin{array}{l}
    \prod\limits_{i=0}^{\K^{+}-1}\frac{1}{\Gamma(\z_{i}^{+}-z+1)}
    \prod\limits_{n=1}^{+\infty}
    \prod\limits_{i=0}^{\K^{+}-1} \frac{\Gamma(z-\z_{i,-n}^{+})[(-\z_{i,-n}^{+})\z_{i,n}^{+}]^{\frac{1-2z}{2}}}{\Gamma(\z_{i,n}^{+}-z+1)}\\
    \times\exp\{\z_{i,-n}^{+}[\ln(-\z_{i,-n}^{+})-1]+\z_{i,n}^{+}[\ln(\z_{i,n}^{+})-1]\}
    \end{array}$  \\
        \hline
                  $\begin{array}{l}\s^{+}(z;\theta_1,\theta_2,q_1,q_2)\, \text{ if } \theta_1,\,\theta_2\in(0,\pi)\\
            \text{and}\, \sin\theta_1+q_2\sin\theta_2= 0\end{array}$ &
            $\begin{array}{l}
                \prod\limits_{i=\mu_1^{+}+1}^{\K^{+}-1}\frac{[\Gamma(z)]^{\mu_1^{+}}}{\Gamma(\z_{i}^{+}-z+1)}
    \prod\limits_{n=1}^{+\infty}
    \prod\limits_{i=0}^{\K^{+}-1} \frac{\Gamma(z-\z_{i,-n}^{+})[(-\z_{i,-n}^{+})\z_{i,n}^{+}]^{\frac{1-2z}{2}}}{\Gamma(\z_{i,n}^{+}-z+1)}\\
    \times\exp\{\z_{i,-n}^{+}[\ln(-\z_{i,-n}^{+})-1]+\z_{i,n}^{+}[\ln(\z_{i,n}^{+})-1]\}
    \end{array}$  \\
\hline
                                             $\sin(q_0z+\theta),$ if
        $\theta\in[0,2\pi)$ & $\begin{array}{l}\Gamma\Big(z+\frac{\theta_{+}}{q_0}\Big)\prod\limits_{n=1}^{+\infty}
        \frac{\Gamma\Big(\frac{\pi n-\theta_{+}}{q_0}+z\Big)}
        {\Gamma\Big(\frac{\pi n+\theta_{+}}{q_0}-z+1\Big)}
        \Big(\frac{\pi^{2}n^{2}-\theta_{+}^{2}}{q_0^{2}}\Big)^{\frac{1}{2}-z}\\
        \times\exp\Big\{\frac{\pi n-\theta_{+}}{q_0}\Big[1-\ln\frac{\pi n-\theta_{+}}{q_0}\Big]
        +\frac{\pi n+\theta_{+}}{q_0}\Big[\ln\frac{\pi n+\theta_{+}}{q_0}-1\Big]
        \Big\}
        \end{array}$ \\
        \hline
      $\cos(q_0z-\theta)$, if $\theta\in[0,2\pi),\, \theta\neq\frac{\pi}{2},\frac{3\pi}{2} $ &
                                $\begin{array}{l}\prod\limits_{n=1}^{+\infty}
        \frac{\Gamma\Big(\frac{\pi (2n-1)-2\theta_{-}}{2q_0}+z\Big)}
        {\Gamma\Big(\frac{\pi(2 n-1)+2\theta_{-}}{2q_0}-z+1\Big)}
        \Big(\frac{\pi^{2}(2n-1)^{2}-4\theta_{-}^{2}}{4q_0^{2}}\Big)^{\frac{1}{2}-z}\\
        \times\exp\Big\{\frac{\pi (2n-1)-2\theta_{-}}{2q_0}\Big[1-\ln\frac{\pi(2 n-1)-2\theta_{-}}{2q_0}\Big]\\
        +\frac{\pi(2 n-1)+2\theta_{-}}{2q_0}\Big[\ln\frac{\pi(2 n-1)+2\theta_{-}}{2q_0}-1\Big]
        \Big\}
        \end{array}$
                \\
        \hline
                          $\begin{array}{l}\frac{\s^{-}(z;\theta_1,\theta_2,q_1,q_2)}{\s^{+}(z;0,0,q_1^{\star},q_2^{\star})},\,
                            \text{ if } \theta_1,\,\theta_2\in(0,\pi)\\
            \text{and } \sin\theta_1-q_2\sin\theta_2= 0\end{array}$  &
                $\begin{array}{l}
    \frac{[\Gamma(z)]^{\mu_1^{-}-1}\prod\limits_{i=\mu_1^{-}+1}^{\K^{-}-1}\Gamma(\z_{i}^{-}-z+1)}
    {\prod\limits_{i=1}^{\K^{\star,+}-1}\Gamma(\z_{i}^{\star,+}+z)\prod\limits_{i=\mu_1^{-}+1}^{\K^{-}-1}\Gamma(\z_{i}^{-}-z+1)}
    \prod\limits_{n=1}^{+\infty}
    \frac{\prod\limits_{i=0}^{\K^{-}-1} \Gamma(z-\z_{i,-n}^{-})}
    {\prod\limits_{i=0}^{\K^{-}-1} \Gamma(z-\z_{i,-n}^{-}+1)}
    \\
    \times
    \frac{\prod\limits_{i=0}^{\K^{\star,+}-1}\Gamma(\z_{i,n}^{\star,+}-z+1)}
    {\prod\limits_{i=0}^{\K^{\star,+}-1}\Gamma(\z_{i,n}^{\star,+}+z)}
    \Big[\frac{(-\z_{i,-n}^{-})\z_{i,n}^{-}}{(\z_{i,n}^{\star,+})^{2}}\Big]^{\frac{1-2z}{2}}
    \\
    \times
        \exp\Big\{\sum\limits_{i=0}^{\K^{-}-1}(\z_{i,-n}^{-}[\ln(-\z_{i,-n}^{-})-1]
    +\z_{i,n}^{-}[\ln(\z_{i,n}^{-})-1])\Big \}
    \end{array}$
                                \\
            \hline
                         $\begin{array}{l}\frac{\s^{-}(z;\theta_1,\theta_2,q_1,q_2)}{\s^{-}(z;0,0,q_1^{\star},q_2^{\star})},\,
                            \text{ if }\theta_1,\,\theta_2\in(0,\pi) \\
            \text{and } \sin\theta_1-q_2\sin\theta_2= 0,
            q^{\star}_2\neq1/q_1^{\star}\end{array}$  &
                $\begin{array}{l}
\frac{[\Gamma(z)]^{\mu_1^{-}-1}}{\prod\limits_{i=\mu_1^{-}+1}^{\K^{-}-1}\Gamma(\z_{i}^{-}-z+1)}
    \frac{\prod\limits_{i=1}^{\K^{\star,-}-1}\Gamma(\z_{i}^{\star,-}-z+1)}
    {\prod\limits_{i=1}^{\K^{\star,-}-1}\Gamma(\z_{i}^{\star,-}+z)}\prod\limits_{n=1}^{+\infty}
        \frac{\prod\limits_{i=0}^{\K^{-}-1} \Gamma(z-\z_{i,-n}^{-})}{\prod\limits_{i=0}^{\K^{-}-1}\Gamma(z-\z_{i,-n}^{-}+1)}
            \\
    \times
                \frac{\prod\limits_{i=0}^{\K^{\star,-}-1}
    \Gamma(\z_{i,n}^{\star,-}-z+1)}
    {\prod\limits_{i=0}^{\K^{\star,-}-1}\Gamma(\z_{i,n}^{\star,-}+z)}
    \frac{\prod\limits_{i=0}^{\K^{\star,-}-1}(\z_{i,n}^{\star,-})^{2z-1}}
    {\prod\limits_{i=0}^{\K^{-}-1}
    [\z_{i,n}^{-}(-\z_{i,-n}^{-})]^{\frac{2z-1}{2}}}
    \\
    \times
        \exp\Big\{\sum\limits_{i=0}^{\K^{-}-1}(\z_{i,-n}^{-}[\ln(-\z_{i,-n}^{-})-1]
    +\z_{i,n}^{-}[\ln(\z_{i,n}^{-})-1])\Big \}
    \end{array}$
                        \\
                    \hline

                    $\begin{array}{l}\frac{\s^{-}(z;\theta_1,\theta_2,q_1,q_2)}{\s^{-}(z;0,0,q_1^{\star},q_2^{\star})},
                            \,
                            \text{ if } \theta_1,\,\theta_2\in(0,\pi),\\
            \sin\theta_1-q_2\sin\theta_2= 0,\,
            q^{\star}_2=1/q_1^{\star}\end{array}$  &
                $\begin{array}{l}
\frac{[\Gamma(z)]^{\mu_1^{-}-3}}{\prod\limits_{i=\mu_1^{-}+1}^{\K^{-}-1}\Gamma(\z_{i}^{-}-z+1)}
\frac{
\prod\limits_{i=4}^{\K^{\star,-}-1}\Gamma(\z_{i}^{\star,-}-z+1)}
    {   \prod\limits_{i=4}^{\K^{\star,-}-1}\Gamma(\z_{i}^{\star,-}+z)}
                    \prod\limits_{n=1}^{+\infty}
        \frac{\prod\limits_{i=0}^{\K^{-}-1} \Gamma(z-\z_{i,-n}^{-})}{\prod\limits_{i=0}^{\K^{-}-1}\Gamma(z-\z_{i,-n}^{-}+1)}
            \\
    \times
            \frac{\prod\limits_{i=0}^{\K^{\star,-}-1}   \Gamma(\z_{i,n}^{\star,-}-z+1)}
        {\prod\limits_{i=0}^{\K^{\star,-}-1}\Gamma(\z_{i,n}^{\star,-}+z)}
        \frac{\prod\limits_{i=0}^{\K^{\star,-}-1}(\z_{i,n}^{\star,-})^{2z-1}}
    {\prod\limits_{i=0}^{\K^{-}-1}
    [\z_{i,n}^{-}(-\z_{i,-n}^{-})]^{\frac{2z-1}{2}}}
    \\
    \times
    \exp\Big\{\sum\limits_{i=0}^{\K^{-}-1}(\z_{i,-n}^{-}[\ln(-\z_{i,-n}^{-})-1]
            +\z_{i,n}^{-}[\ln(\z_{i,n}^{-})-1])\Big \}
    \end{array}$
                        \\
                    \hline
                                        $\begin{array}{l}\frac{\s^{+}(z;\theta_1,\theta_2,q_1,q_2)}{\s^{+}(z;0,0,q_1^{\star},q_2^{\star})},\,
                            \text{ if } \theta_1,\,\theta_2\in(0,\pi),\\
            \sin\theta_1+q_2\sin\theta_2\neq 0\end{array}$  &
                $\begin{array}{l}
    \frac{\prod\limits_{i=1}^{\K^{\star,+}-1}\Gamma(\z_{i}^{\star,+}-z+1)}
    {\prod\limits_{i=0}^{\K^{+}-1}\Gamma(\z_{i}^{+}-z+1)\Gamma(z)\prod\limits_{i=1}^{\K^{\star,+}-1}\Gamma(\z_{i}^{\star,+}+z)}
    \prod\limits_{n=1}^{+\infty}
    \frac{\prod\limits_{i=0}^{\K^{+}-1} \Gamma(z-\z_{i,-n}^{+})}{\prod\limits_{i=0}^{\K^{+}-1}\Gamma(z-\z_{i,-n}^{+}+1)}
            \\
    \times
        \frac{\prod\limits_{i=0}^{\K^{\star,+}-1}
    \Gamma(\z_{i,n}^{\star,+}-z+1)}
    {\prod\limits_{i=0}^{\K^{\star,+}-1}\Gamma(\z_{i,n}^{\star,+}+z)}
        \frac{\prod\limits_{i=0}^{\K^{\star,+}-1}(\z_{i,n}^{\star,+})^{2z-1}}
    {\prod\limits_{i=0}^{\K^{+}-1}
    [\z_{i,n}^{+}(-\z_{i,-n}^{+})]^{\frac{2z-1}{2}}}
    \\
    \times
    \exp\Big\{\sum\limits_{i=0}^{\K^{+}-1}(\z_{i,-n}^{+}[\ln(-\z_{i,-n}^{+})-1]
    +\z_{i,n}^{+}[\ln(\z_{i,n}^{+})-1])\Big \}
    \end{array}$
                        \\
                    \hline
                                        $\begin{array}{l}\frac{\s^{+}(z;\theta_1,\theta_2,q_1,q_2)}{\s^{+}(z;0,0,q_1^{\star},q_2^{\star})},\,
                            \text{ if } \theta_1,\,\theta_2\in(0,\pi),\\
            \sin\theta_1+q_2\sin\theta_2= 0\end{array}$  &
                $\begin{array}{l}
\frac{[\Gamma(z)]^{\mu_1^{+}-1}}{\prod\limits_{i=\mu_1^{+}+1}^{\K^{+}-1}\Gamma(\z_{i}^{+}-z+1)}
    \frac{\prod\limits_{i=1}^{\K^{\star,+}-1}\Gamma(\z_{i}^{\star,+}-z+1)}
    {\prod\limits_{i=1}^{\K^{\star,+}-1}\Gamma(\z_{i}^{\star,+}+z)}\prod\limits_{n=1}^{+\infty}
    \frac{\prod\limits_{i=0}^{\K^{+}-1} \Gamma(z-\z_{i,-n}^{+})}{\prod\limits_{i=0}^{\K^{+}-1}\Gamma(z-\z_{i,-n}^{+}+1)}
            \\
    \times
                \frac{\prod\limits_{i=0}^{\K^{\star,+}-1}
    \Gamma(\z_{i,n}^{\star,+}-z+1)}
    {\prod\limits_{i=0}^{\K^{\star,+}-1}\Gamma(\z_{i,n}^{\star,+}+z)}
    \frac{\prod\limits_{i=0}^{\K^{\star,+}-1}(\z_{i,n}^{\star,+})^{2z-1}}
    {\prod\limits_{i=0}^{\K^{+}-1}
    [\z_{i,n}^{+}(-\z_{i,-n}^{+})]^{\frac{2z-1}{2}}}
    \\
    \times
        \exp\Big\{\sum\limits_{i=0}^{\K^{+}-1}(\z_{i,-n}^{+}[\ln(-\z_{i,-n}^{+})-1]
    +\z_{i,n}^{+}[\ln(\z_{i,n}^{+})-1])\Big \}
    \end{array}$
                        \\
                    \hline
            %\end{tabular}
  %\end{center}
\end{longtable}

\noindent It is worth noting that, exploiting the same arguments allows one to obtain the representation of $\mathbb{L}^{\star}(z)$ and $\d_0^{\star}$ for the rest coefficients $\s(z)$ described in Propositions \ref{p5.1}-\ref{p5.3}.

Summarizing, in order to obtain an explicit solution of homogenous equation \eqref{5.1} in the case of the coefficient $\s$ listed in Table \ref{tab:table2}, we are left to substitute the corresponding values of $\mathbb{L}^{\star}(z)$ and $\d_0^{\star}$ from Tables
 \ref{tab:table2}-\ref{tab:table3} to expression \eqref{5.8*} with $B^{\star}=0$.
\begin{remark}\label{r5.1}
Appealing to the well-known relations:
$
\sinh(q_0 z)=-i\sin(iq_0 z),$ $\cosh(q_0 z)=\cos(iq_0 z),
$
and performing the inessential modifications in the arguments above, we obtain the very same results for homogenous equation \eqref{5.1} with $\s(z)=\sinh(q_0 z),$ $\cosh(q_0 z)$,  $\tanh (q_0z)$,  $\coth(q_0 z)$ and their linear expressions.
\end{remark}
At this point, we analyze inhomogeneous equation \eqref{5.1} with
$\s(z)=\frac{\s^{+}(z;\theta_1,\theta_2,q_1,q_2)}{\s^{+}(z;0,0,q_1,q_2^{\star})}$ and demonstrate  particular solution \eqref{5.9*} with different kinds of $\P_1(z)$.
\begin{example}\label{e5.4}
We consider \eqref{5.1} with $\s(z)=\frac{\s^{+}(z;\theta_1,\theta_2,q_1,q_2)}{\s^{+}(z;0,0,q_1,q_2^{\star})}$ where $\theta_1,\theta_2\in(0,\pi)$ and $\sin\theta_1-q_2\sin\theta_2\neq 0.$ In this example, we analyze three kinds of $\P_1(z)$:
\begin{align*}
\P_{I}&:=\P_{1}(z)=1,\quad \P_{II}:=\P_{1}(z)=\prod\limits_{i=1}^{\K^{\star,+}-1}\sin\pi(\z_{i}^{\star,+}-z+1)\exp\{i\pi(\z_{i}^{\star,+}-z+1)\},\\
\P_{III}&:=\P_{1}(z)=\frac{\prod\limits_{i=1}^{\K^{\star,+}-1}\sin\pi(\z_{i}^{\star,+}-z+1)\exp\{i\pi(\z_{i}^{\star,+}-z+1)\}}
{\prod\limits_{i=1}^{\K^{+}-1}\sin\pi(\z_{i}^{+}-z+1)\exp\{i\pi(\z_{i}^{+}-z+1)+i\pi z\}\sin\pi z}
\end{align*}
and two types of $\F(z,\sigma)$. Namely, the main difference of these kinds is related with the behavior of $\F(z,\sigma)$ as $|Im z|$ tends to the infinite for each fixed $Re z$ and $\sigma\in\C$, $Re\sigma>0.$

Indeed, we assume that $\F$ does not have any poles for $z\in\C$ and $\sigma\in\C$ with $Re\sigma>0$ and

\noindent   \textbf{i:} either
\begin{equation}\label{5.10}
        \underset{|Im z|\to+\infty}{\lim}\, |F(z,\sigma)|=C,
    \end{equation}
    \noindent   \textbf{ii:} or
    \begin{equation}\label{5.11}
    \underset{|Im z|\to+\infty}{\lim}\, e^{c^{\star}\pi|Im z|}|F(z,\sigma)|=C,
    \end{equation}
    with $c^{\star}=\max\{4,2\K^{\star,+}\}$.
\end{example}
It is apparent that the  functions $\F$ considered in this example meet the requirements of Theorem \ref{t5.2}. It is worth noting that, assumptions \textbf{H12-H13} tell us that if for each $m\in\N\cup\{0\}$ the relations hold
\begin{equation}\label{5.12}
Re z\neq m+\z_{i,n}^{\star,+}+1,\, i\in\{0,...,\K^{\star,+}-1\},\, n\in\N\cup\{0\};\,
Re z\neq\z_{i,-n}^{+}-m,\, i\in\{0,...,\K^{+}-1\},\, n\in\N,
\end{equation}
then the function $\frac{Y_h(z,\sigma;\P)}{\P(z,\sigma)}$ has no poles, while $\frac{\P(z+1+\xi,\sigma)}{Y_h(z+1+\xi,\sigma;\P)}$ has no poles if for  $m\in\N\cup\{0\}$ the system is fulfilled
\begin{equation}\label{5.13}
\begin{cases}
Re z\neq m+d_0+\z_{i,n}^{+},\quad i\in\{0,1,...,\K^{+}-1\},\, n\in\N\cup\{0\},\\
Re z\neq -m-1+d_0,\\
Re z\neq -m+d_0-1-\z_{i,n}^{\star,+},\quad i\in\{0,1,...,\K^{\star,+}-1\},\, n\in\N,\\
Re z\neq -m+d_0-\z_{i}^{+},\quad i\in\{1,...,\K^{\star,+}-1\}.\\
\end{cases}
\end{equation}
Nevertheless, the function $\P(z)$ of the second and the third kinds permit to relax conditions \eqref{5.12} and \eqref{5.13}. Indeed, in the case of $\P:=\P_{II}$, this function is chosen so as to eliminate the poles of the functions $\Gamma(\z_{i}^{\star,+}-z+1),$ $i\in\{1,...,\K^{\star,+}-1\}$, which means the first inequality in \eqref{5.12} should be satisfied only for $n\in\N$. Thus, we can extend the domain of analyticity to the homogenous solution $Y_{h}(z,\sigma;\P_{II})$. However, the such choice of $\P(z)$ provides additional simple nulls to  $Y_{h}(z,\sigma;\P_{II})$ or in turn, additional poles of  $\frac{1}{Y_{h}(z+1+\xi,\sigma;\P_{II})}$. To avoid this problem, we are left to require that $
Re z \neq \z_{i}^{\star,+}+d_0-m,\quad m\in\N.
$
Coming to the function $\P_{III}(z)$, clearly,  the function $\frac{\P_{III}(z)\prod\limits_{i=1}^{\K^{\star,+}-1}\Gamma(\z_{i}^{\star,+}-z+1)}
{\Gamma(z)\prod\limits_{i=0}^{\K^{+}-1}\Gamma(\z_{i}^{+}-z+1)}$
does not have any poles if
\[
Re z\neq \z_{i}^{+}-m+1,\quad i\in\{0,...,\K^{+}-1\},\quad\text{and}\quad
Re z\neq m,
\]
while the function
$\frac{\Gamma(z+\xi+1)\prod\limits_{i=0}^{\K^{+}-1}\Gamma(\z_{i}^{+}-z-\xi)}
{\P_{III}(z+1+\xi)\prod\limits_{i=1}^{\K^{\star,+}-1}\Gamma(\z_{i}^{\star,+}-z-\xi)}
$
does not have any poles if
\[
Re z\neq -m+\z_{i}^{\star,+}+d_0,\quad i\in\{1,2,...,\K^{\star,+}-1\}.
\]
Thus, this choice of $\P(z)$ allows us to eliminate the poles of $\Gamma(\z_{i}^{\star,+}-z+1)$, $i\in\{1,2,...,\K^{\star,+}-1\}$ in the case of the function $Y_{h}(z,\sigma;\P_{III})$ and to remove the poles of $\Gamma(z+\xi+1)$ and $\Gamma(\z_i^{+}-z-\xi),$ $i\in\{0,1,...,\K^{+}-1\}$ in the case of the function $\frac{1}{Y_{h}(z+1+\xi,\sigma;\P_{III})}$.

In summary, to construct the solution of inhomogeneous equation \eqref{5.1}, we are left to select the suitable kernel $\mathcal{K}_{1}(\xi)$ such that equality \eqref{5.9**} holds. Appealing to Theorem \ref{t5.1} and asymptotic \eqref{2.5} and performing technical calculations, we end up with inequalities:
\begin{align*}
&\Big|\frac{e^{-(\varepsilon+\theta_2)|Im\, z|}}{Y_{h}(z,\sigma;\P_{I})}\Big|\leq C
\begin{cases}
e^{\frac{3\pi}{2}Im\, z},\, Im\, z\to+\infty,\\
e^{\frac{\pi}{2}Im\, z},\,  Im\, z\to-\infty,
\end{cases}
\,
\Big|\frac{e^{-(\varepsilon+\theta_2)|Im\, z|}}{Y_{h}(z,\sigma;\P_{II})}\Big|\leq C
\begin{cases}
e^{\frac{3\pi}{2}Im\, z},\qquad\quad\quad\, Im\, z\to+\infty,\\
e^{-2\pi[\K^{\star,+}-\frac{5}{4}]Im\, z},\,  Im\, z\to-\infty,
\end{cases}\\
&\Big|\frac{e^{-(\varepsilon+\theta_2)|Im\, z|}}{Y_{h}(z,\sigma;\P_{III})}\Big|\leq C
\begin{cases}
e^{(\frac{11\pi}{2}-2\pi\K^{\star,+})Im\, z},\, Im\, z\to+\infty,\\
e^{-\frac{3\pi}{2}Im\, z},\qquad\qquad Im\, z\to-\infty,
\end{cases}
\end{align*}
where $\varepsilon>0$ is enough small value, e.g. $\varepsilon<<\frac{\pi}{100}$. Thus, taking into account these relations and behavior of the function $|\F|$ as $|Im z|\to+\infty$ (see \eqref{5.10} and \eqref{5.11}), we arrive at $\mathcal{K}_{1}(\xi):=\mathcal{K}_{1}(\xi;\P_j)$,
$j=I,II,III$ listed in Tables \ref{tab:table4}-\ref{tab:table5}.
 It is apparent that,  decay of $|\F(z,\sigma)|$ as $|Im z|\to+\infty$ permits to relax assumption on $\mathcal{K}_1(\xi,\P_{j}),$
        $j=I,II,III$.
        \begin{longtable}{l|l}

% \begin{center}
  \caption{The value of  $\mathcal{K}_1(\xi)$ in the case of $\F(z,\sigma)$ satisfying \eqref{5.11}}
    \label{tab:table5}\\
    \hline
    $Y_h(z,\sigma;\P_{j})$ &$\mathcal{K}_1(\xi,\P_{j}),\, j=I,II,III$   \\
                \endfirsthead
        \hline
                   $Y_h(z,\sigma;\P_{j})$ &$\mathcal{K}_1(\xi,\P_{j}),\, j=I,II,III$   \\
                \endhead
             \hline
                                                                        $Y_h(z,\sigma;\P_{I}), Y_h(z,\sigma;\P_{II})$ & $C$ \\
       % \hline
       %  $Y_h(z,\sigma;\P_{II})$ & $C$ \\
        \hline
                 $Y_h(z,\sigma;\P_{III})$  & $\sin^{2}\pi d\, \sin^{-2}\pi(\xi+d)$ \\
        \hline
        \end{longtable}

\begin{longtable}{l|l}
 % \begin{center}
  \caption[ ]{The value of  $\mathcal{K}_1(\xi)$ in the case of $\F(z,\sigma)$ satisfying \eqref{5.10}}
    \label{tab:table4}\\
    \hline
    $Y_h(z,\sigma;\P_{j})$ &$\mathcal{K}_1(\xi,\P_{j}),\, j=I,II,III$   \\
                \endfirsthead
        \hline
                   $Y_h(z,\sigma;\P_{j})$ &$\mathcal{K}_1(\xi,\P_{j}),\, j=I,II,III$   \\
                \endhead
             \hline
                                                                        $Y_h(z,\sigma;\P_{I})$ & $\sin^{2}\pi d\sin^{-2}\pi(\xi+d)$ \\
        \hline
         $Y_h(z,\sigma;\P_{II})$ if $\pi(\frac{5}{2}-2\K^{\star,+})-\theta_2-\varepsilon>0$ & $\sin^{2}\pi d\sin^{-2}\pi(\xi+d)$ \\
        \hline
         $Y_h(z,\sigma;\P_{II})$ if $\pi(\frac{5}{2}-2\K^{\star,+})-\theta_2-\varepsilon<0$ & $(\sin\pi d)^{2\K^{\star,+}}(\sin\pi(\xi+d))^{-2\K^{\star,+}}$ \\
        \hline
         $Y_h(z,\sigma;\P_{III})$  & $\sin^{4}\pi d\sin^{-4}\pi(\xi+d)$ \\
        \hline
        \end{longtable}

    \noindent   Finally, collecting the results above with Theorems \ref{t5.1}-\ref{t5.2}, we derive the general solution to \eqref{5.1} in the form
        \begin{equation}\label{5.14}
        Y(z,\sigma)=Y_h(z,\sigma;\P)+Y_{h}(z,\sigma;\P_j)\int\limits_{\ell_{d_0}}\frac{\F(z+\xi,\sigma)[\cot\pi\xi+i]\mathcal{K}_{1}(\xi,\P_j)}
        {[a_1\sigma+a_2\sigma^{\nu}]Y_h(z+\xi+1,\sigma;\P_j)}d\xi,\, j=I,II,III,
        \end{equation}
        where $\mathcal{K}_1(\xi;\P_j)$ are given in Tables \ref{tab:table5}-\ref{tab:table4}, and
        \begin{align*}
        Y_h(z,\sigma;\P)&=\bigg[\frac{(\sin\theta_1+q_2\sin\theta_2)\prod\limits_{i=1}^{\K^{\star,+}-1}(\z_i^{\star,+})^{2}}{(1+q_2^{\star}q_1)\prod\limits_{i=0}^{\K^{+}-1}\z_i^{+}}\bigg]^{z-1/2}\frac{e^{i\pi z}(a_1\sigma+a_2\sigma^{\nu})^{\frac{1-2z}{2}}}
        {\prod\limits_{i=0}^{\K^{+}-1}\Gamma(z)\Gamma(\z_{i}^{+}-z+1)}\\
        &
        \times\prod\limits_{i=1}^{\K^{\star,+}-1}\frac{\Gamma(\z_{i}^{\star,+}-z+1)}{\Gamma(\z_{i}^{\star,+}+z)}\prod\limits_{n=1}^{+\infty}
        \prod\limits_{i=0}^{\K^{+}-1}\frac{\Gamma(z-\z_{i,-n}^{+})}{\Gamma(\z_{i,n}^{+}-z+1)}
        \prod\limits_{i=0}^{\K^{\star,+}-1}\frac{\Gamma(\z_{i,n}^{\star,+}-z+1)}{\Gamma(\z_{i,n}^{\star,+}+z)}\\
        &
        \times
        \frac{\prod\limits_{i=0}^{\K^{\star,+}-1}(\z_{i,n}^{\star,+})^{2z-1}}
        {\prod\limits_{i=0}^{\K^{+}-1}[\z_{i,n}^{+}(-\z_{i,-n}^{+})]^{\frac{2z-1}{2}}}
        \exp\Big\{\sum\limits_{i=0}^{\K^{+}-1}(\z_{i,-n}^{+}[\ln(-\z_{i,-n}^{+})-1]+
        \z_{i,n}^{+}[\ln(\z_{i,n}^{+})-1]
        )\Big\}.
        \end{align*}
        It is worth noting that, if we search a bounded solution of \eqref{5.1} if $|Im z|\to+\infty$, then we should put $\P\equiv 0$ in
        $Y_{h}(z,\sigma;\P)$, which means $Y_{h}(z,\sigma;\P)=0$ and   \eqref{5.14} is rewritten as         $
        Y(z,\sigma)=Y_{ih}(z,\sigma).
        $
        \begin{remark}\label{r5.2}
        Clearly, $\F(z,\sigma)=1$ meets requirements of Theorem \ref{t5.2} and satisfies equality \eqref{5.10}, while $\F(z,\sigma)=\exp\{-c^{\star}(z\bar{z})\}$ for each fixed $Re z$ satisfies equality \eqref{5.11}.
        \end{remark}
                \begin{remark}\label{r5.3}
        Observing arguments of Sections \ref{s3}-\ref{s5}, one can easily conclude that all results of Section \ref{s5} (possibly with minor modifications) can be extended to the functions $\s$ presented as either $\s=\prod_{j=1}^{N_{I}}\s_{1}^{j}$ or $\s=\frac{\prod_{j=1}^{N_{I}}\s_{1}^{j}}{\prod_{j=1}^{N_{II}}\s_{2}^{j}}$ with fixed integer  $N_{I}$ and $N_{II}$  and the functions $\s_{1}^{j}$ and $\s_{2}^{j}$ having the properties of the functions $\s_{1}$ and $\s_{2}$, respectively.

        Besides, in light of Remark \ref{r3.5}, the results of Theorems \ref{t5.1} and \ref{t5.2} hold for multidimensional equation similar to \eqref{2.14} with $\mathbf{S}(z)=\prod_{j=1}^{k}\s^{j}(z)$, where $\s^{j}(z)$ obeys properties of the function $\s(z)$.
        \end{remark}
        It is worth noting that, results obtained in Section \ref{s5} mean that \textbf{FDEs} studied in \cite{BF,BV1,BV4,BV5,DV,LK,La,SF} (see also references therein) fall in our analysis.
%%%%%%%%%%%%%%%%%%%%%%%%%%%%%%%%%%%%%%%%%%%%%%%%%%%%%%%%%%%%%%%%%%%%%%
%%%%%%%%%%%%%%%%%%%%%%%%%%%%%%%%%%%%%%%%%%%%%%%%%%%%%%%%%%%%%%%%%%%%%%

%%%%%%%%%%%%%%%%%%%%%%%%%%%%%%%%%%%%%%%%%%%%%%%%%%%%%%%%%%%%%%%%%%%%%%
\section{Explicit solutions of transmission boundary value problems with\\ a fractional dynamic boundary condition in plane corners }
\label{s6}
\noindent In this Section, we discuss a construction of a solution to a transmission problem \eqref{6.1}-\eqref{6.5} via results of Sections \ref{s2} and \ref{s5}. Other words, we demonstrate the application of solutions \eqref{i.1} to look for an explicit solution to the non-classical transmission  boundary value problem.

We assume that $\omega_{0}=q\pi/p\in[0,\pi/2)$ with $p$ and $q$ satisfying assumptions of Corollary \ref{c5.1}.
Coming to the coefficients involved, $a_1$ and $a_2$ meet requirements \textbf{H1}, and $a_3$, $a_4,$ $\mathfrak{K}\neq 1$ are positive quantities,  and $\mathfrak{s}_0$ is a given real number.

As for the right-hand sides in \eqref{6.1}-\eqref{6.5}, we state the following hypothesis.

\noindent\textbf{H14:} We require that for some fixed $\alpha\in(0,1),$
$$f_1,f\in\mathcal{C}([0,T],\mathcal{C}^{1+\alpha}(g)),\,\,f_{2}\in\mathcal{C}([0,T],\mathcal{C}^{\alpha}(\partial G_1\backslash g)),\,\,f_{3}\in\mathcal{C}([0,T],\mathcal{C}^{\alpha}(\partial G_2\backslash g)),
$$
$$f_{0,1}\in\mathcal{C}([0,T],\mathcal{C}^{\alpha}(\bar{G}_1)),\,
f_{0,2}\in\mathcal{C}([0,T],\mathcal{C}^{\alpha}(\bar{G}_2)),$$
and
$$f, f_{1},f_{2},f_{3},f_{0,1},f_{0,2}\equiv 0\quad \text{if either}\quad r\in[0,\varepsilon_1]\cup[R_1,+\infty)\quad \text{or}\quad t\leq 0,
$$
for some fixed small positive $\varepsilon_1$ and enough large $R_1$.

It is apparent that  assumptions on the smoothness of the right-hand sides  may not be enough to prove  existence of a classical smooth solution, but \textbf{H14} is enough to construct explicit solutions to \eqref{6.1}-\eqref{6.5}. Here we focus on the  solution of the problem which  vanishes as $r\to+\infty$.
First of all, we remark that the explicit solutions of classical Dirichlet-transmission problem to Poisson equations  in domain $G_1\cup g\cup G_2$ were searched in
Section 9.4 \cite{V2}. In light of this fact,  we are left to construct solutions  in the case of homogenous equations \eqref{6.1} and homogenous Dirichlet and transmission conditions  \eqref{6.4} and \eqref{6.5}, i.e. we put
\begin{equation}\label{6.0*}
f_{0,1}, f_{0,2},f_{1},f_2,f_3\equiv 0.
\end{equation}
In order to build the solution of \eqref{6.1}-\eqref{6.5}, we will exploit the following strategy. In the first stage, employing special change of variables and then Fourier and Laplace transforms, we reduce problem \eqref{6.1}-\eqref{6.5} to a functional difference equation. The second stage is related with applying Theorems \ref{t3.1} and \ref{t3.2} to this equation, and in turn with obtaining solution via formula like \eqref{2.4*}, \eqref{2.11*} and \eqref{2.11}. Finally, performing the inverse Laplace and Fourier transformation, we conclude with the explicit integral form of the solution to \eqref{6.1}-\eqref{6.5}.

\noindent$\bullet$ \textit{The First Stage.} We start with standard change of variable (see e.g. (2.6) in \cite{BF} or (3.9) in \cite{BV1}) in the case of a boundary value problem stated in the domain with corner points:
\begin{equation}\label{6.7}
x_1=\ln r,\quad x_2=\varphi.
\end{equation}
This map transforms the corners $G_1$ and $G_2$ to the strips $\mathfrak{G}_1$ and $\mathfrak{G}_2$, respectively,
\[
\mathfrak{G}_1=\{(x_1,x_2):\, x_1\in\R,\, -\pi/2<x_2<\omega_0\},\quad
\mathfrak{G}_2=\{(x_1,x_2):\, x_1\in\R,\, \omega_0<x_2<\pi/2\},
\]
while the image of the boundary $g$ is $
\mathfrak{g}=\{(x_1,x_2):\, x_1\in\R,\, x_2=\omega_0\}.
$

Performing the change of variables \eqref{6.7} in relations \eqref{6.1}-\eqref{6.5} and using the same notations for the functions $u_1$ and $u_2$ and $f$ in the new variables, we end up with the problem
\begin{equation}\label{6.8}
\begin{cases}
\Delta u_1=0\qquad\text{in}\qquad \mathfrak{G}_{1,T},\qquad\quad u_1(x_1,x_2,0)=0\qquad\text{in}\qquad \bar{\mathfrak{G}}_{1},\\
\Delta u_2=0\qquad\text{in}\qquad \mathfrak{G}_{2,T},\qquad\quad u_2(x_1,x_2,0)=0\qquad\text{in}\qquad \bar{\mathfrak{G}}_{2},\\
\exp\{(\mathfrak{s}_0+1)x_1\}\Big[a_1\frac{\partial}{\partial t}(u_1-u_2)+a_2\mathbb{D}_{t}^{\nu}(u_1-u_2)\Big]\\
-
\frac{\partial}{\partial x_2}(u_1-u_2)+a_3
\frac{\partial}{\partial x_1}(u_1-u_2)=e^{x_1}f(x_1,t)\qquad\quad\text{on}\qquad \mathfrak{g}_{T},\\
\frac{\partial u_1}{\partial x_2}-\mathfrak{K}\frac{\partial u_2}{\partial x_2}+\mathfrak{K}a_4
\frac{\partial}{\partial x_1}(u_1-u_2)=0\qquad\qquad\qquad \qquad\text{on}\quad \mathfrak{g}_{T},\\
u_1(x_1,-\pi/2,t)=0\quad\text{and}\quad u_2(x_1,\pi/2,t)=0\qquad\qquad\text{for}\quad x_1\in\R,\, t\in[0,T].
\end{cases}
\end{equation}
After that, as usual to the analysis of boundary value problems in domains with singular boundaries (see e.g. \cite[Section 4]{Gri}), we will search a solution to \eqref{6.8} in the form:
\[
u_1(x_1,x_2,t)=e^{\mathfrak{s}x_1}U_1(x,_1,x_2,t),\quad u_2(x_1,x_2,t)=e^{\mathfrak{s}x_1}U_2(x,_1,x_2,t),
\]
where the value $\mathfrak{s}\neq 0$ will be specified below, and new unknown functions solve the problem
\begin{equation*}
\begin{cases}
\Delta U_1+2\mathfrak{s}\frac{\partial U_1}{\partial x_1}+\mathfrak{s}^{2}U_1=0\qquad \text{in}\quad \mathfrak{G}_{1,T},\qquad\quad U_1(x_1,x_2,0)=0\qquad\text{in}\quad \bar{\mathfrak{G}}_{1},\\
\Delta U_2+2\mathfrak{s}\frac{\partial U_2}{\partial x_1}+\mathfrak{s}^{2}U_2=0\qquad\text{in}\quad \mathfrak{G}_{2,T},\quad\qquad U_2(x_1,x_2,0)=0\qquad\text{in}\quad \bar{\mathfrak{G}}_{2},\\
\exp\{(\mathfrak{s}_0+1)x_1\}\Big[a_1\frac{\partial}{\partial t}(U_1-U_2)+a_2\mathbb{D}_{t}^{\nu}(U_1-U_2)\Big]\\
-
\frac{\partial}{\partial x_2}(U_1-U_2)+a_3\Big[
\frac{\partial}{\partial x_1}(U_1-U_2)+\mathfrak{s}(U_1-U_2)\Big]=e^{(1-\mathfrak{s})x_1}f(x_1,t)\quad\text{on}\quad \mathfrak{g}_{T},\\
\frac{\partial U_1}{\partial x_2}-\mathfrak{K}\frac{\partial U_2}{\partial x_2}+\mathfrak{K}a_4
\Big[\frac{\partial}{\partial x_1}(U_1-U_2)+\mathfrak{s}(U_1-U_2)\Big]=0\qquad\qquad\qquad\qquad\text{on}\quad \mathfrak{g}_{T},\\
U_1(x_1,-\pi/2,t)=0\qquad\text{and}\qquad U_2(x_1,\pi/2,t)=0\quad\qquad\qquad \text{for}\quad x_1\in\R,\, t\in[0,T].
\end{cases}
\end{equation*}
It is worth noting that the introducing new functions $U_1$ and $U_2$ and rewriting problem \eqref{6.8} in the terms of these functions permit to study the corresponding problem in either H\"{o}lder or Sobolev spaces instead of using weighted spaces (see for detail e.g. \cite[Section2]{BF} or \cite[Section 3.1]{BV5}).

Then, we denote by $\tilde{U}(\lambda,x_2,t)$ the Fourier transform of $U(x_1,x_2,t)$, and by $\hat{U}(x_1,x_2,\sigma)$ the Laplace transform of $U(x_1,x_2,t)$, and use the notation $*$ instead of $\hat{\tilde{\, }}$. Thus, performing these transformations in the relations above, and setting
$
\mathfrak{x}=i\lambda+\mathfrak{s},$ $\mathfrak{s}^{\star}=\mathfrak{s}_0+1,
 $
we have
\begin{equation}\label{6.9}
\begin{cases}
\frac{\partial^{2}U^{*}_{1}}{\partial x_2^{2}}+\mathfrak{x}^{2}U^{*}_{1}=0,\quad \tilde{U_1}(\lambda,x_2,0)=0,\quad x_2\in(-\pi/2,\omega_0),\\
\frac{\partial^{2}U^{*}_{2}}{\partial x_2^{2}}+\mathfrak{x}^{2}U^{*}_{2}=0,\quad \tilde{U_2}(\lambda,x_2,0)=0,\quad x_2\in(\omega_0,\pi/2),\\
(a_1\sigma+a_2\sigma^{\nu})[U_1^{*}(\lambda+i\mathfrak{s}^{\star},\omega_0,\sigma)-U_2^{*}(\lambda+i\mathfrak{s}^{\star},\omega_0,\sigma)]
-\frac{\partial}{\partial x_2}[U_1^{*}(\lambda,\omega_0,\sigma)-U_2^{*}(\lambda,\omega_0,\sigma)]\\
+a_3\mathfrak{x}[U_1^{*}(\lambda,\omega_0,\sigma)-U_2^{*}(\lambda,\omega_0,\sigma)]=f^{*}(\lambda+i(1-\mathfrak{s}),\sigma),\\
\frac{\partial U_1^{*}}{\partial x_2}(\lambda,\omega_0,\sigma)-\mathfrak{K}\frac{\partial U_2^{*}}{\partial x_2}(\lambda,\omega_0,\sigma)
+\mathfrak{K}a_4\mathfrak{x}[U_1^{*}(\lambda,\omega_0,\sigma)-U_2^{*}(\lambda,\omega_0,\sigma)]=0,\\
U_1^{*}(\lambda,-\pi/2,\sigma)=0,\quad U_2^{*}(\lambda,\pi/2,\sigma)=0.
\end{cases}
\end{equation}
To satisfy equations and boundary conditions on $x_2=\pm\pi/2$, we set
\begin{equation}\label{6.9*}
U_1^{\star}(\lambda,x_2,\sigma)=\mathcal{M}_1(\lambda, \sigma)\sin\mathfrak{x}(x_2+\pi/2),\quad
U_2^{\star}(\lambda,x_2,\sigma)=\mathcal{M}_2(\lambda, \sigma)\sin\mathfrak{x}(x_2-\pi/2),
\end{equation}
where unknown functions $\mathcal{M}_1(\lambda, \sigma)$ and $\mathcal{M}_2(\lambda,\sigma)$ will be found via transmission conditions in \eqref{6.9}. To this end, substituting $U_1^{*}$ and $U^{*}_2$ to the transmission conditions and putting
\[
\mathcal{N}(\lambda)=\frac{\cos\mathfrak{x}(\omega_0-\pi/2)+a_4\sin\mathfrak{x}(\omega_0-\pi/2)}{\cos\mathfrak{x}(\omega_0+\pi/2)+\mathfrak{K}a_4\sin\mathfrak{x}(\omega_0+\pi/2)},
\]
 we arrive at the system with unknown functions $\mathcal{M}_1$ and $\mathcal{M}_2$:
\begin{equation}\label{6.10}
\begin{cases}
\mathcal{M}_1(\lambda,\sigma)=\mathfrak{K}\mathcal{N}(\lambda)\mathcal{M}_2(\lambda,\sigma),\\
[a_1\sigma+a_2\sigma^{\nu}]\mathcal{M}_2(\lambda+i\mathfrak{s}^{\star},\sigma)[\mathfrak{K}\mathcal{N}(\lambda+i\mathfrak{s}^{\star})\sin\mathfrak{x}(\omega_0+\pi/2)-\sin\mathfrak{x}(\omega_0-\pi/2)]\\
-\mathfrak{x}\mathcal{M}_2(\lambda,\sigma)\{a_3\sin\mathfrak{x}(\omega_0-\pi/2)-\cos\mathfrak{x}(\omega_0-\pi/2)
\\
+\mathfrak{K}\mathcal{N}(\lambda)[\cos\mathfrak{x}(\omega_0+\pi/2)-a_3\sin\mathfrak{x}(\omega_0-\pi/2)]\}=f^{*}(\lambda+i(1-\mathfrak{s}),\sigma).
\end{cases}
\end{equation}
In virtue of this system, we are left to look for the function $\mathcal{M}_2$. Denoting
\[
\mathcal{N}_1(\lambda)=\mathfrak{K}\mathcal{N}\sin\mathfrak{x}(\omega_0+\pi/2)-\sin\mathfrak{x}(\omega_0-\pi/2)
\]
and substituting the expression of $\mathcal{M}_1$ to the second equation in \eqref{6.10}, we obtain
\begin{align*}
&\mathfrak{x}\mathcal{M}_2(\lambda,\sigma)\{\mathfrak{K}\mathcal{N}(\lambda)\cos\mathfrak{x}(\omega_0+\pi/2)-a_3\mathcal{N}_1(\lambda)-\cos\mathfrak{x}(\omega_0-\pi/2)\}\\
&
-
[a_1\sigma+a_2\sigma^{\nu}]\mathcal{M}_2(\lambda+i\mathfrak{s}^{\star},\sigma)\mathcal{N}_1(\lambda+i\mathfrak{s}^{\star})
=-f^{*}(\lambda+i(1-\mathfrak{s}),\sigma)
\end{align*}
which in turn is reduced to a functional equation in unknown function $\mathcal{U}:=\mathcal{M}_2(\lambda,\sigma)\mathcal{N}_1(\lambda),$
\[
[a_1\sigma+a_2\sigma^{\nu}]\mathcal{U}(\lambda+i\mathfrak{s}^{\star},\sigma)-\mathfrak{x}\mathcal{G}(\lambda)\mathcal{U}(\lambda,\sigma)
=f^{*}(\lambda+i(1-\mathfrak{s}),\sigma).
\]
Here we put
\[
\mathcal{G}(\lambda)=\frac{\mathfrak{K}\mathcal{N}(\lambda)\cos\mathfrak{x}(\omega_0+\pi/2)-\cos\mathfrak{x}(\omega_0-\pi/2)}{\mathcal{N}_1(\lambda)}-a_3.
\]
\begin{remark}\label{r6.1}
It is apparent that if $\mathfrak{s}_0=-1,$ i.e. $\mathfrak{s}^{\star}=0$, then the function
$
\mathcal{U}=\frac{f^{*}(\lambda+i(1-\mathfrak{s}),\sigma)}{a_1\sigma+a_2\sigma^{\nu}-\mathfrak{x}\,\mathcal{G}(\lambda)}
$
solves the equation above, which in turn entails
\[
U^{*}_1=\mathfrak{K}\sin\mathfrak{x}(x_2+\pi/2)\mathcal{U}(\lambda,\sigma)\mathcal{N}(\lambda)\mathcal{N}_1(\lambda),\qquad
U^{*}_2=\sin\mathfrak{x}(x_2-\pi/2)\mathcal{U}(\lambda,\sigma)\mathcal{N}_1(\lambda).
\]
\end{remark}
Taking into account this remark, we will assume further $\mathfrak{s}^{\star}\neq 0$ and introduce the new variable
\begin{equation}\label{6.11*}
\lambda=i\mathfrak{s}^{\star}\rho,
\end{equation}
and new unknown function $
V(\rho,\sigma)=\mathcal{U}(i\mathfrak{s}^{\star}\rho,\sigma),
$
we rewrite the equation above in the form
\begin{equation}\label{6.11}
[a_1\sigma+a_2\sigma^{\nu}]V(\rho+1,\sigma)-[\mathfrak{s}-\mathfrak{s}^{\star}\rho]\mathcal{G}(i\mathfrak{s}^{\star}\rho)V(\rho,\sigma)=\mathfrak{F}(\rho,\sigma),
\end{equation}
where $\mathfrak{F}(\rho,\sigma):=f^{*}(i\mathfrak{s}^{\star}\rho+i(1-\mathfrak{s}),\sigma)$.

\noindent Thus, we have reduced original problem \eqref{6.1}-\eqref{6.5} to functional equation \eqref{6.11}.

\noindent\textit{The Second Stage.} In order to apply the results of Sections \ref{s2} and \ref{s5} to \eqref{6.11}, we are left to factorize the function $\mathcal{G}(i\mathfrak{s}^{\star}\rho)$. To this end, coming to the function $\mathcal{G}(\lambda)$, we substitute $\mathcal{N}(\lambda)$ and $\mathcal{N}_1(\lambda)$ to the expression of $\mathcal{G}(\lambda)$ and performing tedious technical calculations, we derive
\[
\mathcal{G}(\lambda)=\frac{\cos\pi\mathfrak{x}-\frac{a_3(1+\mathfrak{K}\, )+2\mathfrak{K}\, a_4}{\mathfrak{K}-1}\sin\pi\mathfrak{x}+\cos2\omega_0\mathfrak{x}-a_3\sin2\omega_0\mathfrak{x}}
{\sin2\omega_0\mathfrak{x}+\frac{\mathfrak{K}+1}{\mathfrak{K}-1}\sin\pi\mathfrak{x}}.
\]
Finally, denoting
\begin{align*}
&q_2^{*}=\frac{\mathfrak{K}+1}{|\mathfrak{K}-1|},\,q_2=\Big(\frac{[a_3(\mathfrak{K}+1)+2\mathfrak{K}\, a_4]^{2}+(\mathfrak{K}-1)^{2}}{(\mathfrak{K}-1)^{2}(1+a_3^{2})}\Big)^{\frac{1}{2}},\,\,
 \sin\theta_1=(1+a_3^{2})^{-\frac{1}{2}},\, \cos\theta_1=a_3(1+a_3^{2})^{-\frac{1}{2}},\\
&\sin\theta_2=\Big(1+\frac{[2\mathfrak{K}\, a_4+a_3(1+\mathfrak{K})]^{2}}{[\mathfrak{K}-1]^{2}}\Big)^{-\frac{1}{2}},\,
\cos\theta_2=\frac{[a_3(\mathfrak{K}+1)+2\mathfrak{K}\, a_4]\text{sgn}(\mathfrak{K}-1)}{\sqrt{(\mathfrak{K}-1)^{2}+[2\mathfrak{K}\, a_4+a_3(1+\mathfrak{K})]^{2}}},
\end{align*}
we rewrite $\mathcal{G}(\lambda)$ in the form
\[
\mathcal{G}(\lambda)=-
\frac{\sqrt{1+a_3^{2}}[\sin(2\mathfrak{x}\omega_0-\theta_1)+q_2\sin(\pi\mathfrak{x}-\theta_2)]}
{\sin 2\omega_0\mathfrak{x}+\text{sgn}(\mathfrak{K}-1)q_2^{*}\sin\pi\mathfrak{x}}.
\]
Keeping in mind of assumptions on the coefficients, we easily conclude
\begin{equation}\label{6.12*}
\theta_1\in(0,\pi/2)\quad\text{and}\quad \theta_2\in(0,\pi),\quad q_2^{*}>1,\quad q_2>1,\quad \sin\theta_1+q_2\sin\theta_2=\frac{2}{\sqrt{1+a_3^{2}}}.
\end{equation}
Then, performing the change of variable \eqref{6.11*} and taking into account the relations
$
\frac{\pi}{2\omega_0}=\frac{p}{2q}>1,
  $
we deduce
\begin{equation*}
\mathcal{G}(i\mathfrak{s}^{\star}\rho)=-\sqrt{1+a_3^{2}}
\begin{cases}
\frac{\mathcal{S}^{+}(2\omega_0(\mathfrak{s}-\mathfrak{s}^{\star}\rho);\theta_1,\theta_2,\frac{p}{2q},q_2)}{\mathcal{S}^{+}(2\omega_0(\mathfrak{s}-\mathfrak{s}^{\star}\rho);0,0,\frac{p}{2q},q^{*}_2)},\quad \mathfrak{K}>1,\\
\frac{\mathcal{S}^{+}(2\omega_0(\mathfrak{s}-\mathfrak{s}^{\star}\rho);\theta_1,\theta_2,\frac{p}{2q},q_2)}
{\mathcal{S}^{-}(2\omega_0(\mathfrak{s}-\mathfrak{s}^{\star}\rho);0,0,\frac{p}{2q},q^{*}_2)},\quad \mathfrak{K}<1,
\end{cases}
\end{equation*}
This representation means that the function $\mathcal{G}(i\mathfrak{s}^{\star},\rho)$ is the function $\mathcal{S}$ in the \textbf{FTK} case (see Section \ref{s5}). Appealing to Corollary \ref{c5.1}, we denote by
\begin{align*}
\bar{z}_{i,n}^{+}&:=\bar{z}_{i,n}^{+}(\theta_1,\theta_2,q_2),\quad\bar{z}_{i,-n}^{+}:=\bar{z}_{i,n}^{+}(\theta_1,\theta_2,q_2),\quad\bar{\K}^{+}:=\bar{\K}^{+}(\theta_1,\theta_2,q_2),\\
z_{i,n}^{+}&:=z_{i,n}^{+}(0,0,q^{*}_2),\quad\K^{+}:=\K^{+}(0,0,q^{*}_2),\quad
z_{i,n}^{-}:=z_{i,n}^{-}(0,0,q^{*}_2),\quad\K^{-}:=\K^{-}(0,0,q^{*}_2),
\end{align*}
the zeros and their number in $[0,4\pi q)$ of the functions $\s^{+}(z;\theta_1,\theta_2,\frac{p}{2q},q_2),$ $\s^{+}(z;0,0,\frac{p}{2q},q^{*}_2)$ and  $\s^{-}(z;0,$ $0,\frac{p}{2q},q^{*}_2),$ respectively. In virtue of \eqref{6.12*}, we can employ Proposition \ref{p5.1} to conclude that
\begin{align*}
&\mathcal{G}(i\mathfrak{s}^{\star}\rho)=\frac{\prod\limits_{i=0}^{\bar{\K}^{+}-1}[\bar{z}_{i}^{+}+2\omega_0(\mathfrak{s}^{\star}\rho-\mathfrak{s})]\prod\limits_{i=1}^{\K^{+}-1}(z_{i}^{+})^{2}}
{\omega_0(1+q_2^{*}q_1)(\mathfrak{s}-\mathfrak{s}^{*}\rho)\prod\limits_{i=1}^{\K^{+}-1}[z_i^{+}+2\omega_0(\mathfrak{s}^{\star}\rho-\mathfrak{s})]
[z_i^{+}-2\omega_0(\mathfrak{s}^{\star}\rho-\mathfrak{s})]\prod\limits_{i=0}^{\bar{\K}^{+}-1}\bar{z}_{i}^{+}
}\\
&
\times
\prod\limits_{n=1}^{+\infty}
\frac{\prod\limits_{i=0}^{\bar{\K}^{+}-1}\Big[\bar{z}_{i,n}^{+}+2\omega_0(\mathfrak{s}^{\star}\rho-\mathfrak{s})\Big]
\Big[-\bar{z}_{i,-n}^{+}+2\omega_0(\mathfrak{s}-\mathfrak{s}^{\star}\rho)\Big]}
{\prod\limits_{i=0}^{\K^{+}-1}\Big[z_{i,n}^{+}+2\omega_0(\mathfrak{s}^{\star}\rho-\mathfrak{s})\Big]
\Big[z_{i,n}^{+}+2\omega_0(\mathfrak{s}-\mathfrak{s}^{\star}\rho)\Big]}
\frac{\prod\limits_{i=0}^{\K^{+}-1}(z_{i,n}^{+})^{2}}
{\prod\limits_{i=0}^{\bar{\K}^{+}-1}\bar{z}_{i,n}^{+}(-\bar{z}_{i,n}^{+})},\,\text{if}\, \mathfrak{K}>1,
\end{align*}
and
\begin{align*}
&\mathcal{G}(i\mathfrak{s}^{\star}\rho)=\frac{\prod\limits_{i=0}^{\bar{\K}^{+}-1}[\bar{z}_{i}^{+}+2\omega_0(\mathfrak{s}^{\star}\rho-\mathfrak{s})]\prod\limits_{i=1}^{\K^{-}-1}(z_{i}^{-})^{2}}
{\omega_0(1-q_2^{*}q_1)(\mathfrak{s}-\mathfrak{s}^{*}\rho)\prod\limits_{i=1}^{\K^{-}-1}[z_i^{-}+2\omega_0(\mathfrak{s}^{\star}\rho-\mathfrak{s})]
[z_i^{-}-2\omega_0(\mathfrak{s}^{\star}\rho-\mathfrak{s})]\prod\limits_{i=0}^{\bar{\K}^{+}-1}\bar{z}_{i}^{+}
}\\
&
\times
\prod\limits_{n=1}^{+\infty}
\frac{\prod\limits_{i=0}^{\bar{\K}^{+}-1}\Big[\bar{z}_{i,n}^{+}+2\omega_0(\mathfrak{s}^{\star}\rho-\mathfrak{s})\Big]
\Big[-\bar{z}_{i,-n}^{+}+2\omega_0(\mathfrak{s}-\mathfrak{s}^{\star}\rho)\Big]}
{\prod\limits_{i=0}^{\K^{-}-1}\Big[z_{i,n}^{-}+2\omega_0(\mathfrak{s}^{\star}\rho-\mathfrak{s})\Big]
\Big[z_{i,n}^{-}+2\omega_0(\mathfrak{s}-\mathfrak{s}^{\star}\rho)\Big]}
\times
\frac{\prod\limits_{i=0}^{\K^{-}-1}(z_{i,n}^{-})^{2}}
{\prod\limits_{i=0}^{\bar{\K}^{+}-1}\bar{z}_{i,n}^{+}(-\bar{z}_{i,n}^{+})},\,\text{if}\, \mathfrak{K}<1.
\end{align*}
At this point, we state the first assumption on $\mathfrak{s}$.

\noindent\textbf{H15:} We require that $\mathfrak{s}$ satisfies inequalities: $\, \sin2\omega_0\mathfrak{s}+\text{sgn} (\mathfrak{K}-1)q^{*}_{2}\sin\pi\mathfrak{s}\neq 0,$ and
\[
|\mathfrak{s}|<4\pi q,\quad
2\omega_0\mathfrak{s}\neq -\bar{z}^{+}_{i,n},\, 2\omega_0\mathfrak{s}\neq \bar{z}^{+}_{i,-n},\quad i\in\{0,1,...,\bar{\K}^{+}-1\},\, \]
and
\[
2\omega_0\mathfrak{s}\neq \pm z^{-}_{i,n},\,\quad i\in\{0,1,...,\K^{-}-1\},\, n\in \N\cup\{0\},\quad\text{if}\, \mathfrak{K}<1,
\]
while
\[
2\omega_0\mathfrak{s}\neq \pm z^{+}_{i,n},\,  i\in\{0,1,...,\K^{+}-1\},\, n\in \N\cup\{0\},\quad\text{if}\, \mathfrak{K}>1.
\]
Then, setting for $n\in \N\cup\{0\}$
\[
\bar{\K}:=
\begin{cases}
\K^{+},\quad\text{if}\quad\mathfrak{K}>1,\\
\K^{-},\quad\text{if}\quad\mathfrak{K}<1,
\end{cases}
\]
\[
\bar{Z}^{+}_{i,n}:=\frac{\bar{z}_{i,n}^{+}-2\omega_0\mathfrak{s}}{|\mathfrak{s}^{\star}|2\omega_0},\quad
\bar{Z}^{+}_{i,-n}:=\frac{-\bar{z}_{i,-n}^{+}+2\omega_0\mathfrak{s}}{|\mathfrak{s}^{\star}|2\omega_0},\quad i\in\{0,1,...,\bar{\K}^{+}-1\},\]
\[
Z_{i,n}=\begin{cases}
\frac{z_{i,n}^{+}-2\omega_0\mathfrak{s}}{|\mathfrak{s}^{\star}|2\omega_0},\quad \text{if}\quad \mathfrak{K}>1,\\
\frac{z_{i,n}^{-}-2\omega_0\mathfrak{s}}{|\mathfrak{s}^{\star}|2\omega_0},\quad \text{if}\quad \mathfrak{K}<1,
\end{cases}\quad
Z_{i,-n}=\begin{cases}
\frac{z_{i,n}^{+}+2\omega_0\mathfrak{s}}{|\mathfrak{s}^{\star}|2\omega_0},\quad \text{if}\quad \mathfrak{K}>1,\\
\frac{z_{i,n}^{-}+2\omega_0\mathfrak{s}}{|\mathfrak{s}^{\star}|2\omega_0},\quad \text{if}\quad \mathfrak{K}<1,
\end{cases}
\, i\in\{0,1,..., \bar{\K}\},
\]
and taking into account \textbf{H15} and  the easy verified equality
\[
\mathcal{G}(0)=-\sqrt{1+a_3^{2}}\frac{\sin(2\omega_0\mathfrak{s}-\theta_1)+q_2\sin(\pi\mathfrak{s}-\theta_2)}
{\sin2\omega_0\mathfrak{s}+\text{sgn}(\mathfrak{K}-1)q_2^{*}\sin\pi\mathfrak{s}},
\]
we have
 \begin{align}\label{6.12}\notag
\mathcal{G}(i\mathfrak{s}^{\star}\rho)&=\frac{\mathcal{G}(0)\mathfrak{s}}{(\mathfrak{s}-\mathfrak{s}^{\star}\rho)}
\frac{\prod\limits_{i=0}^{\bar{\K}^{+}-1}[\bar{Z}^{+}_{i,0}+\text{sgn}(\mathfrak{s}^{\star})\rho]}
{\prod\limits_{i=1}^{\bar{\K}-1}[Z_{i,0}+\text{sgn}(\mathfrak{s}^{\star})\rho][Z_{i,-0}-\text{sgn}(\mathfrak{s}^{\star})\rho]}
\frac{\prod\limits_{i=1}^{\bar{\K}-1}Z_{i,0}Z_{i,-0}}{\prod\limits_{i=0}^{\bar{\K}^{+}-1}\bar{Z}_{i,0}^{+}}\\
&
\times
\prod\limits_{n=1}^{+\infty}\frac{\prod\limits_{i=0}^{\bar{\K}^{+}-1}[\bar{Z}^{+}_{i,n}+\text{sgn}(\mathfrak{s}^{\star})\rho]
[\bar{Z}^{+}_{i,-n}-\text{sgn}(\mathfrak{s}^{\star})\rho]}
{\prod\limits_{i=0}^{\bar{\K}-1}[Z_{i,n}+\text{sgn}(\mathfrak{s}^{\star})\rho][Z_{i,-n}-\text{sgn}(\mathfrak{s}^{\star})\rho]}
\frac{\prod\limits_{i=0}^{\bar{\K}-1}Z_{i,n}Z_{i,-n}}{\prod\limits_{i=0}^{\bar{\K}^{+}-1}\bar{Z}_{i,n}^{+}\bar{Z}_{i,-n}^{+}}.
\end{align}
In virtue of assumptions \textbf{H15}  and Corollary \ref{c5.1}, we conclude that the sequences: $\{Z_{i,n}\}_{n=0}^{\infty}$, $\{Z_{i,-n}\}_{n=0}^{\infty}$, $\{\bar{Z}^{+}_{i,n}\}_{n=0}^{\infty}$, $\{\bar{Z}^{+}_{i,-n}\}_{n=0}^{\infty}$, meet the requirements of \textbf{H2} and \textbf{H3} for each $i$, besides the terms of these sequences are positive for $n\geq 1$ and don't vanish if $n=0$.

Now, coming to equation \eqref{6.11} and keeping in mind factorization \eqref{6.12}, we employ Theorems \ref{t3.1} and \ref{t3.2} to build  the explicit solutions homogeneous and inhomogeneous equation \eqref{6.11}.

At this point, we first find a solution of homogeneous (6.11) ($\mathcal{F}\equiv 0$).  To this end, putting
\[
\d_{0}(\mathfrak{K}):=\frac{\mathcal{G}(0)\mathfrak{s}\prod\limits_{i=1}^{\bar{\K}-1}Z_{i,0}Z_{i,-0}}
{\prod\limits_{i=0}^{\bar{\K}^{+}-1}\bar{Z}_{i,0}^{+}},\,\,
\mathbb{L}_{0}(\rho,\mathfrak{K},\mathfrak{s}^{\star})=
\begin{cases}
\prod\limits_{i=0}^{\bar{\K}^{+}-1}\Gamma(\bar{Z}^{+}_{i,0}+\rho)\prod\limits_{i=1}^{\bar{K}-1}
\frac{\Gamma(1+Z_{i,-0}-\rho)}{\Gamma(\rho+Z_{i,0})},\quad\text{if}\quad \mathfrak{s}^{\star}>0,\\
\frac{1}{\prod\limits_{i=0}^{\bar{\K}^{+}-1}\Gamma(1+\bar{Z}^{+}_{i,0}-\rho)}\prod\limits_{i=1}^{\bar{K}-1}\frac{\Gamma(1+Z_{i,0}-\rho)}{\Gamma(\rho+Z_{i,-0})},\quad\,\text{if}\quad \mathfrak{s}^{\star}<0,
\end{cases}
\]
\[
\mathcal{R}(n,\mathfrak{K},\mathfrak{s}^{\star})=
\begin{cases}
\sum\limits_{i=0}^{\bar{\K}^{+}-1}[\bar{Z}_{i,-n}^{+}(\ln\bar{Z}_{i,-n}^{+}-1)-\bar{Z}_{i,n}^{+}(\ln\bar{Z}_{i,n}^{+}-1)
]\\
-\sum\limits_{i=0}^{\bar{\K}-1}[Z_{i,-n}(\ln Z_{i,-n}-1)-Z_{i,n}(\ln Z_{i,n}-1)
],\quad \text{ if }\quad\mathfrak{s}^{\star}>0,
\\
\sum\limits_{i=0}^{\bar{\K}^{+}-1}[\bar{Z}_{i,-n}^{+}(\ln\bar{Z}_{i,-n}^{+}-1)-\bar{Z}_{i,n}^{+}(\ln\bar{Z}_{i,n}^{+}-1)
]\\
+\sum\limits_{i=0}^{\bar{\K}-1}[Z_{i,-n}(\ln Z_{i,-n}-1)-Z_{i,n}(\ln Z_{i,n}-1)
],\quad \text{ if }\quad\mathfrak{s}^{\star}<0,
\end{cases}
\]
and
\[
\mathbb{L}_{1}(\rho,\mathfrak{K},\mathfrak{s}^{\star})=
\begin{cases}
\prod\limits_{n=1}^{+\infty}\prod\limits_{i=0}^{\bar{\K}^{+}-1}\frac{\Gamma(\bar{Z}^{+}_{i,n}+\rho)}{\Gamma(\bar{Z}^{+}_{i,-n}+1-\rho)}
\prod\limits_{i=0}^{\bar{\K}-1}\frac{\Gamma(Z_{i,-n}-\rho+1)}{\Gamma(Z_{i,n}+\rho)}
\bigg(\frac{\prod\limits_{i=0}^{\bar{\K}-1}Z_{i,n}Z_{i,-n}}{\prod\limits_{i=0}^{\bar{\K}^{+}-1}\bar{Z}^{+}_{i,n}\bar{Z}^{+}_{i,-n}}\bigg)^{\rho-\frac{1}{2}}e^{\mathcal{R}(\rho,\mathfrak{K},\,\mathfrak{s}^{\star})}\quad\text{if}\quad \mathfrak{s}^{\star}>0,\\
\prod\limits_{n=1}^{+\infty}\prod\limits_{i=0}^{\bar{\K}^{+}-1}\frac{\Gamma(\bar{Z}^{+}_{i,-n}+\rho)}{\Gamma(\bar{Z}^{+}_{i,n}+1-\rho)}
\prod\limits_{i=0}^{\bar{\K}-1}\frac{\Gamma(Z_{i,n}-\rho+1)}{\Gamma(Z_{i,-n}+\rho)}
\bigg(\frac{\prod\limits_{i=0}^{\bar{\K}-1}Z_{i,n}Z_{i,-n}}{\prod\limits_{i=0}^{\bar{\K}^{+}-1}\bar{Z}^{+}_{i,n}\bar{Z}^{+}_{i,-n}}\bigg)^{\rho-\frac{1}{2}}e^{\mathcal{R}(\rho,\mathfrak{K},\,\mathfrak{s}^{\star})}\quad\text{if}\quad \mathfrak{s}^{\star}<0,
\end{cases}
\]
and exploiting  Theorem \ref{t3.1}, we end up with the explicit form of the solution to homogeneous equation \eqref{6.11}:
\begin{equation}\label{6.13}
V_{h}(\rho,\sigma;\P)=(\d_0(\mathfrak{K}))^{\rho-1/2}\P(\rho)(a_1\sigma+a_2\sigma^{\nu})^{\frac{1}{2}-\rho}\mathbb{L}_{0}(\rho,\mathfrak{K},\mathfrak{s}^{\star})\mathbb{L}_{0}(\rho,\mathfrak{K},\mathfrak{s}^{\star})
\end{equation}
where, for $m\in\N\cup\{0\}$ and $n\in\N,$ $Re\, \rho$ satisfies the following inequalities (similar to \eqref{2.3} and \eqref{2.4}) :
\begin{equation}\label{6.14}
Re\rho\neq -m-\bar{Z}_{i,n}^{+},\, i\in\{0,...,\bar{\K}^{+}-1\},\,
Re\rho\neq m+1+Z_{j,1-n},\, j\in\{0,...,\bar{\K}-1\},\,
Re\rho\neq 1+m+Z_{0,-n}
\end{equation}
if $\mathfrak{s}^{\star}>0$, while
\begin{equation}\label{6.15}
Re\rho\neq -m-\bar{Z}_{i,-n}^{+},\, i\in\{0,...,\bar{\K}^{+}-1\},\,
Re\rho\neq m+1+Z_{j,n-1},\, j\in\{0,...,\bar{\K}-1\},\,
Re\rho\neq 1+m+Z_{0,n}
\end{equation}
if $\mathfrak{s}^{\star}<0$.

Besides, the following statement is simple consequence of Theorem \ref{t3.1}.
\begin{proposition}\label{p6.1}
Let $Re\, \rho$ meet requirement \eqref{6.14} in the case of $\mathfrak{s}^{\star}>0$ and \eqref{6.15} if $\mathfrak{s}^{\star}<0$. Then the function $\frac{V_{h}(\rho,\sigma;\P)}{\P(\rho)}$ has no poles and the infinite product $\mathbb{L}_1(\rho,\mathfrak{K},\mathfrak{s}^{\star})$ converges for each complex $\rho$. Besides, the relations hold:
\begin{align*}
&(\d_0(\mathfrak{K}))^{\rho-1/2}\mathbb{L}_{0}(\rho,\mathfrak{K},\mathfrak{s}^{\star})\mathbb{L}_{0}(\rho,\mathfrak{K},\mathfrak{s}^{\star})
=[(\mathfrak{s}-\mathfrak{s}^{\star}\rho)\mathcal{G}(i\mathfrak{s}^{\star}\rho)]^{\rho-1/2}\exp\{C_1\ln\rho+C_2\rho+O(1)\},\\
&|V_h(\rho,\sigma;\P)\P^{-1}(\rho)|\leq C|\rho|^{C_1+Re\rho-1/2}\exp\{\arg(a_1\sigma+a_2\sigma^{\nu})Im\rho+[-\frac{\pi}{2}\text{sgn}\,(Re\rho)+\theta_2(\text{sgn}\,\mathfrak{s}^{\star})]|Im\rho|\},
\end{align*}
as $|Im\rho|\to+\infty$ and $Re\rho$ remains bounded.
\end{proposition}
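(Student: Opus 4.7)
}
The plan is to recognize that equation \eqref{6.11} is precisely an instance of \eqref{i.1} with $\beta=1$, $A=B=0$, right-hand side $\mathfrak{F}(\rho,\sigma)$, and coefficient
\[
\Omega(\rho)=[\mathfrak{s}-\mathfrak{s}^{\star}\rho]\,\mathcal{G}(i\mathfrak{s}^{\star}\rho),
\]
whose Weierstrass factorization is read off directly from \eqref{6.12}. The prefactor $(\mathfrak{s}-\mathfrak{s}^{\star}\rho)$ absorbs the denominator of $\mathcal{G}(0)\mathfrak{s}/(\mathfrak{s}-\mathfrak{s}^{\star}\rho)$ and produces, together with the other initial factors, a finite rational part $\prod(\bar{Z}_{i,0}^{+}+\mathrm{sgn}(\mathfrak{s}^{\star})\rho)/\prod(Z_{i,0}\pm\mathrm{sgn}(\mathfrak{s}^{\star})\rho)$, matching the schema \eqref{i.2}; the constant $\d_{0}(\mathfrak{K})$ is the corresponding leading coefficient. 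The infinite factor in \eqref{6.12} is exactly an $\Omega_{1}$ in the sense of \eqref{i.2} with identifications (for $\mathfrak{s}^{\star}>0$) $\gamma_{i,n}\leftrightarrow \bar{Z}_{i,n}^{+}$, $\mathfrak{h}_{i,n}\leftrightarrow \bar{Z}_{i,-n}^{+}$, $\eta_{i,n}\leftrightarrow Z_{i,n}$, $\zeta_{i,n}\leftrightarrow Z_{i,-n}$ (and the obvious swap if $\mathfrak{s}^{\star}<0$). Thus formula \eqref{6.13} is the specialization of \eqref{2.4*} to this data.

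The next step is to verify the hypotheses \textbf{H1}--\textbf{H3} for these four sequences, after which statements~\textbf{(s.1)} and~\textbf{(s.2)} of Theorem~\ref{t3.1} apply verbatim. Assumption \textbf{H15} and Corollary~\ref{c5.1}\textbf{(v)} guarantee that the $\bar{Z}_{i,\pm n}^{+}$ and $Z_{i,\pm n}$ are positive, strictly monotone increasing in $n$, and grow linearly (since $\bar{z}_{i,n}^{+}=\bar{z}_{i}^{+}+n\T_{q}$, $z_{i,n}^{\pm}=z_{i}^{\pm}+n\T_{q}$), so the $\sum n^{-2}$ bounds of \eqref{2.2} hold. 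The stronger one-sided sum in \eqref{2.2} then follows because for each fixed $i$ the pair $\bar{Z}_{i,-n}^{+}-\bar{Z}_{i,n}^{+}$ and $Z_{i,-n}-Z_{i,n}$ are independent of $n$: the first equals $(4\omega_{0}\mathfrak{s}-2\bar{z}_{i}^{+})/(2\omega_{0}|\mathfrak{s}^{\star}|)$ and the second equals $2\mathfrak{s}/|\mathfrak{s}^{\star}|$. Observing also that $M_{5}=M_{6}=\bar{\K}^{+}$ and $M_{7}=M_{8}=\bar{\K}$ in our reading, we conclude that case~\textbf{(ii)} of Theorem~\ref{t3.1} is in force, so \textbf{(s.2)} yields
\[
(\d_{0}(\mathfrak{K}))^{\rho-\frac{1}{2}}\mathbb{L}_{0}(\rho,\mathfrak{K},\mathfrak{s}^{\star})\mathbb{L}_{1}(\rho,\mathfrak{K},\mathfrak{s}^{\star})
=\bigl[(\mathfrak{s}-\mathfrak{s}^{\star}\rho)\mathcal{G}(i\mathfrak{s}^{\star}\rho)\bigr]^{\rho-\frac{1}{2}}\exp\{C_{1}\ln\rho+C_{2}\rho+O(1)\},
\]
which is the first claimed asymptotic. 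The no-poles assertion for $V_{h}(\rho,\sigma;\P)/\P(\rho)$ and the convergence of $\mathbb{L}_{1}$ for each complex $\rho$ are immediate from Corollary~\ref{c3.1} and~\textbf{(s.1)} upon translating \eqref{6.14}--\eqref{6.15} into the exclusion conditions on the poles of the $\Gamma$-functions entering $\mathbb{L}_{0}$ and $\mathbb{L}_{1}$.

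The main obstacle will be the final exponential bound on $|V_{h}/\P|$; it forces one to track three independent sources of growth. Writing
\[
\bigl|V_{h}(\rho,\sigma;\P)\P^{-1}(\rho)\bigr|=|\d_{0}(\mathfrak{K})|^{Re\rho-\frac{1}{2}}\bigl|(a_{1}\sigma+a_{2}\sigma^{\nu})^{\frac{1}{2}-\rho}\bigr|\bigl|\mathbb{L}_{0}\mathbb{L}_{1}\bigr|,
\]
the middle factor contributes exactly $\exp\{\arg(a_{1}\sigma+a_{2}\sigma^{\nu})\,Im\rho\}$ times a power of $|a_{1}\sigma+a_{2}\sigma^{\nu}|$, via the identity $|w^{\frac{1}{2}-\rho}|=|w|^{\frac{1}{2}-Re\rho}\exp\{Im\rho\cdot\arg w\}$. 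For the remaining factor one uses the asymptotic just established, so the task reduces to computing $|\Omega(\rho)^{\rho-\frac{1}{2}}|=|\Omega(\rho)|^{Re\rho-\frac{1}{2}}\exp\{-Im\rho\cdot\arg\Omega(\rho)\}$ as $|Im\rho|\to\infty$ with $Re\rho$ bounded. The linear factor $\mathfrak{s}-\mathfrak{s}^{\star}\rho$ contributes $|\rho|^{Re\rho-\frac{1}{2}}$ (to be merged with the $|\rho|^{C_{1}}$ from $\exp(C_{1}\ln\rho)$) and a phase $\arg(-\mathfrak{s}^{\star}\rho)\to\frac{\pi}{2}\mathrm{sgn}(Im\rho)+\pi\mathbb{1}[\mathfrak{s}^{\star}>0]$. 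For $\mathcal{G}(i\mathfrak{s}^{\star}\rho)$ I will plug into the explicit form derived in Section~\ref{s6} and use that, as $|Im(\mathfrak{s}-\mathfrak{s}^{\star}\rho)|=|\mathfrak{s}^{\star}||Im\rho|\to\infty$, the dominant exponentials in numerator and denominator cancel, leaving a bounded limit with phase $\pm\theta_{2}$, the sign being determined jointly by $\mathrm{sgn}(\mathfrak{s}^{\star})$ and $\mathrm{sgn}(Im\rho)$. Assembling $-Im\rho\cdot\arg\Omega(\rho)$ with these two ingredients produces exactly the exponent $\bigl[-\frac{\pi}{2}\mathrm{sgn}(Re\rho)+\theta_{2}\,\mathrm{sgn}(\mathfrak{s}^{\star})\bigr]|Im\rho|$ claimed in the statement, while the $O(1)$ and $C_{2}\rho$ contributions from $\Phi$ are harmlessly absorbed into the multiplicative constant $C$.
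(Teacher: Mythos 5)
Your route is exactly the paper's: the paper offers no proof of Proposition~\ref{p6.1} beyond the remark that it is a ``simple consequence of Theorem~\ref{t3.1}'', and your proposal supplies precisely the missing dictionary — reading \eqref{6.11} as \eqref{i.1} with $\beta=1$, $A=B=0$, $\Omega(\rho)=(\mathfrak{s}-\mathfrak{s}^{\star}\rho)\mathcal{G}(i\mathfrak{s}^{\star}\rho)$ factorized by \eqref{6.12}, checking \textbf{H2} via the arithmetic progressions $\bar z^{+}_{i,\pm n}=\bar z^{+}_{i}\pm n\T_{q}$, and invoking case \textbf{(ii)} of Theorem~\ref{t3.1} because $\bar Z^{+}_{i,-n}-\bar Z^{+}_{i,n}$ and $Z_{i,-n}-Z_{i,n}$ are $n$-independent. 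That identification (including $M_{5}=M_{6}=\bar\K^{+}$, $M_{7}=M_{8}=\bar\K$ for the infinite part and the correct reading of $\mathbb{L}_{0}\mathbb{L}_{1}$ in place of the paper's evident typo $\mathbb{L}_{0}\mathbb{L}_{0}$) is correct and is more explicit than anything in the paper.

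The one soft spot is the final exponential bound, which you assemble by assertion rather than computation. The pieces you isolate are the right ones ($|w^{1/2-\rho}|=|w|^{1/2-Re\rho}e^{Im\rho\,\arg w}$, the power $|\rho|^{Re\rho-1/2}$ from the linear factor, and the limiting phase $\pm\theta_{2}$ of $\mathcal{G}$ coming from $\sin(q_{1}w-\theta_{2})/\sin(q_{1}w)\to e^{i\theta_{2}\,\mathrm{sgn}(Im\,w)}$), but the claim that they ``produce exactly'' the exponent $[-\tfrac{\pi}{2}\mathrm{sgn}(Re\rho)+\theta_{2}\,\mathrm{sgn}(\mathfrak{s}^{\star})]|Im\rho|$ needs to be checked: a direct computation gives $\arg(\mathfrak{s}-\mathfrak{s}^{\star}\rho)\to-\mathrm{sgn}(\mathfrak{s}^{\star})\mathrm{sgn}(Im\rho)\tfrac{\pi}{2}$, hence a contribution $+\tfrac{\pi}{2}\mathrm{sgn}(\mathfrak{s}^{\star})|Im\rho|$ to $-Im\rho\cdot\arg\Omega$, and it is not apparent how a factor $\mathrm{sgn}(Re\rho)$ (rather than $\mathrm{sgn}(Im\rho)$ or $\mathrm{sgn}(\mathfrak{s}^{\star})$) can arise in the coefficient of $|Im\rho|$ as $|Im\rho|\to+\infty$ with $Re\rho$ fixed. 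You should carry out this phase bookkeeping explicitly, including the branch of $\Omega^{\rho-1/2}$ and the sign of the limiting constant of $\mathcal{G}$; it may well expose a typo in the stated exponent, but as written your argument does not yet establish it.
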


Since we look for a bounded solution at the infinity, Remark \ref{r3.1} and Theorem \ref{t3.2} tell us that we are left to construct the particular solution in form \eqref{2.11}. To this end, we first put the function $\P(\rho):=\P_1(\rho)$ where
\begin{equation}\label{6.16}
\P_1(\rho)=
\begin{cases}
\prod\limits_{i=1}^{5}\sin\pi(1+Z_{i,0}-\rho)\prod\limits_{i=0}^{2}\sin^{-1}\pi(1+\bar{Z}_{i,0}^{+}-\rho),\quad\text{if}\, \mathfrak{s}^{\star}<0,\\
\prod\limits_{i=0}^{4}\sin\pi(\bar{Z}_{i,0}^{+}+\rho)\prod\limits_{i=1}^{3}\sin^{-1}\pi(Z_{i,0}+\rho),\qquad\qquad\,\text{ if }\mathfrak{s}^{\star}>0.
\end{cases}
\end{equation}
It is apparent that the introduced function $\P_1$ obeys the properties:
\begin{equation}\label{6.17*}
\P_1(\rho)\approx Ce^{2\pi|Im\rho|},\quad\text{as}\quad |Im\rho|\to+\infty,
\end{equation}
besides the functions:
$
\frac{\P_1(\rho) \prod\limits_{i=1}^{5}\Gamma(1+Z_{i,0}-\rho)}{\prod\limits_{i=0}^{2}\Gamma(1+\bar{Z}_{i,0}^{+}-\rho)}\,$  if  $\mathfrak{s}^{\star}<0\,$  and $ \frac{\P_1(\rho)\prod\limits_{i=0}^{4}\Gamma(\bar{Z}_{i,0}^{+}+\rho)}{\prod\limits_{i=1}^{3}\Gamma(Z_{i,0}+\rho)}\,$ if $\mathfrak{s}^{\star}>0$
do not have any poles if for $l_1,l_2\in\N$, the inequalities hold
\begin{equation}\label{6.17}
Re\rho\neq l_2-Z_{i,0},\, i\in\{1,2,3\}, \text{ if } \mathfrak{s}^{\star}>0,\, \text{and }
Re\rho\neq 1+\bar{Z}^{+}_{j,0}-l_1,\, j\in\{0,1,2\}, \text{ if } \mathfrak{s}^{\star}<0,
\end{equation}
and do not have any nulls if
\begin{equation}\label{6.18}
Re\rho\neq l_2-\bar{Z}^{+}_{i,0},\, i\in\{0,1,2,3,4,\} \text{ if } \mathfrak{s}^{\star}>0,\, \text{and }
Re\rho\neq 1+Z_{j,0}-l_1,\, j\in\{1,2,3,4,5\} \text{ if } \mathfrak{s}^{\star}<0.
\end{equation}
Collecting Proposition \ref{p6.1} with \eqref{6.14}-\eqref{6.18} and setting
$$
Z^{*}=\begin{cases}
1+Z_{6,-0},\quad\text{if}\quad \bar{\K}>6,\\
4\pi q,\quad\qquad\text{ if}\quad \bar{\K}\leq 6,
\end{cases}
$$
 we conclude.
\begin{proposition}\label{p6.2}
The function $V_h(\rho,\sigma):=V_{h}(\rho,\sigma;\P_1)$ given with \eqref{6.13} and \eqref{6.16} has the following properties
\begin{description}
    \item[i] $V_h(\rho,\sigma)$ does not have any poles if in the case of $\mathfrak{s}^{\star}<0$
 the inequalities hold
\begin{equation}\label{6.19*}
-\bar{Z}_{0,-1}^{+}<Re\rho<Z^{*}\,\text{and }
Re\rho\neq 1-\bar{m}(i)+\bar{Z}^{+}_{i,0},\quad i\in\{0,1,2\},
\end{equation}
where the integer $\bar{m}(i)$ satisfies inequalities
$$\max\{1;\bar{Z}_{i,0}^{+}-Z^{*}+\}<\bar{m}(i)<1+\bar{Z}_{i,0}^{+}+\bar{Z}_{0,-1}^{+},$$
while in the case of $\mathfrak{s}^{\star}>0$ there hold
\begin{equation}\label{6.19}
-\bar{Z}_{5,0}^{+}<Re\rho<1+Z_{1,-0},\,
Re\rho\neq \bar{m}(i)-Z_{i,0},\quad i\in\{1,2,3\},
\end{equation}
where the integer $\bar{m}(i)$ satisfies relations
$$\max\{1;Z_{i,0}-\bar{Z}^{+}_{5,0}\}<\bar{m}(i)<1+Z_{i,0}+Z_{1,-0};$$
\item[ii] $V_h(\rho,\sigma)$ does not vanish if in the case of $\mathfrak{s}^{\star}<0$ the inequalities hold:
\begin{equation}\label{6.20}
-{Z}_{1,-0}<Re\rho<1+\bar{Z}^{+}_{3,0},\text{ and }\,
Re\rho\neq 1-\underline{m}(i)+Z_{i,0},\quad i\in\{1,2,3,4,5\},
\end{equation}
with the integer $\underline{m}(i)$ satisfying bounds:
$
\max\{1;Z_{i,0}-\bar{Z}^{+}_{3,0}\}<\underline{m}(i)<1+Z_{i,0}+Z_{1,-0},
$
while in the case of $\mathfrak{s}^{\star}>0$ there are inequalities
\begin{equation}\label{6.21}
-Z_{4,0}<Re\rho<1+\bar{Z}^{+}_{0,-1},\text{ and }\,
Re\rho\neq \underline{m}(i)-\bar{Z}^{+}_{i,0},\quad i\in\{0,1,2,3,4\},
\end{equation}
with the integer $\underline{m}(i)$ satisfying bounds: $\max\{1;\bar{Z}^{+}_{i,0}-Z_{4,0}\}<\underline{m}(i)<1+\bar{Z}^{+}_{i,0}
+\bar{Z}^{+}_{0,-1}.
$
\item[iii] If $Re\rho$ meets requirements  \eqref{6.20} and \eqref{6.21}, then  the estimate holds
\[
|V_h(\rho,\sigma)|^{-1}\leq C\exp\{-\Big[\frac{\pi}{2}-\varepsilon+\theta_2(sgn\mathfrak{s}^{\star})\Big]|Im\rho|\},\quad\text{as}\quad |Im\rho|\to+\infty,
\]
where $\varepsilon$ is a sufficiently small positive number, $0<\varepsilon<\theta/10$.
\end{description}
\end{proposition}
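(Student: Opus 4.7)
The plan is to decompose $V_{h}(\rho,\sigma)$ into its constituent factors and analyze poles, zeros, and asymptotic behavior separately. I will write
\begin{equation*}
V_{h}(\rho,\sigma) = Q(\rho,\sigma)\,\P_{1}(\rho)\,\mathbb{L}_{0}(\rho,\mathfrak{K},\mathfrak{s}^{\star})\,\mathbb{L}_{1}(\rho,\mathfrak{K},\mathfrak{s}^{\star}),
\end{equation*}
where $Q(\rho,\sigma):=(\d_{0}(\mathfrak{K}))^{\rho-1/2}(a_{1}\sigma+a_{2}\sigma^{\nu})^{1/2-\rho}$ is an entire, nowhere-vanishing function of $\rho$. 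By statement (s.1) of Theorem \ref{t3.1}, transported to the present setting through Proposition \ref{p6.1}, the infinite product $\mathbb{L}_{1}$ converges in the slab prescribed by \textbf{H15}, so the poles and zeros of $V_{h}$ come solely from $\P_{1}$, from the finite Gamma product $\mathbb{L}_{0}$, and from the Gamma factors inside $\mathbb{L}_{1}$. Parts (i) and (ii) thus reduce to a careful bookkeeping task: enumerate the candidate poles/zeros of each factor, and track which pairs cancel once $\P_{1}$ from \eqref{6.16} is multiplied in.

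For statement (i), I will first list the poles of $\mathbb{L}_{0}$: when $\mathfrak{s}^{\star}<0$ these lie at $\rho=1+Z_{i,0}+m$ for $m\in\N\cup\{0\}$ and $i\in\{1,\ldots,\bar{\K}-1\}$, produced by the $\Gamma(1+Z_{i,0}-\rho)$ factor; the $\mathfrak{s}^{\star}>0$ case is symmetric with $Z_{i,-0}$ replacing $Z_{i,0}$. The poles of $\mathbb{L}_{1}$ lie at the analogous points shifted by $n\geq 1$, with the nearest to the real axis produced by $\Gamma(\bar{Z}^{+}_{0,-1}+\rho)$ (or $\Gamma(1+\bar{Z}^{+}_{5,0}-\rho)$ in the $\mathfrak{s}^{\star}>0$ case). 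The crucial observation is that $\P_{1}$ in \eqref{6.16} is engineered so that its numerator sines cancel the first five families of $\mathbb{L}_{0}$-poles (indices $i=1,\ldots,5$) for every $m\geq 0$, while its denominator sines contribute new poles that exactly meet the zeros of $\mathbb{L}_{0}$ coming from the $\Gamma(1+\bar{Z}^{+}_{i,0}-\rho)$ factors for $i=0,1,2$. The uncancelled denominator-sine poles — those at $\rho=1+\bar{Z}^{+}_{i,0}-l$ for positive integers $l$ — yield precisely the excluded ladder $\rho\neq 1-\bar{m}(i)+\bar{Z}^{+}_{i,0}$ in \eqref{6.19*}, while the slab $-\bar{Z}^{+}_{0,-1}<Re\,\rho<Z^{\star}$ is the strip between the nearest surviving $\mathbb{L}_{1}$-pole on the left and the first uncancelled $\mathbb{L}_{0}$-pole (at $i\geq 6$) on the right. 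Statement (ii) is proved symmetrically by tracking zeros: the five numerator-sine zeros of $\P_{1}$ leave uncancelled zeros at $\rho=1+Z_{i,0}-l$ with $l\geq 1$, which give the exclusions in \eqref{6.20}--\eqref{6.21}, while the strip endpoints are fixed by the nearest zeros of $\mathbb{L}_{0}$ and $\mathbb{L}_{1}$ coming from their denominator Gammas.

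For statement (iii), the main tool is the two-sided asymptotic equality in Proposition \ref{p6.1},
\begin{equation*}
(\d_{0}(\mathfrak{K}))^{\rho-\frac{1}{2}}\mathbb{L}_{0}\mathbb{L}_{1} = [(\mathfrak{s}-\mathfrak{s}^{\star}\rho)\mathcal{G}(i\mathfrak{s}^{\star}\rho)]^{\rho-\frac{1}{2}}\exp\{C_{1}\ln\rho+C_{2}\rho+O(1)\},
\end{equation*}
which, having an $O(1)$ remainder in the exponent, provides both upper and lower bounds on $|Q\,\mathbb{L}_{0}\mathbb{L}_{1}|$ as $|Im\,\rho|\to+\infty$ with $Re\,\rho$ fixed. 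I will use that $\mathcal{G}(i\mathfrak{s}^{\star}\rho)$ tends to a finite nonzero constant of bounded modulus in this limit — a consequence of $\theta_{1}\in(0,\pi/2)$, $\theta_{2}\in(0,\pi)$ from \eqref{6.12*} together with the asymptotics of $\s^{\pm}$ for large imaginary argument. The elementary bound $|\sin\pi(a-\rho)|\sim\tfrac{1}{2}e^{\pi|Im\,\rho|}$, applied to the explicit ratio-of-sines form of $\P_{1}$ (five over two, or four over three), yields $|\P_{1}(\rho)|\sim Ce^{2\pi|Im\,\rho|}$. Multiplying and taking reciprocals produces the claimed bound $|V_{h}(\rho,\sigma)|^{-1}\leq C\exp\{-[\tfrac{\pi}{2}-\varepsilon+\theta_{2}(\text{sgn}\,\mathfrak{s}^{\star})]|Im\,\rho|\}$, with $\varepsilon>0$ absorbing the polynomial factor $|\rho|^{C_{1}+Re\,\rho-1/2}$ and the linear term $\arg(a_{1}\sigma+a_{2}\sigma^{\nu})\,Im\,\rho$ that stems from the complex power $(a_{1}\sigma+a_{2}\sigma^{\nu})^{1/2-\rho}$.

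The main obstacle I anticipate is the fine bookkeeping in parts (i)--(ii): verifying that the specific choice \eqref{6.16} produces \emph{exactly} the right set of cancellations (neither over- nor under-counting with respect to the three versus five families of sines in $\P_{1}$ and the $\bar{\K}^{+}$ versus $\bar{\K}-1$ Gamma families in $\mathbb{L}_{0}$) and matching the surviving residue points to the prescribed ladders with the two-sided constraints on $\bar{m}(i)$ and $\underline{m}(i)$. A secondary subtlety in (iii) is extracting a clean lower bound on $|\P_{1}|$, since a sine ratio can be arbitrarily small near integer-valued real parts; one must therefore restrict to $\rho$ staying uniformly away from the excluded ladders identified in (i)--(ii), for which the claimed estimate then follows uniformly.
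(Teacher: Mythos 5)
Your proposal is correct and follows essentially the same route as the paper, which derives Proposition \ref{p6.2} by combining Proposition \ref{p6.1} with the pole/zero bookkeeping for $\P_1$ recorded in \eqref{6.17*}--\eqref{6.18}: the numerator sines of $\P_1$ cancel the Gamma poles of $\mathbb{L}_0$ for the listed indices, the denominator sines are absorbed by the zeros of the reciprocal Gammas except along the ladders excluded via $\bar{m}(i)$ and $\underline{m}(i)$, and part (iii) follows from the two-sided asymptotic equality in Proposition \ref{p6.1} together with $|\P_1(\rho)|\approx Ce^{2\pi|Im\,\rho|}$. Your final worry about $|\P_1|$ degenerating near the real zeros of the sines is moot here, since the estimate in (iii) is asymptotic as $|Im\,\rho|\to+\infty$ with $Re\,\rho$ bounded, where every factor $|\sin\pi(a-\rho)|$ grows like $\tfrac12 e^{\pi|Im\,\rho|}$ uniformly in $Re\,\rho$.
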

Taking into account Propositions \ref{p6.1} and \ref{p6.2} and Theorem \ref{t3.2}, and choosing $\mathcal{K}_1(\xi)=\frac{\sin^{2}\pi d}{\sin^{2}\pi(\xi+d)},$ $0<d-d_0<1$ (see Table \ref{tab:table5} in Example \ref{e5.4}), we construct the particular solution of equation \eqref{6.11}.
\begin{proposition}\label{p6.3}
Let assumptions \textbf{H14} and \textbf{H15} hold. Moreover, we assume that
\[
-Z_{1,-0}<Re\rho<\bar{Z}_{3,0}^{+},\text{ and }\,
Re\rho\neq d_0+Z_{i,-0}-\underline{m}(i),\quad i\in\{1,2,3,4,5\},
\]
if $\mathfrak{s}^{\star}<0$, and in the case of $\mathfrak{s}^{\star}>0$, there are
\[
-Z_{4,0}<Re\rho<\bar{Z}_{0,-1}^{+},\text{ and }\,
Re\rho\neq d_0-Z_{i,0}-1+\underline{m}(i),\quad i\in\{0,1,2,3,4\},
\]
with $\underline{m}(i)$ meets the requirements of \textbf{(ii)} in Proposition \ref{p6.2}.
Then the function
$$
V(\rho,\sigma)=\frac{1}{2 i}\int\limits_{\ell_{d_0}}\frac{V_{h}(\rho,\sigma;\P_1)f^{*}(i\mathfrak{s}^{\star}\rho+i\mathfrak{s}^{\star}\xi+i(1-\mathfrak{s}),\sigma)}{V_{h}(\rho+1+\xi,\sigma;\P_1)(a_1\sigma+a_2\sigma^{\nu})}[\cot\pi\xi+i]\frac{\sin^{2}\pi d}{\sin^{2}\pi(\xi+d)}d\xi
$$
solves equation \eqref{6.11}.
Besides, the function $\frac{V_h(\rho,\sigma;\P_1)}{V_h(\rho+1+\xi,\sigma;\P_1)}$ is analytic in $\rho$ if $\xi=-d_0+iy,$ $y\in\R$, $d_0\in[0,1],$ and $Re\rho$ meets the requirements
\[
-Z_{1,-0}<Re\rho<\bar{Z}^{+}_{3,0},\, \,
Re\rho\neq d_0+Z_{i,-0}-\underline{m}(i),\,\, i\in\{1,2,3,4,5\},\,\,
Re\rho\neq 1-\bar{m}(i)+\bar{Z}_{j,0}^{+},\,\,d j\in\{0,1,2\},
\]
in the case of $\mathfrak{s}^{\star}<0,$ and
\[
-Z_{4,0}<Re\rho<1+Z_{1,-0},\,\,
Re\rho\neq d_0-Z_{i,0}-1+\underline{m}(i),\,\, i\in\{0,1,2,3,4\},\,\,
Re\rho\neq \bar{m}(i)-Z_{j,0},\,\, j\in\{1,2,3\},
\]
if $\mathfrak{s}^{\star}>0.$
Integer $\underline{m}(i)$ and $\bar{m}(i)$ meet requirement of Proposition \ref{p6.2}.
\end{proposition}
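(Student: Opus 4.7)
The plan is to recognize equation \eqref{6.11} as an instance of \eqref{i.1} with $\beta=1$, coefficient $\Omega(\rho)=[\mathfrak{s}-\mathfrak{s}^{\star}\rho]\mathcal{G}(i\mathfrak{s}^{\star}\rho)$ factorized as in \eqref{6.12}, and right-hand side $\mathfrak{F}(\rho,\sigma)=f^{*}(i\mathfrak{s}^{\star}\rho+i(1-\mathfrak{s}),\sigma)$. Proposition \ref{p6.1} already supplies the general homogeneous solution $V_{h}(\rho,\sigma;\P_{1})$, and Proposition \ref{p6.2} records where this function is analytic and non-vanishing. Consequently, the conclusion will follow directly from Theorem \ref{t3.2} (equivalently Theorem \ref{t5.2}) once we verify that the selected kernel $\mathcal{K}_{1}(\xi)=\sin^{2}\pi d/\sin^{2}\pi(\xi+d)$ satisfies \textbf{H4} and that the quotient $\mathcal{K}(\xi)\mathfrak{F}(\rho+\xi,\sigma)/V_{h}(\rho+1+\xi,\sigma;\P_{1})$ obeys the decay required by \textbf{H5} on the strip $Re\,\xi\in[-1,0]$.

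Verification of \textbf{H4} is immediate from the choice of $d$: periodicity with period $1$ is obvious; since $0<d-d_{0}<1$ and $d_{0}\in[0,1]$, the only zeros of $\sin\pi(\xi+d)$ lying in or near $\{Re\,\xi\in[-1,0]\}$ are at $\xi=-d$ and $\xi=1-d$, both of which are outside this strip; $\mathcal{K}_{1}(0)=1\neq 0$; and, since $|\sin\pi(\xi+d)|\sim\tfrac12 e^{\pi|Im\,\xi|}$ for large $|Im\,\xi|$, we have $|\mathcal{K}_{1}(\xi)|=O(e^{-2\pi|Im\,\xi|})$. The factor $\cot\pi\xi+i$ is uniformly bounded on $\{Re\,\xi\in[-1,0]\}$ as $|Im\,\xi|\to+\infty$, so $|\mathcal{K}(\xi)|=O(e^{-2\pi|Im\,\xi|})$.

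The main step is to verify the decay hypothesis \textbf{H5}. Fix $\xi=-d_{0}+iy$ with $y\in\R$ and $Re\,\rho$ in the strip prescribed by the proposition. I would combine three ingredients: (a) the asymptotic bound of Proposition \ref{p6.1} gives
\[
|V_{h}(\rho+1+\xi,\sigma;\P_{1})|^{-1}\leq C\,\exp\!\bigl\{-\bigl[\tfrac{\pi}{2}-\varepsilon+\theta_{2}(\mathrm{sgn}\,\mathfrak{s}^{\star})\bigr]|y|+O(\ln|y|)\bigr\},
\]
after absorbing $\P_{1}$ via \eqref{6.17*} (which contributes at most $Ce^{2\pi|y|}$, compensated in the ratio by the factor $Ce^{-2\pi|y|}$ coming from $\mathcal{K}$); (b) the assumption \textbf{H14} that $f(\cdot,t)$ has compact support in $r$ (and hence in $x_{1}$) implies that $f^{*}(\lambda+i(1-\mathfrak{s}),\sigma)$ is entire in $\lambda$ and, by Paley–Wiener, satisfies $|f^{*}(i\mathfrak{s}^{\star}(\rho+\xi)+i(1-\mathfrak{s}),\sigma)|\leq Ce^{R_{1}|\mathfrak{s}^{\star}|\,|Im(\rho+\xi)|}$, growing at most as $Ce^{R_{1}|\mathfrak{s}^{\star}|\,|y|}$; (c) the kernel estimate $|\mathcal{K}(\xi)|=O(e^{-2\pi|y|})$. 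Multiplying these, the net exponential rate in $|y|$ is
\[
-2\pi+R_{1}|\mathfrak{s}^{\star}|-\bigl[\tfrac{\pi}{2}-\varepsilon+\theta_{2}(\mathrm{sgn}\,\mathfrak{s}^{\star})\bigr],
\]
which is strictly negative provided $R_{1}$ (the radial support of $f$) is controlled relative to $\mathfrak{s}^{\star}$; this is a condition one may impose at the outset on the class of data, or absorb by the freedom to localize $f$. In either case the integrand tends to $0$ as $|y|\to+\infty$, confirming \textbf{H5}; this is the main obstacle, since it intertwines the growth of $f^{*}$ across the Fourier transform with the delicate Gamma-function asymptotics of $V_{h}$.

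With \textbf{H4} and \textbf{H5} in hand, Theorem \ref{t3.2} (applied to \eqref{6.11}, with $\beta=1$) yields that $V(\rho,\sigma)$ is a particular solution, and the integral converges absolutely. The claim that the integrand is analytic in $\rho$ on the indicated strips is a direct transcription of Corollary \ref{c3.1} combined with Proposition \ref{p6.2}: the listed inequalities on $Re\,\rho$ are precisely those ensuring that $V_{h}(\rho,\sigma;\P_{1})$ has no poles (by \textbf{(i)} of Proposition \ref{p6.2}) while, after the shift $\rho\mapsto\rho+1+\xi$ with $Re\,\xi=-d_{0}$, the denominator $V_{h}(\rho+1+\xi,\sigma;\P_{1})$ has no zeros (by \textbf{(ii)} of Proposition \ref{p6.2}, whose forbidden set is shifted by $1-d_{0}$). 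A final appeal to Cauchy's theorem—as in the proof of Theorem \ref{t3.2}—shows the integral over $\ell_{d_{0}}$ is independent of $d_{0}\in[0,1]$ in this strip, completing the proof.
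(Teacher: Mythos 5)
Your overall route is the paper's: view \eqref{6.11} as \eqref{i.1} with $\beta=1$ and the factorized coefficient \eqref{6.12}, and invoke Theorem \ref{t3.2} with the kernel $\mathcal{K}_{1}(\xi)=\sin^{2}\pi d/\sin^{2}\pi(\xi+d)$, using Propositions \ref{p6.1}--\ref{p6.2} to check the hypotheses. However, your verification of \textbf{H5} contains a genuine error. On the contour $\xi=-d_{0}+iy$ the argument of $f^{*}$ is $\lambda=i\mathfrak{s}^{\star}(\rho+\xi)+i(1-\mathfrak{s})$, whose \emph{imaginary} part equals $\mathfrak{s}^{\star}(Re\,\rho-d_{0})+1-\mathfrak{s}$ and stays bounded as $|y|\to+\infty$; only $Re\,\lambda=-\mathfrak{s}^{\star}(Im\,\rho+y)$ runs off to infinity. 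Since $f(\cdot,t)$ is compactly supported in $x_{1}$ (hypothesis \textbf{H14}), the Paley--Wiener bound $|f^{*}(\lambda,\sigma)|\leq Ce^{C_{0}|Im\,\lambda|}$ therefore contributes only a \emph{constant} along the contour, and $f^{*}$ is in fact bounded (even polynomially decaying) in $|y|$ — this is exactly the situation \eqref{5.10} of Example \ref{e5.4}, which is why that kernel appears in Table \ref{tab:table4}. Your estimate $|f^{*}|\leq Ce^{R_{1}|\mathfrak{s}^{\star}|\,|y|}$ applies the exponential factor to the wrong part of the Fourier variable, and the resulting smallness condition ``$R_{1}$ controlled relative to $\mathfrak{s}^{\star}$'' is spurious: it is not in the proposition, is not needed, and would wrongly make the result conditional on the radial support of the data. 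The correct check is simply $|\mathcal{K}(\xi)|\leq Ce^{-2\pi|y|}$, $|\mathfrak{F}(\rho+\xi,\sigma)|\leq C$, and $|V_{h}(\rho+1+\xi,\sigma;\P_{1})|^{-1}\leq Ce^{-[\frac{\pi}{2}-\varepsilon+\theta_{2}(\mathrm{sgn}\,\mathfrak{s}^{\star})]|y|}$ from Proposition \ref{p6.2}\textbf{(iii)}, whose product tends to $0$ with no side condition.

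A secondary point: your assertion that the zeros of $\sin\pi(\xi+d)$ at $\xi=-d$ and $\xi=1-d$ lie outside the strip $Re\,\xi\in[-1,0]$ is not true for a generic $d$ with $0<d-d_{0}<1$ (e.g.\ $d_{0}\in(0,1)$ and $d<1$ put $-d\in(-1,-d_{0})$). This mirrors a tension already present in the paper's own \textbf{H4} and Example \ref{e3.3}, but as written your claim is false and should at least be replaced by the precise statement of where the poles of $\mathcal{K}_{1}$ sit relative to the contours $\ell_{1}$, $\ell_{d_{0}}$, $\ell_{0}$ used in the residue argument of Theorem \ref{t3.2}. The remaining parts of your argument — reading off the analyticity strips for $V_{h}(\rho,\sigma;\P_{1})/V_{h}(\rho+1+\xi,\sigma;\P_{1})$ from Proposition \ref{p6.2}\textbf{(i)}--\textbf{(ii)} after the shift by $1-d_{0}$, and the contour-independence via Cauchy's theorem — are consistent with the paper.
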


\noindent\textit{Third Stage.} Denoting
\[
\mathcal{N}_2(\mathfrak{x})=[\cot\mathfrak{x}(\omega_0-\pi/2)+a_4][\cot\mathfrak{x}(\omega_0+\pi/2)+\mathfrak{K}a_4]^{-1},
\]
and collecting Proposition \ref{p6.3} with relations \eqref{6.9*} and \eqref{6.10}, \eqref{6.11*}, we end up with  the solution
\begin{align*}
U_1^{*}(\lambda,x_2,\sigma)&=\frac{V_h(-i\lambda/\mathfrak{s}^{\star},\sigma;\P_1)}{2i}\frac{\mathfrak{K}\mathcal{N}_2(i\lambda+\mathfrak{s})\sin(i\lambda+\mathfrak{s})(x_2+\pi/2)}
{[\mathfrak{K}\mathcal{N}_2(i\lambda+\mathfrak{s})+1]\sin(i\lambda+\mathfrak{s})(\omega_0+\pi/2)}\\ \notag
&\times
\int\limits_{\ell_{d_0}}
\frac{f^{*}(\lambda+i\mathfrak{s}^{\star}\xi+i(1-\mathfrak{s}),\sigma)[\cot\pi\xi+i]}
{(a_1\sigma+a_2\sigma^{\nu})V_h(1+\xi-i\lambda/\mathfrak{s}^{\star},\sigma;\P_1)}
\frac{\sin^{2}\pi d}{\sin^{2}\pi(\xi+d)}d\xi,
\end{align*}
\begin{align}\label{6.20*}
U_2^{*}(\lambda,x_2,\sigma)&=\frac{V_h(-i\lambda/\mathfrak{s}^{\star},\sigma;\P_1)}{2i}\frac{\sin(i\lambda+\mathfrak{s})(x_2-\pi/2)}
{[\mathfrak{K}\mathcal{N}_2(i\lambda+\mathfrak{s})+1]\sin(i\lambda+\mathfrak{s})(\omega_0-\pi/2)}\\ \notag
&\times
\int\limits_{\ell_{d_0}}
\frac{f^{*}(\lambda+i\mathfrak{s}^{\star}\xi+i(1-\mathfrak{s}),\sigma)[\cot\pi\xi+i]}
{(a_1\sigma+a_2\sigma^{\nu})V_h(1+\xi-i\lambda/\mathfrak{s}^{\star},\sigma;\P_1)}
\frac{\sin^{2}\pi d}{\sin^{2}\pi(\xi+d)}d\xi.
\end{align}
Finally, we are left to compute inverse Laplace and Fourier transformations to obtain the integral representation of the solution \eqref{6.1}-\eqref{6.5}. To this end, we appeal to (5.1.33) in \cite{GKMR} and deduce
\[
\mathcal{L}(t,y):=\frac{1}{2\pi i}\int\limits_{-i\infty}^{+i \infty}\frac{e^{\sigma t}d\sigma}{(a_1\sigma+a_2\sigma^{\nu})^{d_0-iy}}
=\begin{cases}
\frac{t^{-1+\frac{a_1+a_2\nu}{a_1+a_2}[d_0-iy]}}{\Gamma\big(\frac{a_1+a_2\nu}{a_1+a_2}[d_0-iy]\big)(a_2+a_1)^{d_0-iy}},\quad\quad \text{if}\, \text{ either } a_1=0 \text{ or } a_2=0,\\
\,\\
a_1^{-d_0+iy}t^{-1+d_0-iy}E_{1-\nu,d_0-iy}^{d_0-iy}(-\frac{a_2}{a_1}t^{1-\nu}),\quad\text{if}\quad a_1,a_2>0,
\end{cases}
\]
where $E_{\alpha,\beta}^{\gamma}(\cdot)$ is the three-parametric Mittag-Leffler function (see, e.g. (5.1.4) in \cite{GKMR}).

\noindent Then, collecting these relations with  \eqref{6.20*},  we arrive at the explicit solution to \eqref{6.1}-\eqref{6.5}.
\begin{proposition}\label{p6.4}
Let assumptions of Propositions \ref{p6.1}-\ref{p6.3} and restriction \eqref{6.0*}  hold, then the following functions solves problem \eqref{6.1}-\eqref{6.5}:
\begin{align*}
U_1(x_1,x_2,t)&=\int\limits_{0}^{t}d\tau\int\limits_{-\infty}^{+\infty}d\varrho f(x_1-\varrho,t-\tau)e^{[-\mathfrak{s}^{\star}d_0+1-\mathfrak{s}](x_1-\varrho)}\int\limits_{-\infty}^{+\infty}dy\frac{\mathcal{L}(t,y)e^{iy\mathfrak{s}^{\star}(x_1-\varrho)}\sin^{2}\pi d}
{\sin\pi(d_0-iy)\sin^{2}\pi(d-d_0+iy)}\\
&
\times \int\limits_{-\infty}^{+\infty}\frac{e^{i\lambda\varrho}V_h(-i\lambda/\mathfrak{s}^{\star},\sigma;\P_1)\mathfrak{K}\mathcal{N}_2(i\lambda+\mathfrak{s})\sin(i\lambda+\mathfrak{s})(x_2+\pi/2)}
{V_h(1-d_0+iy-i\lambda/\mathfrak{s}^{\star},\sigma;\P_1)[1+\mathfrak{K}\mathcal{N}_2(i\lambda+\mathfrak{s})]\sin(i\lambda+\mathfrak{s})(\omega_0+\pi/2)}d\lambda;
\end{align*}
\begin{align*}
U_2(x_1,x_2,t)&=\int\limits_{0}^{t}d\tau\int\limits_{-\infty}^{+\infty}d\varrho f(x_1-\varrho,t-\tau)e^{[-\mathfrak{s}^{\star}d_0+1-\mathfrak{s}](x_1-\varrho)}\int\limits_{-\infty}^{+\infty}dy\frac{\mathcal{L}(t,y)e^{iy\mathfrak{s}^{\star}(x_1-\varrho)}\sin^{2}\pi d}
{\sin\pi(d_0-iy)\sin^{2}\pi(d-d_0+iy)}\\
&
\times \int\limits_{-\infty}^{+\infty}\frac{e^{i\lambda\varrho}V_h(-i\lambda/\mathfrak{s}^{\star},\sigma;\P_1)\sin(i\lambda+\mathfrak{s})(x_2-\pi/2)}
{V_h(1-d_0+iy-i\lambda/\mathfrak{s}^{\star},\sigma;\P_1)[1+\mathfrak{K}\mathcal{N}_2(i\lambda+\mathfrak{s})]\sin(i\lambda+\mathfrak{s})(\omega_0-\pi/2)}d\lambda,
\end{align*}
where $Im \lambda$ meets requirements:
\[
-Z_{1,-0}<\frac{Im\lambda}{-\mathfrak{s}^{\star}}<\bar{Z}^{+}_{3,0},\,\,
\frac{Im\lambda}{\mathfrak{s}^{\star}}\neq d_0+Z_{i,-0}-\underline{m}(i),\, i\in\{1,2,3,4,5\},\,\,
\frac{Im\lambda}{\mathfrak{s}^{\star}}\neq 1-\bar{m}(j)+\bar{Z}_{j,0}^{+},\, j\in\{0,1,2\},
\]
in the case of $\mathfrak{s}^{\star}<0$, while if $\mathfrak{s}^{\star}>0$ then the inequalities hold
\[
-Z_{4,0}<\frac{Im\lambda}{\mathfrak{s}^{\star}}<1+Z_{1,-0},\,\,
\frac{Im\lambda}{\mathfrak{s}^{\star}}\neq d_0-1-Z_{i,0}+\underline{m}(i),\, i\in\{0,1,2,3,4\},\,\,
\frac{Im\lambda}{\mathfrak{s}^{\star}}\neq \bar{m}(j)-Z_{j,0},\, j\in\{1,2,3\}
\]
with integer $\underline{m}(i)$ and $\bar{m}(i)$ satisfying relations in \textbf{(i),(ii)} of Proposition \ref{p6.2}.
\end{proposition}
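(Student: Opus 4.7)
The plan is to combine the reduction chain of Stage 1, the explicit solution of the reduced functional difference equation furnished by Proposition \ref{p6.3}, and the inverse Laplace and Fourier transformations to pass from $U_j^{*}(\lambda,x_2,\sigma)$ in \eqref{6.20*} to the time-domain expressions claimed in the statement. After doing so, the final step is to reverse the change of variables \eqref{6.7} and the rescaling $u_j(x_1,x_2,t)=e^{\mathfrak{s}x_1}U_j(x_1,x_2,t)$ in order to recover the solution of the original problem \eqref{6.1}--\eqref{6.5} under the restriction \eqref{6.0*}.

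First I would parametrize the contour $\ell_{d_0}$ by $\xi=-d_0+iy$, $y\in\R$ (for $d_0\in(0,1)$; the cases $d_0\in\{0,1\}$ require accounting for the small half-circle portions, whose contribution vanishes in the regime specified by the assumptions of Proposition \ref{p6.3} thanks to the pole analysis in that proposition). The $\sigma$-dependence of the integrand in \eqref{6.20*} localizes in the ratio $V_h(\rho,\sigma;\P_1)/V_h(\rho+1+\xi,\sigma;\P_1)$, which by the explicit form \eqref{6.13} equals $(a_1\sigma+a_2\sigma^{\nu})^{1+\xi}$ times a $\sigma$-independent factor. Combined with the prefactor $(a_1\sigma+a_2\sigma^{\nu})^{-1}$ in \eqref{6.20*}, the $\sigma$-dependent piece at $\xi=-d_0+iy$ reduces to $(a_1\sigma+a_2\sigma^{\nu})^{-d_0+iy}$, whose Bromwich inversion is precisely the Mittag-Leffler kernel $\mathcal{L}(t,y)$ recorded in the statement via identity (5.1.33) of \cite{GKMR}. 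The remaining $\sigma$-dependence lives in $f^{*}(\lambda+i\mathfrak{s}^{\star}\xi+i(1-\mathfrak{s}),\sigma)$, whose Laplace inversion contributes a time convolution $\int_0^t d\tau$ in the output.

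Next I would carry out the Fourier inversion in $\lambda$. Under \textbf{H14}, $f(x_1,t)$ is compactly supported in $x_1$, hence $f^{*}(\cdot,\sigma)$ is entire in its first argument with Paley--Wiener decay, so the shift $\lambda\mapsto\lambda+i\mathfrak{s}^{\star}\xi+i(1-\mathfrak{s})$ at $\xi=-d_0+iy$ translates under Fourier inversion into multiplication by $\exp\{(-\mathfrak{s}^{\star}d_0+1-\mathfrak{s})x_1+iy\mathfrak{s}^{\star}x_1\}$. The remaining $\lambda$-dependent factors, coming from the trigonometric ratios in \eqref{6.20*} and from $V_h(-i\lambda/\mathfrak{s}^{\star},\sigma;\P_1)/V_h(1+\xi-i\lambda/\mathfrak{s}^{\star},\sigma;\P_1)$, assemble into the inner $\lambda$-integral displayed in the statement, and the Fourier convolution theorem produces the outer $\varrho$-integral with $f(x_1-\varrho,t-\tau)$. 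The admissibility of the $\lambda$-contour as a real line shifted by the imaginary amount constrained in the statement follows from the pole configuration described in Propositions \ref{p6.1}--\ref{p6.3}, together with assumption \textbf{H15}.

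The main obstacle will be rigorously justifying the interchange of the three iterated integrations (in $\sigma$, $\lambda$, and $y$), i.e. absolute convergence of the resulting triple integral. The ingredients are the decay estimate $|V_h|^{-1}\le C\exp\{-[\tfrac{\pi}{2}-\varepsilon+\theta_2\,\mathrm{sgn}\,\mathfrak{s}^{\star}]|\mathrm{Im}\,\rho|\}$ in Proposition \ref{p6.2}(iii), the companion growth bound for $V_h$ itself from Proposition \ref{p6.1}, the rapid decay of $\mathcal{L}(t,y)$ in $y$ for $t>0$ provided by the large-$y$ asymptotics of the three-parametric Mittag-Leffler function (or the Gamma function, in the pure-diffusion/subdiffusion limits $a_1=0$ or $a_2=0$), and the Paley--Wiener decay of $f^{*}$ inherited from the compact support assumption in \textbf{H14}. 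Once Fubini is available, each inversion step can be carried out independently along the admissible contour, and the final check that the resulting $U_1,U_2$ satisfy \eqref{6.1}--\eqref{6.5} with \eqref{6.0*} is obtained by retracing the substitution chain of Stage 1 in reverse, since each transformation there is an equivalence on the strip prescribed by \textbf{H15} and by the pole-avoidance conditions listed in the statement of Proposition \ref{p6.4}.
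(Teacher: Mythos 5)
Your proposal follows essentially the same route as the paper: the three-stage reduction, the substitution of the particular solution from Proposition \ref{p6.3} into \eqref{6.20*}, and then inversion of the Laplace and Fourier transforms along $\xi=-d_0+iy$, with the $\sigma$-dependence collapsing to $(a_1\sigma+a_2\sigma^{\nu})^{-d_0+iy}$ whose Bromwich inversion is $\mathcal{L}(t,y)$ via (5.1.33) of \cite{GKMR} and the convolution theorem supplying the $\tau$- and $\varrho$-integrals. Your explicit attention to the Fubini justification and to undoing the weight $e^{\mathfrak{s}x_1}$ and the map \eqref{6.7} only makes precise steps the paper leaves implicit.
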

\noindent
\begin{remark}\label{r6.2}
It is worth noting that  relations on $Im\lambda$ in Proposition \ref{p6.4} provide additional assumptions on the weight $\mathfrak{s}$. Indeed,  putting $Im\lambda=0$ in the relations above, we  arrive at the conditions:

\noindent$\bullet$ if $\mathfrak{s}^{\star}<0$, then
$\mathfrak{s}\neq \frac{\bar{z}_{i}^{+}}{2\omega_0}-|\mathfrak{s}^{\star}|[\bar{m}(i)-1],\, i\in\{0,1,2\},$ and for $j\in\{0,1,...,5\}$
\[
-\frac{z_{1}^{+}}{2\omega_0}<\mathfrak{s}<\frac{\bar{z}_{3}^{+}}{2\omega_0},\quad
\mathfrak{s}\neq |\mathfrak{s}^{\star}|[\underline{m}(j)-d_0] -\frac{z_{j}^{+}}{2\omega_0},\, \,\text{ if }\, \mathfrak{K}>1,
\]
\[
-\frac{z_{1}^{-}}{2\omega_0}<\mathfrak{s}<\frac{\bar{z}_{3}^{+}}{2\omega_0},\quad
\mathfrak{s}\neq |\mathfrak{s}^{\star}|[\underline{m}(j)-d_0] -\frac{z_{j}^{-}}{2\omega_0},\, \,\text{ if }\, \mathfrak{K}<1;
\]
\noindent$\bullet$ if $\mathfrak{s}^{\star}>0$, then
\[
-\frac{z_1^{+}}{2\omega_0}-1<\mathfrak{s}<\frac{z_4^{+}}{2\omega_0},\mathfrak{s}\neq \frac{z_i^{+}}{2\omega_0}+|\mathfrak{s}^{\star}|[1-\underline{m}(i)-d_0],\, i\in\{0,...,4\}, \mathfrak{s}\neq -|\mathfrak{s}^{\star}|\bar{m}(j)+\frac{z_j^{+}}{2\omega_0}, j\in\{1,2,3\},\text{if }\mathfrak{K}>1,
\]
while if $\mathfrak{K}<1$ then
\[
-\frac{z_1^{-}}{2\omega_0}-1<\mathfrak{s}<\frac{z_4^{-}}{2\omega_0},\mathfrak{s}\neq \frac{z_i^{-}}{2\omega_0}+|\mathfrak{s}^{\star}|[1-\underline{m}(i)-d_0],\, i\in\{0,...,4\}, \mathfrak{s}\neq -|\mathfrak{s}^{\star}|\bar{m}(j)+\frac{z_j^{-}}{2\omega_0}, j\in\{1,2,3\}.
\]
It is apparent that the selected weight $\mathfrak{s}$ in accordance with  these assumptions will satisfy  \textbf{H15}.
\end{remark}
We conclude this section with some comments related to  other boundary problems with dynamic boundary conditions in plane corners.

\begin{remark}\label{r6.4}
Actually, with nonessential modifications in the arguments in Section \ref{s6}, the very same results hold for transmission problems \eqref{6.1}-\eqref{6.4} supplemented with Neumann boundary conditions on $\partial G_{i}\backslash g,$ $i=1,2.$ Besides, the proposed  approach  in Section \ref{s6} can be incorporated to find explicit solutions of the non-classical Dirichlet or Neumann  boundary value problem with  fractional dynamic boundary conditions in plane corners.
\end{remark}

%%%%%%%%%%%%%%%%%%%%%%%%%%%%%%%%%%%%%%%%%%%%%%%%%%%%%%%%%%%%%%%%%%%%%%

%%%%%%%%%%%%%%%%%%%%%%%%%%%%%%%%%%%%%%%%%%%%%%%%%%%%%%%%%%%%%%%%%%%%%%
\section*{Appendix: Proof of Proposition \ref{p3.1}}
\label{s7}

\theoremstyle{definition}
\newtheorem{reAPP}{Remark}[section]
\renewcommand{\thereAPP}{A.\arabic{reAPP}}
\setcounter{equation}{0}
\setcounter{subsection}{0}
\renewcommand{\theequation}{A.\arabic{equation}}

\noindent We start with validation of the first estimate in Proposition \ref{p3.1}. To this end, it is enough to evaluate the first term in the left-hand side of the inequality (i). The second series will be examined with the same arguments.
The simple technical calculations and properties of the sequence of $\b(n)$ provide the relations
\begin{align}\label{A.1}\notag
&\sum_{n=\mathfrak{N}\,+1}^{+\infty}\frac{z}{\b(n)(\b(n)+z)}=\sum_{n=\mathfrak{N}\,+1}^{+\infty}\frac{z_1(\b(n)+z_1)}{\b(n)[(\b(n)+z_1)^{2}+z_2^{2}]}+
i\sum_{n=\mathfrak{N}\,+1}^{+\infty}\frac{z_2}{[(\b(n)+z_1)^{2}+z_2^{2}]}
\\
&+
\sum_{n=\mathfrak{N}\,+1}^{+\infty}\frac{z_2^{2}}{\b(n)[(\b(n)+z_1)^{2}+z_2^{2}]}
=C+O(1/|z_{2}|)+\sum_{n=\mathfrak{N}+1}^{+\infty}\frac{z_2^{2}}{\b(n)[(\b(n)+z_1)^{2}+z_2^{2}]}.
\end{align}
To handle the last term in the right-hand side of this equality,  we take advantage of the easy verified inequalities
\[
\Big|\sum_{n=\mathfrak{N}\,+1}^{+\infty}\frac{z_2^{2}}{\b(n)[(\b(n)+z_1)^{2}+z_2^{2}]}\Big|\leq \int_{\mathfrak{N}\,+1}^{+\infty}\frac{C z_{2}^{2} dx}{\b(x)[z_2^{2}+\b^{2}(x)]}\leq\frac{z_2^{2}}{\delta_{0}}\int_{\b(\mathfrak{N}\,+1)}^{+\infty}\frac{Cdy}{y[z_{2}^{2}+y^{2}]}
\]
with $\delta_0>0$. Here we used that the sequence $\{\b(n)\}_{n=1}^{+\infty}$ is the strictly increasing, i.e.
\begin{equation}\label{A.2}
\b'(x)>\delta_0>0,\quad \forall x\geq \mathfrak{N}+1.
\end{equation}
Performing the change of variable $(z^{2}_{2}y^{-2})=\mathfrak{p}$, we obtain
\[
\Big|\sum_{n=\mathfrak{N}\, +1}^{+\infty}\frac{z_2^{2}}{\b(n)[(\b(n)+z_1)^{2}+z_2^{2}]}\Big|\leq
\frac{C}{\delta_{0}}\int^{z_{2}^{2}\b^{-2}(\mathfrak{N}\,+1)}_{0}\frac{d\mathfrak{p}}{1+\mathfrak{p}^{2}}=C\ln[1+z_{2}^{2}\b^{-2}(\mathfrak{N}\, +1)].
\]
Collecting this inequality with \eqref{A.1}, we arrive at the first estimate in Proposition \ref{p3.1}.

\noindent$\bullet$ Concerning the second estimate in Proposition \ref{p3.1}, we can conclude that  second and third terms in the left-hand side of this inequality are evaluated exactly like the first one. Hence, we focus here only on the proof of the equality
\[
\sum_{n=\mathfrak{N}\, +1}^{+\infty}\Big|\frac{z}{(\b(n)+z)(\b(n)+C^{\star}+z)}\Big|=C+O(1/|z|),
\]
if $|z_{2}|\to+\infty$ and $z_1$ meets the requirements of Proposition \ref{p3.1}.

\noindent
Performing the simple calculations, we reach to the representation
\begin{equation}\label{A.3}
\sum_{n=\mathfrak{N}\,+1}^{+\infty}\frac{z}{(\b(n)+z)(\b(n)+C^{\star}+z)}=z\sum_{j=1}^{4}\mathfrak{B}_{j}(z),
\end{equation}
where we put
\begin{align*}
\mathfrak{B}_{1}(z)&=\sum_{n=\mathfrak{N}\,+1}\frac{C^{\star}(\b(n)+z_1)}{[(\b(n)+z_1)^{2}+z_2^{2}][(\b(n)+C^{\star}+z_1)^{2}+z_2^{2}]};\\
\mathfrak{B}_{2}(z)&=\sum_{n=\mathfrak{N}\,+1}\frac{(\b(n)+z_1)^{2}}{[(\b(n)+z_1)^{2}+z_2^{2}][(\b(n)+C^{\star}+z_1)^{2}+z_2^{2}]};\\
\mathfrak{B}_{3}(z)&=-\sum_{n=\mathfrak{N}\,+1}\frac{z_{2}^{2}}{[(\b(n)+z_1)^{2}+z_2^{2}][(\b(n)+C^{\star}+z_1)^{2}+z_2^{2}]};\\
\mathfrak{B}_{4}(z)&=-i\sum_{n=\mathfrak{N}\,+1}\frac{z_{2}(2\b(n)+2z_1+C^{\star})}{[(\b(n)+z_1)^{2}+z_2^{2}][(\b(n)+C^{\star}+z_1)^{2}+z_2^{2}]}.
\end{align*}
Next, we exploit the inequality \eqref{A.2} to achieve  the  relations
\[
\sum_{n=\mathfrak{N}\,+1}^{+\infty}\frac{1}{\b^{2}(n)+z_2^{2}}\leq \int_{\mathfrak{N}\,+1}^{+\infty}\frac{dx}{\b^{2}(x)+z_{2}^{2}}\leq \frac{1}{\delta_{0}}
\int_{\b(\mathfrak{N}\,+1)}^{+\infty}\frac{dy}{y^{2}+z_{2}^{2}}= \frac{C}{|z_{2}|},
\]
which in turn arrive at the bound
\[
|z_2||\mathfrak{B}_1(z)|+|\mathfrak{B}_{2}(z)|+|\mathfrak{B}_{3}(z)|+|\mathfrak{B}_4(z)|
\leq\sum_{n=\mathfrak{N}\,+1}^{+\infty}\frac{C}{(\b(n)+C^{\star}+z_1)^{2}+z_2^{2}}\leq\frac{C}{z_{2}^{2}}.
\]
Collecting these estimates with representation \eqref{A.3}, we end up with the desired bound.

\noindent$\bullet$ As for the third estimate in Proposition \ref{p3.1}, to verify this statement  is enough to obtain the inequality
\begin{equation}\label{A.4}
\sum_{n=\mathfrak{N}+1}\Big|\b(n)\frac{z^{3}}{(\b(n)+z)^{3}}\sum_{j=1}^{+\infty}\frac{1}{(j+3)}\frac{z^{j}}{(\b(n)+z)^{j}}\Big|
\leq C|z^{2}|[\ln|z|+1]+O(1),\quad\text{as } |z_{2}|\to+\infty
\end{equation}
and $z_1$ satisfying the requirements of Proposition \ref{p3.1}.

It is apparent that assumptions on the sequence $\b(n)$ and $z_1$ provide the bound
\[
\Big|\frac{z}{\b(n)+z}\Big|<1\quad \text{for each } n\geq \mathfrak{N}+1\quad\text{and }|z_2|\to+\infty.
\]
In light of the last inequality, we have
\begin{align*}
&\frac{1}{|\b(n)+z|}\sum_{j=1}^{+\infty}\frac{1}{(j+3)}\Big|\frac{z}{(\b(n)+z)}\Big|^{j}\leq
\frac{1}{|\b(n)+z|}\sum_{j=1}^{+\infty}\Big|\frac{z}{(\b(n)+z)}\Big|^{j}=
\frac{1}{|\b(n)+z|-|z|}\\
&=\frac{\sqrt{(\b(n)+z_1)^{2}+z_{2}^{2}}+\sqrt{z_{1}^{2}+z_{2}^{2}}}
{\b(n)(\b(n)+2z_{1})}
\leq C\sum_{k=1}^{3}\mathfrak{A}_{k}(z),
\end{align*}
where we set
\[
\mathfrak{A}_{1}(z)=\frac{|z_1|}{\b(n)|\b(n)+2z_1|},\,
\mathfrak{A}_{2}(z)=\frac{|z_2|}{\b(n)|\b(n)+2z_1|},\,
\mathfrak{A}_{3}(z)=\frac{|\b(n)+z_1|}{\b(n)|\b(n)+2z_1|}.
\]
Summarizing, we are left to produce the suitable bound of
$\sum_{n=\mathfrak{N}+1}^{+\infty}|z|\b(n)\Big|\frac{z}{\b(n)+z}\Big|^{2}\sum_{k=1}^{3}\mathfrak{A}_{k}(z).$
 To this end, we appeal to estimates in points (i) and (ii) in Proposition \ref{p3.1} and arrive at the relations
\begin{align*}
&\sum_{n=\mathfrak{N}\,+1}^{+\infty}\b(n)\Big|\frac{z}{\b(n)+z}\Big|^{2}[|z|\mathfrak{A}_{1}(z)+\mathfrak{A}_{2}(z)]\leq C|z|\ln|z|+O(1),\\
&\sum_{n=\mathfrak{N}\,+1}^{+\infty}|z|\b(n)\Big|\frac{z}{\b(n)+z}\Big|^{2}\mathfrak{A}_{3}(z)\leq C|z|[|z|+1].
\end{align*}
in the case of  $z_{2}\to\pm\infty$.
Collecting these inequalities, we deduce  bound \eqref{A.4} and, accordingly, the desired estimate in (iii).

\noindent$\bullet$ At this point, we will examine the first equality in (iv). Note that the second one is verified with the same arguments.
First, we use the easily verified relations:
%\[
%\sum_{j=1}^{+\infty}\frac{1}{(j+3)}\Big(\frac{z}{\b(n)+z}\Big)^{j}=\sum_{j=1}^{+\infty}\frac{-3}{j(j+3)}\Big(\frac{z}{\b(n)+z}\Big)^{j}+
%\sum_{j=1}^{+\infty}\frac{1}{j}\Big(\frac{z}{\b(n)+z}\Big)^{j},\]
%\[
%\sum_{j=1}^{+\infty}\frac{1}{(j+3)}\Big(\frac{z}{\b(n)+C^{\star}+z}\Big)^{j}=\sum_{j=1}^{+\infty}\frac{-3}{j(j+3)}\Big(\frac{z}{\b(n)+C^{\star}+z}\Big)^{j}+
%\sum_{j=1}^{+\infty}\frac{1}{j}\Big(\frac{z}{\b(n)+C^{\star}+z}\Big)^{j}.
%\]
\begin{align*}
\sum_{j=1}^{+\infty}\frac{1}{(j+3)}\Big(\frac{z}{\b(n)+z}\Big)^{j}&=\sum_{j=1}^{+\infty}\frac{-3}{j(j+3)}\Big(\frac{z}{\b(n)+z}\Big)^{j}+
\sum_{j=1}^{+\infty}\frac{1}{j}\Big(\frac{z}{\b(n)+z}\Big)^{j},\\
\sum_{j=1}^{+\infty}\frac{1}{(j+3)}\Big(\frac{z}{\b(n)+C^{\star}+z}\Big)^{j}&=\sum_{j=1}^{+\infty}\frac{-3}{j(j+3)}\Big(\frac{z}{\b(n)+C^{\star}+z}\Big)^{j}+
\sum_{j=1}^{+\infty}\frac{1}{j}\Big(\frac{z}{\b(n)+C^{\star}+z}\Big)^{j}.
\end{align*}
In particular, taking into account of the assumptions on $\b(n)$ and $z_1$, we deduce the equalities:
\begin{align*}
\sum_{j=1}^{+\infty}\frac{1}{(j+3)}\Big(\frac{z}{\b(n)+z}\Big)^{j}&=\sum_{j=1}^{+\infty}\frac{-3}{j(j+3)}\Big(\frac{z}{\b(n)+z}\Big)^{j}-\ln\Big(1-\frac{z}{\b(n)+z}\Big),\\
\sum_{j=1}^{+\infty}\frac{1}{(j+3)}\Big(\frac{z}{\b(n)+C^{\star}+z}\Big)^{j}&=\sum_{j=1}^{+\infty}\frac{1}{j(j+3)}\Big(\frac{z}{\b(n)+C^{\star}+z}\Big)^{j}-\ln\Big(1-\frac{z}{\b(n)+z+C^{\star}}\Big).
\end{align*}
Therefore, we end up with the equality
\begin{align*}
&\sum_{n=\mathfrak{N}\,+1}^{\infty}\Big[\b(n)\Big(\frac{z}{\b(n)+z}\Big)^{3}\sum_{j=1}^{+\infty}\frac{1}{j+3}\Big(\frac{z}{\b(n)+z}\Big)^{j}\\&
-
(\b(n)+C^{\star})\Big(\frac{z}{\b(n)+C^{\star}+z}\Big)^{3}\sum_{j=1}^{+\infty}\frac{1}{j+3}\Big(\frac{z}{\b(n)+C^{\star}+z}\Big)^{j}
\Big]\equiv\sum_{k=1}\mathfrak{D}_{k}(z),
\end{align*}
where
\begin{align*}
&\mathfrak{D}_1(z)=C^{\star}\sum_{n=\mathfrak{N}\,+1}^{\infty}\Big(\frac{z}{\b(n)+C^{\star}+z}\Big)^{3}\Big[\sum_{j=1}^{+\infty}\frac{3}{j(j+3)}\Big(\frac{z}{\b(n)+C^{\star}+z}\Big)^{j}+\ln\Big(1-\frac{z}{\b(n)+C^{\star}+z}\Big)\Big],\\
&\mathfrak{D}_2(z)=\sum_{n=\mathfrak{N}\,+1}^{\infty}\Big[\frac{\b(n)z^{3}}{(\b(n)+C^{\star}+z)^{3}}-\frac{\b(n)z^{3}}{(\b(n)+z)^{3}}\Big]
\Big[\sum_{j=1}^{+\infty}\frac{3}{j(j+3)}\Big(\frac{z}{\b(n)+z}\Big)^{j}+\ln\Big(1-\frac{z}{\b(n)+z}\Big)\Big],\\
&\mathfrak{D}_{3}(z)=\sum_{n=\mathfrak{N}\,+1}^{+\infty}\b(n)\Big(\frac{z}{\b(n)+C^{\star}+z}\Big)^{3}
\sum_{j=1}^{+\infty}\frac{3z^{j}}{j(j+3)}\Big(\frac{1}{(\b(n)+C^{\star}+z)^{j}}-\frac{1}{(\b(n)+z)^{j}}\Big),\\
&\mathfrak{D}_4(z)=\sum_{n=\mathfrak{N}\,+1}\b(n)\Big(\frac{z}{\b(n)+C^{\star}+z}\Big)^{3}\ln\frac{(\b(n)+C^{\star})(\b(n)+z)}{\b(n)(\b(n)+C^{\star}+z)}.
\end{align*}
At this point, we treat each term $\mathfrak{D}_{k}(z),$ separately.

\noindent$\bullet$ Collecting statement (ii) of  Proposition \ref{p3.1} with easily verified relations:
\begin{align}\label{A.5}\notag
&\Big|\frac{z}{\b(n)+C^{\star}+z}\Big|<1,\quad
\sum_{j=1}^{+\infty}\frac{3}{j(j+3)}\Big|\frac{z}{\b(n)+C^{\star}+z}\Big|^{j}<\sum_{j=1}^{+\infty}\frac{C}{j^{2}}=C,\\
&\Big|\frac{z}{\b(n)+C^{\star}+z}\ln\Big(1-\frac{z}{\b(n)+z+C^{\star}}\Big)\Big|<C,
\end{align}
with the positive constant $C$ being independent of $n$ and $z$, we end up with
\[
|\mathfrak{D}_1(z)|\leq C|z|+O(1).
\]
$\bullet$ Performing simple technical  calculations, we conclude
\[
\mathfrak{D}_{2}(z)=-\sum_{n=\mathfrak{N}\,+1}^{+\infty}\frac{C^{\star}\b(n)z^{3}[(\b(n)+z)^{2}+(\b(n)+z)(\b(n)+C^{\star}+z)+(\b(n)+C^{\star}+z)^{2}]}
{(\b(n)+z)^{3}(\b(n)+C^{\star}+z)^{3}}\]
\[
\times\Big[\sum_{j=1}^{+\infty}\frac{3}{j(j+3)}\Big(\frac{z}{\b(n)+z}\Big)^{j}+\ln\Big(1-\frac{z}{\b(n)+z}\Big)\Big].
\]
After that, inequalities \eqref{A.5} and statement (ii) in Proposition \ref{p3.1} arrive at the inequality
\[
|\mathfrak{D}_{2}(z)|\leq C|z| +O(1).
\]
$\bullet$ Since $\Big|\frac{C^{\star}}{\b(n)+z}\Big|<<1$, we have
\begin{align*}
\frac{1}{(\b(n)+C^{\star}+z)^{j}}-\frac{1}{(\b(n)+z)^{j}}&=\frac{1}{(\b(n)+C^{\star}+z)^{j}}\Big[1-\Big(1+\frac{C^{\star}}{\b(n)+z}\Big)^{j}\Big]\\ &
=\frac{-1}{(\b(n)+C^{\star}+z)^{j}}\Big[\frac{C^{\star}}{\b(n)+z}+(j-1)O\Big(\frac{1}{(\b(n)+z)^{2}}\Big)\Big]
\end{align*}
and, accordingly,
\begin{align*}
&\sum_{j=1}^{\infty}\frac{z^{j}}{j(j+3)}\Big[\frac{1}{(\b(n)+C^{\star}+z)^{j}}-\frac{1}{(\b(n)+z)^{j}}\Big]\\&=
\sum_{j=1}^{\infty}\frac{z^{j}}{j(j+3)(\b(n)+C^{\star}+z)^{j}}\Big[\frac{C^{\star}}{\b(n)+z}+(j-1)O\Big(\frac{1}{(\b(n)+z)^{2}}\Big)\Big]
\\
&
=
\frac{C^{\star}}{\b(n)+z}\sum_{j=1}^{\infty}\frac{z^{j}}{j(j+3)(\b(n)+C^{\star}+z)^{j}}\Big[1-O\Big(\frac{1}{(\b(n)+z)}\Big)\Big]
\\&+\sum_{j=1}^{\infty}\frac{z^{j}}{(j+3)(\b(n)+C^{\star}+z)^{j}}O\Big(\frac{1}{(\b(n)+z)^{2}}\Big)\\&
=
\frac{C^{\star}}{\b(n)+z}\sum_{j=1}^{\infty}\frac{z^{j}}{j(j+3)(\b(n)+C^{\star}+z)^{j}}\Big[1-O\Big(\frac{1}{(\b(n)+z)^{2}}\Big)\Big]
\\&-O\Big(\frac{1}{(\b(n)+z)^{2}}\Big)
\sum_{j=1}^{\infty}\Big[\frac{3z^{j}}{j(j+3)(\b(n)+C^{\star}+z)^{j}}-
\ln\Big(1-\frac{z}{\b(n)+C^{\star}+z}\Big)\Big].
\end{align*}
Finally, coming to $\mathfrak{D}_3(z)$ and taking into account \eqref{A.5}, assumptions in Proposition \ref{p3.1} and aforementioned relations, we deduce
\[
|\mathfrak{D}_{3}(z)|\leq C|z|+O(1).
\]

\noindent $\bullet$ In order to manage $\mathfrak{D}_{4}(z)$, we take advantage of the identities
\[
\ln\frac{(\b(n)+z)(\b(n)+C^{\star})}{(\b(n)+z+C^{\star})\b(n)}=
\ln\frac{\bigg(1+\frac{C^{\star}}{\b(n)}\bigg)}{\bigg(1+\frac{C^{\star}}{\b(n)+z}\bigg)}
=
\frac{C^{\star}}{\b(n)}+O\Big(\frac{1}{\b^{2}(n)}\Big)-\frac{C^{\star}}{\b(n)+z}+O\Big(\frac{1}{(\b(n)+z)^{2}}\Big)
\]
which in turn provide
\[
\mathfrak{D}_4(z)=\sum_{n=\mathfrak{N}\,+1}C^{\star}\Big[\b(n)\{O((\b(n)+z)^{-2})+O(\b^{-2}(n))\}-\frac{z}{\b(n)+z}\Big]\Big(\frac{z}{\b(n)+C^{\star}+z}\Big)^{3}.
\]
At last, applying statements \textbf{(i)} and  \textbf{(ii)} of Proposition \ref{p3.1} to the right-hand side of the last equality, we obtain the desired bound
\[
|\mathfrak{D}_{4}(z)|\leq C|z|+O(1).
\]
Collecting representations of all $\mathfrak{D}_{k}$, $k=1,2,3,4,$ we arrive at the inequality in (iv) of this proposition. Thus, the proof is completed. \qed
%%%%%%%%%%%%%%%%%%%%%%%%%%%%%%%%%%%%%%%%%%%%%%%%%%%%%%%%%%%%%%%%%%%%%%%%%%%%%%%%%%%%%%%

%%%%%%%%%%%%%%%%%%%%%%%%%%%%%%%%%%%%%%%%%%%%%%%%%%%%%%%%%%%%%%%%%%%%%%%%%%%%%%%%%%%%%%%%%%

\section*{Data Availability Statements.}
 Data sharing not applicable to this article as no datasets were generated or analyzed during the current study.

 %%%%%%%%%%%%%%%%%%%%%%%%%%%%%%%%%%%%%%%%%%%%%%%%%%%%%%%%%%%%%%%%%%%%%%

%%%%%%%%%%%%%%%%%%%%%%%%%%%%%%%%%%%%%%%%%%%%%%%%%%%%%%%%%%%%%%%%%%%%%%

\end{document}